\documentclass[11pt]{article}

\usepackage[a4paper,margin=1.15in]{geometry}
\usepackage{amsmath,amssymb,amsthm,mathtools}
\usepackage[numbers,sort&compress]{natbib}
\usepackage{microtype}
\usepackage[hidelinks]{hyperref}

\newtheorem{theorem}{Theorem}[section]
\newtheorem{proposition}[theorem]{Proposition}
\newtheorem{lemma}[theorem]{Lemma}
\newtheorem{corollary}[theorem]{Corollary}
\newtheorem{remark}[theorem]{Remark}
\theoremstyle{definition}
\newtheorem{definition}[theorem]{Definition}

\newcommand{\dd}{\,\mathrm d}
\newcommand{\R}{\mathbb R}
\newcommand{\T}{\mathbb T}

\title{Projection--Lift Equivalence and Dissipation-Tight Compactness\\
for Continuum-State FENE--Markov Fluids}
\author{Sai Peng\\
School of Mathematics and Computational Science, Xiangtan University\\
Xiangtan, Hunan, China\\
\texttt{pscfd@xtu.edu.cn}}
\date{}

\hypersetup{
  pdftitle={Projection--Lift Equivalence and Dissipation-Tight Compactness for Continuum-State FENE--Markov Fluids},
  pdfauthor={Sai Peng},
  pdfsubject={Projection-lift equivalence, direct state-resolved compactness, and global weak solutions for continuum-state FENE--Markov systems},
  pdfkeywords={FENE dumbbell, zero diffusion, direct compactness, full drag, relative entropy, continuum-state Markov reaction}
}

\begin{document}
\maketitle

\begin{abstract}
We consider incompressible FENE dumbbells coupled to a reversible Markov
operator on a compact continuum of internal states.  At zero centre-of-mass
diffusion we prove that state averaging is an exact factor map and that a
Lagrangian state lift is its unique inverse on the natural energy-solution
classes.  Thus existence, multiplicity, and uniqueness of the state-resolved
system are precisely those of its scalar FENE projection; the internal-state
dynamics creates no additional large-data nonuniqueness.  The lift is driven
by a trace-free matrix fibre evolution and permits nonlinear local activities,
including rates with linear dependence on the singular Kramers stress.

We also prove a sequential form of this structure.  For state-resolved
regularizations with relative-entropy-prepared initial fibres and no viscous
dissipation defect, the complete densities converge strongly without
reconstructing them after the scalar limit.  Consequently the full drag,
state-dependent activity, singular stress, and non-atomic Jeffreys production
all pass to the limit.  The argument combines stability of regular Lagrangian
flows with a relative-entropy estimate for simultaneously varying matrix
drifts and jump rates.  A stationary oscillation shows that the preparation
cannot follow from the natural entropy bounds alone.  With positive
centre-of-mass diffusion we independently construct global large-data weak
solutions and identify the same nonlinear terms.  An explicit infinite-rank
kernel proves that these results are not finite-species reductions.
\end{abstract}

\paragraph{Keywords.}
FENE dumbbells; direct compactness; zero diffusion; full drag; relative
entropy; continuum-state reactions.

\paragraph{MSC 2020.}
35K65; 35Q84; 60J27; 76A10.

\section{Introduction}

The FENE dumbbell system couples an incompressible fluid to a probability
density on a bounded configuration domain.  Its decisive analytic obstruction
is the singular spring force at finite extension: weak convergence of the
density does not identify the Kramers stress.  This obstruction persists at
large data and without centre-of-mass diffusion.  The scalar theory was built
through local and perturbative results
\citep{jourdain2004existence,lin2007micromacro,masmoudi2008wellposedness},
the corotational construction of Lions and Masmoudi
\citep{lions2007micromacro}, the positive-diffusion theory of Barrett and
S\"uli \citep{barrett2011existence}, and Masmoudi's defect-propagation method
for the nondiffusive full-drag system \citep{masmoudi2013global}.  The FENE
potential originates in Warner's kinetic model \citep{warner1972kinetic} and
the bead--spring constitutive theory of Bird, Dotson, and Johnson
\citep{bird1980polymer}.

This paper asks whether the scalar large-data theory survives an infinite
internal-state resolution.  We replace the scalar density by a density over a
compact non-atomic state space and couple the states through a reversible
Markov operator whose activity may depend on local moments.  Finite-state
reactive polymer systems were studied in
\citep{wang2021twospecies,liu2022global}, and reversible jump processes carry
an entropy-gradient structure \citep{maas2011gradient,mielke2014relation}.
The continuum-state problem is different in two respects: the rate depends on
a density that is only weakly compact, and the Jeffreys production lives on a
non-atomic fibre.  Treating each label by a scalar existence theorem does not
identify either term.

Our central result is a structural equivalence rather than a new conditional
choice of scalar base solution.  At zero centre-of-mass diffusion, state
averaging sends every continuum-state energy solution to a scalar FENE energy
solution.  Conversely, regular-Lagrangian pullback and a trace-free matrix
fibre evolution assign to every scalar solution exactly one state-resolved
lift with the prescribed conditional initial law.  These maps are inverse.
Hence the state-resolved solution space and the scalar solution space are in
bijection; in particular, internal-state resolution preserves existence and
multiplicity and creates no additional source of nonuniqueness.  This is
Theorem~\ref{thm:projection-lift-equivalence} and Corollary
\ref{cor:base-free-existence-multiplicity}.

The fibre evolution is the main analytic input.  Along almost every
DiPerna--Lions trajectory \citep{diperna1989ordinary}, full drag becomes
\[
 B(t,a)=\nabla_xu(t,X(t,a))\in L^2(0,T),
 \qquad \operatorname{tr}B=0.
\]
The matrix is neither bounded nor skew.  Proposition
\ref{prop:full-drag-fibre-evolution} constructs the corresponding
configuration--state evolution, proves positivity and exact \(L^1\)
contraction, and controls the entropy--stretching balance by a FENE Hardy
estimate.  A Bielecki--Volterra argument then closes nonlinear local activity
on the full time interval.  State-blind drivers need only be integrable along
the scalar projection, which allows linear dependence on the complete
singular Kramers stress; state-sensitive moments require Lipschitz control only
on their accessible compact range.

The equivalence has a direct sequential counterpart.  Theorem
\ref{thm:direct-state-resolved-compactness} starts with the state-resolved
regularizations themselves.  If the initial fibres are prepared in relative
entropy and the viscous dissipation has no defect, stability of regular
Lagrangian flows makes the pulled-back deformation histories converge.  A
relative-entropy estimate for two fibres with simultaneously varying matrix
drifts and jump rates then yields strong convergence of the complete
state-resolved densities.  It identifies full drag, nonlinear activity,
singular stress, and Jeffreys production without projecting first and lifting
afterwards.  Corollary~\ref{cor:viscous-defect-criterion} characterizes the
gradient assumption by equality of viscous dissipation.  The stationary
oscillation of Proposition
\ref{prop:v38-local-moment-oscillation-obstruction} shows that the natural
entropy bounds alone cannot imply the required preparation.

For positive centre-of-mass diffusion, Theorem
\ref{thm:v35-large-data-coupled-existence} constructs global large-data weak
solutions on bounded Lipschitz domains in dimensions two and three.  Spatial
Fisher information compactifies the finitely many activity moments, while the
full state density remains weak.  Strong compactness of its state average,
combined with a weighted Hardy estimate, identifies the singular stress; lower
semicontinuity on a fixed reference measure passes the non-atomic reaction
production.  The construction closes the velocity, state density, and
self-consistent moments in one approximation.  Proposition
\ref{prop:infinite-rank-state-kernel} supplies an admissible Markov generator
with infinitely many non-zero modes, so this theorem is not a disguised
finite-species system.

The three regimes are summarized in Theorem
\ref{thm:master-closure-classification}: positive diffusion gives compactness
of local moments; zero diffusion gives an exact constructive factorization;
and dissipation-tight zero-diffusion sequences give direct strong compactness.
Section~2 states the model and the classification.  Section~3 proves the
positive-diffusion theorem.  Section~4 develops the matrix fibre theory,
projection--lift equivalence, and direct compactness.  The appendix records
the sharper corotational consequences and the compactness obstruction.

We make no claim of three-dimensional large-data uniqueness for the scalar
Navier--Stokes--FENE equations, and we do not obtain direct state-resolved
compactness from entropy bounds alone.  What is proved is exact: within the
stated energy classes, uniqueness of the continuum-state system is equivalent
to uniqueness of its scalar projection; and within prepared, dissipation-tight
regularizations, the state-resolved sequence converges without reconstruction.

\section{Continuum-state model}
\label{sec:continuum-state}

Let \(\Omega\) denote either a bounded Lipschitz domain or the torus specified
in each theorem.  For the Warner potential,
\begin{equation}
 D=B_{\sqrt b}(0),\qquad
 U(q)=-\frac b2\log\left(1-\frac{|q|^2}{b}\right),\qquad
 M(q)=Z_b^{-1}e^{-U(q)} .
 \label{eq:fene-potential}
\end{equation}
More general finite-extension potentials are introduced in
Section~\ref{sec:v35-large-data-coupled}.

\begin{definition}[Admissible finite-extension Maxwellian]
\label{def:admissible-Maxwellian}
Let \(D\subset\R^d\) be bounded and \(C^2\), with \(0\in D\), and put
\(s_D(q)=\operatorname{dist}(q,\partial D)\).  A probability density
\(M=Z^{-1}e^{-U}\) is called admissible if \(M>0\) in \(D\),
\(U\in C^2_{\rm loc}(D)\), and there are \(s_0>0\), \(\gamma>1\), and
positive constants \(c_j\) such that, on \(\{s_D<s_0\}\),
\begin{equation}
 c_1s_D^\gamma\le M\le c_2s_D^\gamma,
 \qquad c_3\le s_D|\nabla_qU|\le c_4.
 \label{eq:admissible-Maxwellian-collar}
\end{equation}
We also require \(\nabla_qU\otimes q\in L^2_{\rm loc}(D)\) and
\(\nabla_qU\) bounded on every set compactly contained in \(D\).
The constants in estimates for this class may depend on
\(D,M,s_0,\gamma\),
but not on the solution.
\end{definition}

The class is not radial.  It contains the Warner Maxwellian, the
Barrett--S\"uli radial class used below, and smooth anisotropic barriers on
bounded \(C^2\) configuration domains.  For example, if \(A\) is positive
definite, then
\[
 D_A=\{q:q\cdot Aq<b\},\qquad
 U_A(q)=-\frac b2\log\left(1-\frac{q\cdot Aq}{b}\right),\qquad b>2,
\]
is admissible.  Indeed, a linear change of variables reduces its boundary
estimates to the Warner case.  The point of Definition
\ref{def:admissible-Maxwellian} is that the full-drag fibre analysis uses only
the collar geometry and the weighted Hardy estimate proved in Lemma
\ref{lem:FENE-Hardy-tail}, not radial symmetry.

Let \((Y,d_Y)\) be a compact metric space and \(\pi\) a Borel probability
measure with full support.  Assume
\begin{equation}
 k\in L^\infty(Y\times Y;\pi\otimes\pi),\qquad
 k(y,y')=k(y',y)\ge0 .
 \label{eq:continuum-kernel}
\end{equation}
A finite-dimensional parametrization or differentiable structure on \(Y\) is
never used; in particular, compact infinite-dimensional state spaces are
included.
A spectral gap,
\begin{equation}
 \|v-\langle v\rangle_\pi\|_{L^2_\pi}^2
 \le\lambda_Y^{-1}\mathcal E_k(v),\qquad
 \mathcal E_k(v)=\frac12\int_{Y^2}k(y,y')|v'-v|^2\dd\pi\dd\pi',
 \label{eq:continuum-spectral-gap}
\end{equation}
may be imposed for quantitative state mixing but is not needed for the
existence results below.

Fix \(N_\eta<\infty\) and
\begin{equation}
 c_a\in L^\infty(Y),\qquad
 \psi_a\in W^{1,\infty}(D),\qquad 1\le a\le N_\eta .
 \label{eq:continuum-observables}
\end{equation}
For \(f=Mh\), define
\begin{equation}
 \eta_a[h](t,x)=\int_{D\times Y}c_a(y)\psi_a(q)
 (h(t,x,q,y)-1)M(q)\dd q\dd\pi(y).
 \label{eq:continuum-moments}
\end{equation}
The symmetric activity is measurable in \((q,y,y')\), continuous in
\(z\in\R^{N_\eta}\), and satisfies
\begin{align}
 &\mathfrak a(q,z,y,y')=\mathfrak a(q,z,y',y),\qquad
 0<a_*\le\mathfrak a\le a^*,
 \label{eq:continuum-activity-bounds}\\
 &|\mathfrak a(q,z,y,y')-\mathfrak a(q,\widetilde z,y,y')|
 \le A|z-\widetilde z|.
 \label{eq:continuum-activity-lipschitz}
\end{align}
Set
\begin{align}
 K_h(t,x,q,y,y')&=k(y,y')\mathfrak a(q,\eta[h](t,x),y,y'),
 \label{eq:continuum-effective-kernel}\\
 (\mathcal R_hh)(y)&=\int_YK_h(y,y')[h(y')-h(y)]\dd\pi(y').
 \label{eq:continuum-reaction-operator}
\end{align}
The associated Jeffreys production is
\begin{equation}
 \mathcal D_Y(Mh)=\frac12\int_{\Omega\times D\times Y^2}
 MK_h(h'-h)(\log h'-\log h)\dd\pi\dd\pi'\dd q\dd x .
 \label{eq:continuum-reaction-entropy}
\end{equation}

\begin{lemma}[Continuum detailed balance]
\label{lem:continuum-reaction-thermodynamics}
For every non-negative \(h\),
\(\int_Y\mathcal R_hh\dd\pi=0\).  For positive \(h\),
\begin{align}
 -\int_Y(\mathcal R_hh)\log h\dd\pi
 &=\frac12\int_{Y^2}K_h(h'-h)(\log h'-\log h)\dd\pi\dd\pi',
 \label{eq:continuum-entropy-identity}\\
 -\int_Y(\mathcal R_hv)v\dd\pi
 &=\frac12\int_{Y^2}K_h|v'-v|^2\dd\pi\dd\pi'
 \ge a_*\lambda_Y\|v-\langle v\rangle_\pi\|_2^2
 \label{eq:continuum-quadratic-coercivity}
\end{align}
whenever the gap \eqref{eq:continuum-spectral-gap} is assumed.  Moreover,
\begin{equation}
 \mathcal D_Y(Mh)\ge4a_*\int_{\Omega\times D}
 M\mathcal E_k(\sqrt h)\dd q\dd x .
 \label{eq:continuum-square-root-coercivity}
\end{equation}
\end{lemma}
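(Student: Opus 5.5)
The proof rests on one symmetrization identity. Everything else is a pointwise inequality, applied for fixed \((t,x,q)\) and then integrated against \(M\dd q\dd x\).

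\emph{Symmetrization.} Since \(K_h(y,y')=k(y,y')\mathfrak a(q,\eta,y,y')\) is symmetric in \((y,y')\) and lies in \(L^\infty\) by \eqref{eq:continuum-kernel} and \eqref{eq:continuum-activity-bounds}, Fubini and the swap \(y\leftrightarrow y'\) give, for any test function \(g\),
\[
 \int_Y(\mathcal R_hv)g\dd\pi=-\frac12\int_{Y^2}K_h(v'-v)(g'-g)\dd\pi\dd\pi'.
\]
This requires \(K_h(v'-v)(g'-g)\in L^1(\pi\otimes\pi)\).

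\emph{Conservation and the two identities.} Taking \(g=1\) gives \(\int_Y\mathcal R_hh\dd\pi=0\); here we use \(h\in L^1_\pi\), which makes \(K_h|h'-h|\) integrable. Taking \(v=h\) and \(g=\log h\) gives \eqref{eq:continuum-entropy-identity}. Taking \(g=v\) gives the equality in \eqref{eq:continuum-quadratic-coercivity} for \(v\in L^2_\pi\).

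The only integrability concern is in the entropy identity. The integrand \((h'-h)(\log h'-\log h)\) is non-negative, so the right-hand side is always well defined in \([0,\infty]\). For the left-hand side I would first work with the truncations \(\log(\max(\min(h,n),1/n))\), which are bounded and monotone in \(h\). The symmetrized integrands are then non-negative, and monotone convergence gives the identity in the extended sense. This truncation step is the main technical point; it is routine.

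\emph{Quadratic coercivity.} Since \(\mathfrak a\ge a_*\) and \(k\ge0\), we have
\[
 \frac12\int K_h|v'-v|^2\ge a_*\,\mathcal E_k(v)\ge a_*\lambda_Y\|v-\langle v\rangle_\pi\|_2^2,
\]
where the last step is the spectral gap \eqref{eq:continuum-spectral-gap}.

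\emph{Square-root coercivity.} I would use the elementary inequality
\[
 (a-b)(\log a-\log b)\ge4(\sqrt a-\sqrt b)^2,\qquad a,b>0.
\]
To prove it, set \(a=e^{2s}\), \(b=e^{2r}\). It reduces to \((e^{s}+e^{r})(s-r)\ge2(e^s-e^r)\), which follows from convexity of the exponential via \(\tanh(w)\le w\) for \(w\ge0\). Alternatively, use \((a-b)/(\log a-\log b)\ge\sqrt{ab}\) together with \(a-b=(\sqrt a-\sqrt b)(\sqrt a+\sqrt b)\).

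Multiplying this inequality by \(K_h\ge a_*k\) and integrating gives, pointwise in \((x,q)\),
\[
 \frac12\int_{Y^2}K_h(h'-h)(\log h'-\log h)\ge 2a_*\int_{Y^2} k\,|\sqrt{h'}-\sqrt h|^2=4a_*\mathcal E_k(\sqrt h).
\]
Integrating against \(M\dd q\dd x\) yields \eqref{eq:continuum-square-root-coercivity}. When \(h\) has zeros, I would interpret the integrand by its lower semicontinuous extension (\(+\infty\) when exactly one of the two values vanishes) or by the limit \(h+\varepsilon\to h\); the inequality then persists by Fatou.
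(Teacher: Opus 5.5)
Your proof is correct and follows the paper's route: symmetrization in $(y,y')$ gives conservation and both identities, $\mathfrak a\ge a_*$ together with the spectral gap gives the quadratic bound, and the pointwise inequality $(r-s)(\log r-\log s)\ge 4|\sqrt r-\sqrt s|^2$ gives the square-root bound, while your truncation and zero-value conventions supply integrability details the paper leaves implicit. One slip: your alternative proof of that inequality runs the wrong way, because $(a-b)(\log a-\log b)=(a-b)^2/L(a,b)$ with $L$ the logarithmic mean, so $L(a,b)\ge\sqrt{ab}$ only bounds it from above; what is needed is $L(\sqrt a,\sqrt b)\le\frac12(\sqrt a+\sqrt b)$, which is exactly your $\tanh w\le w$ argument, so the main proof is unaffected.
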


\begin{proof}
Interchanging \(y\) and \(y'\) and averaging the two oriented expressions
gives mass conservation and the factor \(1/2\).  The quadratic estimate follows
from \eqref{eq:continuum-activity-bounds} and
\eqref{eq:continuum-spectral-gap}.  Finally,
\((r-s)(\log r-\log s)\ge4|\sqrt r-\sqrt s|^2\).
\end{proof}

\begin{proposition}[An admissible reaction with infinitely many state modes]
\label{prop:infinite-rank-state-kernel}
Let \(Y=\T^1\), let \(\pi\) be normalized Lebesgue measure, and set
\begin{equation}
 k_\infty(y,y')
 =1+\sum_{m=1}^\infty 2^{-m}\cos\bigl(2\pi m(y-y')\bigr).
 \label{eq:infinite-rank-state-kernel}
\end{equation}
Then \(k_\infty\) is continuous, symmetric, non-negative and bounded by two,
so it satisfies \eqref{eq:continuum-kernel}.  The associated constant-activity
generator
\[
 (\mathcal L_\infty v)(y)
 =\int_Y k_\infty(y,y')[v(y')-v(y)]\dd\pi(y')
\]
has infinitely many distinct non-zero eigenvalues.  Consequently it has
infinite rank on \(L^2_\pi(Y)\) and is not linearly conjugate to a Markov
generator on finitely many internal states.
\end{proposition}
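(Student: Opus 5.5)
The argument is an explicit Fourier computation on \(\T^1\).

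\emph{Regularity and bounds.} Because \(\sum_m 2^{-m}=1\), the series in \eqref{eq:infinite-rank-state-kernel} converges absolutely and uniformly, so \(k_\infty\) is continuous. It is symmetric because cosine is even. Bounding \(\cos\) by \(\pm1\) gives
\[
 0=1-\sum_{m\ge1}2^{-m}\le k_\infty\le 1+\sum_{m\ge1}2^{-m}=2 .
\]
Hence \eqref{eq:continuum-kernel} holds with \(\|k_\infty\|_\infty\le2\).

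\emph{Spectrum.} Write \(e_n(y)=e^{2\pi i n y}\). Since \(k_\infty\) is a convolution kernel, \(k_\infty(y,y')=\kappa(y-y')\) with \(\hat\kappa(0)=1\), \(\hat\kappa(\pm m)=2^{-m-1}\) for \(m\ge1\). The convolution theorem then gives
\[
 \int_Y k_\infty(y,y')e_n(y')\dd\pi(y')=\hat\kappa(n)e_n(y).
\]
Moreover \(\int_Y k_\infty(y,y')\dd\pi(y')=\hat\kappa(0)=1\). Therefore
\[
 \mathcal L_\infty e_n=(\hat\kappa(n)-1)e_n,\qquad
 \lambda_n=2^{-|n|-1}-1\quad(n\ne0),\qquad \lambda_0=0 .
\]
The values \(\lambda_n\), \(|n|\ge1\), are pairwise distinct for distinct \(|n|\), and each is nonzero (indeed \(\lambda_n\in[-1,-3/4]\)). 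This gives infinitely many distinct nonzero eigenvalues, and \(\mathcal L_\infty\) is bounded on \(L^2_\pi\).

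\emph{Consequences.} The eigenvectors \(e_n\) for \(n\ne0\) are linearly independent and lie in the range of \(\mathcal L_\infty\), so the rank is infinite. For non-conjugacy, suppose \(\mathcal L_\infty=S^{-1}GS\) for a linear bijection \(S\) onto \(\R^N\) or \(\mathbb C^N\) and a finite-state generator \(G\). Then \(L^2_\pi\) would be finite-dimensional, which it is not. More intrinsically, if \(\mathcal L_\infty\) were intertwined with \(G\) by any linear injection, every eigenvalue of \(\mathcal L_\infty\) would be an eigenvalue of \(G\), and \(G\) has at most \(N\) of them. This is a contradiction.

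There is no real obstacle here. The only care needed is the normalization of the Fourier coefficients, \(\cos=\tfrac12(e_m+e_{-m})\), and the observation that "not conjugate" only requires counting distinct eigenvalues.
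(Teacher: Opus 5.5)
Your proof is correct and follows the paper's argument step for step: the same uniform-convergence and $\sum_m 2^{-m}=1$ bounds, the same convolution identity $\mathcal L_\infty e_n=-(1-2^{-|n|-1})e_n$ for $n\neq0$, and the same non-conjugacy argument that a finite-state generator has finite rank and finitely many eigenvalues, both preserved under similarity. Your extra ``intertwining by a linear injection'' remark is harmless but vacuous, since no linear injection $L^2_\pi(\T^1)\to\mathbb C^N$ exists; the dimension and spectrum-counting argument alone gives the conclusion.
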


\begin{proof}
Uniform convergence of the series gives continuity and symmetry.  Since
\(\sum_{m\ge1}2^{-m}=1\), one has \(0\le k_\infty\le2\).  Moreover
\(\int_Yk_\infty(y,y')\dd\pi(y')=1\).  For
\(e_n(y)=e^{2\pi iny}\), \(n\ne0\), the convolution identity gives
\[
 \int_Y k_\infty(y,y')e_n(y')\dd\pi(y')
 =2^{-|n|-1}e_n(y).
\]
Hence
\[
 \mathcal L_\infty e_n
 =-(1-2^{-|n|-1})e_n,
 \qquad n\in\mathbb Z\setminus\{0\}.
\]
The eigenvalues indexed by \(|n|\ge1\) are non-zero and pairwise distinct.
A generator on \(N<\infty\) states acts on an \(N\)-dimensional space and
therefore has finite rank and only finitely many eigenvalues.  Similarity
preserves rank and spectrum, which proves the last assertion.
\end{proof}

\begin{lemma}[Two compactness tools]
\label{lem:compactness-tools}
Let \(Q_T=(0,T)\times\Omega\times D\), where \(\Omega\) is bounded
Lipschitz.  Let \(h_n\) be uniformly integrable in
\(L^1_M(Q_T\times Y)\), with a modulus that is also tight in boundary collars
of \(D\).  Assume either that \(h_n\ge0\) and
\[
 \int_{Q_T\times Y}M\bigl(|\nabla_x\sqrt{h_n}|^2
 +|\nabla_q\sqrt{h_n}|^2\bigr)\dd\pi\le C,
\]
or that
\[
 \int_{Q_T\times Y}M\bigl(|\nabla_xh_n|+|\nabla_qh_n|\bigr)\dd\pi\le C.
\]
Suppose also that \(\partial_t(M\bar h_n)\) is bounded in
\(L^1(0,T;(W^{1,\infty}(\Omega\times D))')\), where
\(\bar h_n=\int_Yh_n\dd\pi\).  Then \(M\bar h_n\) is relatively compact
in \(L^1(Q_T)\).

Let, in addition, \(v_n\) be uniformly bounded in \(L^r(Q_T^x)\) for some
\(r>1\),
where \(Q_T^x=(0,T)\times\Omega\), and assume
\[
 \|\nabla_xv_n\|_{L^1(Q_T^x)}
 +\|\partial_tv_n\|_{L^1(0,T;(W^{1,\infty}(\Omega))')}\le C.
\]
Then \(v_n\) is relatively compact in \(L^p(Q_T^x)\) for every
\(1\le p<r\).
\end{lemma}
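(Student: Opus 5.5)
For the first assertion I will use a Fréchet--Kolmogorov (Riesz) criterion on \(Q_T\). I will control spatial and configuration translations through the gradient bounds, and time translations through the dual bound on \(\partial_t(M\bar h_n)\). The collars and the large values of \(h_n\) are removed by the assumed uniform integrability and boundary tightness.

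\textbf{Step 1: pass to the state average.} Since \(\bar h_n=\int_Y h_n\dd\pi\), Jensen and Cauchy--Schwarz give what we need. In the gradient case, \(|\nabla\bar h_n|\le\int_Y|\nabla h_n|\dd\pi\), so \(\int_{Q_T}M(|\nabla_x\bar h_n|+|\nabla_q\bar h_n|)\le C\). In the square-root case,
\[
 |\nabla\bar h_n|\le 2\int_Y\sqrt{h_n}\,|\nabla\sqrt{h_n}|\dd\pi,
 \qquad\text{hence}\qquad
 \int_{Q_T}M|\nabla\bar h_n|\le 2\Bigl(\int M\bar h_n\Bigr)^{1/2}C^{1/2}.
\]
The mass \(\int M\bar h_n\) is bounded by uniform integrability, so both cases reduce to a uniform \(L^1\) bound on \(M\nabla_{x,q}\bar h_n\).

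\textbf{Step 2: spatial compactness in the interior.} Fix a compact \(D'\Subset D\) and a compact \(\Omega'\Subset\Omega\). On \(D'\) we have \(M\ge c_{D'}>0\), and \(\nabla_q M\) is bounded because \(\nabla_qU\) is locally bounded. Therefore \(\nabla_{x,q}(M\bar h_n)\) is bounded in \(L^1((0,T)\times\Omega'\times D')\), and translations in \((x,q)\) of size \(\delta\) cost \(O(\delta)\) in \(L^1\). The remaining regions are small uniformly in \(n\): for the \(D\)-collar this is the assumed tightness, and for the strip near \(\partial\Omega\) it follows from uniform integrability because the strip has small measure. Hence \(M\bar h_n\) has uniformly equicontinuous \((x,q)\)-translations in \(L^1(Q_T)\), after cutoff.

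\textbf{Step 3: time compactness and conclusion.} I combine the spatial equicontinuity with the dual time bound by an Aubin--Lions--Simon/Dubinskii-type argument. Mollify in \((x,q)\) with a smooth kernel \(\rho_\epsilon\) supported in the interior region. Then \(\rho_\epsilon*(M\bar h_n)\) is close to \(M\bar h_n\) in \(L^1\) uniformly in \(n\) by Step 2. For fixed \(\epsilon\), testing \(\partial_t(M\bar h_n)\) against \(\rho_\epsilon(\cdot-z)\), which lies in \(W^{1,\infty}\) with norm \(O(\epsilon^{-1-\dim})\), shows that \(\rho_\epsilon*(M\bar h_n)\) is bounded in \(W^{1,1}(0,T;C^0)\) and is Lipschitz in space. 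By Arzelà--Ascoli together with Simon's lemma it is compact in \(L^1\). A diagonal argument in \(\epsilon\) and the cutoffs then gives relative compactness of \(M\bar h_n\) in \(L^1(Q_T)\).

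\textbf{Second assertion.} The same scheme applies without weights and without collars. The bound on \(\nabla_x v_n\) gives \(L^1\) equicontinuity of space translations; for \(\partial\Omega\), Lipschitz extension or local cutoff uses the \(L^r\) bound with \(r>1\) to make the boundary strip small. Time equicontinuity comes from the dual estimate after spatial mollification, exactly as in Step 3, so \(v_n\) is compact in \(L^1(Q_T^x)\). Interpolating with the uniform \(L^r\) bound, \(\|w\|_p\le\|w\|_1^\theta\|w\|_r^{1-\theta}\), upgrades this to \(L^p\) for \(1\le p<r\).

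\textbf{Main obstacle.} The delicate point is the interplay of the weight \(M\), which degenerates at \(\partial D\) and has unbounded \(\nabla_qM\) there, with the \(W^{1,\infty}\) duality in time. This is why everything is localized to \(D'\Subset D\) and the collars are absorbed by the tightness hypothesis. I also expect Steps 2 and 3 to need care in tracking uniformity in \(n\) when \(\epsilon\) and the cutoffs are sent to zero.
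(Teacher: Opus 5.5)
Your argument is correct. It follows the same overall Aubin--Lions--Simon/Kolmogorov--Riesz strategy as the paper: interior compactness on \(D_\varepsilon\Subset D\), boundary collars discarded by the tightness hypothesis, and interpolation with the \(L^r\) bound for the second assertion. The two technical steps, however, are implemented differently.

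First, you collapse the square-root alternative into the \(W^{1,1}\) alternative. The identity \(\nabla\bar h_n=2\int_Y\sqrt{h_n}\nabla\sqrt{h_n}\dd\pi\) and Cauchy--Schwarz against the mass give a uniform bound on \(\int M|\nabla_{x,q}\bar h_n|\). This is the same weighted Cauchy--Schwarz the paper itself uses later in \eqref{eq:v35-L1-flux-bound}. The paper instead keeps the nonlinear structure: it applies Cauchy--Schwarz in \(Y\) to \(\nabla\sqrt{\bar h_n}\) and Rellich to square roots. Your reduction shows the first alternative is a special case of the second, so only linear compact embeddings are needed.

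Second, for time compactness the paper derives an Ehrling-type inequality from compact sublevels,
\(\|g-\widetilde g\|_1\le\epsilon(1+\mathcal A(g)+\mathcal A(\widetilde g))+C_\epsilon\|g-\widetilde g\|_{(W^{1,\infty})'}\),
and applies it to time translates. You instead mollify in \((x,q)\), which turns the dual bound into an \(L^1(0,T;C^0)\) bound on the time derivative at the explicit cost \(\epsilon^{-1-\dim}\). You then apply Simon's lemma with Arzel\`a--Ascoli at fixed \(\epsilon\) and conclude by total boundedness.

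Your route is more explicit and quantitative. Because the mollifier must stay inside the domain, it also requires cutting away a strip near \(\partial\Omega\), which is harmless by uniform integrability or the \(L^r\) bound. The paper's abstract inequality works on \(\Omega\times D_\varepsilon\) up to \(\partial\Omega\). It would also apply unchanged to any compactness functional with compact \(L^1\) sublevels, not only to a linear gradient bound.
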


\begin{proof}
In the first alternative, Cauchy--Schwarz in \(Y\) gives
\(|\nabla\sqrt{\bar h_n}|^2\le\int_Y|\nabla\sqrt{h_n}|^2\dd\pi\).
On each Lipschitz set \(D_\varepsilon\Subset D\), the sublevels of
\[
 g\mapsto \|\sqrt g\|_{H^1(\Omega\times D_\varepsilon)}^2
       +\|g\|_{L^1(\Omega\times D_\varepsilon)}
\]
are compact in \(L^1\): apply Rellich to the square roots and use
\(\|g-\widetilde g\|_1\le
\|\sqrt g-\sqrt{\widetilde g}\|_2
\|\sqrt g+\sqrt{\widetilde g}\|_2\).  In the second alternative, local
equivalence of \(M\dd x\dd q\) and Lebesgue measure reduces the claim to
\(W^{1,1}(\Omega\times D_\varepsilon)\Subset L^1\).  Compactness of either
family of sublevels and
the injective embedding into \((W^{1,\infty})'\) give, for every
\(\epsilon>0\),
\[
 \|g-\widetilde g\|_1\le
 \epsilon\{1+\mathcal A(g)+\mathcal A(\widetilde g)\}
 +C_{\epsilon,\varepsilon}
 \|g-\widetilde g\|_{(W^{1,\infty})'} .
\]
Applying this inequality to time translates and integrating in time proves
local \(L^1\) compactness by the Kolmogorov--Riesz criterion; this is the
compact-sublevel form of the Aubin--Lions--Simon argument
\citep{simon1987compact}.  The assumed boundary-collar tightness removes the
configuration cutoff.  In the entropy application below it follows from the
Fenchel inequality
\[
 \lambda h_+\mathbf1_E\le F(h_+)+e^{\lambda\mathbf1_E}-1
\]
gives, after integration against \(M\dd q\dd\pi\dd x\),
\[
 \lambda\int_{\Omega\times E\times Y}M[h_n]_+
 \le C+(e^\lambda-1)|\Omega|\int_E M.
\]
Choose \(E=D\setminus D_\varepsilon\), then first choose \(\lambda\) and
then \(\varepsilon\).  The negative parts are handled by their assumed
uniform-integrability modulus.  This makes the boundary strip uniformly small
and globalizes the compactness.

For the second assertion, bounded subsets of \(BV(\Omega)\) are compact in
\(L^1(\Omega)\), and the
same compact-sublevel argument gives
\[
 \|v-\widetilde v\|_1\le
 \epsilon\bigl(1+\|v\|_{BV}+\|\widetilde v\|_{BV}\bigr)
 +C_\epsilon\|v-\widetilde v\|_{(W^{1,\infty})'}.
\]
Spatial translations are controlled by the \(BV\) seminorm and time
translations by this inequality and the derivative bound.  Kolmogorov--Riesz
therefore gives strong \(L^1\) compactness.  Interpolation with the uniform
\(L^r\) bound gives every \(L^p\), \(1\le p<r\).
\end{proof}

\begin{lemma}[Finite-extension Hardy bound and stress tails]
\label{lem:FENE-Hardy-tail}
Let \((D,M)\) be an admissible finite-extension Maxwellian in the sense of
Definition~\ref{def:admissible-Maxwellian}, and write
\(W=\nabla_qU\otimes q\).  For every non-negative \(g\) with
\(\sqrt g\in H^1_M(D)\),
\begin{equation}
 \int_D M|W|^2g\dd q
 \le C_U\int_D M\bigl(g+|\nabla_q\sqrt g|^2\bigr)\dd q.
 \label{eq:FENE-Hardy-pointwise}
\end{equation}
Consequently, if \(\rho_g(t,x)=\int_{D\times Y}Mg\dd q\dd\pi\le R\), then
\begin{align}
 \left\|\int_{D\times Y}MWg\dd q\dd\pi\right\|_{L^2_{t,x}}^2
 &\le C_UR\int M(g+|\nabla_q\sqrt g|^2),
 \label{eq:FENE-stress-L2}\\
 \int M|W|g\mathbf1_{\{|W|>A\}}
 &\le \frac{C_U}{A}\int M(g+|\nabla_q\sqrt g|^2).
 \label{eq:FENE-stress-tail}
\end{align}
The same estimates hold with \(g\) replaced by any \(0\le\beta(g)\le g\)
in the stress integrals.
\end{lemma}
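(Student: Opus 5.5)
The plan is to prove the pointwise weighted Hardy inequality \eqref{eq:FENE-Hardy-pointwise} first, working separately in the boundary collar and in the interior. The stress bounds will then follow from Cauchy--Schwarz and Chebyshev.

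\emph{Interior.} On \(\{s_D\ge s_0/2\}\), Definition~\ref{def:admissible-Maxwellian} makes \(\nabla_qU\) bounded and \(q\) bounded because \(D\) is bounded. Hence \(|W|^2\le C\) there, and
\[
 \int_{\{s_D\ge s_0/2\}} M|W|^2g \le C\int_D Mg .
\]

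\emph{Collar.} On \(\{s_D<s_0\}\) we have \(|W|\le C|\nabla_qU|\le C s_D^{-1}\). So it suffices to prove
\[
 \int_{\{s_D<s_0/2\}} s_D^{\gamma-2}\,v^2 \le C\int_D M\bigl(v^2+|\nabla_q v|^2\bigr),
 \qquad v=\sqrt g,
\]
where we have used \(M\simeq s_D^\gamma\). Since \(\partial D\) is \(C^2\), I would shrink \(s_0\) so that \(s_D\) is \(C^2\) with \(|\nabla s_D|=1\) on the collar. In normal coordinates \((\sigma,s)\in\partial D\times(0,s_0)\) the Jacobian is comparable to \(1\). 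The task then reduces to the one-dimensional weighted Hardy inequality
\[
 \int_0^{s_0} s^{\gamma-2}v^2\,ds \le C\int_0^{s_0} s^\gamma |v'|^2\,ds + C\int_{s_0/2}^{s_0} v^2\,ds,
\]
valid for \(\gamma>1\). The condition \(\gamma>1\) is exactly what makes the boundary term at \(s=0\) harmless, since it has the right sign.

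To prove it, take a cutoff \(\chi\) equal to \(1\) on \([0,s_0/2]\) and vanishing near \(s_0\). Integrate by parts:
\[
 (\gamma-1)\int s^{\gamma-2}\chi^2v^2
 = \bigl[s^{\gamma-1}\chi^2v^2\bigr]_0^{s_0} - \int s^{\gamma-1}\bigl(2\chi^2 v v' + 2\chi\chi' v^2\bigr).
\]
Then apply Young's inequality to absorb \(\int s^{\gamma-2}\chi^2v^2\) into the left side. The boundary term at \(0\) is nonpositive in the needed direction, or vanishes. To justify this rigorously, I would first argue for \(v\) smooth up to the boundary, or truncate at \(s=\delta\) and let \(\delta\to0\), using \(\sqrt g\in H^1_M\) and monotone convergence. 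The cutoff error is supported where \(s\sim s_0\), and there it is bounded by \(\int Mg\). Integrating over \(\sigma\) and combining with the interior estimate gives \eqref{eq:FENE-Hardy-pointwise}. This collar step is the only real obstacle: one must check the density argument and the sign of the boundary term. The constant \(C_U\) depends only on \(D,M,s_0,\gamma\).

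\emph{Consequences.} For \eqref{eq:FENE-stress-L2}, apply Cauchy--Schwarz with respect to the measure \(Mg\,dq\,d\pi\) at fixed \((t,x)\):
\[
 \Bigl|\int MWg\Bigr|^2\le \rho_g\int M|W|^2g \le R\int_{D\times Y}M|W|^2g .
\]
Next apply \eqref{eq:FENE-Hardy-pointwise} for each fixed \((t,x,y)\), and integrate over \(t,x,y\). For \eqref{eq:FENE-stress-tail}, use \(|W|\mathbf1_{\{|W|>A\}}\le |W|^2/A\) and the same pointwise bound.

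Finally, let \(0\le\beta(g)\le g\) stand in the stress integrals. The integrands \(M|W|^2\beta(g)\) and \(M|W|\beta(g)\mathbf1_{\{|W|>A\}}\) are dominated pointwise by the corresponding integrands with \(g\). Also \(\int M\beta(g)\le\rho_g\) bounds the Cauchy--Schwarz mass. So the same right-hand sides, which involve only \(g\), control the stress integrals with \(\beta(g)\).
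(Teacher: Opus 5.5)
Your proposal is correct and follows essentially the same route as the paper. Both arguments use normal coordinates in a thin $C^2$ collar, reduce to the one-dimensional weighted Hardy inequality proved by a cutoff, integration by parts against $s^{\gamma-1}$ and absorption, bound the interior using boundedness of $\nabla_qU$ there, and then derive the consequences by Cauchy--Schwarz in $(q,y)$, Chebyshev, and pointwise domination for $\beta(g)$. The only difference is that you handle the endpoint $s=0$ by truncating at $s=\delta$, where the boundary term $-\delta^{\gamma-1}v(\delta)^2$ is nonpositive, instead of the paper's smooth approximation followed by weighted density; this is a valid and slightly more self-contained substitute.
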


\begin{proof}
Only a collar of \(\partial D\) is singular.  The normal-coordinate map
\((p,s)\mapsto p-sn(p)\) is a \(C^1\) diffeomorphism on a sufficiently thin
collar and has Jacobian bounded above and below.  A finite tangential partition
of unity and \eqref{eq:admissible-Maxwellian-collar} reduce
\eqref{eq:FENE-Hardy-pointwise}, up to bounded Jacobians and interior terms,
to the one-dimensional weighted Hardy estimate
\[
\int_0^{s_0}s^{\gamma-2}|z|^2\dd s
\le C_\gamma\int_0^{2s_0}s^\gamma|z'|^2\dd s
+C_\gamma\int_{s_0}^{2s_0}|z|^2\dd s,
\]
which holds exactly for \(\gamma>1\).  Here is the complete absorption step.
Choose \(\chi\in C_c^1([0,2s_0))\), with \(\chi=1\) on \([0,s_0]\) and
\(|\chi'|\le2/s_0\), and put \(w=\chi z\).  Initially for smooth \(z\),
integration by parts gives
\[
 (\gamma-1)\int_0^{2s_0}s^{\gamma-2}|w|^2\dd s
 =-2\int_0^{2s_0}s^{\gamma-1}ww'\dd s.
\]
The boundary term at \(2s_0\) vanishes by the cutoff and the term at zero
vanishes because \(\gamma>1\).  Weighted Cauchy--Schwarz therefore yields
\[
 \int_0^{2s_0}s^{\gamma-2}|w|^2\dd s
 \le \frac4{(\gamma-1)^2}
 \int_0^{2s_0}s^\gamma|w'|^2\dd s.
\]
Expanding \(w'=\chi z'+\chi'z\) proves the displayed one-dimensional
estimate, with a constant depending only on \(\gamma\) and \(s_0\).
Weighted density extends it to the energy class.  Since \(D\) is bounded and
\(|\nabla U|\le c_4/s_D\) on the collar, integration over the tangential
coordinates, followed by an ordinary interior estimate, proves
\eqref{eq:FENE-Hardy-pointwise}.
For \eqref{eq:FENE-stress-L2}, apply Cauchy--Schwarz in \((q,y)\) at each
\((t,x)\):
\[
 \left|\int MWg\right|^2
 \le \rho_g\int M|W|^2g
 \le C_UR\int M(g+|\nabla_q\sqrt g|^2).
\]
Integration proves \eqref{eq:FENE-stress-L2}.  Finally
\(|W|\mathbf1_{\{|W|>A\}}\le A^{-1}|W|^2\), which proves
\eqref{eq:FENE-stress-tail}.  The cutoff assertion follows from
\(\beta(g)\le g\) directly in the stress integrals.
\end{proof}

\begin{lemma}[Closed drift--stress work pairing]
\label{lem:closed-drift-stress-pairing}
Let \((D,M)\) be admissible, let \(B\in\R^{d\times d}\) satisfy
\(\operatorname{tr}B=0\), and let \(g\ge0\) have finite mass with
\(\sqrt g\in H^1_M(D\times Y)\).  Then the distribution
\(M^{-1}\nabla_q\cdot(BqMg)\) can be paired with the entropy variable by
closure from compactly supported configuration tests, and
\begin{equation}
 \int_{D\times Y}MBq\cdot\nabla_qg
 =B:\int_{D\times Y}M(\nabla_qU\otimes q)g.
 \label{eq:closed-drift-stress-pairing}
\end{equation}
Moreover,
\begin{equation}
 \left|B:\int M(\nabla_qU\otimes q)g\right|
 \le \eta\int M|\nabla_q\sqrt g|^2
 +C_{D,M,\eta}|B|^2\int Mg+C_{D,M}|B|\int Mg
 \label{eq:closed-drift-stress-bound}
\end{equation}
for every \(\eta>0\).
\end{lemma}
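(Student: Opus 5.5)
The plan is to obtain \eqref{eq:closed-drift-stress-pairing} first for configuration-compactly supported test data and then close it by approximation. The estimate \eqref{eq:closed-drift-stress-bound} then follows from the Hardy bound of Lemma~\ref{lem:FENE-Hardy-tail}.

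\emph{Step 1: compact support.} For \(g\) smooth in \(q\) with compact support in \(D\), integrate by parts in \(q\) at fixed \(y\). Since \(\nabla_q\cdot(Bq)=\operatorname{tr}B=0\) and \(\nabla_qM=-M\nabla_qU\), we have
\[
 \nabla_q\cdot(BqM)=-M\,Bq\cdot\nabla_qU=-M\,B:(\nabla_qU\otimes q),
\]
so
\[
 \int_D MBq\cdot\nabla_qg\dd q=-\int_D g\,\nabla_q\cdot(BqM)\dd q=B:\int_D M(\nabla_qU\otimes q)g\dd q .
\]
Integrating in \(\pi\) gives \eqref{eq:closed-drift-stress-pairing} for such \(g\). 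The same computation identifies \(M^{-1}\nabla_q\cdot(BqMg)\) tested against compactly supported functions, which is the sense of ``closure'' in the statement.

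\emph{Step 2: removing the cutoff.} Let \(\chi_\varepsilon\) be cutoffs equal to \(1\) on \(\{s_D>2\varepsilon\}\), vanishing on \(\{s_D<\varepsilon\}\), with \(|\nabla\chi_\varepsilon|\le C/\varepsilon\); then \(s_D|\nabla\chi_\varepsilon|\le C\) on its support. Apply Step~1 to \(g_\varepsilon=\chi_\varepsilon^2 g\), mollified in the interior if needed; this is legitimate because \(\nabla_qU\) is bounded on compact subsets. I would state the identity in the form where the left side is defined by closure, i.e.\ as the limit of the compactly supported pairings, written \(\int MBq\cdot\nabla_qg=2\int M\sqrt g\,Bq\cdot\nabla_q\sqrt g\). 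Two convergences are then needed.
\begin{itemize}
\item The right side converges to the stated limit by dominated convergence, since \(M|W|g\in L^1\) by \eqref{eq:FENE-Hardy-pointwise} and Cauchy--Schwarz.
\item On the left, \(\nabla g_\varepsilon=\chi_\varepsilon^2\nabla g+2\chi_\varepsilon\nabla\chi_\varepsilon\, g\). The first term converges because \(M\sqrt g|q||\nabla\sqrt g|\in L^1\). The commutator is bounded by
\[
 C|B|\int_{\{\varepsilon<s_D<2\varepsilon\}}M g\,s_D^{-1}\dd q\le C|B|\int_{\{\varepsilon<s_D<2\varepsilon\}}M|W|g\dd q,
\]
using \(|Bq|\le C|B|\) on bounded \(D\) and \(|W|\gtrsim s_D^{-1}\) near the boundary; the latter holds from \(s_D|\nabla U|\ge c_3\) provided \(|q|\) is bounded below near \(\partial D\) and \(\nabla U\) is roughly aligned with \(q\).
\end{itemize}

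\emph{Main obstacle.} The commutator step is where I expect the difficulty, because the lower bound \(|W|\gtrsim s_D^{-1}\) is delicate for non-radial \(U\). If it fails, I would instead control \(\int_{\text{collar}} M g/s_D\) directly by the one-dimensional weighted Hardy inequality in the proof of Lemma~\ref{lem:FENE-Hardy-tail}. That inequality, with \(z=\sqrt g\), bounds \(\int Ms_D^{-2}g\) and hence \(Mg/s_D\in L^1\). Either way the integrand is integrable, and the integral over the shrinking strip tends to zero.

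\emph{Step 3: the bound.} Write \(B:\int MWg\) and split the domain. On the interior \(\{s_D\ge s_0\}\), \(|W|\le C\), giving the term \(C|B|\int Mg\). On the collar, use the representation
\[
 2\int M\sqrt g\,Bq\cdot\nabla\sqrt g
\]
and Young's inequality:
\[
 \le\eta\int M|\nabla_q\sqrt g|^2+\eta^{-1}|B|^2\sup_D|q|^2\int Mg .
\]
Boundary terms are absent by Step~2, and the interior/collar split is handled with a smooth partition. This yields \eqref{eq:closed-drift-stress-bound}. Alternatively, Young applied directly to \(|B|\,|W|g\) together with \eqref{eq:FENE-Hardy-pointwise} would only give \(\eta\int M|W|^2g\), so the gradient-side representation is the right one.
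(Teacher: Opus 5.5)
Your proof is correct. For the identity \eqref{eq:closed-drift-stress-pairing} it follows the paper's argument: cut off near $\partial D$, integrate by parts where $\nabla_qU$ is bounded, and kill the commutator on the strip $\{\varepsilon<s_D<2\varepsilon\}$ using $M|W|g\in L^1$, which comes from Lemma~\ref{lem:FENE-Hardy-tail} and Cauchy--Schwarz.

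The ``main obstacle'' you flag does not exist. For a rank-one tensor, $|\nabla_qU\otimes q|=|\nabla_qU|\,|q|$. If you keep $|Bq|\le|B|\,|q|$ instead of weakening it to $|Bq|\le C|B|$, you get on the collar
\[
 |Bq|/s_D\le c_3^{-1}|B|\,|q|\,|\nabla_qU|=c_3^{-1}|B|\,|W|,
\]
with no alignment condition and no lower bound on $|q|$. This is exactly the paper's inequality $|q|/\delta\le 2|q|/s_D\le(2/c_3)|q|\,|\nabla_qU|$. In any case, $|q|\ge\tfrac12\operatorname{dist}(0,\partial D)$ on a thin collar because $0\in D$.

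Where you genuinely differ from the paper is the estimate \eqref{eq:closed-drift-stress-bound}.

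\textbf{The paper's route.} It bounds the stress side directly, $|B:\int MWg|\le|B|(\int Mg)^{1/2}(\int M|W|^2g)^{1/2}$, then applies \eqref{eq:FENE-Hardy-pointwise} and Young. The $g$-term on the right of the Hardy inequality is what produces the linear term $C_{D,M}|B|\int Mg$. So your remark that the Hardy route ``would only give $\eta\int M|W|^2g$'' is mistaken: applying Cauchy--Schwarz against the mass before Young closes it.

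\textbf{Your route.} You read the bound off the gradient side $2\int M\sqrt g\,Bq\cdot\nabla_q\sqrt g$ of the identity you just proved. It does not need the interior/collar split. Young's inequality on all of $D$ gives $\eta\int M|\nabla_q\sqrt g|^2+\eta^{-1}(\sup_D|q|)^2|B|^2\int Mg$. This is a slightly sharper estimate, with an explicit constant and no linear term, and it uses Hardy only qualitatively, to make the identity meaningful.

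\textbf{Trade-off.} The paper's version is a pure stress estimate that does not depend on the pairing. That is the form reused later, for instance the bound on $|B:\tau[V]|$ in the proof of Proposition~\ref{prop:full-drag-fibre-evolution}.
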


\begin{proof}
Take the cutoffs \(\chi_\delta\) used below.  On the support of
\(\nabla\chi_\delta\), Definition~\ref{def:admissible-Maxwellian} gives
\[
 \frac{|q|}{\delta}\le \frac{2|q|}{s_D}
 \le \frac{2}{c_3}\,|q|\,|\nabla_qU|,
\]
so the cutoff error is controlled by
\(C|B|\int_{\{s_D<2\delta\}}M|W|g\), where
\(W=\nabla_qU\otimes q\).  Lemma~\ref{lem:FENE-Hardy-tail} makes
\(M|W|g\) integrable; absolute continuity sends this error to zero.
On the support of \(\chi_\delta\), ordinary integration by parts and
\(\nabla_q M=-M\nabla_q U\) give
\[
 \int M\chi_\delta Bq\cdot\nabla g
 =B:\int M\chi_\delta(\nabla_q U\otimes q)g+o(1),
\]
because \(\operatorname{tr}B=0\).  Passing to the limit proves
\eqref{eq:closed-drift-stress-pairing}.  The Hardy estimate followed by
Cauchy--Schwarz and Young gives
\eqref{eq:closed-drift-stress-bound}.  Weighted density of smooth square
roots proves that the identity is independent of the approximating sequence,
which is the asserted closure.
\end{proof}

\begin{lemma}[Fixed-measure Jeffreys liminf]
\label{lem:fixed-measure-Jeffreys-liminf}
Let \(\mu\) be a finite measure, let \(a_n,a\) be measurable with
\(0<a_*\le a_n,a\le a^*\), and suppose \(a_n\to a\) in measure.  If
\((r_n,s_n)\rightharpoonup(r,s)\) weakly in \(L^1(\mu;\R^2)\), then
\[
 \int a(r-s)(\log r-\log s)\dd\mu
 \le\liminf_n\int a_n(r_n-s_n)(\log r_n-\log s_n)\dd\mu,
\]
with the non-negative lower-semicontinuous extension at zero.
\end{lemma}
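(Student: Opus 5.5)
The approach is joint convexity plus Young-measure (or Mazur) lower semicontinuity, with the varying weight \(a_n\) handled by a truncation. Write \(\Phi(r,s)=(r-s)(\log r-\log s)\) on \([0,\infty)^2\). At the boundary we set \(\Phi=+\infty\) when exactly one argument vanishes and \(\Phi(0,0)=0\). This \(\Phi\) is the non-negative, convex, lower-semicontinuous, positively one-homogeneous function in the statement. Convexity comes from the representation \(\Phi(r,s)=r\log(r/s)+s\log(s/r)\), a sum of two perspective functions of convex maps. Weak \(L^1\) convergence forces \(r,s\ge0\) a.e. whenever the \(r_n,s_n\) are non-negative; if they are not, the right-hand side is \(+\infty\) and there is nothing to prove. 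For a fixed bounded non-negative weight \(a\), the functional \((r,s)\mapsto\int a\Phi(r,s)\dd\mu\) is convex and strongly lower semicontinuous on \(L^1(\mu;\R^2)\) by Fatou, hence weakly lower semicontinuous.

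The difficulty is that the weight \(a_n\) varies with \(n\) and converges only in measure, while \(\Phi(r_n,s_n)\) need not be uniformly integrable. I would handle this by truncation. Fix \(L>0\) and set \(\Phi_L=\min(\Phi,L)\); this is bounded and lower semicontinuous, but no longer convex, so I will not truncate \(\Phi\). Instead I split \(a_n=a+(a_n-a)\) and bound the error from below using only positivity:
\[
 \int a_n\Phi(r_n,s_n)\dd\mu\ \ge\ \int a\,\Phi(r_n,s_n)\dd\mu-\int (a-a_n)_+\Phi(r_n,s_n)\dd\mu .
\]
The error term is not obviously small. To control it, I use the ratio \(a_n/a\ge 1-\epsilon\) on the set \(E_{n,\epsilon}=\{|a_n-a|\le\epsilon a_*\}\), whose complement has \(\mu\)-measure tending to zero. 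On \(E_{n,\epsilon}\) we have \(a_n\ge(1-\epsilon)a\). Since \(\Phi\ge0\), this gives
\[
 \int a_n\Phi(r_n,s_n)\dd\mu\ \ge\ (1-\epsilon)\int a\,\Phi(r_n,s_n)\mathbf 1_{E_{n,\epsilon}}\dd\mu .
\]

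It remains to take the liminf of \(\int a\Phi(r_n,s_n)\mathbf 1_{E_{n,\epsilon}}\). The main obstacle is that multiplying by \(\mathbf 1_{E_{n,\epsilon}}\) destroys joint convexity in the sequence. To get around this I use one-homogeneity: \(\mathbf 1_{E}\Phi(r,s)=\Phi(\mathbf 1_Er,\mathbf 1_Es)\). Set \(\tilde r_n=\mathbf 1_{E_{n,\epsilon}}r_n\) and \(\tilde s_n=\mathbf 1_{E_{n,\epsilon}}s_n\). Then
\[
 \tilde r_n-r_n=-\mathbf 1_{E_{n,\epsilon}^c}r_n\to0 \quad\text{in } L^1,
\]
because \(\mu(E^c_{n,\epsilon})\to0\) and \((r_n)\) is uniformly integrable by Dunford--Pettis, being weakly convergent. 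The same holds for \(s_n\). Hence \((\tilde r_n,\tilde s_n)\rightharpoonup(r,s)\) weakly in \(L^1\), and the fixed-weight weak lower semicontinuity of the first paragraph yields
\[
 \liminf_n\int a_n\Phi(r_n,s_n)\dd\mu\ \ge\ (1-\epsilon)\int a\Phi(r,s)\dd\mu .
\]
Letting \(\epsilon\downarrow0\) proves the claim.

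One routine point deserves a check: the weak lower semicontinuity of the convex integral functional when \(\Phi\) takes the value \(+\infty\) on part of the boundary. I would verify it via Mazur's lemma. Convex combinations of the tails converge strongly in \(L^1\), hence a.e. along a subsequence. Fatou's lemma together with lower semicontinuity of \(\Phi\), including at the boundary, then gives the bound for the convex combinations, and convexity of \(\Phi\) transfers it back to the liminf of the original integrals.
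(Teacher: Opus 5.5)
Your proof is correct, but it handles the $n$-dependence of the weight differently from the paper.

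\textbf{The paper's route.} It passes to a subsequence realizing the liminf along which $a_n\to a$ almost everywhere. Egorov's theorem then gives \emph{fixed} sets $E_j\uparrow\operatorname{supp}\mu$ on which $a_n\ge(1-\epsilon)a$ for all large $n$. It applies weak lower semicontinuity of the convex integral restricted to each fixed $E_j$; restriction to a fixed set trivially preserves weak convergence. Finally it lets $j\to\infty$ by monotone convergence.

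\textbf{Your route.} You keep the $n$-dependent good sets $E_{n,\epsilon}=\{|a_n-a|\le\epsilon a_*\}$ supplied directly by convergence in measure. You then absorb the $n$-dependence into the sequence by zeroing $(r_n,s_n)$ off $E_{n,\epsilon}$. This leaves the integrand unchanged because $\Phi(0,0)=0$ (with $0\cdot\infty=0$). It leaves the weak limit unchanged because a weakly convergent sequence in $L^1(\mu)$ is equi-integrable.

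\textbf{What each buys.} Your argument avoids Egorov, the a.e.\ subsequence, and the extra limit in $j$. The paper's argument needs neither Dunford--Pettis nor the vanishing of $\Phi$ at the origin. It therefore applies verbatim to any non-negative convex lower-semicontinuous integrand, and it is the pattern of Ioffe's theorem cited there.

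\textbf{Minor points.}
\begin{itemize}
\item The zeroing identity $\mathbf 1_E\Phi(r,s)=\Phi(\mathbf 1_Er,\mathbf 1_Es)$ uses only $\Phi(0,0)=0$, not one-homogeneity.
\item The abandoned attempts can be deleted: the splitting $a_n=a+(a_n-a)$ and the truncation $\Phi_L$.
\item Rather than arguing about sign-changing $r_n$, simply extend $\Phi$ by $+\infty$ off $[0,\infty)^2$. It is then convex and lower semicontinuous on all of $\R^2$, and no sign discussion is needed anywhere in the argument.
\end{itemize}
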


\begin{proof}
The Jeffreys integrand \(J(r,s)=(r-s)(\log r-\log s)\) is jointly convex
and lower semicontinuous on \([0,\infty)^2\).  Indeed, in the positive
quadrant,
\[
 D^2J(r,s)=(r+s)
 \begin{pmatrix}r^{-2}&-(rs)^{-1}\\-(rs)^{-1}&s^{-2}\end{pmatrix},
\]
which is positive semidefinite and has determinant zero.  Along a subsequence realizing the
liminf, take \(a_n\to a\) almost everywhere.  Egorov's theorem gives sets
\(E_j\uparrow\operatorname{supp}\mu\), up to a null set, on which the
convergence is uniform.  On \(E_j\), for all sufficiently large \(n\),
\(a_n\ge(1-\epsilon)a\); weak lower semicontinuity of the convex integral
on the fixed set \(E_j\) yields the desired bound there, up to
\(1-\epsilon\).  This is also a direct instance of Ioffe's
fixed-coefficient integral lower-semicontinuity theorem
\citep{ioffe1977lower}.  Let \(n\to\infty\), then \(j\to\infty\), and finally
\(\epsilon\downarrow0\), using non-negativity and monotone convergence.
\end{proof}

\begin{theorem}[Diffusion--drag--locality closure classification]
\label{thm:master-closure-classification}
Let the internal state space, reversible reference measure, and symmetric jump
 kernel satisfy \eqref{eq:continuum-kernel}.  The following seven statements
hold.
\begin{enumerate}
\item[\textup{(D)}] If the centre-of-mass diffusivity is positive, then on
  every bounded Lipschitz domain in dimensions two and three the fully coupled
  FENE--Navier--Stokes system admits a global large-data energy--entropy weak
  solution for every bounded uniformly positive activity depending
  Lipschitz-continuously on finitely many bounded local moments and every datum
  satisfying \eqref{eq:v35-large-data-initial-class}.  The nonlinear
  activity, singular Kramers stress, and Jeffreys production are all
  identified in the limit.  The class contains the infinite-rank
  continuous-state generator of Proposition
  \ref{prop:infinite-rank-state-kernel}, so it is not a reformulation of
  scalar FENE existence through a linear finite-species state map.
\item[\textup{(F)}] If the centre-of-mass diffusivity is zero, then for full
  drag on \(\T^d\), or on a bounded \(C^2\) domain, \(d=2,3\), the
  continuum-state system admits a global large-data weak solution for every
  bounded Lipschitz local finite-moment activity.  Uniform positivity is not
  required: \(0\le\mathfrak a\le a^*\) is allowed.  No scalar trajectory is
  prescribed in the data.  The proof factors the nonempty solution set through
  a trace-free matrix fibre evolution over its scalar projections; its
  nonlinear activity, Kramers stress, and Jeffreys production are identified.
  Projection and lift are inverse, so the state-resolved and scalar solution
  sets have equal multiplicity; over every projection the state lift and
  self-consistent moment field are unique.  The data satisfy
  \eqref{eq:full-drag-state-data-a}--\eqref{eq:full-drag-scalar-log2-data}.
\item[\textup{(S)}] In the full-drag zero-diffusion class, the activity may
  additionally depend on state-blind observables that are merely integrable
  along the intrinsic scalar projection.  In particular, it may depend on the
  singular Kramers stress and grow linearly in that stress.  The resulting reaction
  rate belongs to \(L^1_{t,x}(L^\infty_{q,y,y'})\) but need not be bounded.
\item[\textup{(C)}] Let a sequence of zero-diffusion, state-resolved
  full-drag regularizations have unit number density, uniformly positive
  bounded Lipschitz local activities, well-prepared state data, and no
  viscous-gradient defect.  Then the complete state densities converge
  strongly, uniformly in time in fibrewise total variation.  The local
  activities, full-drag product, singular stress, and Jeffreys production are
  identified directly along the sequence.  No scalar projection is used to
  reconstruct the limiting state density.
\item[\textup{(L)}] If the centre-of-mass diffusivity is zero, then on
  \(\T^2\), for corotational drag and Warner parameter \(b>2\), a global weak
  solution exists for every bounded uniformly positive activity depending
  Lipschitz-continuously on finitely many bounded local moments generated by
  arbitrary observables in \(L^\infty(D\times Y)\), provided the datum satisfies
  \eqref{eq:v37-corotational-scalar-class}--
  \eqref{eq:v37-Masmoudi-initial-class}.  For each scalar corotational base
  solution, the continuum-state lift and its self-consistent local moment field
  are unique in the renormalized class with bounded number density.
\item[\textup{(G)}] If the centre-of-mass diffusivity is zero, then on
  \(\T^2\), for corotational drag and Warner parameter \(b>2\), a global weak
  solution exists for every bounded uniformly positive activity depending on
  finitely many globally averaged moments generated by bounded observables,
  provided the datum satisfies
  \eqref{eq:v37-corotational-scalar-class}--
  \eqref{eq:v37-Masmoudi-initial-class}.  For each scalar corotational base
  solution, the continuum-state lift and its self-consistent moment path are
  unique.  The path is absolutely continuous under
  \eqref{eq:v38-generator-assumption-a}--
  \eqref{eq:v38-generator-assumption-b}.
\item[\textup{(O)}] The natural zero-diffusion mass, entropy, number-density,
  configurational-Fisher, and time-derivative bounds do not sequentially close
  a general bounded Lipschitz activity of local moments.  There are stationary
  unit-number-density solutions satisfying all these bounds whose local
  state-sensitive moments
  converge weakly but whose activities converge to a value different from the
  activity evaluated at the weak limit.
\end{enumerate}
Thus positive spatial diffusion closes general bounded local finite-moment
feedback by compactness.  At zero diffusion, the full-drag Lagrangian
construction closes bounded state-sensitive feedback directly and also
admits unbounded, state-blind stress drivers.  It also closes global
finite-dimensional
feedback.  Statement \textup{(C)} gives sequential closure when the viscous
dissipation has no defect.  Statement \textup{(O)} says that the same local
conclusion cannot be obtained from the displayed weak bounds alone; it is a
sequential compactness obstruction, not a non-existence theorem.
\end{theorem}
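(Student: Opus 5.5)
This theorem is a classification assembled from the main results of the later sections, so the plan is to prove each of the seven items by invoking the corresponding theorem, and then check that the hypotheses listed in each item match those theorems.

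Item (D) is Theorem~\ref{thm:v35-large-data-coupled-existence}. The proof there runs through a single approximation scheme. The entropy--Fisher estimate gives spatial Fisher information. The first alternative of Lemma~\ref{lem:compactness-tools} then gives strong compactness of the state averages. The second alternative of that lemma gives strong compactness of the finitely many moments \(\eta_a\); this identifies the nonlinear activity, because \(\mathfrak a\) is continuous and bounded. Lemma~\ref{lem:FENE-Hardy-tail}, through its tail estimate \eqref{eq:FENE-stress-tail} and strong convergence of \(M\bar h_n\), identifies the Kramers stress. Lemma~\ref{lem:fixed-measure-Jeffreys-liminf}, applied on the fixed measure \(Mk\,\dd\pi\dd\pi'\dd q\dd x\dd t\), passes the Jeffreys production to the limit. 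The clause excluding finite-species reductions is exactly Proposition~\ref{prop:infinite-rank-state-kernel}: \(k_\infty\) satisfies \eqref{eq:continuum-kernel}, and its generator has infinite rank.

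Items (F) and (S) follow from Theorem~\ref{thm:projection-lift-equivalence} and Corollary~\ref{cor:base-free-existence-multiplicity}, combined with Masmoudi's scalar existence theorem \citep{masmoudi2013global}.
\begin{itemize}
\item \textbf{Nonempty set.} The scalar existence theorem supplies at least one projected solution, so no scalar trajectory needs to be prescribed in the data.
\item \textbf{Lift.} Proposition~\ref{prop:full-drag-fibre-evolution} is applied along DiPerna--Lions trajectories with \(B=\nabla_xu(X)\in L^2_t\), \(\operatorname{tr}B=0\). The entropy balance closes through Lemma~\ref{lem:closed-drift-stress-pairing}: \eqref{eq:closed-drift-stress-bound} plus Gronwall with weight \(|B|^2\in L^1_t\).
\item \textbf{Self-consistent moments.} A Bielecki--Volterra contraction gives existence and uniqueness of the moments over each projection. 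Only the Lipschitz bound \eqref{eq:continuum-activity-lipschitz} and \(L^1\) contraction of the fibre flow are used, which is why \(a_*=0\) is allowed.
\item \textbf{Equal multiplicity.} Projection and lift are mutually inverse, so the state-resolved and scalar solution sets have the same multiplicity.
\item \textbf{Item (S).} The drivers depend only on the scalar projection, so they are fixed before the fibre problem is solved. The rate therefore needs only to lie in \(L^1_{t,x}L^\infty\). Linear growth in the stress is admissible because \eqref{eq:FENE-stress-L2} places the stress in \(L^2\subset L^1\).
\end{itemize}

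Item (C) is Theorem~\ref{thm:direct-state-resolved-compactness}. The absence of a viscous defect gives strong \(L^2\) convergence of \(\nabla u_n\), by Corollary~\ref{cor:viscous-defect-criterion}. Stability of regular Lagrangian flows then gives convergence of the pulled-back matrices \(B_n\to B\) in \(L^2_t\) along almost every trajectory. The relative-entropy estimate for two fibres with varying drifts and rates, combined with Pinsker's inequality, gives convergence in fibrewise total variation, uniformly in time. The terms are then identified as before.

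Items (L) and (G) are the appendix corotational results. In the corotational case the drift is skew, so the fibre flow is measure-preserving and the lift is unique in the renormalized class. For globally averaged moments the lift reduces to an ODE in finitely many dimensions, and under \eqref{eq:v38-generator-assumption-a}--\eqref{eq:v38-generator-assumption-b} the moment path is absolutely continuous. Item (O) is Proposition~\ref{prop:v38-local-moment-oscillation-obstruction}. The construction takes stationary, \(x\)-oscillating state profiles with \(q\)-profile equal to \(M\). These satisfy every listed bound, yet a nonlinear \(\mathfrak a\) averages differently from its value at the weak limit.

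The main obstacle is item (F): showing that the lift is unique and inverse to projection when \(B\) is merely \(L^2_t\) and not skew. This requires the closed pairing \eqref{eq:closed-drift-stress-pairing} to hold for every energy solution, and not only for constructed ones. My first attempt would be to prove renormalized uniqueness by applying the relative-entropy estimate with equal drifts to any two lifts over the same projection.
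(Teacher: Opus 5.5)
Your item-by-item assembly matches the paper's proof, which consists only of pointing each item to the corresponding later theorem. However, two of the arguments you sketch would fail as written.

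\textbf{Uniqueness in (F).} Your closing plan is to prove uniqueness of lifts, and hence $\mathsf L\circ\mathsf P=\mathrm{Id}$, by applying Lemma~\ref{lem:varying-fibre-relative-entropy} with equal drifts. This does not cover (F), because (F) allows $0\le\mathfrak a\le a^*$. That lemma controls the change of jump rates through \eqref{eq:direct-rate-quadratic-bound}, which needs $a_*>0$. Take $\mathfrak a(z)=A|z|$ near $z=0$, $z_2=\epsilon/A$ and $z_1=2\epsilon/A$. Then $\mathfrak a_1\log(\mathfrak a_1/\mathfrak a_2)-\mathfrak a_1+\mathfrak a_2=(2\log2-1)\epsilon$, which is linear, not quadratic, in $|z_1-z_2|$. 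Via Pinsker you then only get $\dot y\le C\sqrt y$ for $y=\operatorname{Ent}_M$, and this does not force $y\equiv0$ from $y(0)=0$. Where $\mathfrak a_2=0<\mathfrak a_1$ the rate term is even infinite. Remark~\ref{rem:direct-compactness-scope} records exactly this limitation.

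The obstacle you name is also not the real one. The closed pairing \eqref{eq:closed-drift-stress-pairing} enters only the entropy bound, and membership in $\mathcal S_Y$ already includes the energy--entropy inequality. The paper's route is:
\begin{itemize}
\item pull any solution in the class back to fibres by Lemma~\ref{lem:kinetic-RLF-pullback};
\item compare moment fields through the $L^1$ kernel-stability estimate \eqref{eq:full-drag-label-kernel-stability} and the Lipschitz bound, which gives $d_a(t)\le C\int_0^t d_a$ and hence $d_a=0$;
\item conclude by fixed-kernel $L^1_M$ contraction, in which the non-skew drift disappears by dominated convergence ($z\eta_\varepsilon''(z)\to0$), not by any stress identity.
\end{itemize}
This uses only $a^*$ and $A$, which is exactly what your own ``self-consistent moments'' bullet says; your closing plan contradicts it. In (S) there is a further point: the Lipschitz weight $1+|\widetilde\zeta|$ is only $L^1_t$. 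So the fixed point must be run labelwise, by Picard iteration or with a label-dependent weight $e^{-\lambda A_a(t)}$, not in a uniform Bielecki norm on $L^\infty_{t,a}$.

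\textbf{The ODE claim in (G).} For the arbitrary bounded observables allowed in (G), the global moments do not satisfy a closed finite-dimensional ODE. Their evolution \eqref{eq:v38-global-moment-weak-evolution} involves $\int Mh\,\mathcal A_z\Phi_a$, which is not a function of the moments. Proposition~\ref{prop:v38-autonomous-closure-boundary} shows that an autonomous moment law forces the invariance \eqref{eq:v38-invariant-generator-space}, which (G) does not assume. The paper instead runs a Bielecki fixed point on paths $z\in L^\infty(0,T;\R^{N_g})$, using the kernel stability \eqref{eq:lagrangian-kernel-stability} of the lift. Absolute continuity under the generator hypotheses comes from the bound \eqref{eq:global-fixed-moment-AC-bound} on $|\dot z|$, not from an ODE.

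\textbf{A smaller point in (D).} The diagonal approximations are signed, and their dissipation controls $\int M|\nabla h_j|^2/P_j(h_j)$ rather than $\int M|\nabla\sqrt{h_j}|^2$. The paper therefore uses the $L^1$-flux alternative of Lemma~\ref{lem:compactness-tools} through \eqref{eq:v35-L1-flux-bound}, not the square-root alternative.
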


\begin{proof}
Statement \textup{(D)} is Theorem
\ref{thm:v35-large-data-coupled-existence}, including the identifications in
Steps 3, 4, and 6 of its proof.  Statement \textup{(F)} is
Theorem \ref{thm:full-drag-local-closure}, based on the trace-free fibre
 evolution in Proposition \ref{prop:full-drag-fibre-evolution} and the
 full-drag lift in Theorem \ref{thm:full-drag-state-fibre}.  Statement
 \textup{(S)} is Theorem \ref{thm:integrable-driver-activity} and Corollary
 \ref{cor:unbounded-stress-feedback}.  Statement \textup{(C)} is Theorem
 \ref{thm:direct-state-resolved-compactness} and Corollary
 \ref{cor:viscous-defect-criterion}.  Statement
\textup{(L)} is Theorem
\ref{thm:local-moment-Lagrangian-closure}, based on the label-dependent lift in
Proposition \ref{prop:label-dependent-fibre-lift}.  Statement \textup{(G)} is Theorem
\ref{thm:v38-self-consistent-global-moment-existence}, based on the
Lagrangian state-fibre well-posedness theorem
\ref{thm:lagrangian-state-fibre}.  Statement \textup{(O)} is the stationary
oscillation construction in Proposition
\ref{prop:v38-local-moment-oscillation-obstruction}.
\end{proof}

\section{Large-data coupled continuum-state weak solutions}
\label{sec:v35-large-data-coupled}

We extend the Barrett--S\"uli large-data FENE construction
\citep{barrett2011existence} to a continuum reversible state fibre and bounded
finite-moment activities.  The additional limit passages identify the
nonlinear activity and the continuum Jeffreys production; the algebraic input
is the detailed-balance identity of Lemma
\ref{lem:continuum-reaction-thermodynamics}.

Throughout this section, \(\Omega\subset\R^d\), \(d\in\{2,3\}\), is a
bounded open Lipschitz domain, \(\delta,\nu,{\rm Wi}>0\), and
\((Y,\pi)\), \(k\), \(c_a\), \(\psi_a\), and \(\mathfrak a\) satisfy
\eqref{eq:continuum-kernel}, \eqref{eq:continuum-observables}, and
\eqref{eq:continuum-activity-bounds}--
\eqref{eq:continuum-activity-lipschitz}.  The spectral gap
\eqref{eq:continuum-spectral-gap} is allowed but is not needed for existence.
The lower activity bound is used in the fixed-measure Jeffreys liminf.  We use
the finite-extension potential class of
Barrett and S\"uli: \(D\subset\R^d\) is a ball, the radial potential
\(U(q)=\mathcal U(\frac12|q|^2)\), with \(\mathcal U\in C^2\),
\(\mathcal U(0)=0\), is increasing and unbounded at
\(\partial D\), and, for some \(\gamma>1\) and positive constants \(c_j\),
\begin{align}
 c_1\operatorname{dist}(q,\partial D)^\gamma
 &\le M(q)\le
 c_2\operatorname{dist}(q,\partial D)^\gamma,
 \label{eq:v36-BS-potential-a}\\
 c_3&\le \operatorname{dist}(q,\partial D)
 \mathcal U'(\tfrac12|q|^2)\le c_4.
 \label{eq:v36-BS-potential-b}
\end{align}
The Warner FENE potential \eqref{eq:fene-potential} is the principal example:
then \(D=B_{\sqrt b}(0)\), \(\gamma=b/2\), and
\eqref{eq:v36-BS-potential-a}--\eqref{eq:v36-BS-potential-b} hold exactly
when \(b>2\).  No Hookean potential is included in this bounded-configuration
class.  Because \(|q|\) is bounded away from zero in a sufficiently thin
boundary collar, these radial assumptions imply Definition
\ref{def:admissible-Maxwellian}.

Let
\begin{equation}
 H_\sigma(\Omega):=\overline{C_{c,\sigma}^\infty(\Omega;\R^d)}^{L^2},
 \qquad
 V_\sigma(\Omega):=H^1_0(\Omega;\R^d)\cap H_\sigma(\Omega).
 \label{eq:v36-Leray-spaces}
\end{equation}
Thus the velocity has the no-slip trace.  The Fokker--Planck boundary
conditions are understood in the natural weak sense as
\begin{equation}
 \delta\nabla_xh\cdot n_\Omega=0\quad\hbox{on }\partial\Omega\times D,
 \qquad
 \left((\nabla_xu)qMh-\frac1{2{\rm Wi}}M\nabla_qh\right)\cdot n_D=0
 \quad\hbox{on }\Omega\times\partial D.
 \label{eq:v36-no-flux-boundaries}
\end{equation}
Put
\begin{align}
 \rho[h](t,x)&:=\int_{D\times Y}Mh\dd q\dd\pi,
 \label{eq:v35-number-density}\\
 \tau_Y[h](t,x)&:=\int_{D\times Y}\nabla_qU\otimes q\,Mh
 \dd q\dd\pi.
 \label{eq:v35-large-data-stress}
\end{align}
The isotropic part of the Kramers stress is omitted because its divergence is
absorbed into the pressure.  We use the convention
\((\nabla_xu)_{ij}=\partial_j u_i\) and the Frobenius product
\(A:B=\operatorname{tr}(A^TB)\).  Thus
\((\nabla_xu)q\cdot\nabla_qU=(\nabla_qU\otimes q):\nabla_xu\).
For radial potentials this tensor equals the more customary
\(q\otimes\nabla_qU\); fixing the order is essential for the non-radial class
of Definition~\ref{def:admissible-Maxwellian}.

\begin{definition}[Large-data coupled energy--entropy weak solution]
\label{def:v35-large-data-weak-solution}
Let \(T>0\).  A pair \((u,h)\) is a coupled weak solution on \([0,T]\) if
\(u\in L^\infty(0,T;H_\sigma(\Omega))\cap
L^2(0,T;V_\sigma(\Omega))\),
\(u\in C_w([0,T];H_\sigma(\Omega))\), and
\(h\ge0\) satisfies
\begin{align}
 &Mh\in L^\infty(0,T;L^1(\Omega\times D\times Y)),
 \qquad Mh\log h\in L^\infty(0,T;L^1),
 \label{eq:v35-weak-density-class}\\
 &\sqrt h\in L^2(0,T;H_x^1(L^2_{M\otimes\pi})),
 \qquad
 \nabla_q\sqrt h\in L^2((0,T)\times\Omega\times D\times Y;M),
 \label{eq:v35-weak-fisher-class}\\
 &\mathcal D_Y(Mh)\in L^1(0,T),
 \qquad \tau_Y[h]\in L^2((0,T)\times\Omega).
 \label{eq:v35-weak-reaction-stress-class}
\end{align}
For every divergence-free
\(w\in C^1([0,T];C_{c,\sigma}^\infty(\Omega;\R^d))\),
\begin{align}
 &(u(t),w(t))-(u^{\rm in},w(0))
 -\int_0^t\!\int_{\Omega}u\cdot\partial_rw
 +(u\otimes u):\nabla_xw\dd x\dd r\notag\\
 &\qquad=-\nu\int_0^t\!\int_{\Omega}\nabla_xu:\nabla_xw\dd x\dd r
 -\int_0^t\!\int_{\Omega}\tau_Y[h]:\nabla_xw\dd x\dd r.
 \label{eq:v35-weak-momentum}
\end{align}
For every \(\pi\)-measurable test \(\varphi(t,x,q,y)\) for which
\(\varphi,\partial_t\varphi,\nabla_x\varphi,\nabla_q\varphi\) are bounded and
continuous in \((t,x,q)\) for almost every \(y\),
\begin{align}
 &\int Mh(t)\varphi(t)-\int Mh^{\rm in}\varphi(0)
 -\int_0^t\!\int Mh
 (\partial_r\varphi+u\cdot\nabla_x\varphi
 +(\nabla_xu)q\cdot\nabla_q\varphi)\notag\\
 &\quad=-\delta\int_0^t\!\int M\nabla_xh\cdot\nabla_x\varphi
 -\frac1{2{\rm Wi}}\int_0^t\!\int M\nabla_qh\cdot\nabla_q\varphi\notag\\
 &\qquad+\frac12\int_0^t\!\int_{\Omega\times D\times Y^2}
 MK_h(h'-h)(\varphi-\varphi')
 \dd\pi\dd\pi'\dd q\dd x\dd r.
 \label{eq:v35-weak-kinetic}
\end{align}
Here and below unmarked kinetic integrals include \(\dd\pi(y)\dd q\dd x\),
and derivatives of \(h\) in \eqref{eq:v35-weak-kinetic} are understood as
the \(L^1\) fluxes
\(\nabla h=2\sqrt h\nabla\sqrt h\).  Allowing tests with non-zero boundary
values encodes both conditions in \eqref{eq:v36-no-flux-boundaries}; at
\(\partial D\) no separate trace is assigned to the singular drift or the
diffusion.  We also require
\(Mh\in C_w([0,T];L^1(\Omega\times D\times Y))\).  The initial traces in both
identities are part of the definition.
\end{definition}

Define the total free energy
\begin{equation}
 \mathcal F_Y[u,h](t):=\frac12\|u(t)\|_2^2
 +\int_{\Omega\times D\times Y}M(h\log h-h+1).
 \label{eq:v35-total-free-energy}
\end{equation}

\begin{proposition}[One-step closure of the entropy-cutoff scheme]
\label{prop:one-step-entropy-cutoff}
Fix \(0<\delta_0<1<L\), a time step \(\Delta t>0\), and a solenoidal
Galerkin space \(X_N\).  Fix also a finite Borel partition
\(\mathcal P_m\) of \(Y\), and let \(\mathcal Y_m\subset L^2_\pi(Y)\) be the
space of functions constant on its cells.  Suppose \(u^-\in X_N\),
\(h^-\in L^2_M(\Omega\times D;\mathcal Y_m)\), and
\(F^L_{\delta_0}(h^-)\in L^1_M\).  There is a triple

\[
 u^+\in X_N,\qquad
 h^+\in H^1_M(\Omega\times D;\mathcal Y_m),\qquad
 z^+=\eta[h^+]\in H^1(\Omega;\R^{N_\eta}),
\]

solving the implicit momentum--kinetic step with cutoff
\(\beta^L_{\delta_0}\).  The full stress and reaction kernel are
\[
 \tau[h^+]=\int_{D\times Y}\nabla_qU\otimes q\,Mh^+,
 \qquad K_{z^+}=k\mathfrak a(q,z^+,y,y').
\]
More precisely, for
\(w\in X_N\) and bounded energy tests
\(\phi\in H^1_M(\Omega\times D;\mathcal Y_m)\),
\begin{align}
 &\frac{(u^+-u^-,w)}{\Delta t}+b(u^-,u^+,w)
 +\nu(\nabla u^+,\nabla w)+(\tau[h^+],\nabla w)=0,
 \label{eq:one-step-momentum}\\
 &\frac1{\Delta t}\int M(h^+-h^-)\phi
 -\int Mh^+u^+\cdot\nabla_x\phi
 -\int M\beta^L_{\delta_0}(h^+)(\nabla_xu^+)q\cdot\nabla_q\phi
 \notag\\
 &\quad+\delta\int M\nabla_xh^+\cdot\nabla_x\phi
 +\frac1{2{\rm Wi}}\int M\nabla_qh^+\cdot\nabla_q\phi
 \notag\\
 &\quad+\frac12\int_{\Omega\times D\times Y^2}
 MK_{z^+}(h^{+\prime}-h^+)(\phi'-\phi)=0.
 \label{eq:one-step-kinetic}
\end{align}
Here \(b(v,w,w)=0\) is any consistent skew form for convection.  The solution
satisfies
\begin{align}
 &\frac12\|u^+\|_2^2+\int MF^L_{\delta_0}(h^+)
 +\frac12\|u^+-u^-\|_2^2+\Delta t\,\nu\|\nabla u^+\|_2^2
 \notag\\
 &\quad+\Delta t\,\delta\int M
 \frac{|\nabla_xh^+|^2}{\beta^L_{\delta_0}(h^+)}
 +\frac{\Delta t}{2{\rm Wi}}\int M
 \frac{|\nabla_qh^+|^2}{\beta^L_{\delta_0}(h^+)}
 +\Delta t\,\mathcal D^L_{\delta_0,Y}(h^+)
 \notag\\
 &\le \frac12\|u^-\|_2^2+\int MF^L_{\delta_0}(h^-).
 \label{eq:one-step-entropy}
\end{align}
Writing \([r]_-:=\max\{-r,0\}\), the same solution satisfies
\begin{align}
 &\frac12\|[h^+]_-\|_{L^2_M}^2
 +\Delta t\,\delta\|\nabla_x[h^+]_-\|_{L^2_M}^2
 +\frac{\Delta t}{4{\rm Wi}}
 \|\nabla_q[h^+]_-\|_{L^2_M}^2
 \notag\\
 &\qquad\le\frac12\|[h^-]_-\|_{L^2_M}^2
 +C_b{\rm Wi}\,\delta_0^2\Delta t\|\nabla_xu^+\|_{L^2_x}^2.
 \label{eq:one-step-negative-part}
\end{align}
Here
\begin{equation}
 \mathcal D^L_{\delta_0,Y}(h):=
 \frac12\int_{\Omega\times D\times Y^2}MK_h(h'-h)
 \bigl[(F^L_{\delta_0})'(h')-(F^L_{\delta_0})'(h)\bigr].
 \label{eq:regularized-reaction-production}
\end{equation}
The constants in this estimate are independent of \(m\) and \(N\), and the step
preserves total mass.  If
\(0\le\rho^-:=\int_{D\times Y}Mh^-\in L^\infty(\Omega)\), then
\begin{equation}
 \frac{\rho^+-\rho^-}{\Delta t}+u^+\cdot\nabla_x\rho^+
 =\delta\Delta_x\rho^+,
 \qquad
 0\le\rho^+\le\|\rho^-\|_\infty.
 \label{eq:one-step-density-maximum}
\end{equation}
\end{proposition}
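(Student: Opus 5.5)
The plan is to obtain $(u^+,h^+)$ as a fixed point of a linearized step and to derive \eqref{eq:one-step-entropy} from a cancellation that holds \emph{only} because the momentum equation carries the full stress $\tau[h^+]$. The cancellation is as follows. Test \eqref{eq:one-step-kinetic} with $\phi=(F^L_{\delta_0})'(h^+)$ and \eqref{eq:one-step-momentum} with $w=u^+$. Since $(F^L_{\delta_0})''=1/\beta^L_{\delta_0}$, we have
\[
\beta^L_{\delta_0}(h^+)\nabla_q(F^L_{\delta_0})'(h^+)=\nabla_qh^+ ,
\]
so the kinetic drift term becomes $-\int M(\nabla_xu^+)q\cdot\nabla_qh^+$. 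Lemma~\ref{lem:closed-drift-stress-pairing}, with $B=\nabla_xu^+$ (trace-free because $u^+\in X_N$ is solenoidal), rewrites this as $-\int_\Omega\tau[h^+]:\nabla_xu^+$. It therefore cancels exactly against the stress term $(\tau[h^+],\nabla u^+)$ in the momentum equation.

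The remaining terms are handled as follows.
\begin{itemize}
\item Time differences: convexity gives $(a-b)a=\tfrac12(a^2-b^2)+\tfrac12(a-b)^2$ for the velocity, and $F(h^+)-F(h^-)\le F'(h^+)(h^+-h^-)$ for the density.
\item Convection: $b(u^-,u^+,u^+)=0$ and $\int Mh^+u^+\cdot\nabla_xF'(h^+)=\int Mu^+\cdot\nabla_x G(h^+)=0$ for a suitable primitive $G$.
\item Diffusions: these give the Fisher-type terms with weight $1/\beta^L_{\delta_0}$.
\item Reaction: the symmetrized integral equals $\mathcal D^L_{\delta_0,Y}(h^+)\ge0$, by monotonicity of $F'$ and the symmetry of $K_{z^+}$.
\end{itemize}
None of these constants involves $m$ or $N$. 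One technical point must be checked: Lemma~\ref{lem:closed-drift-stress-pairing} requires $\sqrt{h}\in H^1_M$. For $h^+\in H^1_M$ with the cutoff, I would apply the pairing to $[h^+]_+$ and treat the negative part through the linear stress tail.

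For existence I would use Schaefer's fixed point theorem. Fix $(\tilde u,\tilde h)\in X_N\times L^2_M(\Omega\times D;\mathcal Y_m)$ and set $\tilde z=\eta[\tilde h]$. Then solve the linear kinetic problem with drift $\beta^L_{\delta_0}(\tilde h)\nabla_x\tilde u\,q$, convection $\tilde u$ and kernel $K_{\tilde z}$. Coercivity comes from the $\delta$ and $1/(2\mathrm{Wi})$ diffusions plus a $1/\Delta t$ mass term. The bounded cutoff $\beta^L_{\delta_0}\le L$ makes the drift a bounded perturbation, so Lax--Milgram applies in $H^1_M$. Next solve the finite-dimensional linear momentum problem with stress $\tau[h]$; this is well defined on $X_N$ once $h\in H^1_M$ and Lemma~\ref{lem:FENE-Hardy-tail} controls $M|W|h$. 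Three properties are needed:
\begin{itemize}
\item Compactness of the map: $H^1_M\Subset L^2_M$ on $\mathcal Y_m$-valued functions, since there are finitely many cells, combined with the local Rellich compactness and collar tightness of Lemma~\ref{lem:compactness-tools}.
\item Continuity of the map: $\mathfrak a$ is continuous in $z$ and $\eta$ is continuous on $L^2_M$.
\item A priori bound for $\lambda$-scaled fixed points: this is the energy identity above, uniform in $\lambda\in[0,1]$.
\end{itemize}
The main obstacle is making the energy identity rigorous at the fixed point. The test $F'(h^+)$ is only admissible because $F^L_{\delta_0}$ has a Lipschitz derivative away from the cutoff region, which the $L$- and $\delta_0$-regularization ensures. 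The boundary pairing at $\partial D$ must also go through the cutoffs $\chi_\delta$ of Lemma~\ref{lem:closed-drift-stress-pairing}. I would first verify it for bounded $h$ and pass to the limit by weighted density.

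For \eqref{eq:one-step-negative-part}, test with $\phi=-[h^+]_-$. On $\{h^+<0\}$ the cutoff equals $\delta_0$, so the drift term is
\[
\delta_0\int M(\nabla_xu^+)q\cdot\nabla_q[h^+]_- .
\]
Since $|q|$ is bounded, Young's inequality bounds it by $\tfrac1{4\mathrm{Wi}}\|\nabla_q[h^+]_-\|^2+C_b\mathrm{Wi}\,\delta_0^2\|\nabla_xu^+\|^2$. The reaction term is signed, because $(h'-h)([h]_--[h']_-)\ge0$, and the convection term vanishes. Mass conservation follows from $\phi=1$, and the density equation \eqref{eq:one-step-density-maximum} from tests independent of $(q,y)$: the drift, the $q$-diffusion and the reaction all drop out. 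The bounds $0\le\rho^+\le\|\rho^-\|_\infty$ then follow from the discrete maximum principle, testing with $[\rho^+]_-$ and $[\rho^+-\|\rho^-\|_\infty]_+$ and using $\nabla\cdot u^+=0$.
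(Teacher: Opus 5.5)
Your stress--drag cancellation (the chain rule \(\beta^L_{\delta_0}(h)\nabla_q(F^L_{\delta_0})'(h)=\nabla_qh\) plus trace-free integration by parts) coincides with the paper's. So do your Stampacchia test for \eqref{eq:one-step-negative-part} and your mass and density arguments.

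What differs is the existence mechanism. The paper takes the entropy variable \(w_R=(F^L_{\delta_0})'(h_R)\) as the Galerkin unknown in finite-dimensional spaces \(E_R\subset H^1_M\cap L^\infty\), so the entropy test is itself a discrete test. It then:
\begin{itemize}
\item obtains a discrete solution from the Brouwer-degree homotopy \((1-\lambda)\mathcal J_R+\lambda\mathcal A_R\);
\item lets \(R\to\infty\) using the weighted compact embedding on \(\mathcal Y_m\)-valued functions;
\item gets \eqref{eq:one-step-entropy} only by weak lower semicontinuity;
\item must postpone the test \(-[h^+]_-\) to the limit equation, because that function is not in \(E_R\).
\end{itemize}
Your Schaefer map with Lax--Milgram solves on all of \(H^1_M(\Omega\times D;\mathcal Y_m)\) removes this second limit. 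Since \((F^L_{\delta_0})'\) is globally \(\delta_0^{-1}\)-Lipschitz, both \((F^L_{\delta_0})'(h^+)\) and \(-[h^+]_-\) are admissible directly at the fixed point, so the entropy inequality needs no liminf. The price is continuity and compactness of an infinite-dimensional solution map. Both hold here: \(\beta^L_{\delta_0}\) is Lipschitz, \(|K_{\eta[\tilde h_n]}-K_{\eta[\tilde h]}|\le\|k\|_\infty A|\eta[\tilde h_n]-\eta[\tilde h]|\), and \(\tau[h]\) depends \(L^1_x\)-continuously on \(h\in L^2_M\).

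One step fails as written: the a priori bound for \(\lambda\)-scaled fixed points is not ``the energy identity above''. Suppose \((u,h)=\lambda T(u,h)\) and multiply both linear problems by \(\lambda\).
\begin{itemize}
\item The frozen drag source becomes \(-\lambda\int M\beta^L_{\delta_0}(h)(\nabla_xu)q\cdot\nabla_q\phi\).
\item The momentum stress becomes \(\tau[h]\), with no factor \(\lambda\).
\item The kinetic time term becomes \(\Delta t^{-1}\int M(h-\lambda h^-)\phi\).
\end{itemize}
Testing by \((u,(F^L_{\delta_0})'(h))\) therefore leaves the unsigned remainder \((1-\lambda)\int_\Omega\tau[h]:\nabla_xu\). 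This is exactly why the paper places stress and drag under the same \(\lambda\).

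The repair is short.
\begin{itemize}
\item Test the scaled momentum equation by \(\lambda u\) instead; this restores the exact cancellation.
\item Convexity then gives, uniformly in \(\lambda\), \(\int MF^L_{\delta_0}(h)\le\lambda\int MF^L_{\delta_0}(h^-)+(1-\lambda)F^L_{\delta_0}(0)|\Omega|+\tfrac12\|u^-\|_2^2\), together with the Fisher bounds.
\item Finally test by \(u\) itself, using \(\|\tau[h]\|_{L^1_x}\le C\|h\|_{L^2_M}\) and the inverse inequality \(\|\nabla u\|_\infty\le C_N\|u\|_2\) on \(X_N\).
\end{itemize}
Alternatively, use a Leray--Schauder homotopy that puts \(\lambda\) on the coupling terms.

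A smaller point: you do not need Lemma~\ref{lem:closed-drift-stress-pairing}. Applying it to \([h^+]_+\) would in fact not meet its hypothesis, since \(\sqrt{[h^+]_+}\) need not lie in \(H^1_M\). For signed \(h\in H^1_M\), the identity \(\int MBq\cdot\nabla_qh=B:\int M(\nabla_qU\otimes q)h\) follows directly from the cutoff argument, because \(\nabla_qU\otimes q\in L^2_M(D)\) for \(\gamma>1\). Your alternative of bounded \(h\) plus weighted density is the right route. The same \(L^2_M\) bound, not the Hardy lemma, is what makes \(\tau[h]\) well defined for signed \(h\).
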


\begin{proof}
Put \(F=F^L_{\delta_0}\), \(\beta=\beta^L_{\delta_0}\).  The entropy
variable \(w=F'(h)\) is globally invertible because

\[
 \delta_0\le\beta(h)=\frac1{F''(h)}\le L.
\]

Choose nested finite-dimensional spaces
\[
 E_R\subset H^1_M(\Omega\times D;\mathcal Y_m)\cap L^\infty,
\]
containing constants and dense in
\(H^1_M(\Omega\times D;\mathcal Y_m)\).  They are tensor products of smooth
\((x,q)\)-functions with the finitely many cell indicators of
\(\mathcal P_m\).  On \(X_N\times E_R\) use the unknowns
\((u_R,w_R)\), put \(h_R=(F')^{-1}(w_R)\), and evaluate the rate at the
actual moment \(\eta[h_R]\).  All maps are continuous in finite dimension.
Let \(\mathcal A_R\) be the residual of
\eqref{eq:one-step-momentum}--\eqref{eq:one-step-kinetic}, represented through
the Euclidean inner product on \(X_N\times E_R\), and let \(\mathcal J_R\) be
the corresponding Riesz isomorphism.  We use the explicit degree homotopy
\begin{equation}
 (1-\lambda)\mathcal J_R(u_R,w_R)
 +\lambda\mathcal A_R(u_R,w_R)=0,
 \qquad0\le\lambda\le1.
 \label{eq:one-step-degree-homotopy}
\end{equation}
The stress and drag terms, as well as the fluid and kinetic transport terms,
are multiplied by the same \(\lambda\); hence their cancellation is retained
along the homotopy.

Test a homotopy solution by \(u_R\) and \(w_R=F'(h_R)\).  The time terms
satisfy
\[
 (u_R-u^-,u_R)\ge\tfrac12
 (\|u_R\|_2^2-\|u^-\|_2^2+\|u_R-u^-\|_2^2)
\]
and
\[
 (h_R-h^-)F'(h_R)\ge F(h_R)-F(h^-).
\]
Spatial transport and the skew convection form vanish.  Since
\(\beta(h_R)\nabla_qF'(h_R)=\nabla_qh_R\), configurational integration by
parts changes the drag contribution into
\(-\tau[h_R]:\nabla u_R\), cancelling the momentum stress work.  Pair
symmetry gives \(\mathcal D^L_{\delta_0,Y}(h_R)\ge0\).  We obtain
\eqref{eq:one-step-entropy}, with the additional non-negative term
\((1-\lambda)(\|u_R\|_{X_N}^2+\|w_R\|_{E_R}^2)\), uniformly in the homotopy
parameter and in \(R\).  The entropy controls the norm of the entropy variable
at the endpoint \(\lambda=1\).  Indeed,
\begin{equation}
 F(h)\ge\frac{|h-1|^2}{2L},\qquad
 |F'(h)|\le\delta_0^{-1}|h-1|,\qquad
 |\nabla F'(h)|^2\le\delta_0^{-1}
 \frac{|\nabla h|^2}{\beta(h)}.
 \label{eq:one-step-entropy-variable-control}
\end{equation}
For \(\lambda\le1/2\) the Riesz term supplies the same bound directly, while
for \(\lambda\ge1/2\) the entropy estimate and
\eqref{eq:one-step-entropy-variable-control} do so.  Thus no homotopy solution
lies on the boundary of one fixed ball in \(X_N\times E_R\).  At
\(\lambda=0\) the map is \(\mathcal J_R\), whose Brouwer degree is one.
Homotopy invariance therefore gives a zero at \(\lambda=1\), namely a coupled
solution \((u_R,h_R)\).

The lower cutoff does not preserve pointwise positivity at fixed
\(\delta_0\), so a quantitative negative-part estimate is needed.  We do not
test the finite-dimensional equation by \(-[h_R]_-\), which need not belong to
\(E_R\).  Instead we first remove the Galerkin dimension using only the entropy
estimate, and perform the Stampacchia test in the limiting one-step equation
below.

At fixed \((m,L,\delta_0,\Delta t)\), the entropy estimate is coercive in the
full weighted \(H^1\) norm:
\[
 \int M|h_R|^2\le C_L\left(1+\int MF(h_R)\right),\qquad
 \int M|\nabla h_R|^2
 \le L\int M\frac{|\nabla h_R|^2}{\beta(h_R)}.
\]
Because \(\mathcal Y_m\) is finite dimensional, the weighted compact embedding
for \((x,q)\) under \eqref{eq:v36-BS-potential-a}--
\eqref{eq:v36-BS-potential-b} gives, after extraction,
\begin{equation}
 h_R\longrightarrow h^+
 \quad\text{strongly in }L^2_M(\Omega\times D\times Y).
 \label{eq:one-step-strong-density-limit}
\end{equation}
The velocity converges strongly in the fixed space \(X_N\).  Since \(F'\) and
\(\beta\) are globally Lipschitz at fixed cutoffs,
\(F'(h_R)\to F'(h^+)\) and \(\beta(h_R)\to\beta(h^+)\) strongly in
\(L^2_M\).  The diffusion gradients converge weakly.  The moment component is
bounded in \(H^1(\Omega)^{N_\eta}\); indeed, weighted Cauchy--Schwarz gives,
pointwise in \(x\),
\begin{equation}
 |\nabla_x\eta[h_R](x)|
 \le C_\eta
 \left(\int_{D\times Y}M\beta(h_R)\right)^{1/2}
 \left(\int_{D\times Y}M
 \frac{|\nabla_xh_R|^2}{\beta(h_R)}\right)^{1/2}.
 \label{eq:one-step-moment-H1}
\end{equation}
Since \(\beta(h_R)\le L\) and \(M\dd q\dd\pi\) is a probability measure,
\eqref{eq:one-step-entropy} yields the claimed \(H^1_x\) bound.  Rellich
compactness gives
\[
 \eta[h_R]\longrightarrow z^+
 \quad\text{strongly in }L^2(\Omega;\R^{N_\eta})
 \quad\text{and almost everywhere}.
\]
The bounded Lipschitz activity therefore converges strongly in every finite
\(L^p\).  The reaction term passes by strong convergence of \(h_R\) and the
bounded coefficient; the drag passes by strong convergence of \(\beta(h_R)\).
Moreover, \(\nabla_qU\otimes q\in L^2_M(D)\) because \(\gamma>1\), so the full
stress passes in \(L^1_x\) by Cauchy--Schwarz.  Weak lower semicontinuity
retains the entropy inequality.  Passing in the
Galerkin identities against dense tests proves
\eqref{eq:one-step-momentum}--\eqref{eq:one-step-entropy}.

The limiting kinetic equation is valid for every bounded
\(H^1_M(\Omega\times D;\mathcal Y_m)\) test.  Approximate
\(-[h^+]_-\) by bounded Lipschitz truncations in this space.  The chain rule
for \(r\mapsto\frac12[r]_-^2\) gives
\[
 (h^+-h^-)(-[h^+]_-)
 \ge\frac12\bigl([h^+]_-^2-[h^-]_-^2\bigr).
\]
Spatial transport vanishes, both diffusion terms are non-negative, and pair
symmetry makes the reaction term non-negative because
\(r\mapsto\min\{r,0\}\) is increasing.  On \(\{h^+<0\}\),
\(\beta(h^+)=\delta_0\), and Young's inequality gives
\begin{align*}
 \delta_0\left|\int M(\nabla_xu^+)q\cdot\nabla_q[h^+]_-\right|
 &\le\frac1{4{\rm Wi}}\int M|\nabla_q[h^+]_-|^2
 +C_b{\rm Wi}\,\delta_0^2\|\nabla_xu^+\|_2^2.
\end{align*}
This proves \eqref{eq:one-step-negative-part} without an inadmissible
finite-dimensional test.

The constant test preserves total mass.  Tests independent of \((q,y)\) give
\eqref{eq:one-step-density-maximum}; testing that scalar equation by
\([\rho^+]_-\) and \((\rho^+-\|\rho^-\|_\infty)^+\) proves its two-sided
maximum principle.
\end{proof}

\begin{lemma}[Joint cutoff removal without compactness in the state variable]
\label{lem:joint-cutoff-removal}
Let \(\delta_j\downarrow0\), \(L_j\uparrow\infty\), and set
\[
 P_j(s):=\max\{\delta_j,\min\{s,L_j\}\},\qquad s\in\R.
\]
Suppose that \(h_j\) is bounded in entropy on its positive set and
\begin{equation}
 \|[h_j]_-\|_{L^2_M}\le C\delta_j.
 \label{eq:joint-cutoff-negative-assumption}
\end{equation}
Then
\begin{equation}
 \|P_j(h_j)-h_j\|_{L^1_M}
 \le C\delta_j+C(\log L_j)^{-1}\longrightarrow0.
 \label{eq:joint-cutoff-L1-distance}
\end{equation}
If \(h_j\rightharpoonup h\) weakly in \(L^1_M\), then \(h\ge0\),
\(P_j(h_j)\rightharpoonup h\), and
\begin{align}
 \int M F(h)&\le\liminf_{j\to\infty}
 \int M F^{L_j}_{\delta_j}(h_j),
 \label{eq:joint-cutoff-entropy-liminf}\\
 4\int M|\nabla\sqrt h|^2&\le\liminf_{j\to\infty}
 \int M\frac{|\nabla h_j|^2}{P_j(h_j)}.
 \label{eq:joint-cutoff-Fisher-liminf}
\end{align}
The gradient in \eqref{eq:joint-cutoff-Fisher-liminf} may be either
\(\nabla_x\) or \(\nabla_q\).  Moreover, with
\(\Phi(r,s):=(r-s)(\log r-\log s)\), extended lower
semicontinuously to \([0,\infty)^2\), one has pointwise
\begin{equation}
 (r-s)\bigl[(F^L_\delta)'(r)-(F^L_\delta)'(s)\bigr]
 \ge \Phi(P_{\delta,L}(r),P_{\delta,L}(s)).
 \label{eq:regularized-Jeffreys-projection-bound}
\end{equation}
Consequently, on any fixed finite measure space \((Z,\mu)\), if
\(a_j\to a\) strongly in measure and
\(0<a_*\le a_j,a\le a^*\), then
\begin{align}
 &\frac12\int_Z a\,\Phi(h',h)\dd\mu
 \notag\\
 &\qquad\le\liminf_{j\to\infty}\frac12\int_Z a_j
 (h_j'-h_j)\bigl[(F^{L_j}_{\delta_j})'(h_j')
 -(F^{L_j}_{\delta_j})'(h_j)\bigr]\dd\mu.
 \label{eq:joint-cutoff-Jeffreys-liminf}
\end{align}
Here \(\mu\) may include fixed non-negative weights, such as the possibly
degenerate jump kernel \(M(q)k(y,y')\).
\end{lemma}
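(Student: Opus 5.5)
Throughout, \(F(s)=s\log s-s+1\), and \(F^L_\delta\) is the cutoff entropy with \((F^L_\delta)''=1/P_{\delta,L}\) and \(F^L_\delta(1)=(F^L_\delta)'(1)=0\), so that \(\beta^L_\delta=P_{\delta,L}\) and \((F^L_\delta)'(1)=\log 1=0\).

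\emph{Step 1: the \(L^1\) distance \eqref{eq:joint-cutoff-L1-distance}.} Split \(\|P_j(h_j)-h_j\|_{L^1_M}\) into three sets. On \(\{h_j<\delta_j\}\) the difference is at most \(\delta_j+[h_j]_-\). Since \(M\) is a probability density, Cauchy--Schwarz and \eqref{eq:joint-cutoff-negative-assumption} bound this part by \(C\delta_j\). On \(\{h_j>L_j\}\) the difference is at most \(h_j\). Because \(F(s)\ge s(\log s-1)\), we have \(h_j\le 2F(h_j)/\log L_j\) there once \(L_j\ge e^2\), so the entropy bound gives \(C(\log L_j)^{-1}\). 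On the remaining set the difference is zero.

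\emph{Step 2: weak limits, positivity and the entropy liminf.} Weak convergence \(h_j\rightharpoonup h\) together with Step 1 gives \(P_j(h_j)\rightharpoonup h\). Since \(P_j(h_j)\ge\delta_j>0\), the weak limit satisfies \(h\ge0\). For \eqref{eq:joint-cutoff-entropy-liminf} I will show \(F^{L_j}_{\delta_j}\ge F\) on \([0,\infty)\).
\begin{itemize}
\item On \([\delta_j,L_j]\) the two functions have the same second derivative. Both vanish with zero derivative at \(1\), so they coincide there.
\item Above \(L_j\) and below \(\delta_j\), one has \((F^{L_j}_{\delta_j})''\ge F''\) on the side away from \(1\). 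Integrating twice from \(1\) gives the inequality.
\end{itemize}
Hence \(\int MF(h_j^+)\le\int MF^{L_j}_{\delta_j}(h_j)\) up to the negative part, which is \(O(\delta_j)\) by the same estimate as in Step 1. Convexity and weak lower semicontinuity of \(\int MF\) then conclude.

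\emph{Step 3: the Fisher liminf \eqref{eq:joint-cutoff-Fisher-liminf}.} Pointwise, \(|\nabla h_j|^2/P_j(h_j)\ge|\nabla P_j(h_j)|^2/P_j(h_j)=4|\nabla\sqrt{P_j(h_j)}|^2\). This uses the chain rule for the Lipschitz truncation \(P_j\), whose gradient vanishes off \(\{\delta_j<h_j<L_j\}\). Now use the joint convexity of \((g,G)\mapsto|G|^2/g\), equivalently the weak lower semicontinuity of the Fisher information \(4\int M|\nabla\sqrt g|^2\) under weak \(L^1_M\) convergence \(P_j(h_j)\rightharpoonup h\). This is applied in the distributional sense: \(\nabla P_j(h_j)\to\nabla h\) in distributions, and the functional is a supremum over smooth tests. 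The result is \eqref{eq:joint-cutoff-Fisher-liminf} for either gradient.

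\emph{Step 4: the pointwise Jeffreys bound \eqref{eq:regularized-Jeffreys-projection-bound}.} Write \(G=(F^L_\delta)'\), so \(G'=1/P\) with \(P=P_{\delta,L}\). Assume \(r>s\) and use the mean value form
\[
G(r)-G(s)=\int_s^r\frac{\dd t}{P(t)}.
\]
\begin{itemize}
\item If \(r,s\in[\delta,L]\), the two sides coincide.
\item In general \(r-s\ge P(r)-P(s)\ge0\), because \(P\) is \(1\)-Lipschitz and monotone.
\item We also have \(\int_s^r\dd t/P(t)\ge\int_{P(s)}^{P(r)}\dd t/t=\log P(r)-\log P(s)\). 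To see this, change variables on the middle portion where \(P(t)=t\); the excess outside that portion is non-negative.
\end{itemize}
Multiplying the two non-negative inequalities gives \eqref{eq:regularized-Jeffreys-projection-bound}.

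\emph{Step 5: the Jeffreys liminf \eqref{eq:joint-cutoff-Jeffreys-liminf}.} Apply Step 4 with \(r=h_j'\), \(s=h_j\). The right-hand integrand then dominates \(a_j\Phi(P_j(h_j'),P_j(h_j))\). The pair \((P_j(h_j'),P_j(h_j))\) converges weakly in \(L^1(\mu;\R^2)\) to \((h',h)\): on \(Z\supseteq Y^2\times\ldots\), the primed copy is the weak limit under the product structure. Lemma~\ref{lem:fixed-measure-Jeffreys-liminf} then finishes, with fixed non-negative weights absorbed into \(\mu\).

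\emph{Main obstacle.} I expect the hardest step to be the Fisher liminf in Step 3. Weak \(L^1\) convergence alone does not give pointwise control, so I will lean on the convex \(|G|^2/g\) representation rather than on any strong compactness. This matters especially because the lemma explicitly avoids compactness in \(y\).
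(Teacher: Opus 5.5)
Your Steps 1, 4 and 5 follow the paper's proof essentially line by line: the three-set split with \(F(s)\ge\frac12 s\log L_j\) on \(\{h_j>L_j\}\); the comparisons \(r-s\ge P(r)-P(s)\ge0\) and \(\int_s^r\dd z/P(z)\ge\log P(r)-\log P(s)\); then Lemma~\ref{lem:fixed-measure-Jeffreys-liminf} on the fixed measure.

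\textbf{Step 2 rests on a false inequality.} For \(0<s<\delta_j\) one has \(P_j(s)=\delta_j\), so \((F^{L_j}_{\delta_j})''(s)=1/\delta_j<1/s=F''(s)\). Below \(\delta_j\) the comparison of second derivatives therefore goes the wrong way, and integrating twice from \(1\) gives \(F^{L_j}_{\delta_j}\le F\) on \([0,\delta_j]\). Explicitly, \(F^{L}_{\delta}(0)=F(\delta)-\delta\log\delta+\delta/2=1-\delta/2<F(0)\). So the claim \(F^{L_j}_{\delta_j}\ge F\) on \([0,\infty)\) fails.

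The repair is what the paper does. \(F^{L}_{\delta}\) coincides with \(F\) on \([\delta,L]\). Being convex with minimum at \(1\), it is nonincreasing on \((-\infty,\delta]\) and nondecreasing on \([L,\infty)\). Hence \(F^{L}_{\delta}(s)\ge F(P_{\delta,L}(s))\) for every \(s\in\R\). Weak lower semicontinuity of \(\int MF\) along \(P_j(h_j)\rightharpoonup h\), which you already proved, then gives \eqref{eq:joint-cutoff-entropy-liminf}.

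Alternatively, keep your route but use the correct pointwise bound \(F(h_j^+)\le F^{L_j}_{\delta_j}(h_j)+\delta_j/2\), valid for all real \(h_j\), on a finite measure space. On \(\{h_j<0\}\) the error is small because of this pointwise bound, not because that set is small. Indeed \eqref{eq:joint-cutoff-negative-assumption} does not control its measure, so your phrase ``up to the negative part, which is \(O(\delta_j)\) by Step 1'' is not justified as stated.

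\textbf{Step 3 takes a genuinely different route, and the more robust one.} The paper asserts that \eqref{eq:joint-cutoff-L1-distance} gives \(\sqrt{P_j(h_j)}\to\sqrt h\) strongly in \(L^2_M\), then invokes weak compactness in the weighted Sobolev space. But \eqref{eq:joint-cutoff-L1-distance} only compares \(P_j(h_j)\) with \(h_j\). When \(h_j\rightharpoonup h\) merely weakly, the square roots need not converge strongly; take \(h_j=1+\varepsilon\sin(2\pi jy)\), independent of \((x,q)\). A weak limit of \(\sqrt{P_j(h_j)}\) then need not equal \(\sqrt h\).

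Your argument avoids this. You reduce pointwise to \(4|\nabla\sqrt{P_j(h_j)}|^2\). You then use that \(g\mapsto\int M|\nabla g|^2/g\) is a supremum of the affine maps \(g\mapsto\int Mg\bigl(-2M^{-1}\operatorname{div}(Mb)-|b|^2\bigr)\), which are weakly continuous. This needs only \(P_j(h_j)\rightharpoonup h\), exactly the no-compactness-in-\(y\) setting the lemma advertises.

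When you write it out, take the fields \(b\) smooth and compactly supported in the interior of the \((x,q)\)-domain, so that \(\nabla_qU\) is bounded on their support and the affine maps are continuous for weak \(L^1_M\). Let \(b\) be merely bounded measurable in \(y\). Conclude by exhausting the domain with monotone convergence.
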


\begin{proof}
On \(\{0\le h_j\le L_j\}\), the difference in
\eqref{eq:joint-cutoff-L1-distance} is at most \(\delta_j\).  On the
negative set it is bounded by \(\delta_j+[h_j]_-\), whereas on
\(\{h_j>L_j\}\) it equals \((h_j-L_j)_+\).  Since
\(F(s)\ge \frac12s\log L_j\) for \(s\ge L_j\) once \(L_j\) is large,
the entropy bound and Cauchy--Schwarz prove
\eqref{eq:joint-cutoff-L1-distance}.  They also imply \(h\ge0\).

The convex regularization satisfies
\(F^L_\delta(s)\ge F(P_{\delta,L}(s))\).  The integral functional
generated by \(F\) is weakly lower semicontinuous on \(L^1\), which gives
\eqref{eq:joint-cutoff-entropy-liminf}.  Next,
\[
 |\nabla\sqrt{P_j(h_j)}|^2
 =\frac{\mathbf1_{\{\delta_j<h_j<L_j\}}}{4h_j}|\nabla h_j|^2
 \le\frac14\frac{|\nabla h_j|^2}{P_j(h_j)}.
\]
The \(L^1\) convergence in \eqref{eq:joint-cutoff-L1-distance} implies
\(\sqrt{P_j(h_j)}\to\sqrt h\) in \(L^2_M\).  Weak compactness in the
corresponding weighted Sobolev space and lower semicontinuity prove
\eqref{eq:joint-cutoff-Fisher-liminf}.

It remains to prove the reaction comparison.  Assume \(r\ge s\).  The
projection \(P=P_{\delta,L}\) is nondecreasing and one-Lipschitz, hence
\(r-s\ge P(r)-P(s)\ge0\).  Since
\[
 (F^L_\delta)'(r)-(F^L_\delta)'(s)
 =\int_s^r\frac{\dd z}{P(z)}
 \ge\int_{P(s)}^{P(r)}\frac{\dd z}{z}
 =\log P(r)-\log P(s),
\]
multiplication proves \eqref{eq:regularized-Jeffreys-projection-bound};
the case \(r<s\) follows by symmetry.  Finally, \(\Phi\) is nonnegative,
convex, and lower semicontinuous on \([0,\infty)^2\).  Thus its integral
is weakly lower semicontinuous under
\((P_j(h_j),P_j(h_j'))\rightharpoonup(h,h')\).  Lemma
\ref{lem:fixed-measure-Jeffreys-liminf}, applied with coefficient \(a_j\),
gives the corresponding weighted conclusion.  Combining it with
\eqref{eq:regularized-Jeffreys-projection-bound} proves
\eqref{eq:joint-cutoff-Jeffreys-liminf}.
\end{proof}

\begin{theorem}[Large-data coupled continuum-state existence]
\label{thm:v35-large-data-coupled-existence}
Assume \(u^{\rm in}\in H_\sigma(\Omega)\) and \(h^{\rm in}\ge0\) satisfy
\begin{equation}
 \mathcal F_Y[u^{\rm in},h^{\rm in}]<\infty,
 \qquad
 \rho[h^{\rm in}]\in L^\infty(\Omega).
 \label{eq:v35-large-data-initial-class}
\end{equation}
Then for every \(T>0\) there is a weak solution in the sense of Definition
\ref{def:v35-large-data-weak-solution}.  It conserves total polymer mass,
preserves non-negativity, and for almost every \(t\in(0,T)\) satisfies
\begin{align}
 &\mathcal F_Y[u,h](t)
 +\nu\int_0^t\|\nabla_xu\|_2^2\dd r
 +4\delta\int_0^t\!\int M|\nabla_x\sqrt h|^2\notag\\
 &\quad+\frac2{{\rm Wi}}\int_0^t\!\int M|\nabla_q\sqrt h|^2
 +\int_0^t\mathcal D_Y(Mh)\dd r
 \le \mathcal F_Y[u^{\rm in},h^{\rm in}].
 \label{eq:v35-large-data-energy-entropy}
\end{align}
Moreover, \(\rho[h]\) is the weak solution of
\(\partial_t\rho+u\cdot\nabla_x\rho=\delta\Delta_x\rho\) and
\begin{equation}
 \|\rho[h](t)\|_{L^\infty_x}
 \le\|\rho[h^{\rm in}]\|_{L^\infty_x}
 \quad\text{for almost every }t.
 \label{eq:v35-number-density-maximum}
\end{equation}
For both \(d=2\) and \(d=3\) this is a genuine weak solution with the
identified FENE stress; there is no stress-defect measure or momentum
subsolution.  No uniqueness is asserted in this energy--entropy class.
\end{theorem}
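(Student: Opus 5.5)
The proof follows the Barrett--S\"uli scheme, with the one-step closure of Proposition~\ref{prop:one-step-entropy-cutoff} as the building block. Fix cutoffs \(\delta_0=\delta_j\), \(L=L_j\), a partition \(\mathcal P_m\) of \(Y\), a Galerkin space \(X_N\), and a step \(\Delta t\). Project \(h^{\rm in}\) onto \(\mathcal Y_m\) by conditional expectation on the cells; Jensen keeps the entropy bounded and \(\rho\le\|\rho[h^{\rm in}]\|_\infty\). Then iterate the implicit step. Summing \eqref{eq:one-step-entropy} over the steps gives discrete energy--entropy bounds uniform in \((N,m,\Delta t,j)\). Summing \eqref{eq:one-step-negative-part} gives \(\|[h]_-\|_{L^2_M}^2\le C\,{\rm Wi}\,\delta_j^2\) once the discrete \(L^2H^1\) velocity bound is inserted. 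Finally, \eqref{eq:one-step-density-maximum} gives the maximum principle for \(\rho\) at every step.

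We then pass to the limit jointly along a diagonal sequence \(N,m\to\infty\), \(\Delta t\to0\), \(j\to\infty\), so that \(\delta_j\to0\) and \(L_j\to\infty\). Form the piecewise-constant and piecewise-linear interpolants. The velocity is bounded in \(L^\infty L^2\cap L^2H^1\), and its time derivative is bounded in a dual space. The stress is controlled in \(L^2_{t,x}\) by \eqref{eq:FENE-stress-L2}, using \(\rho\le R\) and the Fisher bound. The cutoff stress is handled through the \(\beta\)-clause of Lemma~\ref{lem:FENE-Hardy-tail}, applied to the positive part together with the negative-part estimate. These bounds give strong \(L^2\) compactness of \(u\) by Aubin--Lions. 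For the kinetic variable, Lemma~\ref{lem:joint-cutoff-removal} replaces \(h\) by \(P_j(h)\) at vanishing \(L^1\) cost. The second alternative of Lemma~\ref{lem:compactness-tools} then applies to \(P_j(h)\), via \(|\nabla P_j(h)|\le 2\sqrt{P_j}\,|\nabla\sqrt{P_j}|\) and Cauchy--Schwarz. Here the \(x\)-gradient comes from \(\delta>0\), and collar tightness comes from the Fenchel bound. This gives strong compactness of \(M\bar h\) in \(L^1(Q_T)\). The same \(\delta>0\) Fisher bound feeds \eqref{eq:one-step-moment-H1}, which bounds \(\eta[h]\) in \(L^2_tH^1_x\). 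Together with a time-derivative bound obtained by testing with \(c_a\psi_a\), the second part of Lemma~\ref{lem:compactness-tools} makes the finitely many moments strongly compact. Hence the activity \(\mathfrak a(q,\eta,y,y')\) converges in measure. Only weak \(L^1\) convergence of the full state density \(h\) itself is needed.

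The limit terms are identified one at a time. The transport term \(Mhu\) and the reaction term \(MK_h(h'-h)(\varphi-\varphi')\) are products of a weakly converging density with strongly converging bounded factors. The diffusion fluxes \(2\sqrt h\nabla\sqrt h\) pass by weak--strong pairing, which requires strong \(L^2\) convergence of \(\sqrt{\bar h}\)-type quantities. For the Jeffreys production and the Fisher information, lower semicontinuity holds on the fixed measure \(M k\,\dd\pi\dd\pi'\dd q\dd x\dd t\) by \eqref{eq:joint-cutoff-Jeffreys-liminf} and \eqref{eq:joint-cutoff-Fisher-liminf}. The entropy liminf, together with weak lower semicontinuity of the velocity norms, yields \eqref{eq:v35-large-data-energy-entropy}. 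Test functions independent of \((q,y)\) give the limiting \(\rho\)-equation, and the maximum principle passes to the limit by weak-\(*\) closedness.

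\textbf{The main obstacle is the singular stress \(\tau_Y[h]\).} The quantity \(h\) is only weakly compact in \(y\), but \(W=\nabla_qU\otimes q\) does not depend on \(y\). Hence \(\tau_Y[h]=\int_D MW\bar h\,\dd q\) involves only the state average. We split \(W=W\mathbf1_{\{|W|\le A\}}+W\mathbf1_{\{|W|>A\}}\). The bounded part converges strongly because \(M\bar h\) converges strongly in \(L^1\). The tail is bounded uniformly by \(C_U/A\) through \eqref{eq:FENE-stress-tail}, applied to \(P_j(h)\) with the Fisher bound; the cutoff and negative parts are again controlled by Lemma~\ref{lem:joint-cutoff-removal}. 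Letting \(A\to\infty\) identifies the stress in \(L^1\), and the \(L^2\) bound then transfers by weak compactness. The drag term passes because \(\beta(h)\,(\nabla u)q\cdot\nabla_q\varphi\) pairs the strongly converging velocity gradient with a weakly converging density. For this it suffices that \(\nabla u\) converges weakly while \(\beta(h)q\nabla\varphi\) converges strongly in \(L^2\) after averaging in \(y\), when \(\varphi\) is \(y\)-independent. For general \(y\)-dependent tests, we instead use weak \(L^2\) convergence of \(\sqrt h\) against the strongly converging \(\sqrt h\nabla u\)-weighted factor. Here we expect to need the bound \(h\log h\) together with \(\rho\le R\) to upgrade equi-integrability, and this is the step we would check most carefully. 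The weak continuity statements \(C_w\) for \(u\) and \(Mh\) follow from the time-derivative bounds. Mass conservation follows from the constant test function.
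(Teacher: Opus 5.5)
Your overall architecture matches the paper's proof: the entropy-cutoff scheme of Proposition~\ref{prop:one-step-entropy-cutoff}, one diagonal limit in all parameters, strong compactness of \(M\bar h_j\) for the stress via the truncation \(W_A\) and the Hardy tail, strong compactness of the finitely many activity moments from \(\delta>0\), and the fixed-measure Jeffreys liminf.

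\textbf{The gap: the full-drag term for state-dependent tests.} At positive centre-of-mass diffusion you only have \(u_j\to u\) in \(L^2_{t,x}\) and \(\nabla_xu_j\rightharpoonup\nabla_xu\) weakly in \(L^2\). The velocity gradient is not strongly convergent, contrary to your first sentence on the drag. The term is \(\int_0^t\!\int_\Omega\nabla_xu_j:\int_{D\times Y}MP_j(h_j)\,q\otimes\nabla_q\varphi\,\dd q\dd\pi\). When \(\varphi\) depends on \(y\), the inner integral is a state-sensitive moment, not a functional of \(\bar h_j\). Your remedy pairs factors that converge only weakly: \(\sqrt{h_j}\) is not strongly compact in \(y\), and nothing built from \(\nabla_xu_j\) is strongly compact. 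Extra equi-integrability from \(h\log h\) and \(\rho\le R\) cannot make such a product converge.

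The missing step is to treat the moment \(G_j=\int_{D\times Y}Mh_j\,q\otimes\nabla_q\varphi_j\), with \(\varphi_j=\Pi_{m_j}\varphi\), exactly like \(\eta_a\):
\begin{enumerate}
\item For smooth \(\varphi\), the function \(q\cdot\nabla_q\varphi_j\,\zeta(x)\) is an admissible test.
\item The spatial Fisher information bounds \(\nabla_xG_j\) in \(L^1_{t,x}\).
\item Testing the kinetic equation bounds \(\partial_tG_j\) in \(L^1(0,T;(W^{1,\infty}(\Omega))')\).
\item The second part of Lemma~\ref{lem:compactness-tools} then gives strong \(L^1_{t,x}\) convergence.
\item The envelope \(|G_j|\le C_\varphi(R_\rho+2n_j)\), with \(n_j=\int_{D\times Y}M[h_j]_-\to0\) in \(L^2_{t,x}\), upgrades this to strong \(L^2_{t,x}\) convergence, which pairs with the weak limit of \(\nabla_xu_j\).
\item Replacing \(h_j\) by \(P_j(h_j)\) costs \(o(1)\) in \(L^2_{t,x}\), as in \eqref{eq:drag-cutoff-removal}.
\item A countable determining family of tests, smooth in \((t,x,q)\) and simple in \(y\), plus a monotone-class argument gives one common subsequence.
\end{enumerate}
This is what lets the paper close the drag in \(d=3\) without strong gradient convergence.

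\textbf{Smaller corrections.}
\begin{itemize}
\item The diffusion terms are linear in \(h\), so they need no strong convergence. \(M\nabla h_j\) is equi-integrable, because \(|\nabla h_j|=P_j(h_j)^{1/2}\cdot|\nabla h_j|P_j(h_j)^{-1/2}\) and \(P_j(h_j)\) is equi-integrable. These fluxes therefore pass by weak \(L^1\) convergence. Strong convergence of \(\sqrt{\bar h}\) is not needed, and it would not help for \(y\)-dependent tests anyway.
\item Uniformly in the cutoffs you only control \(\nabla_x\eta[h_j]\) and the approximate stress \(\tau_j\) in \(L^1\). You do not get \(L^2_tH^1_x\) or \(L^2_{t,x}\) bounds. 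Both \(\int_{D\times Y}M\beta(h_j)\) and \(\rho[[h_j]_+]\) exceed \(\rho[h_j]\le R_\rho\) by \(n_j\), which is small only in \(L^2_{t,x}\), not pointwise.
\item These \(L^1\) bounds are enough for Lemma~\ref{lem:compactness-tools} and for \(\partial_tu_j\in L^1(0,T;V_{\sigma,s}')\). The \(L^2\) stress bound is obtained only for the limit, via \eqref{eq:FENE-stress-L2}.
\item Conditional expectation in \(y\) alone does not in general put the initial datum in \(L^2_M\), as Proposition~\ref{prop:one-step-entropy-cutoff} requires. Nor does it keep \(\int MF^{L}_{\delta_0}\) finite. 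Truncate first and use \(\Pi_m\min\{h^{\rm in},\ell\}\).
\end{itemize}
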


\begin{proof}
We organize the argument around the parts inherited from the classical FENE
scheme and the two state-fibre passages that are new.

\paragraph{1. Entropy-compatible approximation.}
Put \(F(s)=s(\log s-1)+1\) for \(s\ge0\).  For
\(0<\delta_0<1<L\), define the convex regularization
\(F^L_{\delta_0}\) on the whole real line by the normalization
\(F^L_{\delta_0}(1)=(F^L_{\delta_0})'(1)=0\) and
\begin{equation}
 (F^L_{\delta_0})''(s)=\frac1{\beta^L_{\delta_0}(s)},
 \qquad
 \beta^L_{\delta_0}(s):=\max\{\delta_0,\min\{s,L\}\},\qquad s\in\R.
 \label{eq:entropy-compatible-cutoff}
\end{equation}
Thus \(F^L_{\delta_0}\to F\) locally uniformly on \([0,\infty)\) as
\(\delta_0\downarrow0\) and \(L\uparrow\infty\), and, crucially,
\begin{equation}
 \beta^L_{\delta_0}(h)\nabla_q(F^L_{\delta_0})'(h)=\nabla_qh.
 \label{eq:entropy-cutoff-chain-rule}
\end{equation}
This is the entropy-compatible cutoff used in the classical FENE construction
\citep{barrett2011existence}; an ordinary logarithmic test against a separately
truncated stress would not give the cancellation below.  Fix
\(s>d/2+1\) and let
\(V_{\sigma,s}\) be the closure of
\(C_{c,\sigma}^\infty(\Omega;\R^d)\) in \(H^s(\Omega;\R^d)\).  The compact
embedding \(V_{\sigma,s}\Subset H_\sigma\) defines an \(L^2\)-orthonormal
spectral basis \((w_k)\) for the associated positive compact solution
operator.  Set \(X_N=\operatorname{span}\{w_1,\ldots,w_N\}\).  Its
\(L^2\)-orthogonal projection \(P_N\) is a contraction in
\(V_{\sigma,s}\), and the union of the \(X_N\) is dense in \(V_\sigma\).
Work with the fixed Maxwellian \(M\) and nested configuration spaces
dense in \(H^1_M(D)\); the weighted density theorem under
\eqref{eq:v36-BS-potential-a}--\eqref{eq:v36-BS-potential-b} supplies them.
Choose nested finite Borel partitions \(\mathcal P_m\) whose union generates
\(\mathcal B(Y)\), and denote conditional expectation in \(y\) by \(\Pi_m\).
Use the bounded non-negative initial ratios
\[
 h^{\rm in}_{\ell,m}=\Pi_m\min\{h^{\rm in},\ell\}.
\]
They satisfy \(\rho[h^{\rm in}_{\ell,m}]\le\rho[h^{\rm in}]\), converge in
weighted \(L^1\) as \(m,\ell\to\infty\), and have convergent entropies by
conditional Jensen and martingale convergence.  They lie in the weighted
\(L^2\) space needed at the first implicit step.  Their masses converge to the
prescribed mass; exact mass conservation at each approximation level therefore
passes to the limit.

At time step \(\Delta t\), solve the implicit kinetic problem with velocity
in \(X_N\), replace the configurational drag by
\(\nabla_q\cdot((\nabla_xu_{L,N})qM
\beta^L_{\delta_0}(h_{L,N}))\), and use in the momentum equation the full
regularized Kramers stress
\begin{equation}
 \tau_{L,N}:=\int_{D\times Y}\nabla_qU\otimes q\,
 Mh_{L,N}\dd q\dd\pi.
 \label{eq:v35-approximate-stress}
\end{equation}
The reaction is kept implicit and untruncated:
\(M\mathcal R_{\eta[h_{L,N}]}h_{L,N}\).  Proposition
\ref{prop:one-step-entropy-cutoff}, on the fixed state space
\(\mathcal Y_m\), solves this coupled step without assuming
uniqueness of an auxiliary frozen-rate problem.  Iterate it for
\(n=1,\ldots,\lceil T/\Delta t\rceil\), using the terminal value of one step as
the datum of the next, and sum \eqref{eq:one-step-entropy}.  The intermediate
\((x,q)\)-Galerkin dimension has already been removed inside the proposition;
only \((m,\delta_0,L,N,\Delta t,\ell)\) remain.  This closes the moment
\emph{fields}, rather than an unstated finite vector.  No compact embedding in
\(Y\) is used: the state operator enters as a bounded accretive form, while
compactness is required only for the finitely many moment fields and the fluid
Galerkin variables.

For completeness, the boundary approximation does not require a smooth
boundary.  A bounded Lipschitz domain has an \(H^1\)-extension operator and
the compact embedding \(H^1(\Omega)\Subset L^2(\Omega)\).  The solenoidal
test space is dense in \(V_\sigma\), while the scalar conforming spaces are
dense in \(H^1(\Omega)\); hence the zero velocity trace is built into
\(X_N\), and the spatial Neumann condition is the natural boundary condition
of the scalar variational problem.  The weighted density theorem used by
Barrett--S\"uli under
\eqref{eq:v36-BS-potential-a}--\eqref{eq:v36-BS-potential-b} supplies smooth
configuration approximants.  Integrating the combined configurational drift
and diffusion by parts, rather than either term separately, gives the second
condition in \eqref{eq:v36-no-flux-boundaries}.  A \(C^{1,1}\) domain may
equivalently be treated with a Stokes eigenbasis, but no \(C^{1,1}\) estimate
is used in the compactness argument.

Test the momentum equation by \(u_{L,N}\) and the kinetic equation by
\((F^L_{\delta_0})'(h_{L,N})\).  By
\eqref{eq:entropy-cutoff-chain-rule}, incompressibility, radiality of \(U\),
and configurational integration by parts, the drag contribution is
\begin{equation}
 \int M\beta^L_{\delta_0}(h_{L,N})(\nabla_xu_{L,N})q\cdot
 \nabla_q(F^L_{\delta_0})'(h_{L,N})
 =\int_\Omega\tau_{L,N}:\nabla_xu_{L,N},
 \label{eq:entropy-compatible-drag-cancellation}
\end{equation}
with the sign opposite to the stress work in the momentum equation.  The
reaction contributes \(\mathcal D^{L}_{\delta_0,Y}\) from
\eqref{eq:regularized-reaction-production}.  It is non-negative by symmetry
and monotonicity.  At this finite level it is not yet the Jeffreys
production.  We do not remove the lower cutoff at fixed \(L\): that route
would require strong compactness of the entire density in the continuum
variable \(y\), which the model neither assumes nor provides.  Instead, sum
\eqref{eq:one-step-negative-part} over the time steps.  Since the initial
truncation is non-negative and the velocity part of the discrete energy
inequality is uniform, one obtains
\begin{align}
 &\max_{0\le m\le T/\Delta t}\|[h^m_{\delta_0,L}]_-\|_{L^2_M}^2
 +\sum_m\Delta t\left(
 \delta\|\nabla_x[h^m_{\delta_0,L}]_-\|_{L^2_M}^2
 +\frac1{2{\rm Wi}}\|\nabla_q[h^m_{\delta_0,L}]_-\|_{L^2_M}^2\right)
 \notag\\
 &\hspace{5cm}\le C_T\delta_0^2.
 \label{eq:global-negative-part-vanishing}
\end{align}
Thus positivity is recovered quantitatively in the joint limit
\(\delta_0\downarrow0\), \(L\uparrow\infty\), rather than asserted at an
intermediate signed approximation.

The finite activity moments are compact uniformly in both cutoffs.  Indeed,
\[
 \int_{D\times Y}M\beta^L_{\delta_0}(h)
 \le \rho[h]+\int_{D\times Y}M[h]_-+\delta_0.
\]
After integration in \(x\), the density maximum principle,
\eqref{eq:global-negative-part-vanishing}, and
\eqref{eq:one-step-moment-H1} give a cutoff-independent \(BV_x\) bound; no
pointwise \(H^1_x\) bound is asserted.  The time-translation estimate proved
in Step 3 below is uniform for the same reason.  The compactness criterion of
Lemma \ref{lem:compactness-tools} therefore gives
\begin{equation}
 \eta[h_{\delta_0,L}]\longrightarrow\eta[h]
 \quad\text{strongly in }L^1((0,T)\times\Omega;\R^{N_\eta})
 \quad\text{and almost everywhere}.
 \label{eq:joint-cutoff-moment-identification}
\end{equation}
The activity converges strongly in every finite \(L^p\), and its product with
the weakly convergent density is identified.  Lemma
\ref{lem:joint-cutoff-removal} simultaneously supplies the actual free-energy
and Fisher-information lower bounds and, through
\eqref{eq:joint-cutoff-Jeffreys-liminf}, the actual Jeffreys production.  This
joint argument is the point at which positivity, the nonlinear reaction rate,
and continuum-state dissipation are closed without any compact embedding in
\(Y\).
Testing by a function
independent of \((q,y)\) gives the scalar advection--diffusion equation for
\(\rho_{L,N}\); its maximum principle proves
\eqref{eq:v35-number-density-maximum} uniformly.  The only coupled rate is
the cutoff remainder \(O(\Delta t L)\).  Choose explicitly
\begin{equation}
 \begin{gathered}
 L_j\uparrow\infty,\qquad \ell_j\uparrow\infty,\qquad
 N_j\uparrow\infty,\qquad m_j\uparrow\infty,
 \qquad \Delta t_j\downarrow0,\qquad \delta_{0,j}\downarrow0,\\
 \Delta t_jL_j\le j^{-1},\qquad
 (\log L_j)^{-1/2}+\delta_{0,j}^{1/2}\le j^{-1}.
 \end{gathered}
 \label{eq:diffusive-public-diagonal}
\end{equation}
Enumerate countable determining families of fluid and kinetic tests.  After the
uniform estimates in Steps 2--6 have been obtained, choose the \(j\)-th tuple
so that, for the first \(j\) tests \((w_m,\varphi_m)\),
\begin{equation}
 |\mathfrak R^u_j(w_m)|+|\mathfrak R^h_j(\varphi_m)|\le j^{-1}
 \qquad(1\le m\le j),
 \label{eq:diffusive-test-residuals}
\end{equation}
where the residuals are the differences between the time-interpolated
Galerkin identities and \eqref{eq:v35-weak-momentum}--
\eqref{eq:v35-weak-kinetic}, including the replacement of
\(h_{L,N}\) by \(\beta^{L}_{\delta_0}(h_{L,N})\) in the drag.  The latter
contribution is bounded by
\(C_{\varphi_m}[(\log L_j)^{-1/2}+\delta_{0,j}^{1/2}]\): this follows from the
entropy tail, \eqref{eq:global-negative-part-vanishing}, the number-density
maximum principle, and Cauchy--Schwarz against
\(\nabla u_{L,N}\), exactly as quantified in
\eqref{eq:drag-cutoff-removal} below.  Galerkin projection, state-partition,
and initial-truncation errors are made at most \(j^{-1}\) by increasing
\(N_j\), \(m_j\), and \(\ell_j\) after
\((L_j,\Delta t_j,\delta_{0,j})\) have been fixed.  The
lower regularization remains on this same explicit diagonal; by
\eqref{eq:global-negative-part-vanishing} its negative part is
\(O(\delta_{0,j})\), and by \eqref{eq:joint-cutoff-L1-distance} its drag
coefficient differs from the density by
\(O(\delta_{0,j}+(\log L_j)^{-1})\) in \(L^1_M\).  This diagonal
simultaneously removes time, Galerkin, state-partition, both entropy cutoffs,
and initial-truncation
parameters and makes every discrete residual vanish.  The estimates are uniform in all parameters removed
later, so the construction is made on the whole interval \([0,T]\); no local
restart or continuation criterion is assumed.  From this point on we write
\((u_j,h_j)\) for the resulting diagonal interpolants; occurrences of
\((u_{L,N},h_{L,N})\) in parameter-independent estimates refer to this same
sequence.

\paragraph{2. Uniform integrability and time compactness.}
The de la Vall\'ee--Poussin criterion applied to
\(h\mapsto h\log h-h+1\) makes the positive parts of
\(Mh_{\delta_0,L,N}\) uniformly integrable on the full
\((t,x,q,y)\) cylinder, while
\eqref{eq:global-negative-part-vanishing} makes the negative parts converge
strongly to zero.  Thus the signed approximations are uniformly integrable
and every weak limit is non-negative.  Weighted Cauchy--Schwarz gives, for
\(P_j=\beta^{L_j}_{\delta_{0,j}}\),
\begin{equation}
 \int M|\nabla h_j|
 \le\left(\int MP_j(h_j)\right)^{1/2}
 \left(\int M\frac{|\nabla h_j|^2}{P_j(h_j)}\right)^{1/2}\le C_T,
 \label{eq:v35-L1-flux-bound}
\end{equation}
for either \(\nabla=\nabla_x\) or \(\nabla_q\).  Testing the equation against bounded Lipschitz functions
gives a uniform time-translation bound in the dual of
\(W^{1,\infty}_{x,q}(C(Y))\).  Consequently the state average
\(\bar h_{L,N}:=\int_Yh_{L,N}\dd\pi\) is strongly compact in
\begin{equation}
 M\bar h_{L,N}\longrightarrow M\bar h
 \quad\text{in }L^1((0,T)\times\Omega\times D).
 \label{eq:v35-state-average-strong}
\end{equation}
Indeed, reaction cancels after state integration, so
\eqref{eq:v35-L1-flux-bound} supplies the derivative hypothesis in the first
part of Lemma \ref{lem:compactness-tools}; its entropy-tightness argument
removes the FENE boundary collar.  The full state-resolved sequence is only
weakly compact in \(L^1\):
\begin{equation}
 Mh_{L,N}\rightharpoonup Mh
 \quad\text{in }L^1((0,T)\times\Omega\times D\times Y).
 \label{eq:v35-state-density-weak}
\end{equation}
This distinction is essential on a non-atomic state space.

\paragraph{3. Strong finite moments and nonlinear activity.}
Write
\(\Phi_{a,j}(q,y)=(\Pi_{m_j}c_a)(y)\psi_a(q)\).  Since \(h_j\) is
\(\mathcal P_{m_j}\)-measurable,
\(\eta_a[h_j]=\int Mh_j\Phi_{a,j}\), while
\(\|\Phi_{a,j}\|_{W^{1,\infty}_q(L^\infty_y)}\le C_a\).  For
\(\zeta\in W^{1,\infty}(\Omega)\), testing the regularized kinetic equation
by the admissible function \(\Phi_{a,j}\zeta\) gives, for the standard affine
time interpolant,
\begin{align}
 \langle\partial_t\eta_a,\zeta\rangle
 &=\int M h_j\,u_j\cdot\nabla_x\zeta\,\Phi_{a,j}
 +\int MP_j(h_j)(\nabla_xu_j)q\cdot\nabla_q\Phi_{a,j}\,\zeta
 \notag\\
 &\quad-\delta\int M\nabla_xh_j\cdot\nabla_x\zeta\,\Phi_{a,j}
 -\frac1{2{\rm Wi}}\int M\nabla_qh_j\cdot\nabla_q\Phi_{a,j}\,\zeta
 \notag\\
 &\quad-\frac12\int_{\Omega\times D\times Y^2}
 MK_{h_j}(h_j'-h_j)(\Phi_{a,j}'-\Phi_{a,j})\zeta
 +\mathfrak r_{\Delta t}(\zeta),
 \label{eq:finite-moment-distributional-evolution}
\end{align}
where the time-interpolation residual tends to zero in
\(L^1(0,T;(W^{1,\infty})')\) along
\eqref{eq:diffusive-public-diagonal}.  The first two terms are bounded by
\begin{align*}
 C_a\bigl[&R_\rho\|u_j(t)\|_{L^1_x}
 +\|u_j(t)\|_{L^2_x}\|n_j(t)\|_{L^2_x}\\
 &+R_\rho\|\nabla_xu_j(t)\|_{L^1_x}
 +\|\nabla_xu_j(t)\|_{L^2_x}
 \|n_j(t)+\delta_{0,j}\|_{L^2_x}\bigr]
 \|\zeta\|_{W^{1,\infty}},
\end{align*}
where \(n_j(x):=\int_{D\times Y}M[h_j]_-(x,q,y)\dd q\dd\pi\) and
\(\|n_j\|_{L^2_{t,x}}\le C\delta_{0,j}\).  The two diffusion fluxes satisfy
\begin{equation}
 \begin{aligned}
 &\int M|\nabla_xh_j|+\int M|\nabla_qh_j|\\
 &\quad\le\left(\int MP_j(h_j)\right)^{1/2}
 \left[\left(\int M\frac{|\nabla_xh_j|^2}{P_j(h_j)}\right)^{1/2}
 +\left(\int M\frac{|\nabla_qh_j|^2}{P_j(h_j)}\right)^{1/2}\right].
 \end{aligned}
 \label{eq:finite-moment-flux-bound}
\end{equation}
and the reaction term is at most
\(2\|k\|_\infty a^*\|\Phi_{a,j}\|_\infty
\int M|h_j|\,\|\zeta\|_\infty\).
All right-hand sides are integrable in time uniformly along the diagonal.
Moreover,
\begin{equation}
 \int|\nabla_x\eta_a[h_j]|
 \le\|\Phi_{a,j}\|_\infty
 \left(\int MP_j(h_j)\right)^{1/2}
 \left(\int M\frac{|\nabla_xh_j|^2}{P_j(h_j)}\right)^{1/2}
 \le C_a.
 \label{eq:finite-moment-BV-bound}
\end{equation}
Consequently
\begin{equation}
 \|\partial_t\eta_a[h_{L,N}]\|_{L^1(0,T;(W^{1,\infty}(\Omega))')}
 +\|\nabla_x\eta_a[h_{L,N}]\|_{L^1((0,T)\times\Omega)}\le C_a.
 \label{eq:v36-finite-moment-compactness-bound}
\end{equation}
The estimate
\(|\eta_a[h_j]|\le
\|c_a\|_\infty\|\psi_a\|_\infty(\rho[h_j]+2n_j)\) and the maximum principle
give a uniform \(L^2_{t,x}\) bound.  The second part of Lemma
\ref{lem:compactness-tools}, applied to
\eqref{eq:v36-finite-moment-compactness-bound}, first gives strong \(L^1\)
convergence.  Since the limit satisfies
\(|\eta_a[h]|\le C_aR_\rho\), the envelope above yields
\[
 \|\eta_a[h_j]-\eta_a[h]\|_2^2
 \le C_aR_\rho\|\eta_a[h_j]-\eta_a[h]\|_1
 +C_a\|n_j\|_2\|\eta_a[h_j]-\eta_a[h]\|_2.
\]
Hence
\begin{equation}
 \eta_a[h_j]\longrightarrow\eta_a[h]
 \quad\text{strongly in }L^p((0,T)\times\Omega)
 \quad\text{for every }1\le p\le2
 \quad(1\le a\le N_\eta).
 \label{eq:v35-activity-moment-strong}
\end{equation}
After extraction the convergence is almost everywhere.  The Lipschitz and
boundedness assumptions on \(\mathfrak a\) imply convergence in measure and,
by dominated convergence, in every finite \(L^p\) of the effective kernels.
More precisely, the number-density maximum principle and
\eqref{eq:global-negative-part-vanishing} give the quantitative weak--strong product
estimate
\[
 \kappa_j(t,x):=\mathop{\rm ess\,sup}_{q,y,y'}
 |K_{h_j}(t,x,q,y,y')-K_h(t,x,q,y,y')|
 \le \|k\|_\infty A\sum_a|\eta_a[h_j]-\eta_a[h]|,
\]
and hence
\begin{equation}
 \int M|h_j||K_{h_j}-K_h|
 \le
 R_\rho\|\kappa_j\|_{L^1_{t,x}}
 +2\|n_j\|_{L^2_{t,x}}\|\kappa_j\|_{L^2_{t,x}}
 \longrightarrow0.
 \label{eq:v35-activity-weighted-identification}
\end{equation}
Combining this with \eqref{eq:v35-state-density-weak} identifies the reaction
term in \eqref{eq:v35-weak-kinetic}.  The coefficient converges strongly and
the state density weakly; no strong convergence in \(y\) is asserted.

\paragraph{4. Stress compactness and identification.}
Conditions \eqref{eq:v36-BS-potential-a}--
\eqref{eq:v36-BS-potential-b} give
\(M|\nabla_qU|^2\asymp\operatorname{dist}(q,\partial D)^{\gamma-2}\),
which is integrable exactly when \(\gamma>1\); for the Warner potential this is
\(b>2\).  Set \(\widetilde h_j:=[h_j]_++\delta_{0,j}\).  Since
\begin{equation}
 |\nabla_q\sqrt{\widetilde h_j}|^2
 =\frac{\mathbf1_{\{h_j>0\}}|\nabla_qh_j|^2}
 {4([h_j]_++\delta_{0,j})}
 \le\frac14\frac{|\nabla_qh_j|^2}{P_j(h_j)},
 \label{eq:signed-cutoff-square-root-control}
\end{equation}
Lemma \ref{lem:FENE-Hardy-tail}, integrated also over \(Y\), controls the
stress generated by \(\widetilde h_j\).  The remaining pieces satisfy
\begin{equation}
 \left\|\int M(\nabla_qU\otimes q)[h_j]_-\dd q\dd\pi\right\|_{L^2_{t,x}}
 \le \|\nabla_qU\otimes q\|_{L^2_M(D)}\|[h_j]_-\|_{L^2_M}
 \le C\delta_{0,j},
 \label{eq:negative-stress-vanishing}
\end{equation}
and the artificial constant \(\delta_{0,j}\) contributes \(O(\delta_{0,j})\)
because \(\nabla_qU\otimes q\in L^1_M(D)\).  Consequently the stresses are
bounded in \(L^1((0,T)\times\Omega)\):
\begin{equation}
 \|\tau_j\|_{L^1_{t,x}}
 \le C_U\left[1+\left(\int_0^T\!\int M
 \frac{|\nabla_qh_j|^2}{P_j(h_j)}\right)^{1/2}\right]\le C_T.
 \label{eq:v35-stress-Hardy-bound}
\end{equation}
No equiintegrability in \((t,x)\) is needed at the approximation level.  To
identify the stress distribution, truncate
\(W=\nabla_qU\otimes q\) by \(W_A=W\mathbf1_{\{|W|\le A\}}\).
Equation \eqref{eq:v35-state-average-strong} gives strong \(L^1_{t,x}\)
convergence of \(\int_DMW_A\bar h_j\dd q\).  Estimate
\eqref{eq:FENE-stress-tail}, applied to \(\widetilde h_j\), together with
\eqref{eq:negative-stress-vanishing}, gives a modulus
\(\omega_U(A)\downarrow0\) such that
\begin{equation}
 \limsup_{j\to\infty}\left\|\int_{D\times Y} M(W-W_A)
 h_j\dd q\dd\pi\right\|_{L^1_{t,x}}
 \le\omega_U(A).
 \label{eq:v35-stress-uniform-tail}
\end{equation}
The limiting Fisher bound and
\eqref{eq:v35-number-density-maximum} give the same tail estimate for \(h\).
If \(\Xi\in L^\infty((0,T)\times\Omega)\), then
\begin{align}
 \left|\int (\tau_j-\tau_Y[h]):\Xi\right|
 &\le \left|\int (\tau^A_j-\tau^A_Y[h]):\Xi\right|
 +2\|\Xi\|_\infty\omega_U(A),
 \label{eq:stress-two-limit-identification}
\end{align}
where \(\tau^A\) denotes insertion of \(W_A\).  The first term tends to zero
at fixed \(A\) by \eqref{eq:v35-state-average-strong}; letting \(A\to\infty\)
identifies the distributional limit as \(\tau_Y[h]\).  After the limit has
been taken, \(h\ge0\), \(\rho[h]\le R_\rho\), and
\(\sqrt h\in L^2_tH^1_{M,q}\) by
\eqref{eq:joint-cutoff-Fisher-liminf}.  Applying
\eqref{eq:FENE-stress-L2} to \(h\) now gives
\begin{equation}
 \tau_Y[h]\in L^2((0,T)\times\Omega),
 \label{eq:v35-stress-weak-compactness}
\end{equation}
with strong convergence only for bounded stress truncations.  No strong
stress convergence is assumed or claimed.

\paragraph{5. Fluid and deformation limits.}
The energy bound gives weak convergence of \(u_j\) in
\(L^2(0,T;V_\sigma)\).  In three dimensions interpolation gives
\begin{equation}
 \|u_j\|_{L^{8/3}(0,T;L^4(\Omega))}\le C,
 \qquad
 \|(u_j\cdot\nabla)u_j\|_{L^{4/3}(0,T;V_\sigma')}
 \le C;
 \label{eq:v36-3d-convection-bound}
\end{equation}
in two dimensions the same bound follows from Ladyzhenskaya's inequality
(with a better time exponent).  For \(\phi\in V_{\sigma,s}\), self-adjointness
of \(P_{N_j}\) in \(L^2\) and \(\partial_tu_j\in X_{N_j}\) give
\[
 \langle\partial_tu_j,\phi\rangle
 =\langle\partial_tu_j,P_{N_j}\phi\rangle,
 \qquad
 \|P_{N_j}\phi\|_{W^{1,\infty}}
 \le C\|P_{N_j}\phi\|_{V_{\sigma,s}}
 \le C\|\phi\|_{V_{\sigma,s}}.
\]
The projected momentum equation, the convection estimate above, and
\eqref{eq:v35-stress-Hardy-bound} therefore bound
\(\partial_tu_j\) in \(L^1(0,T;V_{\sigma,s}')\), uniformly in both
dimensions.  The compact embedding \(V_\sigma\Subset H_\sigma\), the
continuous embedding \(H_\sigma\hookrightarrow V_{\sigma,s}'\), and Simon's
time-compactness theorem give
\begin{equation}
 u_j\longrightarrow u
 \quad\text{strongly in }L^2(0,T;L^2(\Omega)),
 \label{eq:v36-Leray-strong}
\end{equation}
which identifies \(u\otimes u\) in \(L^1\).

For the deformation term, a fixed test defines the bounded moment
\begin{equation}
 G_j(t,x):=\int_{D\times Y}Mh_j
 q\cdot\nabla_q\varphi(t,x,q,y)\dd q\dd\pi.
 \label{eq:v36-deformation-moment}
\end{equation}
To justify compactness for this non-autonomous observable, first take
\(\varphi\) smooth with bounded derivatives through order two in \((x,q)\),
put \(\varphi_j=\Pi_{m_j}\varphi\), and set
\(\Psi_j=q\cdot\nabla_q\varphi_j\).  Since \(h_j\) is
\(\mathcal P_{m_j}\)-measurable, conditional expectation gives
\(G_j=\int Mh_j\Psi_j\) exactly.  Moreover, all required \((t,x,q)\)
derivative bounds of \(\Psi_j\) are bounded by those of \(\varphi\).
Repeating
\eqref{eq:finite-moment-distributional-evolution}, now accounting also for
\(\partial_t\Psi\) and \(\nabla_x\Psi\), gives
\begin{equation}
 \|\partial_tG_j\|_{L^1(0,T;(W^{1,\infty}(\Omega))')}
 +\|\nabla_xG_j\|_{L^1((0,T)\times\Omega)}
 \le C_\varphi.
 \label{eq:deformation-moment-compactness-bound}
\end{equation}
Indeed, the terms containing derivatives of \(\Psi_j\) are bounded directly by
\(R_\rho\), while transport, diffusion, deformation, and reaction are bounded
exactly as in \eqref{eq:finite-moment-flux-bound}--
\eqref{eq:finite-moment-BV-bound}.  Lemma \ref{lem:compactness-tools} gives
strong \(L^1_{t,x}\) convergence along a subsequence.  Its limit is
\(G(t,x)=\int Mh\Psi\): this follows by testing
\eqref{eq:v35-state-density-weak} against scalar functions of \((t,x)\) times
\(\Psi\).  A countable dense family in the smooth \((t,x,q)\) variables,
tensored with simple functions from a countable Borel algebra generating
\(Y\), permits one common subsequence.  Bounded simple-function approximation
in \(y\), followed by the monotone-class theorem and dominated convergence for
each finite flux measure, extends the conclusion to every test in Definition
\ref{def:v35-large-data-weak-solution}.  Since \(D\) and
\(q\cdot\nabla_q\varphi\) are bounded,
\(|G_j|\le C_\varphi(R_\rho+2n_j)\), whereas
\(|G|\le C_\varphi R_\rho\).  Therefore
\[
 \|G_j-G\|_2^2\le C_\varphi R_\rho\|G_j-G\|_1
 +C_\varphi\|n_j\|_2\|G_j-G\|_2,
\]
and the strong \(L^1\) convergence upgrades to strong \(L^2_{t,x}\).
Pairing it with the weak
\(L^2_{t,x}\) convergence of \(\nabla_xu_j\) closes the kinetic
deformation term after removal of the cutoff.  Indeed, with
\(P_j=\beta^{L_j}_{\delta_{0,j}}\) and
\[
 d_j(t,x):=\int_{D\times Y}M
 \left|h_j-P_j(h_j)\right|\dd q\dd\pi,
\]
the entropy bound, \eqref{eq:global-negative-part-vanishing}, and
\(\rho[h_j]\le R_\rho\) give
\begin{equation}
 \int d_j\le C\left(\frac1{\log L_j}+\delta_{0,j}\right),
 \qquad
 \|d_j\|_{L^2_{t,x}}^2
 \le C R_\rho\|d_j\|_{L^1_{t,x}}
 +C\|[h_j]_-\|_{L^2_M}^2\longrightarrow0.
 \label{eq:drag-cutoff-removal}
\end{equation}
To obtain the second estimate, split into \(h_j>L_j\),
\(0\le h_j\le L_j\), and \(h_j<0\), and use weighted
Cauchy--Schwarz in \((q,y)\) on the last set.  Consequently the moment formed
with \(P_j(h_j)\) has the same strong \(L^2\) limit as \(G_j\).  Thus neither three-dimensional convection nor drag
requires strong state-resolved density or strong velocity-gradient
convergence.

\paragraph{6. Jeffreys lower semicontinuity.}
On the fixed measure
\(M(q)k(y,y')\dd r\dd x\dd q\dd\pi\dd\pi'\), the two coordinate lifts of
\eqref{eq:v35-state-density-weak} converge weakly in \(L^1\): for a bounded
test on \(Y^2\), integration in the unused state coordinate produces a
bounded test on \(Y\).  The activities converge in measure by
\eqref{eq:v35-activity-moment-strong} and stay between \(a_*\) and \(a^*\).
The projection comparison
\eqref{eq:regularized-Jeffreys-projection-bound} and Lemma
\ref{lem:joint-cutoff-removal} therefore give
\begin{equation}
 \int_0^t\mathcal D_Y(Mh)\dd r
 \le\liminf_{j\to\infty}\int_0^t
 \mathcal D^{L_j}_{\delta_{0,j},Y}(h_j)\dd r.
 \label{eq:v35-Jeffreys-lsc}
\end{equation}
The Fisher and viscous dissipation integrands are convex, so weak lower
semicontinuity applies to them.  This proves
\eqref{eq:v35-large-data-energy-entropy}.

\paragraph{7. Initial trace and mass.}
The bounds \eqref{eq:v36-3d-convection-bound} and the momentum equation give
\(u\in C_w([0,T];H_\sigma)\).  For the kinetic density, the uniform
time-translation estimate gives continuity against a countable determining
class of bounded tests that are smooth in \((x,q)\) and simple on a countable
Borel algebra generating \(Y\).  Lusin approximation in \((x,q,y)\) and the
uniform-integrability modulus extend the continuity to every \(L^\infty\)
test.  Uniform integrability from
the entropy bound upgrades this to
\(Mh\in C_w([0,T];L^1(\Omega\times D\times Y))\).  Passing in the
time-integrated weak forms with strongly convergent approximate initial data
identifies
\(u(0)=u^{\rm in}\) and \(Mh(0)=Mh^{\rm in}\).  Taking
\(\varphi\equiv1\) in \eqref{eq:v35-weak-kinetic} annihilates both diffusion
fluxes, transport, and the antisymmetric reaction pair, giving
\begin{equation}
 \int_{\Omega\times D\times Y}Mh(t)
 =\int_{\Omega\times D\times Y}Mh^{\rm in}
 \quad\text{for every }t\in[0,T].
 \label{eq:v35-exact-mass}
\end{equation}
Weak closure of the positive cone gives \(h\ge0\).  The scalar test already
used in Step 1 passes to the limit and proves
\eqref{eq:v35-number-density-maximum}.  The construction is global because
all continuation bounds depend only on the initial energy, entropy, number
density, and fixed coefficients, not on a smallness threshold.
\end{proof}

\begin{corollary}[Strict infinite-state positive-diffusion extension]
\label{cor:strict-positive-diffusion-extension}
In Theorem~\ref{thm:v35-large-data-coupled-existence}, take
\(Y=\T^1\) and \(k=k_\infty\) from Proposition
\ref{prop:infinite-rank-state-kernel}.  Then every admissible large datum and
every activity satisfying
\eqref{eq:continuum-activity-bounds}--
\eqref{eq:continuum-activity-lipschitz} generate a global energy--entropy weak
solution.  Even for constant activity, the state reaction in this solution
cannot be reduced by a linear state identification to a scalar FENE equation
or to finitely many reacting FENE species.
\end{corollary}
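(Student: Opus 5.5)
The corollary has two parts, and I will prove them separately: existence, and then non-reducibility of the state reaction.

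For existence, I check that the data $Y=\T^1$, $\pi$ normalized Lebesgue measure, and $k=k_\infty$ satisfy the standing hypotheses of Section~\ref{sec:v35-large-data-coupled}. The space $\T^1$ is a compact metric space, and Lebesgue measure has full support. Proposition~\ref{prop:infinite-rank-state-kernel} shows that $k_\infty$ is continuous, symmetric, non-negative and bounded by two, so \eqref{eq:continuum-kernel} holds. The observables \eqref{eq:continuum-observables} and the activity bounds \eqref{eq:continuum-activity-bounds}--\eqref{eq:continuum-activity-lipschitz} are assumed in the corollary itself, and the spectral gap is not needed. Theorem~\ref{thm:v35-large-data-coupled-existence} then applies directly to every datum satisfying \eqref{eq:v35-large-data-initial-class}. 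It gives a global weak solution together with the energy--entropy inequality \eqref{eq:v35-large-data-energy-entropy}, identified stress, and identified Jeffreys production. Nothing in that proof used finiteness of $Y$ or compactness in $y$; compactness was needed only for the finitely many moments.

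For non-reducibility, I take constant activity $\mathfrak a\equiv a_0$. The reaction is then $a_0\mathcal L_\infty$ acting fibrewise in $y$ and independent of $(t,x,q)$. Suppose there were a linear state identification $S:L^2_\pi(\T^1)\to\R^N$ carrying the reaction to a finite-species generator $G$, i.e.\ with the intertwining $S\mathcal L_\infty=GS$. I would make precise what "reduction" means by asking that $S$ be injective on the reaction's range, or equivalently that it be an isomorphism onto its image on the dynamically relevant subspace. Under such a reduction, the restriction of $\mathcal L_\infty$ to the span of the $e_n$ would be conjugate to a restriction of $G$ and would therefore have rank at most $N$. This contradicts the infinitely many distinct non-zero eigenvalues $-(1-2^{-|n|-1})$ in Proposition~\ref{prop:infinite-rank-state-kernel}. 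A scalar FENE equation is the case $N=1$ with zero reaction, so it is excluded in the same way.

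The step I expect to be the main obstacle is fixing a definition of "reduction by a linear state identification" that is strong enough to forbid trivial projections. State averaging, for instance, does intertwine with the zero generator, since $\int_Y\mathcal R_hh\dd\pi=0$. The precise claim should therefore be that no injective (or spectrally faithful) linear map conjugates the reaction to a finite-state one, which is exactly the last assertion of Proposition~\ref{prop:infinite-rank-state-kernel}. I would state this reading explicitly and cite that proposition.
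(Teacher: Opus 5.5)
Your proposal is correct and follows the paper's own proof. The paper also verifies the kernel hypotheses through Proposition~\ref{prop:infinite-rank-state-kernel} to invoke Theorem~\ref{thm:v35-large-data-coupled-existence}, and then argues that a scalar or $N$-species generator has rank at most $N$ while $\mathcal L_\infty$ has infinite rank. Your caveat that the identification must be injective or conjugating is a sensible explicit sharpening of the similarity the paper uses implicitly. The caveat is needed because state averaging does intertwine $\mathcal L_\infty$ with the zero generator.
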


\begin{proof}
Proposition~\ref{prop:infinite-rank-state-kernel} verifies all kernel
hypotheses, so existence follows from Theorem
\ref{thm:v35-large-data-coupled-existence}.  A scalar model has no nontrivial
state generator.  A model with \(N\) species has a generator of rank at most
\(N\), whereas \(\mathcal L_\infty\) has infinite rank.  Therefore no exact
linear state identification reduces the displayed continuum-state reaction
to either class.
\end{proof}

\begin{remark}[What is inherited from scalar FENE compactness]
\label{rem:strict-comparison-scalar-FENE}
When \(Y\) is a singleton, the reaction and Jeffreys terms disappear and the
positive-diffusion construction reduces to the scalar compactness mechanism
used in the classical theory \citep{barrett2011existence}.  Three passages in
the present proof are not consequences of that scalar theorem: approximation
of the non-atomic state fibre by finite sigma-algebras, strong compactness of
the selected state-sensitive moments while \(h_j\) remains weak in \(Y\), and
joint lower semicontinuity of the two-copy Jeffreys integrand after activity
identification and entropy-cutoff removal.  Proposition
\ref{prop:infinite-rank-state-kernel} shows that these passages are needed for
an admissible system that cannot be reduced by a linear finite-state change of
variables.  This is the precise sense in which Theorem
\ref{thm:v35-large-data-coupled-existence} extends, rather than restates, the
scalar weak-existence theory.
\end{remark}

\begin{remark}[Exact boundary of the activity class]
\label{rem:v35-activity-boundary}
The proof closes for prescribed bounded uniformly positive symmetric
activities and, more
generally, for the bounded finite-moment class above because
\eqref{eq:v35-activity-moment-strong} identifies the coefficient.  It does
 not by itself cover pointwise density-dependent rates, unbounded
 state-sensitive moment observables, or
activities that are merely weakly continuous on the state-resolved density.
In those classes, \eqref{eq:v35-state-density-weak} does not identify the
coefficient, and the product in
\eqref{eq:v35-activity-weighted-identification} may develop a defect.  A
solution-dependent reference measure would create an additional obstruction:
the relative entropy and detailed-balance measure would vary with the
approximation, so \eqref{eq:v35-Jeffreys-lsc} would no longer follow from the
fixed-measure convex argument.  These are compactness obstacles, not evidence
of non-existence.  No uniqueness statement is made for any of these
large-data classes.
\end{remark}

\section{Zero centre-of-mass diffusion: matrix fibres and projection--lift equivalence}
\label{sec:v37-zero-diffusion}

We now set the centre-of-mass diffusion equal to zero.  This is not the
formal substitution \(\delta=0\) in Theorem
\ref{thm:v35-large-data-coupled-existence}: all spatial Fisher compactness
used there disappears.  The replacement is the propagation-of-compactness
argument of Masmoudi \citep{masmoudi2013global}, applied only to the closed
state average.  The continuum state density is then recovered from a linear
Markov lift, so no compact embedding or defect liminf in \(Y\) is needed.
The theorem is stated for corotational drag in two dimensions and a prescribed
kernel.  The distinction from general drag is made explicit in Remark
\ref{rem:v37-general-drag-obstruction}.

Let \(\Omega=\T^2\), \(D=B_{\sqrt b}(0)\), and let \(U,M\) be given by
\eqref{eq:fene-potential}, with \(b>2\).  Put
\begin{equation}
 \mathsf W(u):=\frac12(\nabla_xu-(\nabla_xu)^T).
 \label{eq:v37-corotational-gradient}
\end{equation}
Let \((Y,\pi)\) be the compact probability space of Section
\ref{sec:continuum-state}, and fix
\begin{equation}
 K_0\in L^\infty(D\times Y\times Y),\qquad
 K_0(q,y,y')=K_0(q,y',y)\ge0.
 \label{eq:v37-fixed-kernel}
\end{equation}
Thus \(K_0\) may vary with configuration and state, but not with the
solution.  Define
\begin{equation}
 (\mathcal R_0h)(q,y):=\int_YK_0(q,y,y')
 [h(q,y')-h(q,y)]\dd\pi(y').
 \label{eq:v37-fixed-reaction}
\end{equation}
The zero-diffusion kinetic equation is
\begin{equation}
 \partial_t(Mh)+u\cdot\nabla_x(Mh)
 =-\nabla_q\cdot(\mathsf W(u)qMh)
 +\frac1{2{\rm Wi}}\nabla_q\cdot(M\nabla_qh)+M\mathcal R_0h.
 \label{eq:v37-zero-diffusion-kinetic}
\end{equation}
Because \(U\) is radial,
\(\mathsf W(u)q\cdot\nabla_qU=0\) and
\(\nabla_q\cdot(\mathsf W(u)q)=0\).  These identities, which fail with
\(\mathsf W(u)\) replaced by the full \(\nabla_xu\), are the precise
corotational simplification used below.

For \(\rho^{\rm in}(x):=\int_{D\times Y}Mh^{\rm in}\dd q\dd\pi\), set
\begin{equation}
 \mathcal L_2(h^{\rm in}):=
 \int_\Omega\left[\int_{D\times Y}Mh^{\rm in}
 \left|\log\frac{h^{\rm in}}{\rho^{\rm in}}\right|^2
 \dd q\dd\pi\right]^{1/2}\dd x,
 \label{eq:v37-log2-initial}
\end{equation}
with the integrand set to zero on \(\{\rho^{\rm in}=0\}\).  This is the
finite-measure form of Masmoudi's additional logarithmic-square hypothesis,
now integrated over the state fibre.  It is not a consequence of finite
relative entropy and is therefore stated separately.

\begin{definition}[Zero-diffusion corotational weak solution]
\label{def:v37-zero-diffusion-weak}
Let \(\mathcal K\in L^\infty((0,T)\times\T^2\times D\times Y^2)\) be
measurable, symmetric in \((y,y')\), and non-negative.  A pair \((u,h)\) on
\([0,T]\) is a zero-diffusion corotational weak solution with reaction
coefficient \(\mathcal K\) if
\begin{align}
 u&\in L^\infty(0,T;L^2_\sigma(\T^2))
 \cap L^2(0,T;H^1_\sigma(\T^2))
 \cap C_w([0,T];L^2_\sigma),
 \label{eq:v37-u-class}\\
 Mh&\in L^\infty(0,T;L^1(\Omega\times D\times Y)),\qquad
 Mh\log h\in L^\infty(0,T;L^1),
 \label{eq:v37-h-class}\\
 \nabla_q\sqrt h&\in L^2((0,T)\times\Omega\times D\times Y;M),
 \qquad \tau_Y[h]\in L^2((0,T)\times\Omega),
 \label{eq:v37-fisher-stress-class}
\end{align}
and \(Mh\in C_w([0,T];L^1)\).  It satisfies the momentum identity
\eqref{eq:v35-weak-momentum} and the kinetic identity below for every test
with the measurability and bounded derivative properties stated in Definition
\ref{def:v35-large-data-weak-solution}:
\begin{align}
 &\int Mh(t)\varphi(t)-\int Mh^{\rm in}\varphi(0)
 -\int_0^t\!\int Mh(\partial_r\varphi+u\cdot\nabla_x\varphi
 +\mathsf W(u)q\cdot\nabla_q\varphi)\notag\\
 &\quad=-\frac1{2{\rm Wi}}\int_0^t\!\int
 M\nabla_qh\cdot\nabla_q\varphi
 +\frac12\int_0^t\!\int_{\Omega\times D\times Y^2}
  M\mathcal K(h'-h)(\varphi-\varphi')
 \dd\pi\dd\pi'\dd q\dd x\dd r.
 \label{eq:v37-zero-diffusion-weak-form}
\end{align}
The initial traces in both identities are part of the definition.
\end{definition}

\begin{lemma}[Reaction renormalization identity]
\label{lem:v37-reaction-defect-sign}
Let \(\beta\in C^2([0,\infty))\) be convex with bounded \(\beta'\) and
\(\beta''\ge0\).  For every non-negative \(h\),
\begin{align}
 \int_Y\beta'(h)\mathcal R_0h\dd\pi
 &=-\frac12\int_{Y^2}K_0(h'-h)
 [\beta'(h')-\beta'(h)]\dd\pi\dd\pi'\le0,
 \label{eq:v37-reaction-renormalized-sign}\\
 \int_Y\mathcal R_0h\dd\pi&=0.
 \label{eq:v37-reaction-average-zero}
\end{align}
For the entropy renormalization, approximation by bounded derivatives and
Lemma \ref{lem:fixed-measure-Jeffreys-liminf} show that the corresponding
Jeffreys dissipation is lower semicontinuous under weak \(L^1\) convergence.
No sign is asserted here for the defect associated with an arbitrary convex
\(\beta\): such a sign would additionally require joint convexity (or another
liminf theorem) for
\((r-s)(\beta'(r)-\beta'(s))\).
\end{lemma}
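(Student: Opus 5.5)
The plan is to prove both identities by the same symmetrization used for Lemma~\ref{lem:continuum-reaction-thermodynamics}, and then to obtain the entropy case by monotone approximation combined with Lemma~\ref{lem:fixed-measure-Jeffreys-liminf}.

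\textbf{Integrability.} Fix \(q\) and a non-negative \(h\in L^1_\pi(Y)\). Since \(K_0\) is bounded, \(\mathcal R_0h\in L^1_\pi\). Since \(\beta'\) is bounded, the integrand \(K_0(h'-h)\beta'(h)\) lies in \(L^1(\pi\otimes\pi)\). Fubini therefore applies without further hypotheses.

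\textbf{Symmetrization.} Write
\[
\int_Y\beta'(h)\mathcal R_0h\dd\pi=\int_{Y^2}K_0(q,y,y')(h'-h)\beta'(h)\dd\pi\dd\pi'.
\]
Exchange \(y\leftrightarrow y'\), use \(K_0(q,y,y')=K_0(q,y',y)\), and average the two expressions. This gives
\[
-\frac12\int_{Y^2}K_0(h'-h)[\beta'(h')-\beta'(h)]\dd\pi\dd\pi'.
\]
Convexity makes \(\beta'\) nondecreasing, so \((h'-h)(\beta'(h')-\beta'(h))\ge0\) pointwise. Together with \(K_0\ge0\), this yields the sign in \eqref{eq:v37-reaction-renormalized-sign}. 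Taking \(\beta(s)=s\), so that \(\beta'\equiv1\), the symmetrized integrand vanishes, which proves \eqref{eq:v37-reaction-average-zero}.

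\textbf{Entropy renormalization.} Here \(\beta'=\log\) is unbounded, so I would approximate it. Take \(\beta_\varepsilon\) with
\[
\beta_\varepsilon'(s)=\log\bigl(\max\{\varepsilon,\min\{s,1/\varepsilon\}\}\bigr),
\]
which is bounded and nondecreasing. Each \(\beta_\varepsilon\) satisfies the identity above. The integrands \((r-s)(\beta_\varepsilon'(r)-\beta_\varepsilon'(s))\) are nonnegative and increase monotonically to the Jeffreys integrand \(J(r,s)\) as \(\varepsilon\downarrow0\), with the lower semicontinuous extension at zero. Monotone convergence then identifies the limit of the symmetrized terms with the Jeffreys dissipation.

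For lower semicontinuity under \(h_n\rightharpoonup h\) in \(L^1\), work on the fixed measure \(\mu=K_0\dd\pi\dd\pi'\) (times \(M\dd q\dd x\dd t\)), taking the activity coefficients \(a_n=a=1\). The two coordinate lifts \((h_n',h_n)\) converge weakly in \(L^1(\mu)\), because integrating a bounded test over the unused coordinate gives a bounded test. Lemma~\ref{lem:fixed-measure-Jeffreys-liminf} then applies; it needs only joint convexity of \(J\), which its proof establishes.

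\textbf{Main obstacle.} The only delicate point is this last step: the liminf relies on joint convexity of \(J\). For a general convex \(\beta\), the map \((r,s)\mapsto(r-s)(\beta'(r)-\beta'(s))\) need not be jointly convex, so I would prove the lower semicontinuity only for the entropy case and, as the statement itself does, assert nothing about the defect for arbitrary \(\beta\).
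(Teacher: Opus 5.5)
Your proposal is correct and follows the paper's proof. It symmetrizes in \((y,y')\) and uses monotonicity of \(\beta'\), takes the constant test for the zero average, and handles the entropy case by truncation plus monotone convergence, then Lemma~\ref{lem:fixed-measure-Jeffreys-liminf} on the fixed measure \(MK_0\). One cosmetic point: your clamped logarithm is only \(C^{1,1}\), not \(C^2\), which is harmless because the identity uses only boundedness and monotonicity of \(\beta'\); otherwise mollify it, as the paper's ``smooth truncation'' does.
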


\begin{proof}
Exchange \(y\) and \(y'\), average the two oriented expressions, and use the
symmetry and non-negativity of \(K_0\).  The first identity follows, and its
right-hand side is non-positive by monotonicity of \(\beta'\).  The same
symmetrization with the constant test gives
\eqref{eq:v37-reaction-average-zero}.  Smooth truncation and monotone
convergence give the entropy identity, whose integrand is the jointly convex
Jeffreys integrand treated in Lemma
\ref{lem:fixed-measure-Jeffreys-liminf}.
\end{proof}

\begin{lemma}[Weighted fibre resolvent]
\label{lem:weighted-fibre-resolvent}
Let \(B\) be a skew-symmetric matrix, let
\(K\in L^\infty(D\times Y^2)\) be symmetric and non-negative, and put
\[
 \mathcal H=L^2(D\times Y;M\dd q\dd\pi),\qquad
 \mathcal V=\{v\in\mathcal H:\nabla_qv\in\mathcal H\}.
\]
For \(\alpha>0\), define on \(\mathcal V\)
\begin{align}
 \mathfrak b_{\alpha,B,K}(v,\phi)
 &:=\alpha\int_{D\times Y}Mv\phi
 +\frac1{2{\rm Wi}}\int_{D\times Y}M\nabla_qv\cdot\nabla_q\phi
 -\int_{D\times Y}MvBq\cdot\nabla_q\phi \notag\\
 &\quad+\frac12\int_{D\times Y^2}MK(v'-v)(\phi'-\phi).
 \label{eq:fibre-resolvent-form}
\end{align}
For every \(f\in\mathcal H\), there is a unique \(v\in\mathcal V\) such
that
\begin{equation}
 \mathfrak b_{\alpha,B,K}(v,\phi)=\alpha\int M f\phi
 \qquad(\phi\in\mathcal V).
 \label{eq:fibre-resolvent-equation}
\end{equation}
The resolvent is positivity preserving and mass preserving.  If \(f\ge0\),
then
\begin{align}
 &\alpha\int M(v\log v-v+1)
 +\frac2{{\rm Wi}}\int M|\nabla_q\sqrt v|^2 \notag\\
 &\quad+\frac12\int_{D\times Y^2}MK(v'-v)(\log v'-\log v)
 \le \alpha\int M(f\log f-f+1).
 \label{eq:fibre-resolvent-entropy}
\end{align}
For two data and the same coefficients, the corresponding solutions satisfy
\begin{equation}
 \|v_1-v_2\|_{L^1_M(D\times Y)}
 \le \|f_1-f_2\|_{L^1_M(D\times Y)}.
 \label{eq:fibre-resolvent-L1}
\end{equation}
\end{lemma}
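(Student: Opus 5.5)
Existence and uniqueness will come from Lax--Milgram applied to \(\mathfrak b_{\alpha,B,K}\) on \(\mathcal V\). Continuity is routine: \(q\) is bounded on \(D\), so \(|\int MvBq\cdot\nabla_q\phi|\le C|B|\|v\|_{\mathcal H}\|\nabla_q\phi\|_{\mathcal H}\), and the reaction form is bounded by \(2\|K\|_\infty\|v\|_{\mathcal H}\|\phi\|_{\mathcal H}\). For coercivity, the drift term vanishes on the diagonal. Writing \(\int MvBq\cdot\nabla_qv=\frac12\int MBq\cdot\nabla_q(v^2)\) and integrating by parts gives
\[
 -\tfrac12\int v^2\,\nabla_q\cdot(MBq)=0,
\]
because \(\operatorname{tr}B=0\) and \(Bq\cdot\nabla_qU=0\). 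The latter holds for skew \(B\) and radial \(U\), the setting of this section. The boundary term is handled exactly as in Lemma~\ref{lem:closed-drift-stress-pairing}, via cutoffs \(\chi_\delta\) and the tangency of \(Bq\) to \(\partial D\); it in fact vanishes identically since \(Bq\cdot n_D=0\). The reaction term equals \(\frac12\int MK|v'-v|^2\ge0\). Hence \(\mathfrak b(v,v)\ge\min\{\alpha,1/(2{\rm Wi})\}\|v\|_{\mathcal V}^2\), and Lax--Milgram yields a unique \(v\in\mathcal V\).

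Mass preservation follows by testing with \(\phi\equiv1\in\mathcal V\): the diffusion and drift terms vanish because \(\nabla_q1=0\), and the reaction vanishes by antisymmetry, so \(\int Mv=\int Mf\). For positivity, test with \(\phi=-[v]_-\in\mathcal V\), using the Stampacchia chain rule in the weighted space. The drift term again reduces to \(\frac12\int MBq\cdot\nabla_q([v]_-^2)=0\). The reaction term is \(\ge0\) because \(r\mapsto-\min\{r,0\}\) is monotone, exactly as in Lemma~\ref{lem:v37-reaction-defect-sign}. This leaves \(\alpha\|[v]_-\|^2+\frac1{2{\rm Wi}}\|\nabla[v]_-\|^2\le-\alpha\int Mf[v]_-\le0\) when \(f\ge0\). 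For the \(L^1\) contraction, apply the same argument to \(w=v_1-v_2\), which solves the linear problem with datum \(f_1-f_2\). Test with a smooth approximation \(S_\epsilon(w)\) of \(\operatorname{sgn}(w)\), for instance \(S_\epsilon(r)=\max\{-1,\min\{1,r/\epsilon\}\}\). The diffusion term is \(\ge0\), and the reaction term is \(\ge0\) by monotonicity of \(S_\epsilon\). The drift term is \(\int MwBq\cdot\nabla_qS_\epsilon(w)=\int MBq\cdot\nabla_q\Theta_\epsilon(w)\) with \(\Theta_\epsilon'(r)=rS_\epsilon'(r)\); by the same divergence identity it is zero. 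Letting \(\epsilon\to0\) then gives \(\alpha\int M|w|\le\alpha\int M|f_1-f_2|\), which is \eqref{eq:fibre-resolvent-L1}.

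The entropy inequality \eqref{eq:fibre-resolvent-entropy} is the main obstacle, since \(\log v\) is not an admissible test. I would first take \(f\) bounded with \(f\ge\epsilon>0\). By a comparison argument with the constant solutions \(\epsilon\) and \(\|f\|_\infty\), which follows from positivity and linearity because constants solve the problem, we get \(\epsilon\le v\le\|f\|_\infty\), so \(\log v\in\mathcal V\) is admissible. Testing with \(\phi=\log v\) then produces four contributions:
\begin{itemize}
\item the diffusion term \(\frac1{2{\rm Wi}}\int M|\nabla v|^2/v=\frac2{\rm Wi}\int M|\nabla\sqrt v|^2\);
\item the drift term \(\int MBq\cdot\nabla_q v=0\), by the divergence identity;
\item the Jeffreys term, by Lemma~\ref{lem:continuum-reaction-thermodynamics};
\item the term \(\alpha\int M(v-f)\log v\ge\alpha\int M[F(v)-F(f)]\), by convexity of \(F(s)=s\log s-s+1\), where the linear parts cancel by mass preservation.
\end{itemize}
For general \(f\ge0\) with finite entropy, approximate by \(f_n=\min\{f,n\}+\epsilon_n\). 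The \(L^1\) contraction gives \(v_n\to v\) in \(L^1_M\), and the entropies of the data converge. On the left-hand side I would pass to the limit by lower semicontinuity: convexity for the entropy and Fisher terms, and Lemma~\ref{lem:fixed-measure-Jeffreys-liminf} with fixed coefficient \(K\) for the Jeffreys term.
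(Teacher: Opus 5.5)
Your proof is correct and follows the paper's route essentially step by step: Lax--Milgram using $\operatorname{div}_q(MBq)=0$ with vanishing boundary flux, the negative-part test for positivity, the constant test for mass, a logarithmic test for the entropy inequality followed by truncation of the datum, and sign approximations for the $L^1$ contraction. The deviations are minor. You make $\log v$ directly admissible through the comparison $\epsilon\le v\le\|f\|_\infty$ with constant solutions, where the paper tests with a smooth bounded approximation of $\log v$. You also remove the truncation by $L^1$ contraction plus lower semicontinuity rather than monotone convergence. One small wording fix: in the positivity step, the nondecreasing map that gives the reaction term its sign is $r\mapsto\min\{r,0\}=-[r]_-$, not $r\mapsto-\min\{r,0\}$.
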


\begin{proof}
The radiality of \(M\) and skew-symmetry of \(B\) give
\(\operatorname{div}_q(MBq)=0\), while the natural weighted boundary flux
vanishes.  Hence
\begin{equation}
 \mathfrak b_{\alpha,B,K}(v,v)
 =\alpha\|v\|_{\mathcal H}^2
 +\frac1{2{\rm Wi}}\|\nabla_qv\|_{\mathcal H}^2
 +\frac12\int_{D\times Y^2}MK|v'-v|^2.
 \label{eq:fibre-resolvent-coercivity}
\end{equation}
The form is bounded on \(\mathcal V\), with a bound depending only on
\(\alpha,{\rm Wi},\sqrt b|B|\), and \(\|K\|_\infty\).  Lax--Milgram proves
existence and uniqueness.

Let \(v^-:=\min\{v,0\}\).  The drift still vanishes when tested by \(v^-\),
and
\((r-s)(r^--s^-)\ge|r^--s^-|^2\).  Thus
\eqref{eq:fibre-resolvent-equation} with \(\phi=v^-\) and \(f\ge0\) forces
\(v^-=0\).  The test \(\phi=1\), justified by weighted approximation, gives
\(\int Mv=\int Mf\).

For positive bounded data, test by a smooth bounded approximation of
\(\log v\).  Convexity of \(s\log s-s+1\), the identity
\(\nabla v\cdot\nabla\log v=4|\nabla\sqrt v|^2\), and pair symmetry give
\eqref{eq:fibre-resolvent-entropy}.  Truncation of the datum and monotone
convergence remove positivity and boundedness.  Finally subtract two
resolvent equations and test by smooth convex approximations of the sign of
the difference.  The skew term vanishes, and both the configurational and
state terms are accretive.  This proves \eqref{eq:fibre-resolvent-L1}.
\end{proof}

\begin{proposition}[Non-autonomous fibre evolution]
\label{prop:nonautonomous-fibre-evolution}
Let \(I=[s,t]\), let
\(B\in L^1(I;\mathfrak{so}(2))\), and let
\[
 K\in L^\infty(I\times D\times Y^2),\qquad K(r,q,y,y')=K(r,q,y',y)\ge0.
\]
Write
\[
 (\mathcal R_{K(r)}v)(q,y)
 :=\int_YK(r,q,y,y')[v(q,y')-v(q,y)]\dd\pi(y').
\]
For every non-negative \(g\in L^1_M(D\times Y)\) of finite relative entropy,
there is a unique renormalized solution
\(V_K\in C(I;L^1_M(D\times Y))\) of
\begin{equation}
 \partial_r(MV_K)=-\nabla_q\cdot(B(r)qMV_K)
 +\frac1{2{\rm Wi}}\nabla_q\cdot(M\nabla_qV_K)
 +M\mathcal R_{K(r)}V_K,\qquad V_K(s)=g.
 \label{eq:nonautonomous-fibre-evolution}
\end{equation}
It is non-negative, preserves \(\int MV_K\), and satisfies the time-integrated
entropy inequality
\begin{align}
 &\int M(V_K(r)\log V_K(r)-V_K(r)+1)
 +\frac2{{\rm Wi}}\int_s^r\!\int M|\nabla_q\sqrt{V_K}|^2\notag\\
 &\quad+\frac12\int_s^r\!\int_{D\times Y^2}
 MK(V_K'-V_K)(\log V_K'-\log V_K)
 \le \int M(g\log g-g+1).
 \label{eq:nonautonomous-fibre-entropy}
\end{align}
For the same kernel it is
an \(L^1_M\)-contraction.  If \(K,L\) are two such kernels and the datum is
the same, then, with \(m_g=\int Mg\),
\begin{equation}
 \sup_{\sigma\in[s,r]}\|V_K(\sigma)-V_L(\sigma)\|_{L^1_M}
 \le2m_g\int_s^r\|K(\sigma)-L(\sigma)\|_{L^\infty(D\times Y^2)}\dd\sigma.
 \label{eq:nonautonomous-fibre-kernel-stability}
\end{equation}
\end{proposition}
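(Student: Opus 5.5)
We construct the evolution by implicit time discretization with the weighted fibre resolvent of Lemma~\ref{lem:weighted-fibre-resolvent}. Fix a partition \(s=r_0<r_1<\dots<r_n=t\) of mesh \(\tau\). On each subinterval replace \(B\) and \(K\) by their averages \(B_k\in\mathfrak{so}(2)\) and \(K_k\); these are still skew, respectively symmetric, non-negative and bounded. Then set \(V^{k}=\) the resolvent solution with \(\alpha=\tau^{-1}\), coefficients \(B_k,K_k\) and datum \(V^{k-1}\).

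For bounded data \(g\in\mathcal H\), Lemma~\ref{lem:weighted-fibre-resolvent} applied step by step gives four properties. The iterates are non-negative and conserve mass. The discrete entropy inequality \eqref{eq:fibre-resolvent-entropy}, multiplied by \(\tau\) and summed, bounds \(\int M(V^k\log V^k-V^k+1)\), the Fisher sum \(\sum\tau\int M|\nabla_q\sqrt{V^k}|^2\) and the Jeffreys sum uniformly in \(\tau\). Finally, \eqref{eq:fibre-resolvent-L1} gives an \(L^1_M\) contraction between two data at each step.

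For general \(g\) of finite relative entropy, truncate to \(\min\{g,\ell\}\) and renormalize the mass. The contraction passes to the limit, so we may reduce to bounded data.

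To pass \(\tau\to0\), we use the equation tested against smooth \(\varphi(q,y)\). Since \(|B|\in L^1\) and \(K\) is bounded, this gives equicontinuity in time of \(\int MV\varphi\) in the dual of \(W^{1,\infty}\). Combined with entropy uniform integrability, the piecewise interpolants converge weakly in \(L^1_M\), uniformly in time. The equation is linear, so the limit is a weak solution. Averaging \(B\) and \(K\) converges strongly in \(L^1\) and in \(L^1(L^\infty)\) respectively, and the products with the weakly convergent density pass to the limit. Entropy and Fisher bounds pass by convexity, and the Jeffreys term by Lemma~\ref{lem:fixed-measure-Jeffreys-liminf} with the fixed measure \(MK\). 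Mass and positivity are preserved under weak limits.

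\textbf{Strong continuity and uniqueness.} We upgrade weak continuity to \(C(I;L^1_M)\) and prove uniqueness by a duality/renormalization argument. For a renormalized solution, \(\beta(V)\) satisfies the equation up to the reaction defect of Lemma~\ref{lem:v37-reaction-defect-sign}. The difference of two renormalized solutions with the same datum and kernel, renormalized by smooth approximations of \(|\cdot|\), then has non-increasing \(L^1_M\) norm, because:
\begin{itemize}
\item the skew drift is divergence free against \(M\);
\item the configurational diffusion is accretive;
\item the symmetric reaction is \(L^1\)-accretive, since \((r-s)(\operatorname{sgn}r-\operatorname{sgn}s)\ge0\) pairwise.
\end{itemize}
This gives the contraction and hence uniqueness. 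Continuity in norm follows from norm conservation under weak continuity for the entropy-type quantity, or more simply from the contraction applied against smooth approximating data, whose solutions are strongly continuous.

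\textbf{Main obstacle.} Justifying the renormalized chain rule at the FENE boundary and in the non-compact \(y\)-variable is the step I expect to be hardest. I would first mollify only in \(q\), away from \(\partial D\) using the cutoffs \(\chi_\delta\) of Lemma~\ref{lem:closed-drift-stress-pairing}, and control the commutators by the Fisher bound.

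\textbf{Kernel stability \eqref{eq:nonautonomous-fibre-kernel-stability}.} This is a Duhamel/\(L^1\) estimate. The difference \(w=V_K-V_L\) solves the \(K\)-equation with source \(M(\mathcal R_K-\mathcal R_L)V_L\). Renormalizing by \(|\cdot|\), all accretive terms drop, leaving
\[
\frac{d}{d\sigma}\|w\|_{L^1_M}\le\int M\bigl|(\mathcal R_{K}-\mathcal R_{L})V_L\bigr|\le \|K-L\|_\infty\int\!\!\int M(V_L'+V_L)\,d\pi\,d\pi'=2m_g\|K(\sigma)-L(\sigma)\|_\infty,
\]
using positivity and mass conservation of \(V_L\). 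Integration gives the claim; the same bound can alternatively be derived at the discrete level and passed to the limit.
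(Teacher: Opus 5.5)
Your argument is viable, but it takes a genuinely different route from the paper.

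\textbf{How the two routes differ.} The paper never keeps the drift. It solves $Q'=BQ$, $Q(s)=I$, in $\mathrm{SO}(2)$ and sets $Z_K(r,\xi,y)=V_K(r,Q(r)\xi,y)$ and $\widetilde K(r,\xi,y,y')=K(r,Q(r)\xi,y,y')$. Rotation invariance of $D$, $M$ and the weighted Dirichlet form then reduces the problem to pure diffusion plus a bounded symmetric Markov term. The paper runs implicit Euler with Lemma~\ref{lem:weighted-fibre-resolvent} at $B=0$ and works in the Gelfand triple $\mathcal V\subset\mathcal H\subset\mathcal V'$ with bounds independent of $B$. It obtains uniqueness for $\mathcal H$ data from the energy identity and $C(I;\mathcal H)$ from Lions' theorem, and reaches entropy data by density and $L^1$ contraction. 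Contraction and kernel stability come from Kato tests on the drift-free equation; the kernel estimate uses the same splitting you use.

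You instead keep the averaged skew drift in each resolvent step, pass to the limit by entropy and weak-$L^1$ compactness, and rely on a DiPerna--Lions renormalization for uniqueness and strong continuity.

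\textbf{What each buys.} The rotation removes exactly the step you flag as hardest: $B$ enters only through the ODE for $Q$, so $B\in L^1(I)$ costs nothing. If the drift is kept, it contributes only $|B|\,\|V\|_{\mathcal H}\in L^1(I)$ to $\|\partial_rV\|_{\mathcal V'}$, which is outside the standard Gelfand-triple setting. That is why your route has to go through renormalization. In return, your route uses only $\operatorname{div}_q(MBq)=0$ and tangency of $Bq$ at $\partial D$, and it stays in a single $L^1$/entropy framework.

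\textbf{Steps that need repair (none fatal).}

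\emph{Kernel averages.} Time averages of $K$ converge in measure and in every finite $L^p$, not in $L^1(I;L^\infty(D\times Y^2))$; consider $K(r,q)=\mathbf 1_{\{q_1<r\}}$. Together with the uniform bound, this weaker convergence still suffices for the strong--weak pairing.

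\emph{Jeffreys liminf.} The discrete weight is the averaged kernel, not $K$, and $K$ has no positive lower bound. So the multiplicative step in Lemma~\ref{lem:fixed-measure-Jeffreys-liminf} is unavailable. Use $K_\tau\ge(K-\epsilon)_+$ on Egorov sets instead, then monotone convergence in $\epsilon$.

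\emph{Strong continuity.} This cannot come from ``norm conservation'', since the entropy is dissipated. Nor do you have, in your framework, an independent proof that solutions with regular data are strongly continuous. What works is the renormalized identity itself. All its terms are integrable for bounded $\beta$ with compactly supported $\beta''$. Hence $V$ and a truncation of $V^2$ are both weakly continuous in time, and that forces strong continuity.

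\emph{Renormalization.} Run it on the difference $w$ directly, as a finite-Fisher weak solution of the same linear equation, rather than assuming a difference of renormalized solutions is renormalized. Mollify in $q$ in the interior and insert radial cutoffs $\chi_\delta$.
\begin{itemize}
\item Since $Bq\cdot\nabla_q\chi_\delta=0$, the drift leaves no cutoff error. Because $Bq$ is linear, its commutator is controlled by $|B(r)|\in L^1(I)$.
\item The diffusion cutoff error $\delta^{-1}\int_{\{\delta<s_D<2\delta\}}M|\nabla_qw|$ vanishes by Cauchy--Schwarz and Lemma~\ref{lem:FENE-Hardy-tail}, because $\delta^{-2}\lesssim|\nabla_qU|^2$ on that collar.
\item The state variable causes no difficulty: $\mathcal R_K$ contains no $y$-derivative and is bounded on $L^1_M$.
\end{itemize}
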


\begin{proof}
Because \(B(r)\in\mathfrak{so}(2)\), the matrix equation
\begin{equation}
 Q'(r)=B(r)Q(r),\qquad Q(s)=I,
 \label{eq:fibre-rotation}
\end{equation}
has a unique absolutely continuous solution in \({\rm SO}(2)\).  The ball
\(D\), the radial Maxwellian \(M\), and the weighted Dirichlet form are
rotation invariant.  Define
\[
 Z_K(r,\xi,y):=V_K(r,Q(r)\xi,y),\qquad
 \widetilde K(r,\xi,y,y'):=K(r,Q(r)\xi,y,y').
\]
The Sobolev chain rule for the absolutely continuous map \(Q\) shows that
\eqref{eq:nonautonomous-fibre-evolution} is equivalent to
\begin{equation}
 \partial_r(MZ_K)=\frac1{2{\rm Wi}}\nabla_\xi\cdot(M\nabla_\xi Z_K)
 +M\mathcal R_{\widetilde K(r)}Z_K,
 \qquad Z_K(s)=g.
 \label{eq:rotated-fibre-evolution}
\end{equation}
This identity is first verified for smooth functions and then for energy
solutions by density; no bounded approximation of \(B\) is required.

Let \(g\in\mathcal H\).  Choose partitions
\(s=r_0^n<\cdots<r_{J_n}^n=t\), with mesh \(\tau_n\downarrow0\), and put
\begin{equation}
 \widetilde K_n^m(\xi,y,y')
 :=\frac1{r_m^n-r_{m-1}^n}
 \int_{r_{m-1}^n}^{r_m^n}\widetilde K(r,\xi,y,y')\dd r.
 \label{eq:fibre-time-averaged-kernel}
\end{equation}
Starting from \(Z_n^0=g\), define \(Z_n^m\) by
\begin{align}
 &\frac1{r_m^n-r_{m-1}^n}\int M(Z_n^m-Z_n^{m-1})\phi
 +\frac1{2{\rm Wi}}\int M\nabla_qZ_n^m\cdot\nabla_q\phi
 \notag\\
 &\qquad+\frac12\int_{D\times Y^2}M\widetilde K_n^m
 (Z_n^{m\prime}-Z_n^m)(\phi'-\phi)=0
 \qquad(\phi\in\mathcal V).
 \label{eq:fibre-implicit-Euler}
\end{align}
This is Lemma \ref{lem:weighted-fibre-resolvent} with
\(\alpha=(r_m^n-r_{m-1}^n)^{-1}\) and \(B=0\).  It is positivity and mass
preserving.  Summing its entropy estimate gives, for every grid index \(j\),
\begin{align}
 &\int MF(Z_n^j)
 +\frac2{{\rm Wi}}\sum_{m=1}^j(r_m^n-r_{m-1}^n)
 \int M|\nabla_q\sqrt{Z_n^m}|^2
 \notag\\
 &\quad+\frac12\sum_{m=1}^j(r_m^n-r_{m-1}^n)
 \int_{D\times Y^2}M\widetilde K_n^m
 (Z_n^{m\prime}-Z_n^m)(\log Z_n^{m\prime}-\log Z_n^m)
 \le\int MF(g).
 \label{eq:fibre-discrete-entropy-sum}
\end{align}
Testing instead by \(Z_n^m\) gives
\begin{equation}
 \sup_m\|Z_n^m\|_{\mathcal H}^2
 +\sum_m(r_m^n-r_{m-1}^n)\|Z_n^m\|_{\mathcal V}^2
 \le C\|g\|_{\mathcal H}^2.
 \label{eq:fibre-discrete-L2-bound}
\end{equation}
For the piecewise affine interpolant \(\widehat Z_n\), equation
\eqref{eq:fibre-implicit-Euler} and boundedness of the Markov form imply
\begin{equation}
 \|\partial_r\widehat Z_n\|_{L^2(I;\mathcal V')}
 \le C(1+\|K\|_\infty)\|g\|_{\mathcal H}.
 \label{eq:fibre-discrete-time-bound}
\end{equation}

Weak compactness in the Gelfand triple
\(\mathcal V\subset\mathcal H\subset\mathcal V'\), together with
\eqref{eq:fibre-discrete-time-bound}, gives a subsequence converging weakly in
\(L^2(I;\mathcal V)\) and in \(C_w(I;\mathcal H)\), with the initial trace.
No compact embedding in the continuum state variable is used.  Conditional expectations
\(\widetilde K_n\) converge to \(\widetilde K\) strongly in every finite
\(L^p(I\times D\times Y^2;M)\).  Thus strong--weak pairing identifies the
Markov term in \eqref{eq:fibre-implicit-Euler} first for bounded energy tests;
uniform \(L^\infty\) control of the kernels and density of bounded tests in
\(\mathcal V\) extend the identity to every \(\phi\in\mathcal V\).  The diffusion term and time
derivative pass by weak convergence.  This proves
\eqref{eq:rotated-fibre-evolution}.  The energy identity for the difference of
two limits gives uniqueness, so the whole sequence converges and the standard
non-autonomous form theorem \citep{lions1961equations} yields
\(Z_K\in C(I;\mathcal H)\).

For general entropy data choose non-negative \(g_j\in\mathcal H\) converging
to \(g\) in \(L^1_M\), with convergent entropy.  The discrete \(L^1\)
contraction makes the corresponding solutions Cauchy in
\(C(I;L^1_M)\).  Lower semicontinuity of the weighted Dirichlet integral and
Lemma \ref{lem:fixed-measure-Jeffreys-liminf}, now with the fixed coefficient
\(\widetilde K\), pass \eqref{eq:fibre-discrete-entropy-sum} to the limit and
prove \eqref{eq:nonautonomous-fibre-entropy}.  Rotation back by \(Q(r)\) is an
isometry of every weighted \(L^p_M\) space and preserves all conclusions.

For two solutions with the same kernel, apply smooth Kato tests to their
difference in the rotated equation and pass to the renormalized limit.
Configuration diffusion and the symmetric Markov operator are accretive;
hence the \(L^1_M\) norm is non-increasing.  For two kernels, write
\[
 \mathcal R_KV_K-\mathcal R_LV_L
 =\mathcal R_K(V_K-V_L)+(\mathcal R_K-\mathcal R_L)V_L.
\]
The first term is accretive and
\[
 \| (\mathcal R_K-\mathcal R_L)V_L\|_{L^1_M}
 \le2\|K-L\|_\infty\int MV_L
 =2m_g\|K-L\|_\infty.
\]
Time integration proves \eqref{eq:nonautonomous-fibre-kernel-stability} and
completes the proof.
\end{proof}

\begin{proposition}[Trace-free matrix fibre evolution]
\label{prop:full-drag-fibre-evolution}
Let \(d\in\{2,3\}\), \(I=[s,t]\), and let
\[
 B\in L^2(I;\R^{d\times d}),\qquad \operatorname{tr}B=0
 \quad\hbox{a.e. on }I.
\]
Let \((D,M)\) be an admissible finite-extension Maxwellian and let
\[
 K\in L^1\bigl(I;L^\infty(D\times Y^2)\bigr),\qquad
	K(r,q,y,y')=K(r,q,y',y)\ge0.
	\]
For every non-negative \(g\in L^1_M(D\times Y)\) of finite relative entropy
there is a unique renormalized solution
\(V_{B,K}\in C(I;L^1_M(D\times Y))\) of
\begin{equation}
 \begin{aligned}
 \partial_r(MV_{B,K})
 &=-\nabla_q\cdot(B(r)qMV_{B,K})
 +\frac1{2{\rm Wi}}\nabla_q\cdot(M\nabla_qV_{B,K})\\
 &\quad+M\mathcal R_{K(r)}V_{B,K},
 \qquad V_{B,K}(s)=g.
 \end{aligned}
 \label{eq:full-drag-fibre-evolution}
\end{equation}
It is non-negative and preserves \(m_g:=\int Mg\).  Writing
\[
 \tau[V](r):=\int_{D\times Y}\nabla_qU\otimes q\,MV(r,q,y)
 \dd q\dd\pi(y),
\]
and
\[
 \mathcal J_K[V](r):=\int_{D\times Y^2}
 MK(r)(V'-V)(\log V'-\log V),
\]
it satisfies, for every \(r\in I\),
\begin{align}
 &\int MF(V_{B,K}(r))
 +\frac2{{\rm Wi}}\int_s^r\!\int M|\nabla_q\sqrt{V_{B,K}}|^2
 +\frac12\int_s^r\mathcal J_K[V_{B,K}](\sigma)\dd\sigma
 \notag\\
 &\qquad\le \int MF(g)
 +\int_s^r B(\sigma):\tau[V_{B,K}](\sigma)\dd\sigma,
 \label{eq:full-drag-fibre-entropy-work}
\end{align}
and hence
\begin{align}
 &\sup_{\sigma\in I}\int MF(V_{B,K}(\sigma))
 +\frac1{{\rm Wi}}\int_I\!\int M|\nabla_q\sqrt{V_{B,K}}|^2
 +\frac12\int_I\mathcal J_K[V_{B,K}](\sigma)\dd\sigma
 \notag\\
 &\qquad\le \int MF(g)
 +C_{D,M,{\rm Wi}}m_g\int_I(1+|B|^2)\dd r,
 \label{eq:full-drag-fibre-a-priori}
\end{align}
where \(F(z)=z\log z-z+1\) and
\(J(z',z)=(z'-z)(\log z'-\log z)\).

For the same \(B,K\), the evolution is an \(L^1_M\)-contraction.  If only the
kernel changes, then
\begin{equation}
 \sup_{\sigma\in[s,r]}
 \|V_{B,K}(\sigma)-V_{B,L}(\sigma)\|_{L^1_M}
 \le2m_g\int_s^r
 \|K(\sigma)-L(\sigma)\|_{L^\infty(D\times Y^2)}\dd\sigma.
 \label{eq:full-drag-fibre-kernel-stability}
\end{equation}
	In particular, neither contraction nor kernel stability requires
	skew-symmetry of \(B\).
 The equation is understood with its natural weighted no-flux realization:
 for every \(\phi\in\mathcal V\),
 \begin{equation}
  \langle\partial_rV,\phi\rangle_{\mathcal V',\mathcal V}
  +\frac1{2{\rm Wi}}\int M\nabla_qV\cdot\nabla_q\phi
  -\int MVBq\cdot\nabla_q\phi
  +\frac12\int MK(V'-V)(\phi'-\phi)=0.
  \label{eq:full-drag-fibre-weak-form}
 \end{equation}
	\end{proposition}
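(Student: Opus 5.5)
The plan is to follow the scheme of Proposition~\ref{prop:nonautonomous-fibre-evolution}. The rotation trick is no longer available, since \(B\) is neither skew nor bounded and \(M\) need not be radial, so the drift is kept in the equation and controlled through Lemma~\ref{lem:closed-drift-stress-pairing}. The argument has four steps: regularize the data, solve a time-discrete problem with a drift-augmented resolvent, pass to the limit using an entropy bound that is uniform in the regularization, and then remove the regularization using the \(L^1\) structure.

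First I regularize. Replace \(B\) by \(B_n\), the piecewise constant time averages on a partition of mesh \(\tau_n\), which are still trace free. Replace \(K\) by its time averages \(K_n^m\), truncated at level \(n\) so that it becomes bounded in time. For each step I solve the implicit Euler problem \(\mathfrak b_{\alpha,B_n^m,K_n^m}(v,\phi)=\alpha\int MZ^{m-1}\phi\). For non-skew \(B\) the form is no longer coercive by cancellation. Instead, \(\int MvBq\cdot\nabla_qv\) is bounded by \(\epsilon\|\nabla_qv\|^2+C_\epsilon|B|^2\|v\|^2\) because \(D\) is bounded, so Lax--Milgram still applies once \(\alpha\ge\alpha_0(1+|B|^2)\), that is, for a small enough step. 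Positivity comes from testing with \(v^-\) and absorbing the drift the same way. Mass preservation comes from the constant test. \(L^1\) contraction comes from Kato-type tests \(\operatorname{sign}_\epsilon(v_1-v_2)\): the drift contribution is \(\int M(v_1-v_2)Bq\cdot\nabla_q\operatorname{sign}_\epsilon\), which is supported where \(|v_1-v_2|<\epsilon\) and therefore vanishes as \(\epsilon\to0\). This is why contraction does not need skew-symmetry.

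Next comes the entropy--work estimate. I test the step with a truncation of \(\log v\). Convexity of \(F\) controls the time difference term, the diffusion term produces the Fisher information, and the reaction term produces \(\tfrac12\mathcal J_K\). The drift term becomes \(\int MBq\cdot\nabla_qv\), and by the closed pairing \eqref{eq:closed-drift-stress-pairing} this equals \(B:\tau[v]\), using \(\operatorname{tr}B=0\). Summing over steps gives the discrete form of \eqref{eq:full-drag-fibre-entropy-work}. Then I apply \eqref{eq:closed-drift-stress-bound} with \(\eta=1/\mathrm{Wi}\) and use \(|B|\le1+|B|^2\). This absorbs half of the Fisher term and gives \eqref{eq:full-drag-fibre-a-priori} with \(m_g\int(1+|B_n|^2)\). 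The bound is uniform in \(n\), because \(\|B_n\|_{L^2}\le\|B\|_{L^2}\) by Jensen, and the only truncation used is for the pairing, which is handled by Lemma~\ref{lem:FENE-Hardy-tail}.

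For the passage to the limit, I first take \(L^2_M\) data and use the weak form \eqref{eq:full-drag-fibre-weak-form}. The drift term is \(\int MV_nB_nq\cdot\nabla_q\phi\). Here \(B_n\to B\) strongly in \(L^2(I)\) and \(V_n\) converges weakly in \(L^2(I;\mathcal H)\), which is not enough for the product. I would therefore first obtain strong convergence \(V_n\to V\) in \(C(I;L^1_M)\) through a Cauchy estimate. Comparing two regularizations with the Kato test, the drift difference contributes \(\int|B_n-B_{n'}|\,|\int MV\,q\cdot\nabla_q\operatorname{sign}|\), which is not directly bounded. An alternative is to write it as \((B_n-B_{n'}):\int M\nabla_qU\otimes q\,V\), up to boundary terms in the sign function. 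Both routes are awkward, so the fallback is compactness: \(L^1\) weak compactness from the entropy bound, together with the \(L^1_M\) kernel estimate and fixed-\(B\) contraction, which pass through in the same way as in Proposition~\ref{prop:nonautonomous-fibre-evolution}. Kernel stability \eqref{eq:full-drag-fibre-kernel-stability} follows by the same splitting \(\mathcal R_KV_K-\mathcal R_LV_L\) as before. Uniqueness of renormalized solutions comes from the contraction applied with the same \(B\). General entropy data are handled by approximation in \(L^1_M\) using the contraction. The entropy inequality passes by lower semicontinuity and Lemma~\ref{lem:fixed-measure-Jeffreys-liminf}. The work term \(\int B:\tau[V]\) needs stress convergence, which I get by truncating \(W\) and using the tail estimate \eqref{eq:FENE-stress-tail}.

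\textbf{Main obstacle.} The main obstacle is identifying the unbounded non-skew drift \(B_nqMV_n\), and the work term, in the limit. Only \(L^2\) in time is available for \(B\), and \(V_n\) is weak. I expect this to require strong time compactness of \(V_n\), obtained from an Aubin--Lions argument in the \(q\) variable via the Fisher bound. Since the \(y\) variable carries no compactness, I would combine this with linearity in \(y\), testing against products \(\phi(q)\chi(y)\).
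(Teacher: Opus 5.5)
Most of your architecture matches the paper's proof. Both use G\aa rding-type absorption of the non-skew drift via boundedness of \(D\), positivity by testing with \(v^-\), and a Kato argument in which the drift only sees \(\{0<|v_1-v_2|<\epsilon\}\) and drops out. Both also use the splitting \(\mathcal R_KV_K-\mathcal R_LV_L\) for kernel stability, the closed pairing \eqref{eq:closed-drift-stress-pairing} with the Hardy bound for the entropy--work inequality, and approximation of general data through the contraction.

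The gap is the step you label the main obstacle. You never identify the drift, or the work term, in the limit; you leave it to an unspecified Cauchy or Aubin--Lions argument. Your diagnosis is wrong: the drift is not a weak--weak product. For a fixed \(\phi\in\mathcal V\), the regularized matrix enters only as a function of time. Your implicit scheme already bounds \(V_n\) uniformly in \(L^\infty(I;\mathcal H)\): test a step by \(Z^m\), absorb the drift, and apply discrete Gronwall with \(\sum_m\Delta r\,|B^m_n|^2\le\|B\|_{L^2(I)}^2\). Hence
\[
 \Bigl|\int_I\!\int MV_n(B_n-B)q\cdot\nabla_q\phi\Bigr|
 \le R_D\|B_n-B\|_{L^2(I)}\|V_n\|_{L^2(I;\mathcal H)}\|\nabla_q\phi\|_{\mathcal H}\longrightarrow0 .
\]
The remaining term \(\int_I\int MV_nBq\cdot\nabla_q\phi\) converges by weak convergence, because \(Bq\cdot\nabla_q\phi\in L^2(I;\mathcal H)\).

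The work term passes the same way. \(\tau[V_n]\) is bounded in \(L^2(I)\) by Lemma~\ref{lem:FENE-Hardy-tail} and your uniform Fisher bound. Its weak limit is \(\tau[V]\) by the \(W_A\)-truncation and \eqref{eq:FENE-stress-tail}, and it is paired with the strongly convergent \(\mathbf 1_{[s,r]}B_n\). No strong compactness of \(V_n\), in \(q\) or in \(y\), is needed. You were right to abandon the Kato comparison of two different drifts: it does fail in \(L^1\), which is why the paper later needs Lemma~\ref{lem:varying-fibre-relative-entropy}. The construction simply never requires it.

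The paper avoids even this pairing by not regularizing \(B\) at all. It solves a Galerkin system in \(\mathcal V\) with measurable, \(L^2\)-in-time coefficients by Carath\'eodory's theorem. The G\aa rding estimate \eqref{eq:full-drag-Garding} has negative coefficient \(R_D^2{\rm Wi}|B|^2\in L^1(I)\). The drift is then linear in \(V_n\) with fixed coefficient \(Bq\phi\in L^2(I;\mathcal H)\), so weak convergence suffices.

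The paper also treats \(K\in L^\infty\) first and removes time-unboundedness afterwards with \(K_j=\min\{K,j\}\). Here \(\|K-K_j\|_{L^\infty}(r)=(\|K(r)\|_\infty-j)_+\to0\) in \(L^1(I)\), so \eqref{eq:full-drag-fibre-kernel-stability} makes the solutions Cauchy in \(C(I;L^1_M)\). Your simultaneous time-averaging and truncation of \(K\) inside the scheme would need an extra strong-convergence argument for the kernels, which the two-stage order avoids. With the pairing argument inserted, your time-discrete route is a valid alternative.
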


\begin{proof}
We give the construction because it is the point at which corotation is
removed.  Put
\(\mathcal H=L^2_M(D\times Y)\) and
\(\mathcal V=\{v\in\mathcal H:\nabla_qv\in\mathcal H\}\).  First suppose
	\(K\in L^\infty(I\times D\times Y^2)\).  For bounded data in \(\mathcal H\),
	Galerkin approximation uses the time-dependent form
\begin{align*}
 a_r(v,\phi)
 &=\frac1{2{\rm Wi}}\int M\nabla_qv\cdot\nabla_q\phi
 -\int MvB(r)q\cdot\nabla_q\phi\\
 &\quad+\frac12\int_{D\times Y^2}MK(r)(v'-v)(\phi'-\phi).
\end{align*}
Writing \(R_D=\sup_{q\in D}|q|\), Young's inequality gives the uniform G\aa rding
estimate
\begin{equation}
 a_r(v,v)\ge\frac1{4{\rm Wi}}\|\nabla_qv\|_{\mathcal H}^2
 -R_D^2{\rm Wi}|B(r)|^2\|v\|_{\mathcal H}^2.
 \label{eq:full-drag-Garding}
\end{equation}
	The negative coefficient belongs to \(L^1(I)\).  Fix an orthonormal
	Galerkin basis contained in \(\mathcal V\).  Since the matrix coefficients
	of the finite system are measurable and integrable in time, the
	Carath\'eodory theorem gives its absolutely continuous solution directly for
	\(B\in L^2(I)\).  Testing by the Galerkin solution \(V_n\) and using
	\eqref{eq:full-drag-Garding} gives
	\begin{align}
	 &\sup_{\sigma\in I}\|V_n(\sigma)\|_{\mathcal H}^2
	 +\frac1{2{\rm Wi}}\int_I\|\nabla_qV_n\|_{\mathcal H}^2
	 \notag\\
	 &\qquad\le
	 \|g_n\|_{\mathcal H}^2
	 +2R_D^2{\rm Wi}\int_I|B|^2\|V_n\|_{\mathcal H}^2
	 \le C_B\|g_n\|_{\mathcal H}^2,
	 \label{eq:full-drag-Galerkin-energy}
	\end{align}
	where \(C_B\) depends only on \(D,{\rm Wi}\), and
	\(\int_I|B|^2\).  Moreover, for \(\|\phi\|_{\mathcal V}\le1\),
	\begin{equation}
	 |\langle\partial_rV_n,\phi\rangle|
	 \le C\bigl(\|\nabla_qV_n\|_{\mathcal H}
	 +(1+\|K(r)\|_\infty)\|V_n\|_{\mathcal H}
	 +|B|\|V_n\|_{\mathcal H}\bigr).
	 \label{eq:full-drag-Galerkin-time-bound}
	\end{equation}
	Thus \((V_n)\) is bounded in
	\(L^2(I;\mathcal V)\cap H^1(I;\mathcal V')\), and hence, after extraction,
	converges weakly to a member of
	\(L^2(I;\mathcal V)\cap C(I;\mathcal H)\).  Every term in
	\eqref{eq:full-drag-fibre-weak-form} passes to the limit: for the drift this
	follows from \(Bq\phi\in L^2(I;\mathcal H)\), and the Markov form is a bounded
	operator on \(\mathcal H\).  This proves existence for \(L^2\) data without
	first imposing a boundedness assumption on \(B\).

	Positivity is proved after the Galerkin limit, where the truncation is an
	admissible \(\mathcal V\)-test.  The Hilbert-triple chain rule with
	\(\phi=-V^-\), the monotonicity of \(z\mapsto\min\{z,0\}\), and
	\eqref{eq:full-drag-Garding} give
	\[
	 \frac12\|V^-(r)\|_{\mathcal H}^2
	 \le C\int_s^r|B|^2\|V^-\|_{\mathcal H}^2,
	\]
so positivity follows from Gronwall.  The constant test gives mass
conservation.  For contraction, subtract two equations and test by
\(\eta_\varepsilon'(V_1-V_2)\), where \(\eta_\varepsilon\) is a smooth convex
approximation of the absolute value.  The drift contains
\[
 \int M(V_1-V_2)Bq\cdot\nabla_q(V_1-V_2)
 \eta_\varepsilon''(V_1-V_2).
\]
	Choose \(\eta_\varepsilon\) so that
	\(|z\eta_\varepsilon''(z)|\le C\) and
	\(z\eta_\varepsilon''(z)\to0\).  The absolute value of the integrand is
	dominated by
	\(CR_D\,M|B||\nabla_q(V_1-V_2)|\), which belongs to \(L^1\) by
	Cauchy--Schwarz in \(I\times D\times Y\).  Dominated convergence therefore
	removes the drift as \(\varepsilon\downarrow0\).  Configuration diffusion and the symmetric
	Markov form are accretive.  This proves \(L^1_M\)-contraction without
skew-symmetry.  The
same calculation applied to
\(\mathcal R_KV_{B,K}-\mathcal R_LV_{B,L}\), followed by
\[
 \|(\mathcal R_K-\mathcal R_L)V_{B,L}\|_{L^1_M}
 \le2m_g\|K-L\|_\infty,
\]
proves \eqref{eq:full-drag-fibre-kernel-stability}.

	For a general non-negative
	\(K\in L^1(I;L^\infty(D\times Y^2))\), put \(K_j=\min\{K,j\}\).
	Since \(\|K_j-K_\ell\|_{L^1_tL^\infty}\to0\), estimate
\eqref{eq:full-drag-fibre-kernel-stability} makes
\(V_j:=V_{B,K_j}\) Cauchy in \(C(I;L^1_M)\); denote its limit by \(V\).
The reaction fluxes converge strongly in \(L^1(I\times D\times Y;M)\).
Indeed, symmetry and positivity give
\(\|\mathcal R_Lw\|_{L^1_M}\le2\|L\|_\infty\|w\|_{L^1_M}\).
With \(k(r)=\|K(r)\|_\infty\), the difference
\(\mathcal R_{K_j}V_j-\mathcal R_KV\) on
	\(\{k\le R\}\) is bounded by
	\[
	 2R\|V_j-V\|_{L^1_M}
	 +2m_g\|K_j-K\|_\infty,
	\]
	whereas its time integral on \(\{k>R\}\) is at most
	\(4m_g\int_{\{k>R\}}k(r)\dd r\).  First let \(j\to\infty\), then
	\(R\to\infty\).  This proves the weak equation, positivity, mass
	conservation, contraction, and kernel stability for the asserted
	\(L^1_tL^\infty\) kernel class.

It remains to record the entropy estimate.  There is no hidden boundary
trace in this step.  For \(\varepsilon>0\), use
\(F_\varepsilon'(z)=\log(z+\varepsilon)\) and set
\[
 \beta_\varepsilon(z)=\int_0^z\frac{s}{s+\varepsilon}\dd s
 =z-\varepsilon\log(1+z/\varepsilon),
 \qquad 0\le\beta_\varepsilon(z)\le z.
\]
The elementary inequality
\[
 \frac{|\beta_\varepsilon'(z)|^2}{\beta_\varepsilon(z)}
 \le \frac{2}{z+\varepsilon},\qquad z>0,
\]
shows that the regularized diffusion controls
\(\sqrt{\beta_\varepsilon(V)}\) in \(H^1_M\).
The convex chain rule gives the drift as
\(\int MBq\cdot\nabla\beta_\varepsilon(V)\).  Lemma
\ref{lem:closed-drift-stress-pairing}, together with the last assertion of
Lemma~\ref{lem:FENE-Hardy-tail}, identifies this term with
\(B:\tau[\beta_\varepsilon(V)]\) and bounds it uniformly in
\(\varepsilon\).  The diffusion term is
\((2{\rm Wi})^{-1}\int M|\nabla V|^2/(V+\varepsilon)\).
Letting \(\varepsilon\downarrow0\), monotone convergence for the diffusion,
the stress-tail estimate for \(\beta_\varepsilon(V)\le V\), and the standard
two-point convex chain rule for the symmetric Markov form give
\begin{equation}
 \frac{\dd}{\dd r}\int MF(V)
 +\frac2{{\rm Wi}}\int M|\nabla_q\sqrt V|^2
 +\frac12\int_{D\times Y^2}MKJ(V',V)
 \le B:\tau[V].
 \label{eq:full-drag-fibre-differential-entropy}
\end{equation}
In particular, the limiting drift contribution is
\[
 \int MBq\cdot\nabla_qV
 =-\int V\,\operatorname{div}_q(MBq)
 =B:\int M(\nabla_qU\otimes q)V.
\]
The equality is precisely \eqref{eq:closed-drift-stress-pairing}; no separate
trace is assigned to either singular configuration flux.  Notice that the
tensor order fixed after \eqref{eq:v35-large-data-stress} is needed here when
\(U\) is non-radial.
Lemma~\ref{lem:FENE-Hardy-tail}, applied at each time, yields
\begin{align*}
 |B:\tau[V]|
 &\le |B|m_g^{1/2}
 \left(C_{D,M}m_g+C_{D,M}\int M|\nabla_q\sqrt V|^2\right)^{1/2}\\
 &\le\frac1{{\rm Wi}}\int M|\nabla_q\sqrt V|^2
 +C_{D,M,{\rm Wi}}m_g(1+|B|^2).
\end{align*}
	Integration proves \eqref{eq:full-drag-fibre-entropy-work}--
	\eqref{eq:full-drag-fibre-a-priori} for bounded positive data in
	\(\mathcal H\).  For general finite-entropy data, choose interior
	mollifications of two-sided truncations, followed by a vanishing positive
	constant and a mass correction, so that
	\(g_j\to g\) in \(L^1_M\) and
	\(\int MF(g_j)\to\int MF(g)\).  Contraction makes the corresponding solutions
	Cauchy in \(C(I;L^1_M)\).  The Hardy estimate bounds
	\(\tau[V_j]\) in \(L^2(I)\); truncation of \(W\), strong \(L^1_M\)
	convergence, and \eqref{eq:FENE-stress-tail} identify its weak limit as
	\(\tau[V]\).  Since \(B\in L^2(I)\), the work integrals converge.  Lower
	semicontinuity of the entropy, Fisher information, and symmetric Jeffreys
	form then gives the two asserted inequalities.  The uniform
	\(C(I;L^1_M)\) limit supplies both strong endpoint traces and the claimed
	renormalized solution.  For \(K\in L^1_tL^\infty\), write
\(V_R=V_{B,K\wedge R}\).  Kernel stability gives
\(V_R\to V_{B,K}\) strongly in \(C(I;L^1_M)\).  Fix \(L<\infty\).  For
\(R\ge L\), discard the non-negative part weighted by
\((K\wedge R)-(K\wedge L)\), and apply Ioffe lower semicontinuity on the fixed
finite measure
\[
 (K\wedge L)M\,\dd r\dd q\dd\pi(y)\dd\pi(y').
\]
This yields the Jeffreys term weighted by \(K\wedge L\) for the limit
solution.  The stress-tail argument above gives
\(\tau[V_R]\rightharpoonup\tau[V_{B,K}]\) in \(L^2(I)\), hence convergence of
the work against \(B\in L^2(I)\).  Finally let \(L\uparrow\infty\) and use
monotone convergence.  This proves the entropy and Jeffreys bounds for the
full kernel without coupling the two truncation parameters.
\end{proof}

\begin{lemma}[Kinetic pullback by an incompressible regular Lagrangian flow]
\label{lem:kinetic-RLF-pullback}
Let \(d\in\{2,3\}\).  If \(\Omega=\T^d\), assume
\(u\in L^1(I;W^{1,1}(\T^d;\R^d))\); if \(\Omega\) is a bounded Lipschitz
domain, assume \(u\in L^1(I;W^{1,1}_0(\Omega;\R^d))\).  Suppose that \(u\)
is divergence free, let \(X\) be its measure-preserving regular
Lagrangian flow, and let
\[
 \mathsf G(u)\in\{\nabla_xu,\tfrac12(\nabla_xu-\nabla_xu^T)\},
 \qquad B(r,a)=\mathsf G(u)(r,X(r,a)).
\]
In the bounded-domain case the zero extension of \(u\) is used to construct
\(X\); it leaves \(\Omega\) invariant up to a null set.  Let
\(\mathcal K\) be measurable, symmetric in \((y,y')\), and non-negative.
Suppose \(H\ge0\), \(M\mathcal R_{\mathcal K}H\in L^1\), and
\begin{equation}
 \begin{aligned}
 &MH\in C_w(I;L^1),\qquad
 \lim_{r\downarrow s}\|H(r)-H(s)\|_{L^1_M}=0,\\
 &\rho[H]\in L^\infty(I\times\Omega),\qquad
 M\nabla_qH\in L^1(I\times\Omega\times D\times Y).
 \end{aligned}
 \label{eq:RLF-pullback-integrability}
\end{equation}
Then \(H\) solves the Eulerian kinetic equation with coefficient
\(\mathcal K\) if and only if
\[
 \widetilde H(r,a,q,y):=H(r,X(r,a),q,y)
\]
solves, for almost every \(a\), the fibre equation
\begin{align}
 \partial_r(M\widetilde H)
 &=-\nabla_q\cdot(B(r,a)qM\widetilde H)
 +\frac1{2{\rm Wi}}\nabla_q\cdot(M\nabla_q\widetilde H)
 +M\mathcal R_{\widetilde{\mathcal K}(r,a)}\widetilde H,
 \label{eq:RLF-general-fibre-equation}\\
 \widetilde{\mathcal K}(r,a,q,y,y')
 &:=\mathcal K(r,X(r,a),q,y,y').
 \notag
\end{align}
The equivalence holds in the renormalized class.  It preserves the displayed
strong initial trace and every other strong \(L^1_M\) time trace possessed by
\(H\).
\end{lemma}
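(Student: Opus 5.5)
The plan is to reduce the Eulerian--Lagrangian equivalence to a change of variables in the weak formulation, testing against functions transported by the flow. The key tool is the measure-preserving property of the regular Lagrangian flow $X$: for every bounded $\Phi(r,a,q,y)$, the Eulerian test $\varphi(r,x,q,y)=\Phi(r,X(r)^{-1}(x),q,y)$ satisfies $\int_\Omega\varphi(r,x)\,G(x)\dd x=\int_\Omega\Phi(r,a)G(X(r,a))\dd a$. Formally, $\partial_r\varphi+u\cdot\nabla_x\varphi=(\partial_r\Phi)\circ X^{-1}$. Since $u$ is only $L^1_tW^{1,1}_x$, $X^{-1}$ is not differentiable, so I will not compose tests directly. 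Instead I will use the renormalized/Lagrangian characterization of transport: a function $\widetilde H(r,a)$ is a renormalized Lagrangian solution exactly when $H(r,x)=\widetilde H(r,X(r)^{-1}x)$ solves the Eulerian transport equation with the corresponding source. This is the DiPerna--Lions uniqueness and stability theory for linear transport with $L^1$ sources, applied with $(q,y)$ as parameters.

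\textbf{Step 1: localize in the fibre variables.} Fix a test $\psi(q,y)$ from a countable determining family, smooth in $q$ and simple in $y$. Set $h_\psi(r,x):=\int_{D\times Y}MH\psi$. The Eulerian kinetic equation tested against $\zeta(r,x)\psi$ says that $h_\psi$ solves the scalar transport equation
\[
\partial_rh_\psi+u\cdot\nabla_xh_\psi=S_\psi,
\]
where
\[
S_\psi:=\int MH\,\mathsf G(u)q\cdot\nabla_q\psi-\frac1{2{\rm Wi}}\int M\nabla_qH\cdot\nabla_q\psi+\int M(\mathcal R_{\mathcal K}H)\psi .
\]
The hypotheses \eqref{eq:RLF-pullback-integrability} put $S_\psi$ in $L^1(I\times\Omega)$: the drag term is bounded by $C|\nabla_xu|\rho[H]$, the diffusion term uses $M\nabla_qH\in L^1$, and the reaction term uses $M\mathcal R_{\mathcal K}H\in L^1$. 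Since $h_\psi\in L^\infty$, DiPerna--Lions theory for the divergence-free, $W^{1,1}$ field $u$ (zero-extended in the bounded case, with invariance of $\Omega$) gives uniqueness of bounded distributional solutions. These solutions are then exactly the Lagrangian ones,
\[
h_\psi(r,X(r,a))=h_\psi(s,a)+\int_s^rS_\psi(\sigma,X(\sigma,a))\dd\sigma
\]
for almost every $a$. Composing $S_\psi$ with $X$ turns $\mathsf G(u)$ into $B(r,a)$ and $\mathcal K$ into $\widetilde{\mathcal K}$, so this identity is precisely the weak fibre equation \eqref{eq:RLF-general-fibre-equation} for $\widetilde H(\cdot,a)$ tested against $\psi$.

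\textbf{Step 2: collect null sets and recover the fibre equation.} Countability of the family lets me take a single null set of $a$ outside which the identity holds for all $\psi$. Density of the family in the weighted test class, together with $L^1_M$ bounds on the fluxes along almost every trajectory (obtained by Fubini from the Eulerian $L^1$ bounds), then yields the fibre equation with test functions depending on $(q,y)$. Time-dependent fibre tests are handled by the product rule, since $\psi$ does not depend on $x$.

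\textbf{Step 3: the converse direction.} Conversely, if the fibre equations hold for almost every $a$, integrate against $\zeta(r,X(r,a))$. Equivalently, define $h_\psi$ and observe that it is the Lagrangian solution, which is a distributional Eulerian solution because the Lagrangian representation produces a weak solution. A density argument over product tests $\zeta\psi$, followed by the monotone class theorem in $y$, recovers the full kinetic identity.

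\textbf{Step 4: renormalization and traces.} Renormalized formulations transfer the same way: apply the argument to $\beta(H)$ with the renormalized sources. Strong traces transfer because composition with the measure-preserving $X(r)$ is an isometry on $L^1_M$. Continuity of $r\mapsto\|\widetilde H(r)-\widetilde H(s)\|$ additionally requires continuity of $X(r)$ in measure, which regular Lagrangian flows possess.

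\textbf{Main obstacle.} I expect the hardest part to be Step 1 in the bounded-domain case with merely $L^1_tW^{1,1}$ velocity. DiPerna--Lions commutator estimates need $W^{1,1}$ in space, which holds, and $h_\psi\in L^\infty$, which requires $\psi$ bounded and $\rho[H]\in L^\infty$. I would first check the commutator lemma with an $L^1$ source in the no-slip zero-extension setting. If that fails, my fallback is to use Ambrosio's stability of regular Lagrangian flows under mollification of $u$.
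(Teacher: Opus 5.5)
This is essentially the paper's proof: probe moments $m_\psi$ satisfy a scalar continuity equation with $L^1$ source, the DiPerna--Lions commutator gives the characteristic identity on almost every label, and a countable test family with the monotone-class theorem collects the null sets; the converse uses the trajectory chain rule with a measure-preserving change of variables, and renormalizations and traces use the $L^1_M$ isometry (the paper obtains the characteristic identity directly by mollifying and integrating along $X$, which is the right form of your Step 1, since uniqueness among \emph{bounded} solutions does not by itself reach the Lagrangian candidate built from an $L^1$ source, which is not known a priori to be bounded). Your observation that trace preservation also needs $X(r)\to X(r_0)$ in measure, not only the isometry, is correct and slightly sharpens the paper's one-line argument.
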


\begin{proof}
In the bounded-domain case, the zero extension \(\widehat u\) belongs to
\(L^1(I;W^{1,1}(\R^d))\) and is divergence free in distributions because the
trace of \(u\) vanishes.  Both \(\mathbf1_\Omega\) and
\(\mathbf1_{\R^d\setminus\Omega}\) are stationary renormalized solutions of
\(\partial_r\zeta+\operatorname{div}(\widehat u\zeta)=0\).  Uniqueness in the
bounded renormalized class, equivalently the regular-flow representation,
therefore gives
\begin{equation}
 \mathbf1_\Omega(X(r,a))=\mathbf1_\Omega(a)
 \quad\text{for a.e. }a\text{ and every }r\in I.
 \label{eq:RLF-domain-invariance}
\end{equation}
Thus whole-space mollification of zero extensions may be used below and then
restricted to \(\Omega\).

Let \(\psi(q,y)\) be bounded, continuously differentiable in \(q\), and simple
in \(y\), and set
\[
 m_\psi(r,x):=\int_{D\times Y}MH(r,x,q,y)\psi(q,y)\dd q\dd\pi(y).
\]
The Eulerian kinetic identity implies the scalar continuity equation
\begin{equation}
 \partial_rm_\psi+\operatorname{div}_x(um_\psi)=S_\psi
 \quad\text{in }\mathcal D'(I\times\Omega),
 \label{eq:RLF-scalar-probe-equation}
\end{equation}
where
\begin{align}
 S_\psi
 &=\int MH\,\mathsf G(u)q\cdot\nabla_q\psi
 -\frac1{2{\rm Wi}}\int M\nabla_qH\cdot\nabla_q\psi
 \notag\\
 &\quad-\frac12\int_{D\times Y^2}
 M\mathcal K(H'-H)(\psi'-\psi).
 \label{eq:RLF-probe-source}
\end{align}
All integrals in \eqref{eq:RLF-probe-source} are over the unmarked kinetic
variables.  They belong to \(L^1(I\times\Omega)\): the drift is bounded by
\(C_\psi|\mathsf G(u)|\rho[H]\), the diffusion by
\eqref{eq:RLF-pullback-integrability}, and the reaction by the assumed
\(L^1\) reaction flux.

Moreover \(|m_\psi|\le\|\psi\|_\infty\rho[H]\), so
\(m_\psi\in L^\infty\).  Apply spatial mollification to
\eqref{eq:RLF-scalar-probe-equation}, using the zero-extended equation in the
bounded-domain case and periodic mollification on the torus.  This produces
the DiPerna--Lions commutator
\[
 c_\varepsilon
 =u\cdot\nabla_x(m_\psi*\varrho_\varepsilon)
 -(u\cdot\nabla_xm_\psi)*\varrho_\varepsilon,
 \qquad \|c_\varepsilon\|_{L^1(I\times\Omega)}\longrightarrow0.
\]
Writing the commutator as an integral of increments of \(u\) proves the stated
convergence from \(\nabla u\in L^1\), \(m_\psi\in L^\infty\), and
\(\operatorname{div}u=0\).  Integrating the mollified equation along \(X\),
using measure preservation for \(S_\psi\), and then passing
\(\varepsilon\downarrow0\), gives for almost every label
\begin{equation}
 m_\psi(r,X(r,a))-m_\psi(s,a)
 =\int_s^rS_\psi(\sigma,X(\sigma,a))\dd\sigma.
 \label{eq:RLF-probe-characteristic-identity}
\end{equation}
Weak continuity and the strong initial trace in
\eqref{eq:RLF-pullback-integrability} fix a common representative at the
initial time and the absolutely continuous representative of every probe at
later times.  Choose a countable
dense set of smooth \(q\)-tests and simple \(y\)-tests
from a countable generating Borel algebra.  Intersecting their full-measure
label sets and using bounded convergence and the monotone-class theorem yields
\eqref{eq:RLF-general-fibre-equation} for almost every common label.

Conversely, multiply the fibre identity by bounded label tests and integrate
in \(a\).  For a smooth Eulerian test \(\varphi\), the regular-Lagrangian chain
rule gives
\[
 \frac{\dd}{\dd r}\varphi(r,X(r,a),q,y)
 =\partial_r\varphi(r,X(r,a),q,y)
 +u(r,X(r,a))\cdot\nabla_x\varphi(r,X(r,a),q,y)
\]
for almost every trajectory.  The source bounds above justify Fubini and
passage to general admissible tests.  Changing variables
\(x=X(r,a)\), using preservation of Lebesgue measure, gives the Eulerian weak
identity.  The same argument applied to bounded renormalizations proves the
equivalence in the renormalized class.  Finally, composition with a
measure-preserving map is an \(L^1_M\)-isometry at every time.  This proves
preservation of the strong initial trace and, whenever present, any further
strong time trace.
\end{proof}

\begin{theorem}[Lagrangian state-fibre well-posedness]
\label{thm:lagrangian-state-fibre}
Let \(I=[s,t]\subset[0,T]\), let
\[
 u\in L^2(I;H^1_\sigma(\T^2)),
 \qquad \mathsf W(u)=\tfrac12(\nabla_xu-\nabla_xu^T),
\]
and let \(X(r,a)\) be the regular Lagrangian flow of \(u\), normalized by
\(X(s,a)=a\).  Suppose
\begin{equation}
 K\in L^\infty(I\times D\times Y^2),\qquad
 K(r,q,y,y')=K(r,q,y',y)\ge0.
 \label{eq:lagrangian-kernel-class}
\end{equation}
For every \(g\ge0\) with finite mass and relative entropy and with
\(\rho_g\in L^\infty(\T^2)\),
\begin{equation}
 \mathfrak m:=\int_{\T^2\times D\times Y}Mg<\infty,
 \qquad
 \int_{\T^2\times D\times Y}M(g\log g-g+1)<\infty,
 \label{eq:lagrangian-initial-class}
\end{equation}
there is a unique renormalized solution \(H_K\) of
\begin{align}
 \partial_r(MH_K)+u\cdot\nabla_x(MH_K)
 &=-\nabla_q\cdot(\mathsf W(u)qMH_K)
   +\frac1{2{\rm Wi}}\nabla_q\cdot(M\nabla_qH_K)
   +M\mathcal R_{K(r)}H_K,
 \label{eq:lagrangian-lift-equation}\\
 H_K(s)&=g,
 \notag
\end{align}
where
\[
 (\mathcal R_{K(r)}v)(q,y)
 :=\int_YK(r,q,y,y')[v(q,y')-v(q,y)]\dd\pi(y').
\]
It is non-negative, belongs to
\(C(I;L^1_M(\T^2\times D\times Y))\), preserves mass, and satisfies
\begin{align}
 &\int M(H_K(r)\log H_K(r)-H_K(r)+1)
 +\frac2{{\rm Wi}}\int_s^r\!\int M|\nabla_q\sqrt{H_K}|^2
 \notag\\
 &\quad+\frac12\int_s^r\!\int_{\T^2\times D\times Y^2}
 MK(\,H_K'-H_K\,)(\log H_K'-\log H_K)
 \le \int M(g\log g-g+1).
 \label{eq:lagrangian-lift-entropy}
\end{align}
If \(\rho_g(a)=\int_{D\times Y}Mg(a,q,y)\dd q\dd\pi(y)\), then
\begin{equation}
 \rho[H_K](r,X(r,a))=\rho_g(a)
 \quad\text{for a.e. }(r,a).
 \label{eq:lagrangian-number-density}
\end{equation}

The solution is Lagrangian in the following precise sense.  With
\begin{equation}
 B(r,a):=\mathsf W(u)(r,X(r,a)),
 \qquad
 \widetilde H_K(r,a,q,y):=H_K(r,X(r,a),q,y),
 \label{eq:lagrangian-pullback}
\end{equation}
the pullback solves, for almost every \(a\),
\begin{equation}
 \partial_r(M\widetilde H_K)
 =-\nabla_q\cdot(B(r,a)qM\widetilde H_K)
 +\frac1{2{\rm Wi}}\nabla_q\cdot(M\nabla_q\widetilde H_K)
 +M\mathcal R_{K(r)}\widetilde H_K.
 \label{eq:lagrangian-fibre-equation}
\end{equation}
Conversely, the measurable family of fibre solutions in
\eqref{eq:lagrangian-fibre-equation}, pushed forward by \(X\), is the
renormalized Eulerian solution of \eqref{eq:lagrangian-lift-equation}.

For the same kernel, initial data \(g,\widehat g\), and corresponding solutions
\(H_K,\widehat H_K\),
\begin{equation}
 \int M|H_K(r)-\widehat H_K(r)|
 \le \int M|g-\widehat g|.
 \label{eq:lagrangian-L1-contraction}
\end{equation}
For two kernels \(K,L\) satisfying \eqref{eq:lagrangian-kernel-class} and the
same datum \(g\),
\begin{equation}
 \sup_{\sigma\in[s,r]}\int M|H_K-H_L|(\sigma)
 \le2\mathfrak m\int_s^r
 \|K(\sigma)-L(\sigma)\|_{L^\infty(D\times Y^2)}\dd\sigma.
 \label{eq:lagrangian-kernel-stability}
\end{equation}
Finally, \(\int_YH_K\dd\pi\) is independent of \(K\): it is the unique
renormalized scalar solution obtained from \eqref{eq:lagrangian-lift-equation}
after deleting the reaction and averaging the initial datum in \(Y\).
\end{theorem}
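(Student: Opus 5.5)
The plan is to build $H_K$ fibrewise along the regular Lagrangian flow and push it forward. Since $u\in L^2(I;H^1_\sigma(\T^2))$, DiPerna--Lions theory gives a unique measure-preserving regular Lagrangian flow $X$. For almost every label $a$, the matrix $B(r,a)=\mathsf W(u)(r,X(r,a))$ is skew. By Fubini and measure preservation,
\[
\int_{\T^2}\int_I|B(r,a)|^2\dd r\dd a=\|\mathsf W(u)\|_{L^2}^2,
\]
so $B(\cdot,a)\in L^2(I;\mathfrak{so}(2))\subset L^1$ for a.e. $a$. The label-dependent kernel is $\widetilde K(r,a)=K(r)$ (the kernel in \eqref{eq:lagrangian-kernel-class} does not depend on $x$), and it is bounded.

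For a.e. $a$ with $g(a,\cdot,\cdot)$ of finite mass and entropy, which holds for a.e. $a$ by Fubini, Proposition~\ref{prop:nonautonomous-fibre-evolution} yields a unique renormalized fibre solution $V(\cdot,a)$. This solution is non-negative, preserves fibre mass, satisfies the entropy inequality, is an $L^1_M$-contraction, and obeys kernel stability. Before using these pointwise in $a$, I check that $a\mapsto V(\cdot,a)$ is measurable. This follows because $V$ is a $C(I;L^1_M)$-limit of the implicit Euler scheme, whose steps depend measurably on $(B(\cdot,a),g(a))$, or alternatively from the continuous dependence supplied by contraction.

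Set $H_K(r,x)=V(r,X(r,\cdot)^{-1}(x))$, that is, $\widetilde H_K=V$. The converse direction of Lemma~\ref{lem:kinetic-RLF-pullback}, with $\mathsf G=\mathsf W$, shows that $H_K$ is a renormalized Eulerian solution. I then verify its hypotheses \eqref{eq:RLF-pullback-integrability}. Fibre mass conservation gives $\rho[\widetilde H_K](r,a)=\rho_g(a)$, which is \eqref{eq:lagrangian-number-density} and gives the $L^\infty$ bound. The bound $M\nabla_q H\in L^1$ follows from Cauchy--Schwarz with the integrated Fisher bound, and the reaction flux is in $L^1$ since $K$ is bounded.

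The remaining conclusions are obtained by integrating the fibre statements in $a$ and changing variables with the measure-preserving map $X(r,\cdot)$:
\begin{itemize}
\item the entropy inequality \eqref{eq:lagrangian-lift-entropy};
\item mass conservation;
\item $C(I;L^1_M)$ continuity, via dominated convergence in $a$;
\item contraction \eqref{eq:lagrangian-L1-contraction};
\item kernel stability \eqref{eq:lagrangian-kernel-stability}, where the fibre masses integrate to $\mathfrak m$.
\end{itemize}

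For uniqueness, take any renormalized Eulerian solution in the class. By Lemma~\ref{lem:kinetic-RLF-pullback}, its pullback solves the fibre equation for a.e. $a$ with datum $g(a)$. Uniqueness in Proposition~\ref{prop:nonautonomous-fibre-evolution} then forces it to equal $V(\cdot,a)$.

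For the last assertion, integrate the fibre equation in $y$. The reaction vanishes by \eqref{eq:v37-reaction-average-zero}, so $\int_Y V\dd\pi$ solves the scalar corotational fibre equation with datum $\int_Y g\dd\pi$. That equation is again covered by the proposition, with a singleton state space and zero kernel. Its uniqueness shows that the average is independent of $K$, and pushing forward identifies it with the unique scalar renormalized solution.

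I expect the main obstacle to be technical: joint measurability in $a$ of the fibre family, together with verifying that the pushed-forward density lies in the class where Lemma~\ref{lem:kinetic-RLF-pullback} applies and uniqueness holds. I would handle measurability through the explicit time-discretization, whose finitely many resolvent steps are continuous in $(B,g)$, followed by the $C(I;L^1_M)$ limit.
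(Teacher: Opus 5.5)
Your proposal is correct and follows essentially the paper's proof. It applies Proposition~\ref{prop:nonautonomous-fibre-evolution} fibrewise along the DiPerna--Lions flow, assembles the fibres and proves Eulerian uniqueness through Lemma~\ref{lem:kinetic-RLF-pullback} using the transported bound on $\rho$, integrates the fibre entropy, contraction and kernel-stability estimates over labels, and uses cancellation of the symmetric reaction for the state average.

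The only difference is the measurability step. The paper runs the rotated implicit-Euler/Lax--Milgram construction once on the product space $L^2(\T^2_a;\mathcal H)$, with the jointly measurable rotation $Q(r,a)=\exp(\theta(r,a)J)$, and localizes with $\mathbf 1_E(a)$ tests. Your scheme-based argument also works, provided continuity of the resolvent in the kernel is taken in measure with uniform bounds, since the rotated kernel $K(r,Q(r,a)\xi,y,y')$ is not $L^\infty$-continuous in $a$. Your fallback via contraction does not work, because contraction and kernel stability give continuity in $g$ and $K$ but none in $B(\cdot,a)$.
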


\begin{proof}
\medskip\noindent\textit{1. The regular Lagrangian flow.}\par\noindent
Since the torus has finite measure,
\(u\in L^1(I;W^{1,1}(\T^2))\) and \(\operatorname{div}u=0\).  The
DiPerna--Lions flow theorem \citep{diperna1989ordinary} gives a unique regular
Lagrangian flow \(X\).  It preserves Lebesgue measure, and therefore
\begin{equation}
 \int_I\!\int_{\T^2}|B(r,a)|\dd a\dd r
 =\int_I\!\int_{\T^2}|\mathsf W(u)(r,x)|\dd x\dd r<\infty.
 \label{eq:lagrangian-B-integrability}
\end{equation}
Thus \(B(\cdot,a)\in L^1(I)\) for almost every \(a\).  Because \(B\) is
skew-symmetric and \(M\) is radial,
\begin{equation}
 \operatorname{div}_q(Bq)=0,
 \qquad Bq\cdot\nabla_qU=0,
 \qquad \operatorname{div}_q(MBq)=0.
 \label{eq:lagrangian-skew-identities}
\end{equation}

\medskip\noindent\textit{2. Construction on one fibre.}\par\noindent
For almost every \(a\), Proposition
\ref{prop:nonautonomous-fibre-evolution} applies to
\(B(\cdot,a)\) and \(g(a,\cdot,\cdot)\).  To obtain the family without a
measurable-selection assumption, perform the rotated construction once on the
product space \(\T^2_a\times D\times Y\).  Write
\(B(r,a)=b(r,a)J\), where \(J\) is the fixed quarter-turn, and set
\begin{equation}
 \theta(r,a)=\int_s^r b(\sigma,a)\dd\sigma,
 \qquad Q(r,a)=\exp(\theta(r,a)J).
 \label{eq:product-fibre-rotation}
\end{equation}
The map \((r,a)\mapsto Q(r,a)\) is jointly measurable and is absolutely
continuous in \(r\) for almost every \(a\).  With
\[
 \widehat K(r,a,\xi,y,y')=K(r,Q(r,a)\xi,y,y'),
\]
the product-space equation has no \(q\)-drift and a bounded measurable
symmetric kernel.  To make the product construction explicit, put
\[
 \mathbb H=L^2(\T^2_a;\mathcal H),\qquad
 \mathbb V=L^2(\T^2_a;\mathcal V).
\]
On a time cell \(I_j\), let \(\widehat K_j(a)\) be the time average of
\(\widehat K\) and define on \(\mathbb V\)
\[
 \mathbb B_j(v,\phi)
 :=\int_{\T^2}\mathfrak b_{|I_j|^{-1},0,\widehat K_j(a)}
 (v(a),\phi(a))\dd a.
\]
The coercivity identity \eqref{eq:fibre-resolvent-coercivity} is uniform in
\(a\), and boundedness uses only \(\|K\|_\infty\).  Lax--Milgram therefore
gives a unique product resolvent for \(L^2\) data.  Testing with
\(\mathbf1_E(a)\) times the usual truncations, for arbitrary Borel
\(E\subset\T^2\), localizes positivity, mass conservation, entropy, and
contraction to almost every label.  Implicit Euler time averages thus define
the solution directly on the product space.  Approximation of the datum by
bounded \(L^2_M\) functions, the product-space contraction, and
\eqref{eq:nonautonomous-fibre-entropy} give a jointly measurable limit in
\(C(I;L^1_M(\T^2_a\times D\times Y))\).  Disintegration and fibrewise
uniqueness identify it with Proposition
\ref{prop:nonautonomous-fibre-evolution}; rotation back gives
\eqref{eq:lagrangian-fibre-equation}.  Integrating the fibre entropy inequality
in \(a\) proves \eqref{eq:lagrangian-lift-entropy}, and fibre mass conservation
gives \eqref{eq:lagrangian-number-density}.

\medskip\noindent\textit{3. Contraction and kernel stability.}\par\noindent
The fibrewise contraction in Proposition
\ref{prop:nonautonomous-fibre-evolution}, integrated in \(a\), gives
\eqref{eq:lagrangian-L1-contraction} and uniqueness in the renormalized class.
Its kernel estimate, followed by Tonelli's theorem and
\(\int_{\T^2}\rho_g=\mathfrak m\), gives
\eqref{eq:lagrangian-kernel-stability} with the displayed constant.

\medskip\noindent\textit{4. Return to Eulerian variables.}\par\noindent
The bounded number density is used at this point and nowhere in the fibre
construction.  Fibre mass conservation and
\eqref{eq:lagrangian-B-integrability} give
\begin{equation}
 \int_I\!\int_{\T^2}|B(r,a)|\rho_g(a)\dd a\dd r
 \le \|\rho_g\|_{L^\infty}\|B\|_{L^1(I\times\T^2)}<\infty.
 \label{eq:weighted-drift-assembly}
\end{equation}
Hence the configurational drift belongs to \(L^1\) after the fibres are
assembled, so every term in the Eulerian weak formulation is defined.
The Fisher estimate and mass conservation give
\(M\nabla_q\widetilde H_K\in L^1\), so all hypotheses of Lemma
\ref{lem:kinetic-RLF-pullback} hold.  That lemma transforms the fibre weak
formulation into \eqref{eq:lagrangian-lift-equation} and pulls every
renormalized Eulerian solution back to the same fibre problem.  Fibrewise
contraction then proves Eulerian
uniqueness and shows that the strong \(L^1\) endpoint traces are preserved by
the push-forward.  Testing the fibre
equation by a function independent of \((q,y)\) shows that its total
\((q,y)\)-mass is constant, which is
\eqref{eq:lagrangian-number-density}.  Integrating only in \(Y\) cancels the
reaction.  The contraction argument with no state variable gives uniqueness
of the scalar equation, so the state average is independent of \(K\).
\end{proof}

\begin{remark}[Where bounded number density is used]
\label{rem:lagrangian-density-threshold}
Proposition \ref{prop:nonautonomous-fibre-evolution} and the rotated
product-space construction require only finite fibre mass and entropy.  The
Eulerian assembly additionally requires
\begin{equation}
 \int_I\!\int_{\T^2}|B(r,a)|\rho_g(a)\dd a\dd r<\infty.
 \label{eq:sharp-weighted-drift-threshold}
\end{equation}
This is the exact velocity-dependent threshold.  The stated condition
\(\rho_g\in L^\infty\) is a velocity-independent sufficient hypothesis and is
propagated by the incompressible flow.  Without
\eqref{eq:sharp-weighted-drift-threshold}, the rotated fibres still exist for
almost every label, but their drift need not define an Eulerian \(L^1\) flux;
the theorem therefore makes no Eulerian claim in that larger class.
\end{remark}

\begin{lemma}[Scalar projection and prescribed-velocity uniqueness]
\label{lem:full-drag-scalar-projection}
Let \(d\in\{2,3\}\), let \(\Omega=\T^d\) or a bounded Lipschitz domain, and
let \((D,M)\) be an admissible finite-extension Maxwellian.  Assume
\(u\in L^2(I;H^1_\sigma(\T^d))\) on the torus and
\(u\in L^2(I;H^1_0(\Omega;\R^d)\cap H_\sigma(\Omega))\) on a bounded domain.
Let \(X\) be its measure-preserving regular Lagrangian flow and set
\(B=\nabla u\circ X\).  In the class
\begin{equation}
 \begin{aligned}
 &Mv\in C_w(I;L^1(\Omega\times D)),\qquad
 \lim_{r\downarrow s}\|v(r)-v(s)\|_{L^1_M}=0,\\
 &\rho[v]\in L^\infty(I\times\Omega),\qquad
 M\nabla_qv\in L^1.
 \end{aligned}
 \label{eq:scalar-projection-class}
\end{equation}
the scalar full-drag Fokker--Planck equation with prescribed velocity \(u\)
has at most one non-negative renormalized solution for each initial datum.
If \(H\) is a state-resolved solution with a symmetric conservative Markov
kernel, then \(\bar H=\int_YH\dd\pi\) is that scalar solution and is therefore
independent of the kernel.
\end{lemma}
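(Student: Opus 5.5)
The plan is to reduce uniqueness to a fibrewise statement along the regular Lagrangian flow, and then use the $L^1_M$ contraction of Proposition~\ref{prop:full-drag-fibre-evolution} with $Y$ a singleton and $K=0$.

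First, let $v_1,v_2$ be two non-negative renormalized solutions in the class \eqref{eq:scalar-projection-class} with the same datum. Lemma~\ref{lem:kinetic-RLF-pullback} applies with $Y$ a singleton, $\mathcal K=0$ and $\mathsf G(u)=\nabla_xu$. Its hypotheses are exactly \eqref{eq:scalar-projection-class}, and the reaction flux is trivially in $L^1$. The lemma shows that $\widetilde v_i(r,a,q)=v_i(r,X(r,a),q)$ solve, for almost every label $a$, the fibre equation \eqref{eq:full-drag-fibre-evolution} with drift $B(\cdot,a)$, $K=0$, and common datum $v^{\rm in}(a,\cdot)$. Several points need checking here.

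\begin{itemize}
\item Since $u\in L^2(I;H^1)$ and $X$ preserves measure, $\int_{\Omega}\int_I|B(r,a)|^2\,\dd r\,\dd a=\|\nabla u\|_{L^2}^2$. Hence $B(\cdot,a)\in L^2(I)$ for almost every $a$, and $\operatorname{tr}B=\operatorname{div}u\circ X=0$.
\item By Fubini, $M\nabla_q\widetilde v_i\in L^1$ fibrewise for almost every $a$.
\item The fibre datum has finite mass $\rho(a)$ for almost every $a$.
\end{itemize}

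The delicate point is that Proposition~\ref{prop:full-drag-fibre-evolution} asserts uniqueness among renormalized solutions of finite relative entropy, and its contraction argument tests the equation with $\eta_\varepsilon'(V_1-V_2)$ against the weak form \eqref{eq:full-drag-fibre-weak-form}. I would not invoke it verbatim. Instead I would rerun the Kato argument directly on the difference $w=\widetilde v_1-\widetilde v_2$ in the renormalized fibre class, with the following steps.

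\begin{itemize}
\item Mollify in $q$ on interior sets $D_\varepsilon\Subset D$, where the drift $Bq$ is smooth and bounded. The DiPerna--Lions commutator then vanishes, because $q\mapsto Bq$ is linear and $w\in L^1$ with $\nabla_q w\in L^1_M$.
\item Apply a smooth approximation of $|\cdot|$, together with a cutoff $\chi_\delta$ near $\partial D$.
\item The diffusion term has a good sign, up to a cutoff error bounded by $\delta^{-1}\int_{\{s_D<2\delta\}}M|\nabla_qw|$.
\item The drift produces $\int M|w|\,Bq\cdot\nabla\chi_\delta$. This term is the main obstacle. Using the collar bound $|q|/\delta\lesssim |q||\nabla_qU|$ from the proof of Lemma~\ref{lem:closed-drift-stress-pairing}, it is controlled by $|B|\int_{\{s_D<2\delta\}}M|W||w|$. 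This requires $M|W|v_i\in L^1$ on the fibre. I would obtain that from the Hardy bound, Lemma~\ref{lem:FENE-Hardy-tail}, if the class provides $\nabla_q\sqrt{v_i}\in L^2_M$. If only $M\nabla_qv\in L^1$ is available, I would instead use a one-dimensional trace-type estimate in the normal coordinate: $\int_{\{s_D<2\delta\}}M|w|/\delta\le C\int_{\{s_D<2\delta\}}M|\nabla_q w|+o(1)$. This follows from $M\sim s_D^\gamma$ with $\gamma>1$, since $\int_0^{2\delta}s^{\gamma-1}|z|\,\dd s\le C\int_0^{2\delta}s^\gamma|z'|\,\dd s$ plus an interior term. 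In either case absolute continuity sends the cutoff error to zero.
\end{itemize}

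Letting $\delta\downarrow0$ and then $\varepsilon\downarrow0$ gives $\|w(r)\|_{L^1_M(D)}\le\|w(s)\|_{L^1_M(D)}=0$ for almost every $a$, using the strong initial trace preserved by the pullback. Integrating in $a$ and pushing forward by the measure-preserving $X$ gives $v_1=v_2$.

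For the second assertion, take $H$ a state-resolved solution with a symmetric conservative kernel, assumed to lie in the analogous class. Test its weak form with $\varphi$ independent of $y$. Pair symmetry annihilates the reaction term, exactly as in \eqref{eq:v37-reaction-average-zero}, so $\bar H=\int_YH\,\dd\pi$ satisfies the scalar full-drag weak identity with datum $\bar h^{\rm in}$. The class conditions transfer to $\bar H$: $\rho[\bar H]=\rho[H]$, $|M\nabla_q\bar H|\le\int_Y M|\nabla_qH|\,\dd\pi$, and weak continuity and the strong initial trace pass by averaging in $y$. Renormalization is the one condition that does not transfer directly, because averaging does not commute with nonlinear $\beta$. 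I would sidestep this by using the Lagrangian characterization of Lemma~\ref{lem:kinetic-RLF-pullback}, which is linear in the unknown once established; renormalized and weak fibre solutions coincide under $\nabla_q\bar H\in L^1_M$ by the commutator step above. Uniqueness then identifies $\bar H$ with the scalar solution. Since the scalar solution does not involve the kernel, $\bar H$ is independent of the kernel.
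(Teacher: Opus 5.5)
Your outline is the paper's proof. You pull back by Lemma~\ref{lem:kinetic-RLF-pullback} with a one-point state space and no reaction, compare two fibres driven by the same trace-free \(B(\cdot,a)\), and integrate the fibre \(L^1_M\)-contraction over labels. For the second assertion you cancel the symmetric reaction by integrating in \(Y\).

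The paper does not rerun the contraction. It simply cites Proposition~\ref{prop:full-drag-fibre-evolution}, whose contraction proof uses \(\nabla_qV\in L^2_M\) and is then extended by approximation. Your decision to redo the Kato argument in the class \eqref{eq:scalar-projection-class} is therefore a fair concern, but as written the rerun does not close.

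\textbf{Where the rerun fails.} The obstacle is not the drift but the diffusion cutoff error \(\delta^{-1}\int_{\{\delta<s_D<2\delta\}}M|\nabla_qw|\).
\begin{itemize}
\item Absolute continuity only gives \(\int_{\{s_D<2\delta\}}M|\nabla_qw|=o(1)\), not \(o(\delta)\). Integrability alone allows a flux density like \((s_D|\log s_D|^2)^{-1}\) near \(\partial D\), for which this error blows up.
\item Your one-dimensional \(L^1\) Hardy inequality, valid for any \(\gamma>0\), controls \(M|w|/s_D\) and so settles the drift error. Nothing in \eqref{eq:scalar-projection-class} controls \(M|\nabla_qw|/s_D\).
\item Hence the branch ``only \(M\nabla_qv\in L^1\)'' of your argument is incomplete. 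The same step is reused in your renormalization sidestep for \(\bar H\).
\end{itemize}

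\textbf{The repair.} Work in your first branch, finite configurational Fisher information, where the Hardy lemma controls both cutoff errors.
\begin{itemize}
\item Every solution to which the paper applies the lemma has it: Masmoudi's scalar class, and the lifts of Theorem~\ref{thm:full-drag-state-fibre}.
\item It passes to the state average, since \(|\nabla_q\sqrt{\bar H}|^2\le\int_Y|\nabla_q\sqrt H|^2\dd\pi\).
\item Pointwise, \(M|\nabla_qv|\le2(Mv)^{1/2}(M|\nabla_q\sqrt v|^2)^{1/2}\).
\item On a thin collar \(|W|\ge c/s_D\), where \(W=\nabla_qU\otimes q\). Hence \(\int_{\{s_D<2\delta\}}Mv\le C\delta^2\int_{\{s_D<2\delta\}}M|W|^2v\).
\end{itemize}
Combining these,
\[
 \delta^{-1}\int_{\{s_D<2\delta\}}M|\nabla_qv|
 \le C\Bigl(\int_{\{s_D<2\delta\}}M|W|^2v\Bigr)^{1/2}
 \Bigl(\int_{D}M|\nabla_q\sqrt v|^2\Bigr)^{1/2}.
\]
After Cauchy--Schwarz in time, the right-hand side tends to zero by Lemma~\ref{lem:FENE-Hardy-tail} and dominated convergence. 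The same Hardy bound handles the drift error.

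\textbf{A smaller point.} Your interior commutator remark covers only the linear drift \(Bq\). The second-order term with the variable weight \(M\) needs its own justification, for example Kato's inequality for smooth divergence-form operators on interior sets.
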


\begin{proof}
Lemma~\ref{lem:kinetic-RLF-pullback}, with the state space reduced to one
point, pulls two scalar solutions with the same datum to fibres driven by the
same trace-free matrix \(B(\cdot,a)\).  The \(L^1_M\)-contraction in
Proposition~\ref{prop:full-drag-fibre-evolution}, integrated in the label,
identifies the two solutions.  For a state-resolved solution, integration in
\(Y\) cancels the reaction because symmetry and Fubini give
\(\int_Y\mathcal R_KH\dd\pi=0\).  The remaining terms are exactly the scalar
equation, and the preceding uniqueness proves the last assertion.
\end{proof}

\begin{theorem}[Full-drag Lagrangian state-fibre lift]
\label{thm:full-drag-state-fibre}
Let \(d\in\{2,3\}\).  Let \(\Omega=\T^d\), or let \(\Omega\subset\R^d\)
be bounded Lipschitz.  Let \((D,M)\) be an admissible finite-extension
Maxwellian, and set
\[
 V_\sigma(\Omega)=
 \begin{cases}
 H^1_\sigma(\T^d),&\Omega=\T^d,\\
 H^1_0(\Omega;\R^d)\cap H_\sigma(\Omega),&\partial\Omega\ne\varnothing.
 \end{cases}
\]
Let \(u\in L^2(I;V_\sigma(\Omega))\), let \(X\) be its
measure-preserving regular Lagrangian flow, and put
\[
 B(r,a)=\nabla_xu(r,X(r,a)).
\]
Suppose \(g\ge0\) has finite mass and relative entropy and
\(\rho_g\in L^\infty(\Omega)\).  For every measurable label-dependent kernel
\begin{equation}
 \mathsf K(r,a,q,y,y')=\mathsf K(r,a,q,y',y)\ge0,
 \label{eq:full-drag-label-kernel}
\end{equation}
such that, with
\(\kappa(r,a)=\|\mathsf K(r,a)\|_{L^\infty(D\times Y^2)}\),
\begin{equation}
 \kappa(\cdot,a)\in L^1(I)\quad\hbox{for a.e. }a,
 \qquad
 \int_{I\times\Omega_a}\rho_g(a)\kappa(r,a)\dd a\dd r<\infty,
 \label{eq:full-drag-integrable-label-kernel}
\end{equation}
there is a unique jointly measurable family
\(\widetilde H_{\mathsf K}\) such that, for almost every \(a\),
\begin{align}
 \partial_r(M\widetilde H_{\mathsf K})
 &=-\nabla_q\cdot(B(r,a)qM\widetilde H_{\mathsf K})
 +\frac1{2{\rm Wi}}\nabla_q\cdot(M\nabla_q\widetilde H_{\mathsf K})
 +M\mathcal R_{\mathsf K(r,a)}\widetilde H_{\mathsf K},
 \label{eq:full-drag-label-fibre}\\
 \widetilde H_{\mathsf K}(s,a)&=g(a).
 \notag
\end{align}
It preserves \(\rho_g(a)\), belongs to
\(C(I;L^1_M(\Omega_a\times D\times Y))\), and satisfies the labelwise
contraction and kernel estimate
\begin{equation}
 \sup_{\sigma\in[s,r]}
 \int_{D\times Y}M
 |\widetilde H_{\mathsf K}-\widetilde H_{\mathsf L}|(\sigma,a)
 \le2\rho_g(a)\int_s^r
 \|\mathsf K-\mathsf L\|_{L^\infty(D\times Y^2)}(\theta,a)\dd\theta.
 \label{eq:full-drag-label-kernel-stability}
\end{equation}
Define
\begin{equation}
 H_{\mathsf K}(r,X(r,a),q,y)
 =\widetilde H_{\mathsf K}(r,a,q,y),\qquad
 K^E(r,X(r,a),q,y,y')=\mathsf K(r,a,q,y,y').
 \label{eq:full-drag-Eulerian-pushforward}
\end{equation}
Then \(H_{\mathsf K}\) is the unique renormalized solution, for the prescribed
velocity \(u\), of
\begin{equation}
 \partial_r(MH)+u\cdot\nabla_x(MH)
 =-\nabla_q\cdot((\nabla_xu)qMH)
 +\frac1{2{\rm Wi}}\nabla_q\cdot(M\nabla_qH)
 +M\mathcal R_{K^E}H.
 \label{eq:full-drag-Eulerian-lift}
\end{equation}
with \(H(s)=g\), in the class
\eqref{eq:RLF-pullback-integrability} with bounded number density.
Its state average is independent of \(K^E\) and is the unique renormalized
solution of the scalar full-drag Fokker--Planck equation with datum
\(\bar g=\int_Yg\dd\pi\).  Moreover,
\begin{align}
 &\sup_{\sigma\in I}\int_{\Omega\times D\times Y}MF(H_{\mathsf K}(\sigma))
 +\frac1{{\rm Wi}}\int_I\!\int M|\nabla_q\sqrt{H_{\mathsf K}}|^2
 +\frac12\int_I\!\int_{\Omega\times D\times Y^2}
 MK^EJ(H_{\mathsf K}',H_{\mathsf K})
 \notag\\
 &\qquad\le
 \int_{\Omega\times D\times Y}MF(g)
 +C_{D,M,{\rm Wi}}\|\rho_g\|_\infty
 \left(|I|\,|\Omega|+\|\nabla_xu\|_{L^2(I\times\Omega)}^2\right).
 \label{eq:full-drag-lift-entropy-bound}
\end{align}
In particular,
\(\tau_Y[H_{\mathsf K}]\in L^2(I\times\Omega)\), and
\[
 \tau_Y[H_{\mathsf K}]
 =\int_D\nabla_qU\otimes q\,M
 \left(\int_YH_{\mathsf K}\dd\pi\right)\dd q.
\]
\end{theorem}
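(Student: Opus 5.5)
The plan is to mirror the proof of Theorem~\ref{thm:lagrangian-state-fibre}, with Proposition~\ref{prop:full-drag-fibre-evolution} replacing the rotated construction. The argument proceeds in four steps.

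\emph{Step 1: the flow and its label drift.} Since $u\in L^2(I;V_\sigma)$ is divergence free, the DiPerna--Lions theorem gives a measure-preserving regular Lagrangian flow $X$. In the bounded-domain case we use the zero extension, which leaves $\Omega$ invariant by \eqref{eq:RLF-domain-invariance}. Measure preservation yields
\[
\int_I\!\int_\Omega|B(r,a)|^2\dd a\dd r=\|\nabla_xu\|_{L^2(I\times\Omega)}^2 .
\]
Hence $B(\cdot,a)\in L^2(I)$ for almost every $a$, and $\operatorname{tr}B=\operatorname{div}u\circ X=0$.

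\emph{Step 2: fibrewise construction, measurably in the label.} For almost every $a$ the label data are admissible:
\begin{itemize}
\item $g(a)$ has finite mass $\rho_g(a)$ and finite relative entropy, by Fubini;
\item $\mathsf K(\cdot,a)\in L^1(I;L^\infty)$ by \eqref{eq:full-drag-integrable-label-kernel}.
\end{itemize}
Proposition~\ref{prop:full-drag-fibre-evolution} therefore gives a unique renormalized fibre solution. Its contraction and kernel stability, applied with $m_g=\rho_g(a)$, give \eqref{eq:full-drag-label-kernel-stability}.

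Joint measurability is the delicate point, since we want to avoid a measurable-selection argument. I would follow the route of Theorem~\ref{thm:lagrangian-state-fibre}:
\begin{itemize}
\item First truncate the data and kernel: take $g\wedge n$ and $\mathsf K\wedge n$.
\item Approximate $B$ by bounded simple functions in $(r,a)$.
\item Run the Galerkin/Gårding construction on the product Hilbert space $L^2(\Omega_a;\mathcal H)$. Testing with $\mathbf 1_E(a)$ localizes the estimates to almost every label.
\item Pass to the limit using the labelwise $L^1$ contraction, and use stability in $B$ along a subsequence.
\end{itemize}
Stability in $B$ is not stated, but it follows from the Galerkin compactness together with uniqueness. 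An a.e. limit of measurable maps is measurable, and fibrewise uniqueness identifies the limit with the fibre solutions.

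Integrating \eqref{eq:full-drag-fibre-a-priori} in $a$ gives \eqref{eq:full-drag-lift-entropy-bound}. Here $C m_g\int(1+|B|^2)$ is bounded by $\|\rho_g\|_\infty(|I||\Omega|+\|\nabla u\|_2^2)$, again by measure preservation. The Jeffreys term transfers because the push-forward of $\mathsf K$ is $K^E$.

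\emph{Step 3: return to Eulerian variables.} We check the hypotheses \eqref{eq:RLF-pullback-integrability} of Lemma~\ref{lem:kinetic-RLF-pullback}:
\begin{itemize}
\item Fibre mass conservation gives $\rho[H](r,X(r,a))=\rho_g(a)\in L^\infty$.
\item The Fisher bound and Cauchy--Schwarz give $M\nabla_qH\in L^1$.
\item The reaction flux is bounded by $2\int\rho_g\kappa<\infty$, using the second condition in \eqref{eq:full-drag-integrable-label-kernel}.
\item The drift is bounded by $\|\rho_g\|_\infty\|B\|_{L^1}$.
\item The strong initial trace comes from $C(I;L^1_M)$.
\end{itemize}
The lemma then shows that $H_{\mathsf K}$ solves \eqref{eq:full-drag-Eulerian-lift}. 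Conversely, any renormalized solution in the class pulls back to fibre solutions, which coincide by fibrewise uniqueness; this gives uniqueness. Lemma~\ref{lem:full-drag-scalar-projection} identifies the state average as the scalar solution independent of $K^E$.

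\emph{Step 4: the stress.} Apply \eqref{eq:FENE-stress-L2} pointwise in $(r,x)$ with $R=\|\rho_g\|_\infty$, and integrate using the Fisher bound from \eqref{eq:full-drag-lift-entropy-bound}. This places $\tau_Y$ in $L^2$. The formula with the state average is just Fubini, since $W$ does not depend on $y$.

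The main obstacle I expect is Step 2's measurable product construction when $B$ is neither skew nor bounded: the rotation trick is unavailable. If the approximation in $B$ proves awkward, I would instead show that the map $(B(\cdot),\mathsf K(\cdot),g)\mapsto V$ is continuous from $L^2(I)\times L^1(I;L^\infty)\times L^1_M$ into $C(I;L^1_M)$ on entropy-bounded sets, and obtain measurability by composition.
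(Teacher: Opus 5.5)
Your proposal is correct and follows the paper's proof essentially step for step. Measure preservation puts $B(\cdot,a)$ in $L^2(I)$ with zero trace, and Proposition~\ref{prop:full-drag-fibre-evolution} is applied labelwise, with joint measurability obtained by truncating the data, kernels and drifts inside a labelwise Galerkin construction; the paper removes the drift truncation by the same strong--weak pairing you call stability in $B$. The Eulerian statements, the scalar projection and the stress bound then come from Lemmas~\ref{lem:kinetic-RLF-pullback}, \ref{lem:full-drag-scalar-projection} and \ref{lem:FENE-Hardy-tail}, exactly as you describe.

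Two small points. You leave implicit the upgrade to $C(I;L^1_M(\Omega_a\times D\times Y))$; the paper obtains it by dominated convergence in $a$, using fibrewise continuity and the bound $2\rho_g(a)$. Your fallback via continuity of the solution map would also need care: $a\mapsto\mathsf K(\cdot,a)$ need not be Borel measurable into the non-separable space $L^1(I;L^\infty(D\times Y^2))$, so composition does not immediately give measurability.
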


\begin{proof}
The zero extension of a field in \(V_\sigma(\Omega)\) is divergence free on
\(\R^d\) in the bounded-domain case.  DiPerna--Lions theory therefore gives a
measure-preserving regular Lagrangian flow which leaves \(\Omega\) invariant
up to null sets.  Change of variables along \(X\) gives
\begin{equation}
 \int_{\Omega}\int_I|B(r,a)|^2\dd r\dd a
 =\int_I\int_\Omega|\nabla_xu(r,x)|^2\dd x\dd r<\infty.
 \label{eq:full-drag-B-Fubini}
\end{equation}
Hence \(B(\cdot,a)\in L^2(I)\) and
\(\operatorname{tr}B(\cdot,a)=0\) for almost every label.  Proposition
\ref{prop:full-drag-fibre-evolution} applies on every such fibre.

We spell out measurability, since no measurable-selection assertion is
needed.  First replace \(g\) by the jointly measurable bounded truncations
\(g_j=\min\{g,j\}\), and use one fixed countable Galerkin basis of
\(\mathcal V\).  For every finite dimension, the matrix entries obtained by
	integrating \(\min\{\mathsf K,j\}\) against two basis functions are jointly
	measurable and bounded; the drift entries are jointly measurable and belong
to \(L^2(I\times\Omega_a)\).  Truncating \(B\) gives finite Carath\'eodory ODEs
whose solutions are jointly measurable.  Estimates
\eqref{eq:full-drag-Galerkin-energy} and
\eqref{eq:full-drag-Galerkin-time-bound}, first integrated over labels on
finite truncation levels, give the weak limits in the Galerkin dimension.
The drift truncations then converge strongly in \(L^2(I\times\Omega_a)\), so
their weak formulations converge by strong--weak pairing.  Finally
\(g_j\to g\) in \(L^1_M(\Omega_a\times D\times Y)\), and the integrated
\(L^1\)-contraction makes the corresponding solutions Cauchy uniformly in
	time.  Letting the kernel truncation tend to infinity uses the \(L^1_t\)
	kernel stability from Proposition~\ref{prop:full-drag-fibre-evolution};
	condition \eqref{eq:full-drag-integrable-label-kernel} permits the same limit
	after integration in the label.  The resulting limit is jointly measurable
	and unique on almost every fibre.

For each label, the time-continuity supplied by Proposition
\ref{prop:full-drag-fibre-evolution} and the bound
\(\|\widetilde H(r,a)-\widetilde H(\sigma,a)\|_{L^1_M}\le2\rho_g(a)\)
allow dominated convergence in \(a\).  This proves
\(C(I;L^1_M(\Omega_a\times D\times Y))\), rather than only weak continuity.
Formula
\eqref{eq:full-drag-label-kernel-stability} is exactly
\eqref{eq:full-drag-fibre-kernel-stability} with
\(m_g=\rho_g(a)\).

The drift source in Lemma \ref{lem:kinetic-RLF-pullback} is integrable because
\[
 \int_I\int_\Omega|\nabla_xu|\,\rho[H_{\mathsf K}]
 \le |I|^{1/2}|\Omega|^{1/2}\|\rho_g\|_\infty
 \|\nabla_xu\|_{L^2(I\times\Omega)}.
\]
The Fisher estimate gives \(M\nabla_qH_{\mathsf K}\in L^1\).  The Eulerian
reaction flux is integrable because
\[
 \int_{I\times\Omega\times D\times Y}M
 |\mathcal R_{K^E}H_{\mathsf K}|
 \le2\int_{I\times\Omega_a}\rho_g(a)\kappa(r,a)\dd a\dd r<\infty.
\]
That lemma,
with \(\mathsf G(u)=\nabla_xu\), proves
\eqref{eq:full-drag-Eulerian-lift} and preserves the endpoint traces.
Conversely, it pulls every renormalized Eulerian solution into the unique
fibre evolution, proving uniqueness for prescribed \(u,K^E\).

Lemma~\ref{lem:full-drag-scalar-projection} shows that state integration
cancels the symmetric reaction and identifies the average with the unique
scalar prescribed-velocity solution; hence it is independent of the kernel.
Integrating \eqref{eq:full-drag-fibre-a-priori} in \(a\), using
\eqref{eq:full-drag-B-Fubini} and
\(\rho_g(a)\le\|\rho_g\|_\infty\), proves
\eqref{eq:full-drag-lift-entropy-bound}.  Finally Lemma
\ref{lem:FENE-Hardy-tail} and the transported \(L^\infty\) bound on the
number density give the asserted \(L^2\) stress estimate.  State integration
gives the displayed stress identity.
\end{proof}

\begin{proposition}[Conditional state entropy contraction]
\label{prop:full-drag-conditional-entropy}
Under the hypotheses of Theorem~\ref{thm:full-drag-state-fibre}, put
\(\bar H=\int_YH_{\mathsf K}\dd\pi\) and
\begin{equation}
 \mathcal C_Y[H\mid\bar H](r)
 :=\int_{\Omega\times D\times Y}
 MH(r)\log\frac{H(r)}{\bar H(r)}.
 \label{eq:full-drag-conditional-entropy}
\end{equation}
The integrand is set to zero on \(\{\bar H=0\}\), where
\(H=0\) for \(\pi\)-almost every state.  Then, for every \(r\in I\),
\begin{align}
 &\mathcal C_Y[H\mid\bar H](r)
 +\frac1{2{\rm Wi}}\int_s^r\!\int
 MH\left|\nabla_q\log\frac{H}{\bar H}\right|^2
 \notag\\
 &\qquad
 +\frac12\int_s^r\!\int_{\Omega\times D\times Y^2}
 MK^E(H'-H)(\log H'-\log H)
 \le \mathcal C_Y[g\mid\bar g].
 \label{eq:full-drag-conditional-entropy-decay}
\end{align}
Neither the velocity gradient nor its symmetric part occurs on the right-hand
side.
\end{proposition}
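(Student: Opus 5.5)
The plan is to prove \eqref{eq:full-drag-conditional-entropy-decay} label by label on the Lagrangian fibres and then integrate in \(a\). By Theorem~\ref{thm:full-drag-state-fibre}, for almost every label the pullback \(V(r)=\widetilde H_{\mathsf K}(r,a)\) solves \eqref{eq:full-drag-label-fibre}. Integrating that equation in \(Y\) cancels the symmetric reaction, as in Lemma~\ref{lem:full-drag-scalar-projection}. Hence \(\bar V=\int_YV\dd\pi\) solves the same fibre equation with the same trace-free drift \(B(\cdot,a)\) and no reaction. Since \(X\) preserves measure, \(\mathcal C_Y[H\mid\bar H](r)=\int_\Omega\int_{D\times Y}MV\log(V/\bar V)\dd a\). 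The same change of variables converts the Fisher and Jeffreys integrals, so it suffices to prove the fibre inequality and integrate it against \(\dd a\).

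On a fibre I will compute, formally, the derivative of \(\int M\Phi(V,\bar V)\) with \(\Phi(v,w)=v\log(v/w)\). The key structural point is that \(\Phi\) is jointly convex and positively \(1\)-homogeneous, so \(v\partial_v\Phi+w\partial_w\Phi=\Phi\). The two drift contributions then combine into \(\int MBq\cdot\nabla_q\Phi(V,\bar V)\)-type terms that cancel exactly, as the following computation shows. The drift part of \(\frac{\dd}{\dd r}\int M\Phi\) is
\[
\int MVBq\cdot\nabla_q\log\frac V{\bar V}
-\int M\bar VBq\cdot\nabla_q\frac V{\bar V}.
\]
This vanishes identically because \(\bar V\nabla_q(V/\bar V)=V\nabla_q\log(V/\bar V)\). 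No stress pairing (Lemma~\ref{lem:closed-drift-stress-pairing}) and no Hardy bound is needed, and this is why neither \(\nabla_xu\) nor its symmetric part appears on the right. The diffusion part gives \(-\frac1{2{\rm Wi}}\int MV|\nabla_q\log(V/\bar V)|^2\). This is the standard relative-Fisher dissipation for two solutions of the same linear Fokker--Planck operator. For the reaction part, \(\log\bar V\) is independent of \(y\), so its pairing with \(\mathcal R_{\mathsf K}V\) vanishes by \(\int_Y\mathcal R V\dd\pi=0\). The remaining pairing with \(\log V\) equals \(-\frac12\int M\mathsf K(V'-V)(\log V'-\log V)\) by Lemma~\ref{lem:continuum-reaction-thermodynamics}. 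Integrating from \(s\) to \(r\) gives the fibre version of \eqref{eq:full-drag-conditional-entropy-decay}, even with equality.

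The main obstacle is justifying this computation for renormalized solutions. The difficulties are that \(V\) and \(\bar V\) may vanish, that the logarithmic test is unbounded, and that the drift is singular at \(\partial D\). I will first treat bounded, strictly positive data in \(\mathcal H\) with kernels truncated to \(\mathsf K\wedge R\), so that the fibres are the Galerkin-limit solutions of Proposition~\ref{prop:full-drag-fibre-evolution}. I will replace \(\Phi\) by the regularization \(\Phi_\varepsilon(v,w)=v\log\bigl((v+\varepsilon w)/((1+\varepsilon)w)\bigr)\). It is still jointly convex and \(1\)-homogeneous, so the drift cancellation survives exactly, and its derivatives are bounded on the region \(v\ge0\), \(w\ge c>0\). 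Positivity of \(\bar V\) would then come from a lower bound for the positive-datum scalar fibre, obtained from a Gronwall argument as in the positivity step of Proposition~\ref{prop:full-drag-fibre-evolution}; this is the step I am least sure of. If it fails, I would instead add \(\varepsilon\bar V\) inside the logarithm only and control the error by the \(L^1\) contraction. The chain rule in the Hilbert triple \(\mathcal V\subset\mathcal H\subset\mathcal V'\) then applies to \(\Phi_\varepsilon\).

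Letting \(\varepsilon\downarrow0\) uses monotone convergence for the Fisher term and Lemma~\ref{lem:fixed-measure-Jeffreys-liminf} for the Jeffreys term, which yields an inequality. For general finite-entropy data I will approximate \(g\) as in the proof of Proposition~\ref{prop:full-drag-fibre-evolution}, with convergent conditional entropy, which joint convexity of \(\Phi\) and truncation permit. The \(L^1_M\) contraction gives convergence of \(V\) and \(\bar V\) in \(C(I;L^1_M)\). Lower semicontinuity of the jointly convex \(\int M\Phi\) and of the relative Fisher information, written as the jointly convex functional \(|\nabla V-V\nabla\bar V/\bar V|^2/V\), passes the inequality to the limit. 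The kernel truncation \(R\uparrow\infty\) is then removed by kernel stability, using the fixed-measure liminf exactly as at the end of Proposition~\ref{prop:full-drag-fibre-evolution}.
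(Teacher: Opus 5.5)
Your argument follows the paper's proof. Both work labelwise on bounded positive approximations, cancel the full-drag term algebraically without any stress pairing, obtain the relative Fisher information by expanding the square, let \(\log\bar H\) drop out of the Jeffreys difference, and finish with the truncation/contraction approximation, lower semicontinuity, and Fubini in the label.

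Two points you leave open are also passed over in the paper. First, \(\bar V\ge c>0\) cannot come from the Gronwall positivity step, because the full-drag flux does not preserve constants: in general \(\nabla_q\cdot(MBq)=-MBq\cdot\nabla_qU\not\equiv0\). Second, the relative Fisher integrand is convex as \(|\xi|^2/V\) in \((V,\xi)\) but not jointly convex in \((V,\bar V,\nabla_qV,\nabla_q\bar V)\). Lower semicontinuity therefore also requires \(\xi_j=\nabla_qV_j-(V_j/\bar V_j)\nabla_q\bar V_j\rightharpoonup\xi\), which follows from \(\bigl|\int_Y(V_j/\bar V_j)\zeta\dd\pi\bigr|\le\|\zeta\|_\infty\).
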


\begin{proof}
Work first with bounded strictly positive fibre approximations.  Differentiate
the conditional entropy in \eqref{eq:full-drag-conditional-entropy}, use the
equations for \(H\) and \(\bar H\), and sum in the state variable.  The
full-drag contribution vanishes algebraically:
\[
 \int_Y MH\,Bq\cdot\nabla_q\log\frac H{\bar H}\dd\pi
 =MBq\cdot\left(\nabla_q\bar H-
 \bar H\nabla_q\log\bar H\right)=0.
\]
The diffusion contribution is
\begin{align*}
 &-\frac1{2{\rm Wi}}\int M
 \left(\int_YH|\nabla_q\log H|^2\dd\pi
 -\bar H|\nabla_q\log\bar H|^2\right)\\
 &\qquad=-\frac1{2{\rm Wi}}\int_{D\times Y}
 MH\left|\nabla_q\log\frac H{\bar H}\right|^2,
\end{align*}
where the second equality follows by expanding the square and using
\(\int_Y\nabla_qH\dd\pi=\nabla_q\bar H\).  Symmetry of the kernel turns the
reaction contribution into minus one half of its Jeffreys form; the factor
\(\bar H\) cancels from the logarithmic difference.  This proves equality for
the approximations.  The same truncation and contraction approximation as in
Proposition~\ref{prop:full-drag-fibre-evolution}, followed by convex lower
semicontinuity, proves \eqref{eq:full-drag-conditional-entropy-decay}.  The
calculation is labelwise, and Fubini gives the displayed Eulerian form.
\end{proof}

\begin{proposition}[Label-dependent state-fibre lift]
\label{prop:label-dependent-fibre-lift}
Let \(u,X,g\) satisfy the hypotheses of Theorem
\ref{thm:lagrangian-state-fibre}, and let
\begin{equation}
 \mathsf K\in L^\infty(I\times\T^2_a\times D\times Y^2),
 \qquad
 \mathsf K(r,a,q,y,y')=\mathsf K(r,a,q,y',y)\ge0.
 \label{eq:label-kernel-class}
\end{equation}
There is a unique jointly measurable family
\(\widetilde H_{\mathsf K}(r,a,q,y)\) solving
\begin{align}
 \partial_r(M\widetilde H_{\mathsf K})
 &=-\nabla_q\cdot(B(r,a)qM\widetilde H_{\mathsf K})
 +\frac1{2{\rm Wi}}\nabla_q\cdot
 (M\nabla_q\widetilde H_{\mathsf K})
 +M\mathcal R_{\mathsf K(r,a)}\widetilde H_{\mathsf K},
 \label{eq:label-fibre-equation}\\
 \widetilde H_{\mathsf K}(s,a)&=g(a)
 \quad\text{for almost every }a.
 \notag
\end{align}
It belongs to
\(C(I;L^1_M(\T^2_a\times D\times Y))\), preserves the mass
\(\rho_g(a)\) on almost every label, and satisfies the integrated entropy
inequality \eqref{eq:lagrangian-lift-entropy} with \(K\) replaced by
\(\mathsf K\) and integration also in \(a\).

For two label kernels \(\mathsf K,\mathsf L\) and the same datum, the
solutions satisfy, for almost every \(a\),
\begin{equation}
 \sup_{\sigma\in[s,r]}
 \int_{D\times Y}M
 |\widetilde H_{\mathsf K}-\widetilde H_{\mathsf L}|(\sigma,a)
 \le2\rho_g(a)\int_s^r
 \|\mathsf K-\mathsf L\|_{L^\infty(D\times Y^2)}(\sigma,a)\dd\sigma.
 \label{eq:labelwise-kernel-stability}
\end{equation}
Define the Eulerian coefficient and density by
\begin{equation}
 K^E(r,X(r,a),q,y,y')=\mathsf K(r,a,q,y,y'),
 \qquad
 H^E(r,X(r,a),q,y)=\widetilde H_{\mathsf K}(r,a,q,y).
 \label{eq:label-to-Eulerian-pushforward}
\end{equation}
where the identities use the measurable almost-everywhere inverse of the
measure-preserving regular Lagrangian flow.
Then \(H^E\) is the unique renormalized Eulerian solution with coefficient
\(K^E\).  Its state average is independent of \(\mathsf K\).
\end{proposition}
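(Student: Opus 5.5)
The plan is to repeat the proof of Theorem~\ref{thm:lagrangian-state-fibre}, replacing the label-independent kernel \(K(r)\) by \(\mathsf K(r,a)\) throughout. Since only the corotational part \(B(r,a)=\mathsf W(u)(r,X(r,a))\) enters, Step~1 of that proof applies verbatim. It gives \(B(\cdot,a)\in L^1(I;\mathfrak{so}(2))\) for almost every label, and \(\operatorname{div}_q(MBq)=0\). For almost every fixed \(a\), Proposition~\ref{prop:nonautonomous-fibre-evolution} applies with the bounded symmetric kernel \(\mathsf K(\cdot,a)\) and datum \(g(a)\). It produces a unique renormalized fibre solution that is nonnegative, mass preserving and obeys the fibre entropy inequality. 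Its kernel estimate \eqref{eq:nonautonomous-fibre-kernel-stability} with \(m_g=\rho_g(a)\) is exactly \eqref{eq:labelwise-kernel-stability}.

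The point needing care is joint measurability in \(a\) without a measurable-selection argument. I would follow the product-space construction of Step~2 of Theorem~\ref{thm:lagrangian-state-fibre}. Rotate by the jointly measurable \(Q(r,a)=\exp(\theta(r,a)J)\) and set \(\widehat K(r,a,\xi,y,y')=\mathsf K(r,a,Q(r,a)\xi,y,y')\). On each time cell, take time averages \(\widehat K_j(a)\) and solve the product resolvent on \(\mathbb V=L^2(\T^2_a;\mathcal V)\) by Lax--Milgram. This works because \(\widehat K_j\) now depends on \(a\), but coercivity \eqref{eq:fibre-resolvent-coercivity} is uniform in \(a\) and boundedness uses only \(\|\mathsf K\|_\infty\). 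Testing with \(\mathbf 1_E(a)\) times truncations localizes positivity, mass, entropy and \(L^1\) contraction to almost every label. The implicit Euler limit and data approximation then give a jointly measurable limit in \(C(I;L^1_M(\T^2_a\times D\times Y))\), and disintegration together with fibrewise uniqueness identifies it with the fibre solutions. I expect this step to be the main obstacle. Specifically, one must check that the time averages of an \(a\)-dependent kernel still converge strongly in \(L^p(I\times\T^2\times D\times Y^2)\), which holds by the martingale/Lebesgue differentiation argument already used. One must also check that the limit passage is labelwise and not just integrated. Integrating the fibre entropy in \(a\) gives the claimed entropy inequality.

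For the Eulerian statements, define \(K^E\) and \(H^E\) through the almost-everywhere inverse of the measure-preserving flow. Joint measurability and symmetry of \(K^E\) are preserved, and \(\|K^E\|_\infty=\|\mathsf K\|_\infty\). The hypotheses of Lemma~\ref{lem:kinetic-RLF-pullback} then hold. The drift flux is integrable by \eqref{eq:weighted-drift-assembly} using \(\rho_g\in L^\infty\). The Fisher bound gives \(M\nabla_qH^E\in L^1\). The reaction flux is bounded by \(2\|\mathsf K\|_\infty\mathfrak m|I|\). Moreover \(\widetilde{K^E}(r,a)=\mathsf K(r,a)\) by construction, so the lemma shows that \(H^E\) is a renormalized Eulerian solution with coefficient \(K^E\). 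Conversely, any renormalized Eulerian solution pulls back to the same fibre problem, and fibrewise uniqueness gives Eulerian uniqueness. Finally, integrating in \(Y\) cancels the symmetric reaction by \eqref{eq:v37-reaction-average-zero}. The state average is therefore the unique scalar corotational solution of Theorem~\ref{thm:lagrangian-state-fibre} (last assertion), and it is independent of \(\mathsf K\).
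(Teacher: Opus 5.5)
Your proposal is correct and follows essentially the same route as the paper: the product-space rotation by \(Q(r,a)\) with label-dependent cell averages \(\widehat{\mathsf K}_j(a)\), the decomposable Lax--Milgram resolvent on \(L^2(\T^2_a;\mathcal V)\) with \(a\)-uniform coercivity, and localization by \(\mathbf 1_E(a)\) times truncations. It also uses disintegration with fibrewise uniqueness, the labelwise kernel estimate with \(m_g=\rho_g(a)\), and the Eulerian return through Lemma~\ref{lem:kinetic-RLF-pullback} with the weighted drift bound \eqref{eq:weighted-drift-assembly}.
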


\begin{proof}
Use the product-space rotation \eqref{eq:product-fibre-rotation}.  The rotated
kernel
\[
 \widehat{\mathsf K}(r,a,\xi,y,y')
 :=\mathsf K(r,a,Q(r,a)\xi,y,y')
\]
is jointly measurable, bounded, symmetric, and non-negative.  The implicit
Euler construction \eqref{eq:fibre-implicit-Euler} can be performed on
\(\T^2_a\times D\times Y\).  More precisely, on a time cell \(I_j\) replace
\(\widehat{\mathsf K}\) by its cell average \(\widehat{\mathsf K}_j(a)\) and
solve with the decomposable form
\begin{equation}
 \mathfrak B_j(v,\phi):=
 \int_{\T^2}\mathfrak b_{\Delta r_j^{-1},0,
 \widehat{\mathsf K}_j(a)}(v(a),\phi(a))\dd a.
 \label{eq:label-product-resolvent-form}
\end{equation}
The coercivity constant in Lemma \ref{lem:weighted-fibre-resolvent} is
independent of \(a\), while boundedness follows from
\(\|\mathsf K\|_\infty\).  Lax--Milgram therefore gives the product resolvent.
Approximation of \(g\) and \(\widehat{\mathsf K}_j\) by simple functions of
\(a\), together with resolvent contraction, shows that the resolvent is jointly
measurable.  Testing with \(\mathbf1_E(a)\) times the usual truncations, for an
arbitrary Borel set \(E\subset\T^2\), gives positivity, entropy dissipation,
and mass conservation on almost every label, rather than only after integration
in \(a\).

The discrete estimates are precisely
\eqref{eq:fibre-discrete-entropy-sum} integrated in \(a\), with constants
independent of the partition.  The compactness and Minty identification in
Proposition \ref{prop:nonautonomous-fibre-evolution} apply on the product
space; alternatively one may disintegrate first and use its fibrewise limit.
Both routes give the same limit by \(L^1_M\) contraction.  They yield
\(C(I;L^1_M(\T^2_a\times D\times Y))\), the integrated entropy inequality,
and a jointly measurable representative.  Disintegration and fibrewise
uniqueness then prove \eqref{eq:label-fibre-equation} for almost every label.

Apply \eqref{eq:nonautonomous-fibre-kernel-stability} on each label, with
fibre mass \(m_g=\rho_g(a)\).  This gives
\eqref{eq:labelwise-kernel-stability}.  Since \(\rho_g\in L^\infty\), the
weighted drift estimate \eqref{eq:weighted-drift-assembly} remains valid.
The regular-Lagrangian change of variables used in Step 4 of Theorem
\ref{thm:lagrangian-state-fibre} then proves
\eqref{eq:label-to-Eulerian-pushforward} and the Eulerian weak equation.
Finally, state integration cancels every symmetric Markov reaction on each
label; scalar \(L^1\) contraction identifies the state average independently
of \(\mathsf K\).
\end{proof}

\begin{theorem}[Two-dimensional zero-diffusion continuum-state existence]
\label{thm:v37-zero-diffusion-existence}
Assume \eqref{eq:v37-fixed-kernel}.  Let
\(u^{\rm in}\in L^2_\sigma(\T^2)\), \(h^{\rm in}\ge0\), and write
\(\bar h^{\rm in}=\int_Yh^{\rm in}\dd\pi\).  Assume
\begin{equation}
 \rho^{\rm in}(x)=\int_{D\times Y}Mh^{\rm in}\dd q\dd\pi=1
 \quad\text{for a.e. }x,
 \qquad
 \mathop{\rm ess\,sup}_{x\in\T^2}\int_D M|\bar h^{\rm in}(x,q)|^2\dd q<\infty,
 \label{eq:v37-corotational-scalar-class}
\end{equation}
and
\begin{equation}
 \int_{\Omega\times D\times Y}M
 (h^{\rm in}\log h^{\rm in}-h^{\rm in}+1)<\infty,
 \qquad \mathcal L_2(h^{\rm in})<\infty.
 \label{eq:v37-Masmoudi-initial-class}
\end{equation}
Then for every \(T>0\) there is a zero-diffusion corotational weak solution
with reaction coefficient \(\mathcal K=K_0\) in the sense of Definition
\ref{def:v37-zero-diffusion-weak}.  It is
non-negative, conserves polymer mass, and
\begin{equation}
 \partial_t\rho+u\cdot\nabla_x\rho=0,
 \qquad \rho(t,x)=1\quad\hbox{for a.e. }(t,x).
 \label{eq:v37-transported-number-density}
\end{equation}
On every finite time interval the configurational Fisher information, the
reaction production, and Masmoudi's logarithmic-square quantity are finite.
The momentum equation contains the identified Kramers stress
\(\tau_Y[h]\), not a stress-defect measure.
\end{theorem}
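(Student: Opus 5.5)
The proof follows the ``project, solve scalar, lift'' strategy announced at the start of this section. Averaging the datum in \(Y\) gives \(\bar h^{\rm in}\). By \eqref{eq:v37-corotational-scalar-class} its number density is one and its \(L^2_M\) norm is uniformly bounded in \(x\). Conditional Jensen and \(\mathcal L_2(h^{\rm in})<\infty\) give finite relative entropy and a finite scalar logarithmic-square quantity: since \(\rho^{\rm in}=1\), we have \(\bar h^{\rm in}|\log\bar h^{\rm in}|^2\le\int_Y h^{\rm in}|\log h^{\rm in}|^2\dd\pi\) by convexity on the relevant range, with a separate bound near zero. These are precisely Masmoudi's hypotheses for the two-dimensional corotational zero-diffusion FENE system with \(b>2\) \citep{masmoudi2013global}. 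We invoke that theorem to obtain a global scalar weak solution \((u,\bar h)\). It has \(u\in L^2(0,T;H^1_\sigma)\), a transported number density \(\rho\equiv1\), finite Fisher information and an identified Kramers stress \(\int_D\nabla_qU\otimes q\,M\bar h\dd q\in L^2_{t,x}\). Since \(K_0\) does not depend on the solution, no fixed point is required.

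Next we lift. With this fixed velocity we apply Theorem~\ref{thm:lagrangian-state-fibre} with \(K=K_0\) on \(I=[0,T]\) and datum \(g=h^{\rm in}\). The required finite mass, finite relative entropy and \(\rho_g=1\in L^\infty\) all hold. The theorem produces a unique non-negative renormalized \(h\in C([0,T];L^1_M)\) solving \eqref{eq:lagrangian-lift-equation}. It also gives the entropy--Fisher--Jeffreys inequality \eqref{eq:lagrangian-lift-entropy}, which makes the reaction production finite, together with \eqref{eq:lagrangian-number-density}, which yields \(\rho[h](t,X(t,a))=1\) and hence \eqref{eq:v37-transported-number-density}. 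Mass conservation holds as well. By the last assertion of Theorem~\ref{thm:lagrangian-state-fibre}, \(\int_Yh\dd\pi\) is the unique renormalized scalar solution driven by \(u\) with datum \(\bar h^{\rm in}\). Masmoudi's \(\bar h\) lies in the corresponding scalar renormalized class: transport is renormalizable because \(u\in L^2H^1\), and the corotational drift is divergence free in \(q\). Hence \(\int_Yh\dd\pi=\bar h\). The stress therefore satisfies \(\tau_Y[h]=\tau[\bar h]\), and the momentum identity \eqref{eq:v35-weak-momentum} for \(u\) holds with \(\tau_Y[h]\) and without any defect. The kinetic weak form \eqref{eq:v37-zero-diffusion-weak-form} with \(\mathcal K=K_0\) follows from the Eulerian equation. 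Since \(\rho\le1\), the drift term lies in \(L^1\), so test functions of the stated class are admissible, and \(Mh\in C_w\) holds.

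It remains to check the logarithmic-square quantity along the flow. We plan to combine the scalar propagation supplied by Masmoudi's theorem with Proposition~\ref{prop:full-drag-conditional-entropy}, specialised to corotational drag; the same algebraic cancellation works for skew \(B\). That proposition bounds the conditional entropy of \(h\) given \(\bar h\) uniformly in time. If finiteness of the state-integrated \(\mathcal L_2\) is required beyond the scalar part, we would instead propagate \(\int Mh|\log(h/\bar h)|^2\) labelwise along the rotated fibres of Proposition~\ref{prop:nonautonomous-fibre-evolution}, where the drift has been removed. The main obstacle is the identification step. We must verify that Masmoudi's solution belongs to the uniqueness class of Lemma~\ref{lem:kinetic-RLF-pullback}, namely a strong initial trace, \(M\nabla_q\bar h\in L^1\), renormalizability, and membership in the appropriate weak-continuity class. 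Otherwise the averaged lift might differ from the given scalar solution. The strong initial trace is the most delicate of these conditions; we would try to obtain it from continuity of the entropy at \(t=0\) together with weak continuity.
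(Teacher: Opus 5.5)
Your architecture matches the paper's: average the datum, take a scalar corotational pair $(u,\bar h)$, apply Theorem~\ref{thm:lagrangian-state-fibre} with $K=K_0$ and $g=h^{\rm in}$, identify $\int_Yh\dd\pi=\bar h$ through its final uniqueness assertion, and inherit $\tau_Y[h]=\tau[\bar h]$. Your concern about placing $\bar h$ in the class of Lemma~\ref{lem:kinetic-RLF-pullback} is fair; the paper asserts the identification without comment. Your entropy-continuity argument for the strong initial trace is the one the paper uses in Step~1 of Theorem~\ref{thm:full-drag-local-closure}.

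\textbf{The scalar base comes from the wrong theorem.} \citet{masmoudi2013global} treats full drag. There the a priori bounds come from cancelling the drag work in the entropy balance against the stress work in the momentum equation. Corotational drag has no such cancellation. For radial $U$ one has $\mathsf W(u)q\cdot\nabla_qU=0$, so the entropy balance carries no work term, while the fluid energy balance still contains $-\int\tau:\nabla_xu$. The corotational theory closes instead through the $L^\infty_x(L^2_M)$ bound on $\bar h$. The $M$-divergence-free skew drift propagates this bound along trajectories, and it bounds $\tau$ pointwise when $b>2$. This is Theorem~2.1 of \citet{lions2007micromacro}, and \eqref{eq:v37-corotational-scalar-class} is exactly its normalized data class; it is the theorem the paper invokes. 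The hypothesis $\mathcal L_2(h^{\rm in})<\infty$ plays no role in producing $(u,\bar h)$, so your Jensen step for a scalar logarithmic square is not needed.

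\textbf{The logarithmic-square conclusion is not reached.} The quantity to be bounded is the state-integrated $\mathcal L_2(h(t))$ of \eqref{eq:v37-log2-initial}. Your primary plan, scalar propagation plus Proposition~\ref{prop:full-drag-conditional-entropy}, cannot give it. The splitting $|\log h|^2\le2|\log\bar h|^2+2|\log(h/\bar h)|^2$ needs $\int Mh|\log(h/\bar h)|^2$, an $L\log^2L$ quantity, whereas the conditional entropy is only of $L\log L$ type. A propagated scalar logarithmic square would also have to be proved separately. Your fallback could be made rigorous by a generalized relative-entropy argument: being independent of $y$, $\bar h$ is annihilated by the reaction and so solves the same linear fibre equation as $h$. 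But you give no mechanism, and none is needed.

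The missing observation is that, since $\rho\equiv1$, you can renormalize $h$ itself on each fibre with a smooth convex $\beta\ge0$ equal to $s(\log s)^2$ for large $s$.
\begin{itemize}
\item The skew drift drops because $\operatorname{div}_q(MBq)=0$.
\item The diffusion contributes $-\frac1{2{\rm Wi}}\int M\beta''(h)|\nabla_qh|^2\le0$.
\item The Markov term is non-positive by Lemma~\ref{lem:v37-reaction-defect-sign}, applied to truncations of $\beta$ with bounded derivative and followed by monotone convergence.
\end{itemize}
Hence $\int_{D\times Y}M\beta(h)(t,X(t,a))\le\int_{D\times Y}M\beta(h^{\rm in}(a))$ for a.e.\ $a$. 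Now use $s|\log s|^2\le C(1+\beta(s))$ and $\beta(s)\le C(1+s|\log s|^2)$, take square roots, and integrate in $a$. This gives $\mathcal L_2(h(t))\le C(|\T^2|+\mathcal L_2(h^{\rm in}))$, which is how the paper obtains the bound.
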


\begin{proof}
The proof uses a triangular reduction, which avoids an unjustified
state-fibre defect liminf.

\paragraph{1. The closed state average.}
Set \(\bar h^{\rm in}=\int_Yh^{\rm in}\dd\pi\).  Jensen's inequality gives
the scalar entropy bound, while
\eqref{eq:v37-corotational-scalar-class} gives exactly the normalized
\(L^\infty_x(L^2_M)\) datum in Theorem 2.1 of
\citet{lions2007micromacro}.  Apply that theorem to
\((u^{\rm in},M\bar h^{\rm in})\).  Its equation is precisely
\eqref{eq:v37-zero-diffusion-kinetic} after deleting the state variable and
reaction: the configuration domain is a ball, the Maxwellian is radial, and
the drag is \(\mathsf W(u)q\).  It yields a global scalar pair
\((u,\bar h)\) with the identified stress
\[
 \tau[\bar h]=\int_D\nabla_qU\otimes q\,M\bar h\dd q
 \in L^2((0,T)\times\T^2).
\]
Thus the only propagation-of-compactness result imported from the scalar
theory is stated at its actual scope: the state average.  No compactness in
the non-atomic \(Y\)-fibre is inferred from it.

\paragraph{2. Linear state-fibre lift.}
The entropy assumption in \eqref{eq:v37-Masmoudi-initial-class} verifies the
initial class of Theorem \ref{thm:lagrangian-state-fibre}.  With this \(u\)
fixed, apply that theorem
on \([0,T]\), with the time-independent
kernel \(K_0\) and datum \(h^{\rm in}\).  It gives a non-negative
renormalized lift \(h\), exact mass and number-density transport, the
configurational Fisher estimate, and the Jeffreys production.  This step is
performed directly for the Sobolev velocity supplied by the scalar theorem;
no smooth-velocity limit and no unproved Eulerian chain rule are used.

For completeness, the same fibre renormalization propagates the additional
logarithmic-square quantity.  In Lagrangian variables,
\(\rho(r,X(r,a))=\rho^{\rm in}(a)\).  On
\(\{\rho^{\rm in}>0\}\), write \(z=h/\rho^{\rm in}(a)\).  Choose a smooth
convex \(\beta\ge0\) which agrees with \(s(\log s)^2\) for all sufficiently
large \(s\).  Since \(s(\log s)^2\) is bounded on \([0,1]\),
\[
 s|\log s|^2\le C\{1+\beta(s)\},\qquad s\ge0.
\]
The skew drift vanishes, configurational diffusion is dissipative, and the
Markov term has the sign in Lemma \ref{lem:v37-reaction-defect-sign}.  Apply
that lemma first to convex truncations with bounded derivative and then use
monotone convergence.  Hence
the fibre integral of \(\beta(z)\) is non-increasing.  Multiplying by
\(\rho^{\rm in}(a)\), using the displayed comparison, taking square roots,
and integrating in \(a\) gives a finite bound in terms of
\(\mathcal L_2(h^{\rm in})\), the total mass, and \(|\T^2|\).  The
zero-density fibres remain zero by \(L^1\) contraction.

\paragraph{3. Identification of the average and stress.}
The final assertion of Theorem \ref{thm:lagrangian-state-fibre} shows that the
state average is the unique scalar renormalized solution with datum
\(\bar h^{\rm in}\).  Hence it equals the previously constructed
\(\bar h\).  In particular,
\(\tau_Y[h]=\tau[\bar h]\), so stress identification is inherited exactly,
not through a state-fibre defect argument.

\paragraph{4. Jeffreys production and traces.}
The reaction dissipation supplied by Theorem
\ref{thm:lagrangian-state-fibre} is
\begin{equation}
 \mathcal D_{Y,0}(Mh):=\frac12\int_{\Omega\times D\times Y^2}
 MK_0(h'-h)(\log h'-\log h).
 \label{eq:v37-fixed-Jeffreys}
\end{equation}
It is finite by \eqref{eq:lagrangian-lift-entropy}.  The identity
\(\tau_Y[h]=\tau[\bar h]\) places the stress in \(L^2\) and inserts it directly
into the already constructed scalar momentum equation; no state-resolved
stress limit is taken.  The scalar construction gives
\(u\in C_w([0,T];L^2_\sigma)\), while the Lagrangian theorem gives the strong
\(L^1\) kinetic trace and identifies it with \(Mh^{\rm in}\) at time zero.
Its constant test proves exact mass conservation, and
\eqref{eq:lagrangian-number-density} is precisely
\eqref{eq:v37-transported-number-density}.  This completes the construction.
\end{proof}

\begin{remark}[Why the corotational proof does not prove full drag]
\label{rem:v37-general-drag-obstruction}
The normalized \(L^\infty_x(L^2_M)\) assumption in
\eqref{eq:v37-corotational-scalar-class} is the data class of the published
scalar corotational theorem used above.  The present argument does not upgrade
that scalar base theorem to arbitrary finite-entropy data.  Such an upgrade
would require a separate scalar compactness result and is not hidden in the
state-fibre lift.

For general drag the kinetic flux contains
\(\nabla_q\cdot((\nabla_xu)qMh)\).  Along an approximation one must identify
\begin{equation}
 \overline{(\nabla_xu_n)qMh_n}
 -(\nabla_xu)qMh,
 \label{eq:v37-general-drag-defect}
\end{equation}
which is not controlled by weak convergence of either factor.  Masmoudi's
scalar theorem closes this term by coupling the polymer defect to the
velocity-gradient defect measure and by dividing the renormalized equation
by a transported logarithmic-square normalizer.  Repeating that defect
calculus in the non-atomic state variable would be unnecessary and would not
identify the nonlinear activity.  Theorem
\ref{thm:full-drag-local-closure} instead applies scalar compactness only to
the state average and solves the state-resolved equation directly along the
limiting regular Lagrangian flow.  Proposition
\ref{prop:full-drag-fibre-evolution} is the additional ingredient that the
corotational rotation argument lacks.
\end{remark}

\begin{theorem}[Full-drag zero-diffusion closure of local activities]
\label{thm:full-drag-local-closure}
Let \(d\in\{2,3\}\), and let \(\Omega=\T^d\) or a bounded \(C^2\) domain.
Let \(D=B_{\sqrt b}(0)\), \(b>2\), and let \(M\) be the Warner Maxwellian.
Assume
\[
 k\in L^\infty(Y^2),\qquad k(y,y')=k(y',y)\ge0,
\]
and let
\(\mathfrak a:D\times\R^N\times Y^2\to[0,\infty)\) be Borel measurable,
symmetric in \((y,y')\), and continuous in its moment argument.  Assume the
possibly degenerate bounds
\begin{equation}
 0\le\mathfrak a(q,z,y,y')\le a^*,\qquad
 |\mathfrak a(q,z,y,y')-\mathfrak a(q,\widetilde z,y,y')|
 \le A|z-\widetilde z|.
 \label{eq:full-drag-degenerate-activity}
\end{equation}
Let \(\Phi_1,\ldots,\Phi_N\in L^\infty(D\times Y)\) and define
\[
 \eta_a[h](t,x)=\int_{D\times Y}\Phi_a(q,y)M(q)
 (h(t,x,q,y)-1)\dd q\dd\pi(y).
\]
Suppose \(u^{\rm in}\in L^2_\sigma(\Omega)\), \(h^{\rm in}\ge0\), and
\begin{equation}
 \rho^{\rm in}(x):=\int_{D\times Y}Mh^{\rm in}\dd q\dd\pi=1
 \quad\hbox{a.e.},\qquad
 \int_{\Omega\times D\times Y}MF(h^{\rm in})<\infty.
 \label{eq:full-drag-state-data-a}
\end{equation}
and, with \(\bar h^{\rm in}=\int_Yh^{\rm in}\dd\pi\),
\begin{equation}
 \int_\Omega\left[\int_D M\bar h^{\rm in}
 |\log\bar h^{\rm in}|^2\dd q\right]^{1/2}\dd x<\infty.
 \label{eq:full-drag-scalar-log2-data}
\end{equation}
Here \(z|\log z|^2\) is assigned the value zero at \(z=0\).
Then for every \(T>0\) the zero-centre-of-mass-diffusion system
\begin{align}
 \partial_tu+u\cdot\nabla_xu-\nu\Delta_xu+\nabla_xp
 &=\nabla_x\cdot\tau_Y[h],\qquad \nabla_x\cdot u=0,
 \label{eq:full-drag-coupled-momentum}\\
 \partial_t(Mh)+u\cdot\nabla_x(Mh)
 &=-\nabla_q\cdot((\nabla_xu)qMh)
 +\frac1{2{\rm Wi}}\nabla_q\cdot(M\nabla_qh)
 +M\mathcal R_hh
 \label{eq:full-drag-coupled-kinetic}
\end{align}
has a global energy--entropy weak solution with
\[
 K_h(t,x,q,y,y')
 =k(y,y')\mathfrak a(q,\eta[h](t,x),y,y').
\]
The weak formulation is Definition
\ref{def:v35-large-data-weak-solution} with \(\delta=0\), with
\(\sqrt h\in L^2(0,T;L^2_xH^1_M(D\times Y))\), and with the configuration
boundary condition interpreted in the natural no-flux sense.  On the torus
the spatial boundary terms are absent.
The nonlinear activity and the singular Kramers stress are identified, not
represented by defects, and
\begin{align}
 &\frac12\|u(t)\|_2^2+\int_{\Omega\times D\times Y}MF(h(t))
 +\nu\int_0^t\|\nabla_xu\|_2^2
 +\frac2{{\rm Wi}}\int_0^t\!\int M|\nabla_q\sqrt h|^2
 \notag\\
 &\quad+\frac12\int_0^t\!\int_{\Omega\times D\times Y^2}
 MK_h(h'-h)(\log h'-\log h)
 \le\frac12\|u^{\rm in}\|_2^2+\int MF(h^{\rm in}).
 \label{eq:full-drag-coupled-entropy}
\end{align}
Moreover \(\rho[h]=1\), \(\tau_Y[h]\in L^2((0,T)\times\Omega)\), and no
scalar solution is prescribed as part of the hypotheses.  More precisely,
every scalar full-drag weak solution in the Masmoudi energy class with datum
\((u^{\rm in},M\bar h^{\rm in})\) has exactly one state-resolved lift and
self-consistent moment field in the renormalized class with bounded number
density, and every solution in the corresponding projection-admissible
renormalized class projects to one such scalar solution.  Thus the
state-resolved solution set is nonempty and has exactly
the scalar multiplicity.  No uniqueness of the scalar or fully coupled
large-data solution is asserted.
\end{theorem}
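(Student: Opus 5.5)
The construction is triangular, in the same spirit as Theorem~\ref{thm:v37-zero-diffusion-existence}, but the corotational rotation argument is replaced by the trace-free matrix fibre evolution.

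\emph{Step 1: scalar base.} We apply Masmoudi's full-drag nondiffusive existence theorem \citep{masmoudi2013global} to the scalar datum $(u^{\rm in},M\bar h^{\rm in})$. Jensen's inequality gives finite scalar entropy from \eqref{eq:full-drag-state-data-a}, the unit number density transfers to $\bar h^{\rm in}$, and \eqref{eq:full-drag-scalar-log2-data} is exactly Masmoudi's logarithmic-square hypothesis. This produces $(u,\bar h)$ with $u\in L^2(0,T;V_\sigma)$, $\rho[\bar h]=1$, and an identified stress $\tau[\bar h]\in L^2$. On a bounded $C^2$ domain we will have to check that the imported scalar theorem covers this setting, and we will cite it at exactly the scope it is stated.

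\emph{Step 2: self-consistent lift by Volterra iteration.} Fix $u$ and its regular Lagrangian flow $X$, and set $B=\nabla u\circ X$. For a trial label moment field $\zeta(r,a)\in\R^N$, put
\[
\mathsf K_\zeta(r,a,q,y,y')=k(y,y')\mathfrak a(q,\zeta(r,a),y,y'),
\]
which is bounded by $\|k\|_\infty a^*$. Theorem~\ref{thm:full-drag-state-fibre} then gives the unique fibre family $\widetilde H_{\mathsf K_\zeta}$. Define
\[
\mathcal T\zeta(r,a)=\int\Phi M(\widetilde H_{\mathsf K_\zeta}-1).
\]
By \eqref{eq:full-drag-label-kernel-stability} and the Lipschitz bound \eqref{eq:full-drag-degenerate-activity}, with $\rho_g=1$,
\[
|\mathcal T\zeta-\mathcal T\widetilde\zeta|(r,a)\le 2\|\Phi\|_\infty\|k\|_\infty A\int_0^r|\zeta-\widetilde\zeta|(\sigma,a)\dd\sigma .
\]
This is a Volterra contraction labelwise. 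Under a Bielecki weight $e^{-\lambda r}$ on $L^\infty(0,T)$ it gives a unique fixed point on all of $[0,T]$ for almost every label; measurability in $a$ follows because the iterates are measurable. Uniform positivity of the activity is never used, so $0\le\mathfrak a$ suffices.

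We then push forward by $X$ as in Theorem~\ref{thm:full-drag-state-fibre}. This yields $h$ solving \eqref{eq:full-drag-coupled-kinetic} with $K_h$ evaluated at its own moment $\eta[h]$. The state average of $h$ is the unique scalar solution (Lemma~\ref{lem:full-drag-scalar-projection}), hence equals $\bar h$. Therefore $\tau_Y[h]=\tau[\bar h]$, and the momentum equation is inherited exactly, with no defect.

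\emph{Step 3: energy--entropy inequality.} This is the step I expect to be the main obstacle. The fibre estimate \eqref{eq:full-drag-fibre-entropy-work} carries the work term $\int B:\tau$. After integration in labels this becomes $\int\nabla u:\tau_Y[h]$, which must cancel against the stress work in the scalar energy inequality. Masmoudi's scalar energy inequality alone does not supply the matching entropy for $h$. I would split the entropy as
\[
\int MF(h)=\int MF(\bar h)+\mathcal C_Y[h\mid\bar h],
\]
which holds since $\rho=1$ and $\int_Y$ is a probability average. For the scalar part we add Masmoudi's scalar energy--entropy inequality. For the conditional part we use Proposition~\ref{prop:full-drag-conditional-entropy}, whose right side contains no velocity gradient.

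The dissipations must also combine. The pointwise identity
\[
\int_Y H|\nabla\log H|^2=\bar H|\nabla\log\bar H|^2+\int_Y H|\nabla\log(H/\bar H)|^2
\]
adds the scalar and conditional Fisher terms into $4\int M|\nabla_q\sqrt h|^2$. The Jeffreys term comes from the conditional estimate. Summing gives \eqref{eq:full-drag-coupled-entropy}, including the $\nu$ term. This decomposition avoids passing any work term to a limit.

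\emph{Step 4: bijection.} Every scalar Masmoudi-class solution admits exactly one lift and one moment field by Step 2. For the converse, take a projection-admissible state-resolved solution. Its pullback satisfies the labelwise fibre problem (Lemma~\ref{lem:kinetic-RLF-pullback}), so its moment field is a fixed point of $\mathcal T$ and is therefore the unique one. Its average solves the scalar equation by Lemma~\ref{lem:full-drag-scalar-projection}. The projection and the lift are thus mutually inverse, and the state-resolved and scalar solution sets have equal multiplicity.
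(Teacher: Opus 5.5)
Your proposal is correct and follows the paper's proof essentially step for step: Masmoudi's scalar base; a Bielecki--Volterra fixed point for the Lagrangian moment field, driven by the labelwise kernel-stability estimate (the paper runs it on $L^\infty((0,T)\times\Omega_a)$ rather than label by label, which is equivalent); identification of the state average with $\bar h$, so that the stress and the momentum equation are inherited; and the energy--entropy inequality, obtained by adding Proposition~\ref{prop:full-drag-conditional-entropy} to the scalar free-energy inequality through the exact entropy and Fisher splittings. The one detail the paper makes explicit and you skip is that the Masmoudi base lies in the uniqueness class \eqref{eq:scalar-projection-class}, in particular that it has a strong $L^1_M$ initial trace (from weak continuity, lower semicontinuity and the energy inequality); this is what justifies your appeal to Lemma~\ref{lem:full-drag-scalar-projection}.
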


\begin{proof}
\textit{Step 1: nonempty intrinsic scalar projection class.}
Jensen's inequality gives finite scalar relative entropy from
\eqref{eq:full-drag-state-data-a}, while
\eqref{eq:full-drag-scalar-log2-data} is precisely the additional
logarithmic-square hypothesis in the full-drag compactness theorem of
\citet{masmoudi2013global}, after scaling the configuration ball.  Apply that
theorem to obtain a global scalar pair \((u,\bar h)\) on \([0,T]\).  It has
transported number density one and identified stress
\[
 \tau[\bar h]=\int_D\nabla_qU\otimes q\,M\bar h\dd q
 \in L^2((0,T)\times\Omega).
\]
The approximating construction gives the scalar total free-energy inequality
used in Step 4.  It also gives the strong \(L^1_M\) initial trace required in
\eqref{eq:scalar-projection-class}: weak continuity at zero and lower
semicontinuity give the lower free-energy bound, while the global energy
inequality gives the matching upper bound; convergence of entropy and mass
then yields strong \(L^1_M\) convergence.  These are the precise properties
meant here by the Masmoudi energy class.

\textit{Step 2: a contraction on Lagrangian moment fields.}
Let \(X(t,a)\) be the regular Lagrangian flow of \(u\), and put
\[
 \mathcal Z_T=L^\infty((0,T)\times\Omega_a;\R^N),\qquad
 \|z\|_\lambda=\mathop{\rm ess\,sup}_{(t,a)}e^{-\lambda t}|z(t,a)|.
\]
For \(z\in\mathcal Z_T\), define
\[
 \mathsf K_z(t,a,q,y,y')
 =k(y,y')\mathfrak a(q,z(t,a),y,y').
\]
Theorem \ref{thm:full-drag-state-fibre} gives a unique pullback
\(\widetilde H_z\) with datum \(h^{\rm in}\).  Set
\[
 (\mathcal F_Tz)_a(t,a)
 =\int_{D\times Y}\Phi_aM(\widetilde H_z(t,a)-1)\dd q\dd\pi.
\]
Since the label mass equals one,
\(\|\mathcal F_Tz\|_\infty\le2C_{\rm obs}\), where
\(C_{\rm obs}=(\sum_a\|\Phi_a\|_\infty^2)^{1/2}\).  From
\eqref{eq:full-drag-label-kernel-stability} and
\eqref{eq:full-drag-degenerate-activity},
\[
 |(\mathcal F_Tz-\mathcal F_Tw)(t,a)|
 \le2C_{\rm obs}\|k\|_\infty A\int_0^t|z-w|(r,a)\dd r.
\]
Consequently
\[
 \|\mathcal F_Tz-\mathcal F_Tw\|_\lambda
 \le\frac{2C_{\rm obs}\|k\|_\infty A}{\lambda}
 \|z-w\|_\lambda.
\]
Choosing \(\lambda>2C_{\rm obs}\|k\|_\infty A\) gives a unique fixed point
on the whole interval, without continuation or a smallness assumption.

\textit{Step 3: Eulerian closure and stress identification.}
Push the fixed point and its fibre solution forward by \(X\).  Theorem
\ref{thm:full-drag-state-fibre} gives
\[
 z(t,X(t,a))=\eta[h](t,X(t,a))
\]
and hence \(K^E=K_h\).  The state average of \(h\) is the unique
renormalized scalar solution for the prescribed velocity \(u\).  It therefore
equals the scalar density \(\bar h\) used in Step 1.  Thus
\[
 \tau_Y[h]=\tau[\bar h]\in L^2((0,T)\times\Omega),
\]
and the already constructed scalar momentum equation is exactly
\eqref{eq:full-drag-coupled-momentum}.  This triangular identification is the
reason no weak limit of
\((\nabla u_n)qMh_n\) is taken in the state variable.

\textit{Step 4: energy--entropy inequality and uniqueness over a projection.}
The scalar projection satisfies its total free-energy inequality
\begin{align}
 &\frac12\|u(t)\|_2^2+\int_{\Omega\times D}MF(\bar h(t))
 +\nu\int_0^t\|\nabla_xu\|_2^2
 +\frac2{{\rm Wi}}\int_0^t\!\int_{\Omega\times D}
 M|\nabla_q\sqrt{\bar h}|^2
 \notag\\
 &\qquad\le\frac12\|u^{\rm in}\|_2^2
 +\int_{\Omega\times D}MF(\bar h^{\rm in}).
 \label{eq:full-drag-scalar-total-energy}
\end{align}
Proposition~\ref{prop:full-drag-conditional-entropy}, applied to the fixed-point
lift, supplies the complementary conditional entropy and state-reaction
dissipation.  The exact identities
\begin{align*}
 \int_YF(h)\dd\pi
 &=F(\bar h)+\int_Yh\log\frac h{\bar h}\dd\pi,\\
 \int_Y|\nabla_q\sqrt h|^2\dd\pi
 &=|\nabla_q\sqrt{\bar h}|^2
 +\frac14\int_Yh\left|\nabla_q\log\frac h{\bar h}\right|^2\dd\pi
\end{align*}
show that adding \eqref{eq:full-drag-conditional-entropy-decay} to
\eqref{eq:full-drag-scalar-total-energy} gives precisely
\eqref{eq:full-drag-coupled-entropy}.  Thus no separate weak limit of the
stress-work product, and no separate kinetic-energy inequality, is used.  The Hardy estimate
\ref{lem:FENE-Hardy-tail} gives the stated stress class.  Finally, any second
self-consistent lift over the same scalar projection defines another fixed point of
\(\mathcal F_T\).  Banach uniqueness and prescribed-kernel contraction in
Theorem \ref{thm:full-drag-state-fibre} identify first its moment field and
then the entire state-resolved density.  Conversely, integrating any solution
in the corresponding projection-admissible renormalized class in (Y)
cancels the symmetric reaction and preserves the stress; by definition its
state average belongs to the scalar Masmoudi class.  The
preceding uniqueness over that projection then identifies the original
solution with its lift.  This proves the asserted one-to-one correspondence
without selecting a scalar solution in the theorem's hypotheses.
\end{proof}

\begin{theorem}[Integrable state-blind drivers and unbounded activities]
\label{thm:integrable-driver-activity}
Let \((D,M)\) be an admissible finite-extension Maxwellian, and let
\((u,\bar h)\) be a scalar full-drag energy solution on
\(I\times\Omega\times D\) with number density one, identified Kramers stress,
and the regularity required in Theorem~\ref{thm:full-drag-state-fibre}.
Let \(h^{\rm in}\ge0\) have finite relative entropy and state average
\(\bar h^{\rm in}\).

Let \(\Xi_1,\ldots,\Xi_m:D\to\R\) be state-blind observables for which
\begin{equation}
 \zeta_j(t,x):=\int_D M(q)\Xi_j(q)\bar h(t,x,q)\dd q
 \quad\hbox{satisfies}\quad
 \zeta=(\zeta_1,\ldots,\zeta_m)\in L^1(I\times\Omega).
 \label{eq:integrable-base-driver}
\end{equation}
Let \(\Phi_1,\ldots,\Phi_N\in L^\infty(D\times Y)\), put
\[
 R_a=2\|\Phi_a\|_\infty,\qquad
 \mathcal B_\Phi=\prod_{a=1}^N[-R_a,R_a],
\]
and let \(k\in L^\infty(Y^2)\) be non-negative and symmetric.  Suppose
\(\mathfrak a:D\times\R^m\times\mathcal B_\Phi\times Y^2\to[0,\infty)\)
is Borel measurable, symmetric in \((y,y')\), continuous in its last
finite-dimensional argument, and, for \(z,w\in\mathcal B_\Phi\),
\begin{align}
 0\le\mathfrak a(q,\xi,z,y,y')&\le A_0+A_1|\xi|,
 \label{eq:integrable-driver-growth}\\
 |\mathfrak a(q,\xi,z,y,y')-\mathfrak a(q,\xi,w,y,y')|
 &\le L_0(1+|\xi|)|z-w|.
 \label{eq:integrable-driver-local-Lipschitz}
\end{align}
Then there is a unique state-resolved lift \(h\), over the chosen scalar base,
whose kernel is
\begin{equation}
 K_h(t,x,q,y,y')=k(y,y')\,
 \mathfrak a\bigl(q,\zeta(t,x),\eta[h](t,x),y,y'\bigr),
 \qquad
 \eta_a[h]=\int_{D\times Y}\Phi_aM(h-1)\dd q\dd\pi.
 \label{eq:integrable-driver-kernel}
\end{equation}
Here uniqueness is in the renormalized lifting class with bounded number
density.  The rate \(K_h\) need not be bounded in space--time; it satisfies
\begin{equation}
 \int_{I\times\Omega}
 \|K_h(t,x)\|_{L^\infty(D\times Y^2)}\dd x\dd t<\infty.
 \label{eq:integrable-driver-kernel-L1}
\end{equation}
The state average of \(h\) is \(\bar h\), its Kramers stress is the scalar
stress, and the conditional entropy inequality
\eqref{eq:full-drag-conditional-entropy-decay} holds.  If the scalar pair
solves the coupled momentum equation and satisfies its total free-energy
inequality, then \((u,h)\) solves the coupled continuum-state system and
satisfies \eqref{eq:full-drag-coupled-entropy}, with \(K_h\) given by
\eqref{eq:integrable-driver-kernel}.
\end{theorem}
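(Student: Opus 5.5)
The plan is to repeat the Bielecki--Volterra fixed point of Step~2 in the proof of Theorem~\ref{thm:full-drag-local-closure}, now with a label-dependent weight in place of the constant Lipschitz bound. The driver \(\zeta\) is determined by the prescribed scalar base \(\bar h\) and not by the unknown lift, so it is a fixed coefficient. Let \(X\) be the regular Lagrangian flow of \(u\) and put \(\widetilde\zeta(t,a)=\zeta(t,X(t,a))\). Measure preservation gives \(\widetilde\zeta\in L^1(I\times\Omega_a)\), hence \(\widetilde\zeta(\cdot,a)\in L^1(I)\) for almost every label. For a moment field \(z\in L^\infty(I\times\Omega_a;\mathcal B_\Phi)\) I define the label kernel
\[
 \mathsf K_z(t,a,q,y,y')=k(y,y')\,\mathfrak a\bigl(q,\widetilde\zeta(t,a),z(t,a),y,y'\bigr).
\]
By \eqref{eq:integrable-driver-growth}, \(\kappa(t,a)\le\|k\|_\infty(A_0+A_1|\widetilde\zeta(t,a)|)\). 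Since \(\rho_g\equiv1\), this verifies \eqref{eq:full-drag-integrable-label-kernel}, and Theorem~\ref{thm:full-drag-state-fibre} yields a unique fibre family \(\widetilde H_z\) with datum \(h^{\rm in}\). I then set \((\mathcal F z)_a(t,a)=\int\Phi_aM(\widetilde H_z-1)\). Unit label mass gives \(|(\mathcal F z)_a|\le2\|\Phi_a\|_\infty=R_a\), so \(\mathcal F\) maps the box-valued fields into themselves. This invariance is exactly why \(\mathfrak a\) is only required to be Lipschitz on \(\mathcal B_\Phi\).

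For the contraction, \eqref{eq:full-drag-label-kernel-stability} and \eqref{eq:integrable-driver-local-Lipschitz} give
\[
 |(\mathcal Fz-\mathcal Fw)(t,a)|\le C\int_0^t w_a(r)\,|z-w|(r,a)\dd r,
 \qquad w_a(r):=L_0\|k\|_\infty\bigl(1+|\widetilde\zeta(r,a)|\bigr),
\]
where \(w_a\in L^1(I)\) only. This is the main obstacle. The exponential weight \(e^{-\lambda t}\) from the bounded case is no longer enough, and the integrability of \(w_a\) is not uniform in \(a\). I would handle it with a label-dependent Bielecki norm
\[
 \|z\|_\lambda=\mathop{\rm ess\,sup}_{(t,a)}e^{-\lambda W(t,a)}|z(t,a)|,
 \qquad W(t,a)=\int_0^t w_a,
\]
which is finite-valued for almost every label and is equivalent to the \(L^\infty\) norm on each fixed label, though not uniformly in \(a\). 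The identity \(\int_0^t w_a e^{\lambda W}\dd r\le\lambda^{-1}e^{\lambda W(t,a)}\) then gives the contraction factor \(C/\lambda\). For completeness I will work in the space of measurable box-valued fields with the metric \(\|\cdot\|_\lambda\); this space is complete because \(W\) is finite almost everywhere. If this causes measurability difficulties, the fallback is to run the Picard iteration labelwise and use the integral Gronwall inequality with the \(L^1\) weight, which gives convergence and uniqueness on almost every label.

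At the fixed point, I push forward by \(X\) as in Step~3 of Theorem~\ref{thm:full-drag-local-closure}. Theorem~\ref{thm:full-drag-state-fibre} gives a unique renormalized Eulerian solution with \(K^E=K_h\) as in \eqref{eq:integrable-driver-kernel}. Its state average is the kernel-independent scalar solution, which is \(\bar h\) by Lemma~\ref{lem:full-drag-scalar-projection}, so its stress is the scalar stress. The bound \eqref{eq:integrable-driver-kernel-L1} is the estimate on \(\kappa\) above, integrated in \(a\) and transported by measure preservation. For uniqueness, any other lift in the class pulls back to a fibre family whose moment field is a fixed point of \(\mathcal F\), so it coincides by the contraction and the prescribed-kernel uniqueness. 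The conditional entropy inequality follows from Proposition~\ref{prop:full-drag-conditional-entropy}, with the Jeffreys lower-semicontinuity step done through truncation \(K\wedge L\) as at the end of the proof of Proposition~\ref{prop:full-drag-fibre-evolution}. Finally, if the scalar pair solves the coupled momentum equation with its total free-energy inequality, then adding the two inequalities through the splitting identities of Step~4 of Theorem~\ref{thm:full-drag-local-closure} gives \eqref{eq:full-drag-coupled-entropy}.
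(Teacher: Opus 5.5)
Your proposal is correct and follows the paper's proof. You pull the state-blind driver back along the regular Lagrangian flow and close a Volterra fixed point for the box-valued moment field with the \(L^1(I)\) weight \(1+|\widetilde\zeta(\cdot,a)|\). You then assemble through Theorem~\ref{thm:full-drag-state-fibre} and add Proposition~\ref{prop:full-drag-conditional-entropy} to the scalar free-energy inequality, exactly as the paper does.

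The only variation is how the Volterra inequality is closed. Your primary route uses a global label-weighted Bielecki norm \(e^{-\lambda W(t,a)}\). The paper instead uses your fallback: a labelwise Picard iteration with the bound \(A_a(t)^n/n!\), Gronwall for uniqueness, and label measurability recovered by starting from \(z_0=0\). Your weighted norm yields a jointly measurable fixed point directly, which is a minor convenience.
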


\begin{proof}
Let \(X\) be the regular Lagrangian flow of \(u\), and write
\(\widetilde\zeta(t,a)=\zeta(t,X(t,a))\).  Measure preservation and
\eqref{eq:integrable-base-driver} give
\[
 \widetilde\zeta(\cdot,a)\in L^1(I)
 \quad\hbox{for almost every }a,\qquad
 \int_{\Omega_a\times I}|\widetilde\zeta|=\int_{\Omega\times I}|\zeta|.
\]
Fix such a label.  For a measurable
\(z:I\to\mathcal B_\Phi\), define
\[
 \mathsf K_z(t,a,q,y,y')
 =k(y,y')\mathfrak a(q,\widetilde\zeta(t,a),z(t),y,y')
\]
and let \(\widetilde H_z\) be the fibre solution from Proposition
\ref{prop:full-drag-fibre-evolution}.  The kernel is admissible because its
\(L^\infty(D\times Y^2)\)-norm belongs to \(L^1(I)\).  Set
\[
 (\mathcal F_a z)_j(t)
 =\int_{D\times Y}\Phi_j(q,y)M(q)
   (\widetilde H_z(t,a,q,y)-1)\dd q\dd\pi(y).
\]
Fibre mass conservation shows that \(\mathcal F_a\) maps measurable
\(\mathcal B_\Phi\)-valued functions into themselves.  Kernel stability and
\eqref{eq:integrable-driver-local-Lipschitz} yield
\begin{equation}
 |\mathcal F_a z(t)-\mathcal F_a w(t)|
 \le C_\Phi\int_s^t
 (1+|\widetilde\zeta(r,a)|)|z(r)-w(r)|\dd r,
 \label{eq:integrable-driver-Volterra}
\end{equation}
where
\(C_\Phi=2(\sum_j\|\Phi_j\|_\infty^2)^{1/2}
\|k\|_\infty L_0\).

Starting with any measurable \(z_0:I\to\mathcal B_\Phi\), put
\(z_{n+1}=\mathcal F_a z_n\).  If
\(A_a(t)=C_\Phi\int_s^t(1+|\widetilde\zeta(r,a)|)\dd r\), iteration of
\eqref{eq:integrable-driver-Volterra} gives
\[
 \sup_{\sigma\le t}|z_{n+1}(\sigma)-z_n(\sigma)|
 \le 2\,{\rm diam}(\mathcal B_\Phi)\frac{A_a(t)^n}{n!}.
\]
Thus \(z_n\) converges uniformly on \(I\) to a fixed point.  The same Volterra
estimate and Gronwall's lemma give uniqueness.  Starting the Picard iteration
with \(z_0=0\), and using the jointly measurable Galerkin construction in
Theorem~\ref{thm:full-drag-state-fibre}, proves measurability in the label.

At the fixed point, \eqref{eq:integrable-driver-growth} and measure
preservation give
\[
 \int_{\Omega_a\times I}
 \|\mathsf K_z(t,a)\|_\infty\dd a\dd t
 \le\|k\|_\infty
 \left(A_0|I||\Omega|+A_1\|\zeta\|_{L^1(I\times\Omega)}\right).
\]
Consequently \eqref{eq:full-drag-integrable-label-kernel} holds, and Theorem
\ref{thm:full-drag-state-fibre} assembles the fibres into an Eulerian
renormalized solution.  Its scalar projection is \(\bar h\), so both the
stress identity and the momentum equation follow.  Proposition
\ref{prop:full-drag-conditional-entropy}, added to the scalar free-energy
inequality exactly as in Step 4 of Theorem
\ref{thm:full-drag-local-closure}, gives the asserted state-resolved
inequality.  Finally, any second self-consistent lift produces a second
fixed point in \eqref{eq:integrable-driver-Volterra}; uniqueness of that fixed
point and then fibre contraction identify the two lifts.
\end{proof}

\begin{corollary}[Unbounded stress-feedback rates]
\label{cor:unbounded-stress-feedback}
Under the data assumptions of Theorem
\ref{thm:full-drag-local-closure}, let the activity depend on the full Kramers
stress through
\[
 \zeta(t,x)=\tau_Y[h](t,x)
 =\int_{D\times Y}\nabla_qU\otimes q\,Mh\dd q\dd\pi.
\]
If \(\mathfrak a\) satisfies
\eqref{eq:integrable-driver-growth}--\eqref{eq:integrable-driver-local-Lipschitz},
then the resulting stress-dependent continuum-state system has a global
energy--entropy weak solution.  Its reaction rate may grow linearly in
\(|\tau_Y[h]|\) and is therefore allowed to be unbounded.
\end{corollary}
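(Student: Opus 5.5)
The plan is to reduce the corollary to Theorem~\ref{thm:integrable-driver-activity}, taking as scalar base the Masmoudi solution from Step~1 of Theorem~\ref{thm:full-drag-local-closure}. The key observation is that the stress is state-blind. If \(h\) is any lift over the base \((u,\bar h)\), then \(\int_Y h\dd\pi=\bar h\). Hence
\[
 \tau_Y[h]=\int_D\nabla_qU\otimes q\,M\bar h\dd q=\tau[\bar h],
\]
which is the driver \eqref{eq:integrable-base-driver} with the \(d^2\) observables \(\Xi_{ij}(q)=\partial_{q_i}U\,q_j\). In particular the self-consistent dependence on \(h\) through the stress is in fact a dependence on the fixed base, and no fixed point in \(\zeta\) is needed.

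The steps are as follows.

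First, apply the data hypotheses \eqref{eq:full-drag-state-data-a}--\eqref{eq:full-drag-scalar-log2-data} exactly as in Step~1 of Theorem~\ref{thm:full-drag-local-closure}. This produces the scalar pair \((u,\bar h)\), with number density one, the scalar momentum equation, the total free-energy inequality, and the strong initial trace.

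Second, verify \eqref{eq:integrable-base-driver}. Lemma~\ref{lem:FENE-Hardy-tail}, used with \(\rho[\bar h]=1\) and the scalar Fisher bound, gives \(\tau[\bar h]\in L^2((0,T)\times\Omega)\). Since \(\Omega\) is bounded, this implies \(\tau[\bar h]\in L^1\).

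Third, invoke Theorem~\ref{thm:integrable-driver-activity} with \(\zeta=\tau[\bar h]\). This yields a unique lift \(h\) whose kernel is
\[
 K_h=k\,\mathfrak a\bigl(q,\tau[\bar h],\eta[h],y,y'\bigr),
\]
whose state average is \(\bar h\), and whose rate satisfies \eqref{eq:integrable-driver-kernel-L1}. Because the average of \(h\) is \(\bar h\), we may replace \(\tau[\bar h]\) by \(\tau_Y[h]\) in the kernel, so the kernel is genuinely the stress-dependent one. The last clause of that theorem then gives the coupled system together with \eqref{eq:full-drag-coupled-entropy}.

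The main obstacle is the weak formulation. With an unbounded rate, the reaction term in \eqref{eq:v35-weak-kinetic} must be well defined, and the Jeffreys term in the energy inequality must be meaningful. I would handle the first point through the \(L^1_{t,x}(L^\infty)\) bound combined with \(\rho=1\). That gives \(M|K_h(h'-h)|\in L^1\), so the reaction pairing against bounded tests is finite. This is precisely the integrability hypothesis used in Lemma~\ref{lem:kinetic-RLF-pullback}.

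For the Jeffreys term, the conditional entropy estimate of Proposition~\ref{prop:full-drag-conditional-entropy} was obtained by truncating \(K\wedge L\) and passing to the limit by monotone convergence, and it holds for \(L^1_tL^\infty\) kernels. I would check that this argument carries over labelwise to the present kernel, which is admissible in the sense of \eqref{eq:full-drag-integrable-label-kernel}. Linear growth in \(|\tau_Y[h]|\) is allowed since \eqref{eq:integrable-driver-growth} with \(A_1>0\) is permitted. Unboundedness is possible because \(\tau\) is only in \(L^2\).
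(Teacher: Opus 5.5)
Your proposal is correct and is essentially the paper's proof: state blindness gives \(\tau_Y[h]=\tau[\bar h]\in L^2\subset L^1\) over the Masmoudi base, so Theorem~\ref{thm:integrable-driver-activity} applies with the components of \(\nabla_qU\otimes q\) as the observables \(\Xi_j\). Your extra checks on the integrability of the unbounded reaction flux and on the Jeffreys term are already contained in that theorem (through Theorem~\ref{thm:full-drag-state-fibre} and Proposition~\ref{prop:full-drag-conditional-entropy}), so the paper does not repeat them.
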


\begin{proof}
For the scalar Masmoudi solution,
\(\tau[\bar h]\in L^2(I\times\Omega)\subset L^1(I\times\Omega)\).  The stress
is state blind:
\[
 \tau_Y[h]=\int_D\nabla_qU\otimes q\,M
 \left(\int_Yh\dd\pi\right)\dd q=\tau[\bar h].
\]
Thus Theorem~\ref{thm:integrable-driver-activity} applies with the components
of \(\nabla_qU\otimes q\) as the observables \(\Xi_j\).
\end{proof}

\begin{remark}[Sharp boundary of the unbounded class]
\label{rem:unbounded-driver-boundary}
Theorem~\ref{thm:integrable-driver-activity} does not hide an \(L^\infty\)
assumption on \(\zeta\); only \eqref{eq:integrable-base-driver} is used.
The decisive structure is state blindness, which makes \(\zeta\) a function
of the scalar projection before the state fixed point is solved.  An
unbounded observable with genuine \(y\)-dependence is not covered: controlling
its difference along two lifts would require a weighted \(L^1\) kernel
stability estimate not supplied by entropy alone.  Thus the theorem includes
the physically singular stress feedback without claiming closure of every
unbounded state-sensitive moment.
\end{remark}

\begin{definition}[Scalar and state-resolved solution sets]
\label{def:projection-lift-solution-sets}
Fix the coefficients, a time interval, the complete initial state density
\(h^{\rm in}\), and either the bounded-moment activity law of Theorem
\ref{thm:full-drag-local-closure} or the integrable-driver law of Theorem
\ref{thm:integrable-driver-activity}.  All functions below are identified up
to almost-everywhere equality.

The set \(\mathcal S_{\rm sc}\) consists of all scalar pairs
\((u,\bar h)\) with initial datum
\((u^{\rm in},\int_Yh^{\rm in}\dd\pi)\) that satisfy the scalar momentum--FENE
system, its total free-energy inequality, number-density normalization, the
identified stress condition, the regularity in
\eqref{eq:scalar-projection-class}, and the velocity regularity required in
Theorem~\ref{thm:full-drag-state-fibre}.  In the integrable-driver case it is
also required that the prescribed state-blind field \(\zeta[\bar h]\) satisfy
\eqref{eq:integrable-base-driver}.

The set \(\mathcal S_Y\) consists of all pairs \((u,h)\) with initial datum
\((u^{\rm in},h^{\rm in})\) that satisfy the coupled momentum equation and the
renormalized state-resolved kinetic equation, the self-consistent kernel law
selected above, bounded number density, the strong initial trace and flux
conditions in \eqref{eq:RLF-pullback-integrability}, and the state-resolved
energy--entropy inequality.  Membership includes the requirement that
\(\bar h=\int_Yh\dd\pi\) belongs to the preceding scalar class.  Thus these
sets impose exactly the hypotheses used by the projection and fibre
uniqueness arguments; no uniqueness of either set is built into the
definition.
\end{definition}

\begin{theorem}[Projection--lift equivalence of solution spaces]
\label{thm:projection-lift-equivalence}
For the sets in Definition~\ref{def:projection-lift-solution-sets}, state
averaging defines a projection
\[
 \mathsf P:\mathcal S_Y\to\mathcal S_{\rm sc},\qquad
 \mathsf P(u,h)=\left(u,\int_Yh\dd\pi\right).
\]
The Lagrangian construction defines a lift
\(\mathsf L:\mathcal S_{\rm sc}\to\mathcal S_Y\), and
\begin{equation}
 \mathsf P\circ\mathsf L={\rm Id}_{\mathcal S_{\rm sc}},
 \qquad
 \mathsf L\circ\mathsf P={\rm Id}_{\mathcal S_Y}.
 \label{eq:projection-lift-bijection}
\end{equation}
Hence the two solution spaces are in bijection.  In particular, the
continuum-state problem is unique in this class if and only if its scalar
projection is unique.  Every scalar weak--strong uniqueness statement
transfers to the state-resolved system on the same lifespan.
\end{theorem}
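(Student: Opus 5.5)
The plan is to verify four things in turn: \(\mathsf P\) is well defined, \(\mathsf L\) is well defined, \(\mathsf P\circ\mathsf L\) is the identity, and \(\mathsf L\circ\mathsf P\) is the identity. The inputs are Theorem~\ref{thm:full-drag-state-fibre}, Lemma~\ref{lem:full-drag-scalar-projection}, Proposition~\ref{prop:full-drag-conditional-entropy}, and the fixed-point constructions in Step~2 of Theorem~\ref{thm:full-drag-local-closure} and Theorem~\ref{thm:integrable-driver-activity}.

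\emph{Well-definedness of \(\mathsf P\).} This is almost built into Definition~\ref{def:projection-lift-solution-sets}. Given \((u,h)\in\mathcal S_Y\), integrating the kinetic equation in \(Y\) cancels the symmetric reaction, by \eqref{eq:continuum-entropy-identity} or the identity \(\int_Y\mathcal R_KH\dd\pi=0\). The stress is state blind, \(\tau_Y[h]=\tau[\bar h]\), so the momentum equation is the scalar one. The scalar total free-energy inequality follows from the state-resolved one by Jensen, using \(\int_YF(h)\dd\pi\ge F(\bar h)\) and the Fisher splitting identity in Step~4 of Theorem~\ref{thm:full-drag-local-closure}. The definition of \(\mathcal S_Y\) then supplies membership of \(\bar h\) in the scalar regularity class.

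\emph{Well-definedness of \(\mathsf L\).} Given \((u,\bar h)\in\mathcal S_{\rm sc}\), take the regular Lagrangian flow \(X\) of \(u\). Solve the labelwise moment fixed point: a Banach contraction in the weighted norm \(\|\cdot\|_\lambda\) for the bounded-moment law, or a Picard--Volterra iteration for the integrable-driver law, where \(\zeta[\bar h]\) is known in advance because it depends only on the projection. Assemble the fibres through Theorem~\ref{thm:full-drag-state-fibre}. Its final assertion, together with Lemma~\ref{lem:full-drag-scalar-projection}, shows that the state average of the lift is the unique scalar renormalized solution for the prescribed \(u\). That solution is \(\bar h\), since \(\bar h\) lies in the uniqueness class \eqref{eq:scalar-projection-class}. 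Two consequences follow. The stress of the lift is the stress of \(\bar h\), so the coupled momentum equation holds. Adding the conditional entropy inequality \eqref{eq:full-drag-conditional-entropy-decay} to the scalar free-energy inequality gives the state-resolved energy--entropy inequality. The remaining requirements come from Theorem~\ref{thm:full-drag-state-fibre}: the strong initial trace, the bounded number density, and the flux integrability \eqref{eq:RLF-pullback-integrability}. Hence \(\mathsf L(u,\bar h)\in\mathcal S_Y\), and \(\mathsf P\circ\mathsf L=\mathrm{Id}\) is exactly the identification of the average just described.

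\emph{The identity \(\mathsf L\circ\mathsf P=\mathrm{Id}\).} This is the main step. Take \((u,h)\in\mathcal S_Y\) and let \(\bar h\) be its average. By Lemma~\ref{lem:kinetic-RLF-pullback}, whose hypotheses are part of membership in \(\mathcal S_Y\), the pullback \(\widetilde h=h\circ X\) solves on almost every label the fibre equation with drift \(B=\nabla u\circ X\) and kernel \(K_h\circ X\). Write \(z(t,a)=\eta[h](t,X(t,a))\). By fibre uniqueness in Proposition~\ref{prop:full-drag-fibre-evolution}, \(\widetilde h\) is the fibre solution \(\widetilde H_z\) for the kernel \(\mathsf K_z\). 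Hence \(z=\mathcal F z\), so \(z\) is a fixed point of the same map used to build \(\mathsf L(u,\bar h)\).

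The expected obstacle is to check that this \(z\) lies in the space where the fixed point is unique. In the bounded case, fibre mass conservation gives \(|z|\le 2C_{\rm obs}\). In the integrable-driver case, \(z\) takes values in \(\mathcal B_\Phi\), and the Volterra estimate \eqref{eq:integrable-driver-Volterra} needs only \((1+|\widetilde\zeta|)\in L^1(I)\) on almost every label. Contraction, respectively Gronwall, then identifies \(z\) with the moment field of the lift. Prescribed-kernel contraction \eqref{eq:full-drag-label-kernel-stability}, followed by push-forward along \(X\), identifies \(h\) with the lift. A second point to watch is that \(X\) is the same flow in both constructions, which holds because the velocity component is unchanged by \(\mathsf P\).

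Finally, the bijection gives that uniqueness of \(\mathcal S_Y\) is equivalent to uniqueness of \(\mathcal S_{\rm sc}\). For weak--strong uniqueness, if every scalar solution coincides with a strong scalar solution on its lifespan, apply \(\mathsf L\) and injectivity of \(\mathsf P\) to transfer the statement to the state-resolved system on the same lifespan.
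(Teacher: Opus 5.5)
Your outline follows the paper's proof. You define \(\mathsf P\) through cancellation of the symmetric reaction and state-blindness of the stress. You build \(\mathsf L\) from the fixed points of Theorems~\ref{thm:full-drag-local-closure} and~\ref{thm:integrable-driver-activity}, together with prescribed-velocity scalar uniqueness. You prove \(\mathsf L\circ\mathsf P=\mathrm{Id}\) by pulling both densities back along the same flow, identifying the moment fields by Banach/Gronwall uniqueness, and then identifying the densities by \eqref{eq:full-drag-label-kernel-stability}. That part is sound. So are your checks that the moment field of an arbitrary element of \(\mathcal S_Y\) takes values in \(\mathcal B_\Phi\) and that \(\widetilde\zeta(\cdot,a)\in L^1(I)\).

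The one step that fails is your claim that the scalar total free-energy inequality follows from the state-resolved one by Jensen. By the splitting identities you quote, the state-resolved inequality is exactly the scalar one with two kinds of additions. On the left, \(\mathcal C_Y[h\mid\bar h](t)\), the conditional Fisher term and the Jeffreys term are added. On the right, \(\mathcal C_Y[h^{\rm in}\mid\bar h^{\rm in}]\ge0\) is added.

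Jensen only lets you drop the left-hand additions. That leaves the scalar inequality with the extra term \(\mathcal C_Y[h^{\rm in}\mid\bar h^{\rm in}]\) on the right, which is a weaker bound whenever \(h^{\rm in}\) genuinely depends on \(y\). Removing that term would require the reverse of \eqref{eq:full-drag-conditional-entropy-decay}, i.e.\ equality in Proposition~\ref{prop:full-drag-conditional-entropy}. That equality is not available. The splitting only runs in the direction you correctly use for \(\mathsf L\), where the conditional inequality is added.

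The repair is what the paper does. Definition~\ref{def:projection-lift-solution-sets} requires \(\bar h\) to belong to the whole scalar class \(\mathcal S_{\rm sc}\), free-energy inequality included, not merely to the regularity class \eqref{eq:scalar-projection-class}. So for \(\mathsf P\) you only need to check that the averaged equation is the scalar one with the same stress.
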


\begin{proof}
Symmetry of the reaction gives \(\int_Y\mathcal R_hh\dd\pi=0\), while the
stress is unchanged by state averaging.  Therefore \(\mathsf P\) maps every
state-resolved solution to a scalar solution with the prescribed datum.
Theorems \ref{thm:full-drag-local-closure} and
\ref{thm:integrable-driver-activity} construct \(\mathsf L\), and scalar
projection of the constructed lift gives the original base; this proves the
first identity in \eqref{eq:projection-lift-bijection}.

Conversely, take \((u,h)\in\mathcal S_Y\), set
\((u,\bar h)=\mathsf P(u,h)\), and write
\(\widehat h=\mathsf L(u,\bar h)\).  Pull both state densities back by the
same regular Lagrangian flow of \(u\).  In the bounded-moment class, their
moment difference \(d_a(t)\) satisfies, labelwise,
\[
 d_a(t)\le C\int_0^t d_a(r)\dd r.
\]
In the integrable-driver class, the same use of
\eqref{eq:full-drag-label-kernel-stability} and
\eqref{eq:integrable-driver-local-Lipschitz} gives
\[
 d_a(t)\le C\int_0^t
 (1+|\widetilde\zeta(r,a)|)d_a(r)\dd r,
 \qquad \widetilde\zeta(\cdot,a)\in L^1(I).
\]
The integral form of Gronwall gives \(d_a=0\) in either case.  The two kernels
therefore coincide almost everywhere, and fibre \(L^1_M\)-contraction gives
\(h=\widehat h\).  This proves the second identity.  If one of the two
solution sets is a singleton, the bijection makes the other a singleton; the
same restriction to the lifespan of a scalar strong solution transfers any
scalar weak--strong uniqueness statement.
\end{proof}

\begin{corollary}[Base-free existence and exact multiplicity transfer]
\label{cor:base-free-existence-multiplicity}
Assume the full-drag zero-diffusion hypotheses of Theorem
\ref{thm:full-drag-local-closure}.  The conclusion also holds for the
integrable-driver law of Theorem~\ref{thm:integrable-driver-activity} provided
\eqref{eq:integrable-base-driver} holds for every scalar solution in
\(\mathcal S_{\rm sc}\), as it does for the Kramers-stress driver by the
uniform scalar stress estimate.  Prescribe only the initial data
\((u^{\rm in},h^{\rm in})\) and the constitutive kernel; do not prescribe a
scalar solution.  Then the state-resolved solution set \(\mathcal S_Y\) is
nonempty.  More precisely,
\begin{equation}
 \mathsf P:\mathcal S_Y\longrightarrow\mathcal S_{\rm sc}
 \quad\hbox{is a bijection},
 \qquad
 |\mathcal S_Y|=|\mathcal S_{\rm sc}|,
 \label{eq:base-free-cardinality-transfer}
\end{equation}
where equality is equality of cardinalities.  Every state-resolved solution
determines its scalar projection intrinsically, and over that projection the
state fibre and its self-consistent moment field are unique.  Thus a scalar
base is not auxiliary input to the zero-diffusion theorem.  The only possible
large-data nonuniqueness is exactly the nonuniqueness of the scalar projected
FENE system.
\end{corollary}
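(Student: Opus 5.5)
The plan is to combine three ingredients: nonemptiness of the scalar class \(\mathcal S_{\rm sc}\), the bijection of Theorem~\ref{thm:projection-lift-equivalence}, and the uniqueness-over-a-projection statements already proved in Theorems~\ref{thm:full-drag-local-closure} and~\ref{thm:integrable-driver-activity}. No new estimate should be needed. The work is to check that the sets in Definition~\ref{def:projection-lift-solution-sets} are populated under the stated data hypotheses, and that the hypotheses of the lift theorems hold for \emph{every} element of \(\mathcal S_{\rm sc}\), not just for a selected one.

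\emph{Step 1: nonemptiness.} From \eqref{eq:full-drag-state-data-a}--\eqref{eq:full-drag-scalar-log2-data} and Jensen's inequality, the averaged datum \((u^{\rm in},M\bar h^{\rm in})\) lies in Masmoudi's class. Step~1 of Theorem~\ref{thm:full-drag-local-closure} therefore produces a scalar pair with:
\begin{itemize}
\item unit number density;
\item identified stress in \(L^2\);
\item the total free-energy inequality;
\item the strong \(L^1_M\) initial trace.
\end{itemize}
Together with \(M\nabla_q\bar h\in L^1\), which comes from the Fisher bound and unit density, this is exactly membership in \(\mathcal S_{\rm sc}\). Hence \(\mathcal S_{\rm sc}\neq\varnothing\). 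Applying the lift \(\mathsf L\) then gives \(\mathcal S_Y\neq\varnothing\). For this I check that the constructed lift satisfies the state-resolved energy--entropy inequality, which is Step~4 of Theorem~\ref{thm:full-drag-local-closure}, and the flux conditions \eqref{eq:RLF-pullback-integrability}, which come from Theorem~\ref{thm:full-drag-state-fibre}.

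\emph{Step 2: bijection and cardinality.} Theorem~\ref{thm:projection-lift-equivalence} gives \(\mathsf P\circ\mathsf L={\rm Id}\) and \(\mathsf L\circ\mathsf P={\rm Id}\). So \(\mathsf P\) is a bijection with inverse \(\mathsf L\), and \eqref{eq:base-free-cardinality-transfer} follows, since equal cardinality is just the existence of a bijection. Intrinsic determination of the projection is the statement that \(\mathsf P\) is a well-defined map on \(\mathcal S_Y\). Uniqueness of the fibre and moment field over a given projection is injectivity of \(\mathsf P\). At the level of moment fields this is the Banach/Volterra fixed-point uniqueness of Step~2 of Theorem~\ref{thm:full-drag-local-closure}, respectively of Theorem~\ref{thm:integrable-driver-activity}, followed by fibre contraction.

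\emph{Step 3: the integrable-driver law, which is the main obstacle.} Here \(\mathsf L\) is defined on all of \(\mathcal S_{\rm sc}\) only if \eqref{eq:integrable-base-driver} holds for every scalar solution; this is why it appears as a hypothesis. For the Kramers-stress driver I verify it uniformly. For any \((u,\bar h)\in\mathcal S_{\rm sc}\), the free-energy inequality bounds \(\int M|\nabla_q\sqrt{\bar h}|^2\). With \(\rho=1\), estimate \eqref{eq:FENE-stress-L2} of Lemma~\ref{lem:FENE-Hardy-tail} then gives \(\tau[\bar h]\in L^2\subset L^1\) on the bounded cylinder, with a bound depending only on the data. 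The one subtle point is that the driver \(\zeta\) is state blind, so it is computed from \(\bar h=\mathsf P(u,h)\) and is identical for \(h\) and \(\mathsf L\mathsf P(u,h)\). This is what lets the Gronwall argument of Theorem~\ref{thm:projection-lift-equivalence} run with weight \(1+|\widetilde\zeta|\in L^1(I)\) on almost every label. The final sentence of the statement, that the only possible nonuniqueness is scalar nonuniqueness, is then a restatement of the bijection.
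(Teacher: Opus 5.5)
Your proposal is correct and takes essentially the same route as the paper. Nonemptiness of \(\mathcal S_{\rm sc}\) comes from Masmoudi's scalar theorem, the bijection and fibre uniqueness come from the mutually inverse maps of Theorem~\ref{thm:projection-lift-equivalence}, and the uniform Hardy stress bound makes the integrable-driver lift available over every scalar solution; your extra checks (that the Masmoudi pair lies in \(\mathcal S_{\rm sc}\), and that the state-blind driver is identical for \(h\) and \(\mathsf L\mathsf P(u,h)\)) only make explicit what the paper's short proof leaves implicit.
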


\begin{proof}
The scalar existence theorem of \citet{masmoudi2013global}, under the data
hypotheses used in Theorem~\ref{thm:full-drag-local-closure}, gives
\(\mathcal S_{\rm sc}\ne\varnothing\).  In the integrable-driver class the
uniform projection-wise assumption makes the lift available over every member
of that set.  Theorem~\ref{thm:projection-lift-equivalence}
provides mutually inverse maps \(\mathsf P\) and \(\mathsf L\), proving
nonemptiness and \eqref{eq:base-free-cardinality-transfer}.  The inverse
identities also show that a state solution cannot carry an independent choice
of base: its only base is its own state average.  Fibre uniqueness is the
second inverse identity.  No scalar uniqueness is used.
\end{proof}

\subsection{Direct compactness of dissipation-tight zero-diffusion
regularizations}
\label{sec:direct-zero-diffusion-compactness}

The preceding projection--lift theorem constructs a state density over each
scalar energy solution.  We now address a different question: when does an
actual state-resolved regularization sequence converge without discarding its
state variable?  The answer is quantitative.  Once the complete deformation
histories converge, relative entropy closes both the full-drag product and the
nonlinear local activity.  The strict lower activity bound below is used only
to compare two different jump generators.

For non-negative unit-mass densities \(f,g\) on \(D\times Y\), define
\begin{align}
 \operatorname{Ent}_M(f\mid g)
 &:=\int_{D\times Y}M
 \left(f\log\frac fg-f+g\right)\dd q\dd\pi,
 \label{eq:direct-relative-entropy}\\
 \mathcal I_q(f\mid g)
 &:=\int_{D\times Y}Mf
 \left|\nabla_q\log\frac fg\right|^2\dd q\dd\pi.
 \label{eq:direct-relative-Fisher}
\end{align}
The usual lower-semicontinuous conventions are used at zero.  In particular,
\(\operatorname{Ent}_M(f\mid g)=+\infty\) if \(f\) charges
\(\{g=0\}\).

\begin{lemma}[Relative entropy for varying full-drag fibres]
\label{lem:varying-fibre-relative-entropy}
Let \((D,M)\) be an admissible finite-extension Maxwellian, let
\(B_i\in L^2(s,T;\R^{d\times d})\) be trace free, and let \(f_i\) be the
renormalized fibre solutions with unit mass and kernels
\begin{equation}
 K_i(t,q,y,y')=k(y,y')
 \mathfrak a(q,z_i(t),y,y'),
 \qquad
 z_{i,a}(t)=\int_{D\times Y}\Phi_aM(f_i-1),
 \label{eq:direct-fibre-kernels}
\end{equation}
where \(k\in L^\infty(Y^2)\) is non-negative and symmetric,
\(\Phi_a\in L^\infty(D\times Y)\), and
\begin{align}
 0<a_*\le\mathfrak a(q,z,y,y')&\le a^*,
 \label{eq:direct-positive-activity}\\
 |\mathfrak a(q,z,y,y')-\mathfrak a(q,w,y,y')|
 &\le A|z-w|.
 \label{eq:direct-Lipschitz-activity}
\end{align}
Then there is a constant
\(C=C(D,M,{\rm Wi},a_*,a^*,A,\|k\|_\infty,
\Phi_1,\ldots,\Phi_N)\) such that, for \(s\le t\le T\),
\begin{align}
 &\operatorname{Ent}_M(f_1(t)\mid f_2(t))
 +\frac1{4{\rm Wi}}\int_s^t
 \mathcal I_q(f_1\mid f_2)(r)\dd r
 \notag\\
 &\qquad\le e^{C(t-s)}
 \left[\operatorname{Ent}_M(f_1(s)\mid f_2(s))
 +C\int_s^t|B_1-B_2|^2\dd r\right].
 \label{eq:varying-fibre-relative-entropy}
\end{align}
The estimate is asymmetric, as relative entropy is, and remains valid after
integration over an arbitrary label measure.
\end{lemma}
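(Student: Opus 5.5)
The plan is a direct relative-entropy calculation on the fibre, done first for regularized solutions. First I approximate the two fibre problems so that $f_1,f_2$ are bounded, bounded below by a positive constant, and have enough $q$-regularity to use $\log(f_1/f_2)$ as a test function. For this I truncate and mollify the data as in the proof of Proposition~\ref{prop:full-drag-fibre-evolution}, add a vanishing positive constant, and use bounded approximations of $B_i$. The self-consistent kernels are frozen at the approximate moment paths, which are closed by the Volterra fixed point used in Step~2 of Theorem~\ref{thm:full-drag-local-closure}. At the end I pass to the limit. Contraction and kernel stability from Proposition~\ref{prop:full-drag-fibre-evolution} give $C(I;L^1_M)$ convergence, and relative entropy and relative Fisher information are jointly lower semicontinuous. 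The only requirement is that the constant be independent of the regularization. The constant never depends on the label, so the label-integrated version follows by Fubini.

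With $r=f_1/f_2$, the derivative is
\[
 \frac{\dd}{\dd t}\operatorname{Ent}_M(f_1\mid f_2)=\int\partial_t(Mf_1)\log r-\int\partial_t(Mf_2)\,r .
\]
I split it into drift, diffusion and reaction contributions. For the drift I write $B_1=B_2+(B_1-B_2)$. The part driven by the common matrix $B_2$ cancels: transport of two densities by the same linear flux preserves $\int Mf_2\phi(f_1/f_2)$, which is the same algebraic cancellation as in Proposition~\ref{prop:full-drag-conditional-entropy}. Consequently no singular stress $\nabla_qU\otimes q$ enters the estimate. The remaining term is $\int Mf_1(B_1-B_2)q\cdot\nabla_q\log r$. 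Since $|q|\le R_D$ and $\int Mf_1=1$, Young's inequality bounds it by
\[
 \tfrac1{4{\rm Wi}}\mathcal I_q(f_1\mid f_2)+{\rm Wi}\,R_D^2|B_1-B_2|^2 .
\]
The configurational diffusion contributes exactly $-\frac1{2{\rm Wi}}\mathcal I_q$. This leaves $\frac1{4{\rm Wi}}\mathcal I_q$ on the left-hand side, which matches \eqref{eq:varying-fibre-relative-entropy}.

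For the reaction I write $\mathcal R_{K_1}f_1=\mathcal R_{K_2}f_1+(\mathcal R_{K_1}-\mathcal R_{K_2})f_1$. By pair symmetry, the common-kernel part equals minus the relative Markov dissipation
\[
 \mathcal D_2:=\frac12\int MK_2f_2'\,\Psi(r',r)\dd\pi\dd\pi'\ge0,
 \qquad \Psi(r',r)=r'\log\frac{r'}{r}-r'+r,
\]
in symmetrized form. The difference part is
\[
 \frac12\int M(K_1-K_2)(f_1'-f_1)(\log r-\log r') .
\]
Since $K_1-K_2=K_2\,\epsilon$ with $|\epsilon|\le a_*^{-1}A|z_1-z_2|$, it is an $\epsilon$-perturbation of the same two-point expression weighted by $K_2$.

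The main obstacle is absorbing this perturbation into $\delta\mathcal D_2+C_\delta|z_1-z_2|^2$. I would first try an elementary two-point inequality of the form
\[
 |\epsilon|\,f_2'\,r'|\log r'-\log r|\le\delta f_2'\Psi(r',r)+C_\delta\,\epsilon^2 f_2'(r'+r),
\]
obtained by applying the convex duality of $x\mapsto e^x-1-x$ to the logarithmic increment. This is exactly where the strict lower bound $a_*$ is needed: it converts a bound on $|\mathfrak a_1-\mathfrak a_2|$ into a relative bound against $K_2$. If the region $r'\gg r$ resists this, I would split it off and control it with the tail of $\Psi$, which grows like $r'\log r'$ there. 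In either case the error after integration is at most $C|z_1-z_2|^2$, because both fibres have unit mass and $\|k\|_\infty a^*$ is finite.

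Finally, $|z_1-z_2|\le C_\Phi\|f_1-f_2\|_{L^1_M}\le C_\Phi\sqrt{2\operatorname{Ent}_M(f_1\mid f_2)}$ by Pinsker for unit masses. This gives
\[
 \frac{\dd}{\dd t}\operatorname{Ent}_M+\frac1{4{\rm Wi}}\mathcal I_q\le C\operatorname{Ent}_M+C|B_1-B_2|^2,
\]
and Gronwall yields \eqref{eq:varying-fibre-relative-entropy}.
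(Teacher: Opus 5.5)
\paragraph{Verdict: gap in the jump comparison.} Your drift cancellation, the Young bound on the $B_1-B_2$ mismatch, the exact relative-Fisher term from diffusion, Pinsker, and Gronwall are what the paper does. The proof breaks at the jump comparison, the one non-routine step, for two reasons.

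\paragraph{The two-point inequality fails.} You take absolute values in the difference part and want
\[
 |\epsilon|\,f_2'r'\,|\log r'-\log r|\le\delta f_2'\Psi(r',r)+C_\delta\epsilon^2f_2'(r'+r).
\]
With $x=r'/r$ this reads $|\epsilon|\,x|\log x|\le\delta(x\log x-x+1)+C_\delta\epsilon^2(x+1)$. As $x\to\infty$ the left side grows like $|\epsilon|x\log x$ and the right side only like $\delta x\log x$. So the inequality is false whenever $|\epsilon|>\delta$. Your fallback to the tail of $\Psi$ cannot rescue it, because that tail carries coefficient $\delta\le1$.

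Nothing makes $\epsilon$ small. You only know $|\epsilon|\le A|z_1-z_2|/a_*$ with $|z_1-z_2|\le 2C_\Phi$, and $1+\epsilon=\mathfrak a_1/\mathfrak a_2$ may lie anywhere in $[a_*/a^*,a^*/a_*]$. That range contains $\epsilon>1$ once $a^*>2a_*$.

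\paragraph{The error term is not controlled by the masses.} The weight $f_2'r=f_2(y')f_1(y)/f_2(y)$ is not bounded by the unit masses. Take $Y=[0,1]$, $k\equiv1$, $f_1\equiv1$, $f_2=2y$. Then $\operatorname{Ent}_M(f_1\mid f_2)<\infty$, yet $\int MK_2f_2'r=\infty$. So your error does not integrate to $C|z_1-z_2|^2$.

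\paragraph{How to fix it.} Keep the sign and do not split. Put $w=r/r'$ and $\lambda=K_1/K_2=1+\epsilon$. Symmetry of $K_1,K_2$ turns the whole reaction contribution into
\[
 \mathcal E_{12}=\int_{D\times Y^2}MK_2\,f_1'\,\bigl[\lambda\log w-w+1\bigr]\dd q\dd\pi\dd\pi' .
\]
Your common-kernel part is the $\lambda=1$ piece, $-\int MK_2f_2'\Psi(r',r)$.

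Maximize in $w$, equivalently use $\lambda\log w\le w+\lambda\log\lambda-\lambda$. This gives
\[
 \mathcal E_{12}\le\int Mf_1K_2\,\ell(\lambda),\qquad \ell(\lambda)=\lambda\log\lambda-\lambda+1,
\]
which is exactly the paper's change-of-rates bound. The weight is now $f_1$, which has unit mass. Taylor's theorem on $[a_*/a^*,a^*/a_*]$ gives $K_2\ell(\lambda)\le C\|k\|_\infty A^2|z_1-z_2|^2$.

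In your own terms, this is absorption using the full common dissipation with the correct weight $f_2'r'=f_1'$. It works because $1+\epsilon\ge a_*/a^*>0$, not because $\epsilon$ is small.

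\paragraph{A smaller point on regularization.} When you regularize the data, apply the same operation to both densities so that $\operatorname{Ent}_M(f_1(s)\mid f_2(s))$ does not increase. The paper uses a common Markov regularizer and the data-processing inequality. Truncating and mollifying each datum separately, as in Proposition~\ref{prop:full-drag-fibre-evolution}, controls only each entropy relative to $1$.
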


\begin{proof}
We first take bounded positive data and bounded coefficients, so that every
calculation below is justified by the Galerkin construction in Proposition
\ref{prop:full-drag-fibre-evolution}.  The diffusion and transport chain rules
give
\begin{align}
 \frac{\dd}{\dd t}\operatorname{Ent}_M(f_1\mid f_2)
 +\frac1{2{\rm Wi}}\mathcal I_q(f_1\mid f_2)
 \le{}&\int Mf_1(B_1-B_2)q\cdot
 \nabla_q\log\frac{f_1}{f_2}+\mathcal E_{12},
 \label{eq:direct-relative-entropy-identity}
\end{align}
where \(\mathcal E_{12}\) is the contribution of the two jump generators.
The common full-drag transport cancels exactly; neither radiality nor
skew-symmetry is used.  Since \(D\) is bounded and \(\int Mf_1=1\), Young's
inequality yields
\begin{equation}
 \left|\int Mf_1(B_1-B_2)q\cdot
 \nabla_q\log\frac{f_1}{f_2}\right|
 \le\frac1{4{\rm Wi}}\mathcal I_q(f_1\mid f_2)
 +C_{D,{\rm Wi}}|B_1-B_2|^2.
 \label{eq:direct-drift-mismatch}
\end{equation}

For completeness, the jump comparison is obtained pairwise.  Write
\(\ell(r)=r\log r-r+1\).  The logarithmic Young inequality
\(ab\le e^a+b\log b-b\), applied to the two oriented jump fluxes and then
symmetrized in \((y,y')\), gives the standard change-of-rates bound
\begin{equation}
 \mathcal E_{12}
 \le\int_{D\times Y^2}Mf_1(q,y)K_2(q,y,y')
 \ell\!\left(\frac{K_1(q,y,y')}{K_2(q,y,y')}\right)
 \dd q\dd\pi(y)\dd\pi(y').
 \label{eq:direct-jump-change-rate}
\end{equation}
On \(\{k=0\}\) the integrand is defined as zero.  Elsewhere the ratio is the
ratio of the two activities.  Taylor's theorem on \([a_*,a^*]\) therefore
gives
\begin{equation}
 K_2\ell(K_1/K_2)
 =k\left[\mathfrak a_1\log(\mathfrak a_1/\mathfrak a_2)
 -\mathfrak a_1+\mathfrak a_2\right]
 \le C_{a_*,a^*}\|k\|_\infty A^2|z_1-z_2|^2.
 \label{eq:direct-rate-quadratic-bound}
\end{equation}
Because the two fibre masses equal one, Pinsker's inequality and boundedness
of the observables imply
\begin{align}
 |z_1-z_2|^2
 &\le C_\Phi\left(\int M|f_1-f_2|\right)^2
 \le2C_\Phi\operatorname{Ent}_M(f_1\mid f_2).
 \label{eq:direct-moment-Pinsker}
\end{align}
Combining \eqref{eq:direct-relative-entropy-identity}--
\eqref{eq:direct-moment-Pinsker}, absorbing half of the relative Fisher
information, and applying Gronwall proves
\eqref{eq:varying-fibre-relative-entropy} for positive bounded solutions.

For general finite-entropy data, apply the same mass-preserving Markov
regularizer to both densities: conditional expectation on an increasing
sequence of finite state sigma-algebras, the weighted configuration
semigroup, and then a vanishing convex mixture with the unit equilibrium.
The data-processing inequality gives
\[
 \operatorname{Ent}_M(S_jf_1\mid S_jf_2)
 \le\operatorname{Ent}_M(f_1\mid f_2),
\]
while martingale and semigroup convergence give \(S_jf_i\to f_i\) in
\(L^1_M\).  Two-sided value truncations and a common mass correction now
produce bounded positive data without increasing the limiting right-hand
side.  Approximate \(B_i\) in \(L^2\), pass first in the Galerkin dimension,
and then remove these regularizations.  Convex lower semicontinuity gives the
left-hand side, while the right-hand side is stable under the strong
coefficient approximations.  Monotone convergence on \(\{k>0\}\) removes the
positive rate regularization.  This proves the renormalized estimate.  Every
operation is fibrewise and all constants are label independent, so Tonelli's
theorem gives the last assertion.
\end{proof}

\begin{lemma}[Convergence of deformation histories]
\label{lem:deformation-history-convergence}
Let \(\Omega=\T^d\), or let \(\Omega\) be bounded Lipschitz and extend
velocities by zero.  Suppose \(d\in\{2,3\}\),
\(u_n,u\in L^2(0,T;V_\sigma(\Omega))\), and
\begin{equation}
 u_n\longrightarrow u
 \quad\hbox{strongly in }L^2(0,T;H^1(\Omega;\R^d)).
 \label{eq:strong-velocity-gradient-convergence}
\end{equation}
Let \(X_n,X\) be their measure-preserving regular Lagrangian flows and set
\(B_n=\nabla u_n\circ X_n\), \(B=\nabla u\circ X\).  Then
\begin{align}
 X_n^{\pm1}&\longrightarrow X^{\pm1}
 &&\text{locally in measure, uniformly in time},
 \label{eq:direct-flow-convergence}\\
 B_n&\longrightarrow B
 &&\text{strongly in }L^2((0,T)\times\Omega).
 \label{eq:direct-history-convergence}
\end{align}
\end{lemma}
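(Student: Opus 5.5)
The plan has two parts. First we prove \eqref{eq:direct-flow-convergence} with the quantitative stability theory for regular Lagrangian flows of Sobolev fields (Crippa--De Lellis, or the DiPerna--Lions stability theorem). Then we deduce \eqref{eq:direct-history-convergence} by splitting
\[
 B_n-B=(\nabla u_n-\nabla u)\circ X_n+(\nabla u\circ X_n-\nabla u\circ X).
\]
In the bounded-domain case we work with zero extensions. These are divergence free on \(\R^d\) and converge strongly in \(L^2(0,T;W^{1,2}(\R^d))\). By \eqref{eq:RLF-domain-invariance}, all flows leave \(\Omega\) invariant up to null sets, so all statements can be localized to \(\Omega\).

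For the flows, we use the Crippa--De Lellis logarithmic functional
\[
 \Phi_\delta(t)=\int_\Omega\log\Bigl(1+\frac{|X_n(t,a)-X(t,a)|}{\delta}\Bigr)\dd a .
\]
Its derivative is controlled by \(\delta^{-1}\|u_n-u\|_{L^1}\) plus maximal-function terms involving \(\nabla u\) and \(\nabla u_n\). Measure preservation (compressibility constant one) and the \(L^2\) maximal inequality bound the latter uniformly in \(n\). Choosing \(\delta=\delta_n\to0\) with \(\delta_n^{-1}\|u_n-u\|_{L^1_{t,x}}\to0\) gives \(\sup_t|\{|X_n-X|>\gamma\}|\to0\) for each \(\gamma>0\). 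For the inverse flows, note that \(X^{-1}(t,\cdot)\) is the flow at time \(0\) of the backward field \(-u(t-\cdot)\), which also converges strongly, so the same estimate applies. The torus case is identical with periodic distance.

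For the histories, the first term is handled by measure preservation:
\[
 \int_0^T\!\int_\Omega|(\nabla u_n-\nabla u)\circ X_n|^2=\|\nabla u_n-\nabla u\|_{L^2}^2\to0 .
\]
The second term is the main obstacle, because \(\nabla u\) is only \(L^2\) and composition with a merely measure-convergent map is not continuous in general. We handle it by density. Choose \(G_\epsilon\in C_c((0,T)\times\overline\Omega)\) with \(\|\nabla u-G_\epsilon\|_{L^2}<\epsilon\). Measure preservation of both \(X_n\) and \(X\) bounds \(\|(\nabla u-G_\epsilon)\circ X_n\|_{L^2}\) and \(\|(\nabla u-G_\epsilon)\circ X\|_{L^2}\) by \(\epsilon\). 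For the continuous part, uniform continuity of \(G_\epsilon\) combined with convergence in measure of \(X_n\to X\) (uniformly in \(t\), hence in measure on \((0,T)\times\Omega\)) gives \(G_\epsilon(t,X_n)\to G_\epsilon(t,X)\) in measure. Since this sequence is bounded by \(\|G_\epsilon\|_\infty\) on a finite measure space, dominated convergence gives convergence in \(L^2\). Letting \(n\to\infty\) and then \(\epsilon\downarrow0\) proves \eqref{eq:direct-history-convergence}.
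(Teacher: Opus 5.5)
Your proposal is correct and follows the paper's proof. Both arguments use stability of regular Lagrangian flows, then time-reversed fields for the inverses, then the same splitting of \(B_n-B\): the first term is handled by measure preservation, and the second by density of bounded continuous fields plus bounded convergence in measure. The only difference is that you make the flow stability quantitative and explicitly uniform in time via the Crippa--De Lellis logarithmic functional, whereas the paper simply cites DiPerna--Lions stability.
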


\begin{proof}
Strong convergence in \(L^1_tW^{1,1}_x\), the uniform Sobolev bound, and
incompressibility give \eqref{eq:direct-flow-convergence} for the forward
flows by stability of regular Lagrangian flows
\citep{diperna1989ordinary}.  Applying the same result to the time-reversed
fields gives convergence of the inverses.  Measure preservation
gives
\[
 \|(\nabla u_n-\nabla u)\circ X_n\|_{L^2_{t,a}}
 =\|\nabla u_n-\nabla u\|_{L^2_{t,x}}\longrightarrow0.
\]
To treat \(\nabla u\circ X_n-\nabla u\circ X\), approximate \(\nabla u\)
in \(L^2((0,T)\times\Omega)\) by a bounded continuous matrix field.  Its
compositions converge in measure by \eqref{eq:direct-flow-convergence} and,
being bounded, converge in \(L^2\).  The two approximation errors have exactly
their Eulerian \(L^2\) norms by measure preservation.  Letting the
approximation error tend to zero proves \eqref{eq:direct-history-convergence}.
\end{proof}

\begin{theorem}[Direct state-resolved compactness at zero diffusion]
\label{thm:direct-state-resolved-compactness}
Let \(d\in\{2,3\}\), let \(\Omega=\T^d\) or a bounded Lipschitz domain, and
let \((D,M)\) be an admissible finite-extension Maxwellian.  Let the state
space, kernel, observables, and activity satisfy
\eqref{eq:direct-positive-activity}--
\eqref{eq:direct-Lipschitz-activity}.  For each \(n\), let \(u_n\in
L^2(0,T;V_\sigma(\Omega))\), and let \(h_n\) be a renormalized solution of
the bounded-number-density class \eqref{eq:RLF-pullback-integrability}, with
its strong initial trace, for
the zero-centre-of-mass-diffusion equation
\begin{align}
 \partial_t(Mh_n)+u_n\cdot\nabla_x(Mh_n)
 &=-\nabla_q\cdot((\nabla_xu_n)qMh_n)
 +\frac1{2{\rm Wi}}\nabla_q\cdot(M\nabla_qh_n)
 +M\mathcal R_{K_n}h_n,
 \label{eq:direct-regularized-kinetic}\\
 K_n&=k\,\mathfrak a(q,\eta[h_n],y,y'),
 \qquad
 \eta_a[h_n]=\int_{D\times Y}\Phi_aM(h_n-1).
 \label{eq:direct-regularized-kernel}
\end{align}
Assume \(\rho[h_n]=1\), the entropy and configurational-Fisher bounds in
\eqref{eq:full-drag-coupled-entropy} are uniform, and
\eqref{eq:strong-velocity-gradient-convergence} holds for some \(u\).
Assume also that the initial state data are Cauchy in directed relative
entropy:
\begin{equation}
 \lim_{N\to\infty}\sup_{n,m\ge N}
 \int_\Omega\operatorname{Ent}_M(h_n^{\rm in}(x)\mid
 h_m^{\rm in}(x))\dd x=0.
 \label{eq:direct-well-prepared-initial-data}
\end{equation}
Then there is a non-negative \(h\) with \(\rho[h]=1\) such that, for the
pullbacks
\[
 H_n(t,a,q,y)=h_n(t,X_n(t,a),q,y),\qquad
 H(t,a,q,y)=h(t,X(t,a),q,y),
\]
one has
\begin{equation}
 \sup_{0\le t\le T}\int_\Omega
 \left(\int_{D\times Y}M|H_n-H|(t,a)\dd q\dd\pi\right)^2\dd a
 \longrightarrow0.
 \label{eq:direct-fibre-total-variation-convergence}
\end{equation}
In Eulerian variables,
\begin{align}
 h_n&\longrightarrow h
 &&\text{strongly in }L^1((0,T)\times\Omega\times D\times Y;M),
 \label{eq:direct-Eulerian-L1-convergence}\\
 \eta[h_n]&\longrightarrow\eta[h]
 &&\text{strongly in }L^2((0,T)\times\Omega;\R^N),
 \label{eq:direct-local-moment-convergence}\\
 K_n&\longrightarrow K_h
 &&\text{in measure and in every finite }L^p,
 \quad K_h=k\mathfrak a(q,\eta[h],y,y').
 \label{eq:direct-kernel-convergence}
\end{align}
The limit \(h\) satisfies the renormalized zero-diffusion kinetic equation
with velocity \(u\) and kernel \(K_h\).  Moreover,
\begin{align}
 (\nabla u_n)qMh_n&\longrightarrow(\nabla u)qMh
 &&\text{in distributions},
 \label{eq:direct-drag-product-convergence}\\
 \tau_Y[h_n]&\longrightarrow\tau_Y[h]
 &&\text{strongly in }L^1((0,T)\times\Omega),
 \label{eq:direct-stress-convergence}
\end{align}
and the entropy, Fisher information, and Jeffreys production satisfy the
corresponding lower-semicontinuity inequalities.  Thus every term in the
state-resolved equation is identified directly along \((u_n,h_n)\); the
limiting density is not defined by lifting a separately selected scalar
limit.

If, in addition, \((u_n,h_n)\) satisfies the momentum equation with residual
\(r_n\to0\) in distributions, \(u_n(0)\to u^{\rm in}\) in \(L^2\), and the
uniform total energy inequality, then \((u,h)\) is a coupled global
energy--entropy weak solution and obeys \eqref{eq:full-drag-coupled-entropy}.
\end{theorem}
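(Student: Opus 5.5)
We work entirely in Lagrangian variables along the approximating flows. By Lemma~\ref{lem:kinetic-RLF-pullback}, with $\mathsf G(u)=\nabla_xu$, each $h_n$ pulls back to a jointly measurable family of fibre solutions $H_n(\cdot,a)$ of \eqref{eq:full-drag-label-fibre}. These fibres have unit mass ($\rho[h_n]=1$), are driven by the trace-free matrix $B_n=\nabla u_n\circ X_n$, and have kernel $k\,\mathfrak a(q,z_n(t,a),y,y')$, where $z_n(t,a)=\eta[h_n](t,X_n(t,a))$ is exactly the labelwise moment of $H_n(t,a)$. So for almost every label the pair $(H_n(\cdot,a),H_m(\cdot,a))$ lies in the setting of Lemma~\ref{lem:varying-fibre-relative-entropy}. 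Lemma~\ref{lem:deformation-history-convergence} gives $B_n\to B$ in $L^2((0,T)\times\Omega)$, and in particular $\int_\Omega\int_0^T|B_n-B_m|^2\to0$.

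\textbf{Step 1: Cauchy estimate in relative entropy.} Apply \eqref{eq:varying-fibre-relative-entropy} labelwise from $s=0$ and integrate in $a$ by Tonelli. The initial term $\int_\Omega\operatorname{Ent}_M(H_n(0,a)\mid H_m(0,a))\dd a$ equals the quantity in \eqref{eq:direct-well-prepared-initial-data}, since $X_n(0)=\mathrm{id}$. This gives
\[
 \sup_{t\le T}\int_\Omega\operatorname{Ent}_M(H_n(t,a)\mid H_m(t,a))\dd a
 \le e^{CT}\Bigl[\epsilon_N+C\|B_n-B_m\|_{L^2}^2\Bigr]\longrightarrow0 .
\]
Pinsker's inequality on each unit-mass fibre then gives $\bigl(\int M|H_n-H_m|\bigr)^2\le2\operatorname{Ent}_M$. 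Hence $(H_n)$ is Cauchy in the norm of \eqref{eq:direct-fibre-total-variation-convergence}, which is complete (it is $C([0,T];L^2_a(L^1_M))$ after accounting for continuity of fibres). Call the limit $H$. Then $H\ge0$ with unit fibre mass, and we define $h(t,x)=H(t,X^{-1}(t,x))$. This step is the main point, and I expect the delicate part to be checking that Lemma~\ref{lem:varying-fibre-relative-entropy} applies to the renormalized $h_n$ rather than only to constructed solutions. I would settle this by invoking uniqueness from Proposition~\ref{prop:full-drag-fibre-evolution} with the frozen kernel $\mathsf K_n$. Since $\mathsf K_n$ is bounded, Theorem~\ref{thm:full-drag-state-fibre} identifies $H_n$ with the constructed fibre, to which the lemma applies.

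\textbf{Step 2: Eulerian convergence.} We need to pass from label to Eulerian convergence. Write $h_n-h$ as
\[
 H_n\circ X_n^{-1}-H\circ X_n^{-1}\;+\;H\circ X_n^{-1}-H\circ X^{-1}.
\]
The first term has the $L^1_M$ norm of $H_n-H$ by measure preservation. For the second, approximate $H$ in $L^1$ by functions continuous in $a$, and use \eqref{eq:direct-flow-convergence} for inverse flows together with equi-integrability from unit fibre mass. This gives \eqref{eq:direct-Eulerian-L1-convergence}. Moments are bounded by $\|\Phi\|_\infty\int M|h_n-h|$ fibrewise, so the squared-$L^1$ fibre control gives \eqref{eq:direct-local-moment-convergence} in $L^2$. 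The Lipschitz bound on $\mathfrak a$ and boundedness give \eqref{eq:direct-kernel-convergence}.

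\textbf{Step 3: identification of the terms.} The drag product $(\nabla u_n)qMh_n$ converges because $\nabla u_n\to\nabla u$ strongly in $L^2$ while $\int_{D\times Y}M|h_n-h|$ tends to $0$ in $L^2_{t,x}$ and $q$ is bounded. This is the strong--strong pairing that gives \eqref{eq:direct-drag-product-convergence}. For the stress, truncate $W$ at level $A$. The truncated part converges by strong $L^1$ convergence. The tails are uniformly $O(1/A)$ by \eqref{eq:FENE-stress-tail} and the uniform Fisher bound, which together with $\rho=1$ also give $L^2$ bounds; this yields \eqref{eq:direct-stress-convergence}. The reaction passes because the kernels converge strongly and are bounded, and the densities converge strongly. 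Transport and diffusion pass by weak convergence of $M\nabla_qh_n$: Fisher and unit density make $2\sqrt{h_n}\nabla_q\sqrt{h_n}$ weakly compact in $L^1$. Entropy and Fisher lower semicontinuity are convex-functional statements. The Jeffreys liminf follows from Lemma~\ref{lem:fixed-measure-Jeffreys-liminf} on the fixed measure $Mk\,\dd\pi\dd\pi'$, since the activities converge in measure. The renormalized form then follows via Lemma~\ref{lem:kinetic-RLF-pullback} for the limit fibre, which solves \eqref{eq:full-drag-label-fibre} with $B$ and the limiting kernel by passing to the limit in the fibre weak form \eqref{eq:full-drag-fibre-weak-form}.

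\textbf{Step 4: coupled limit.} For the final coupled statement, the momentum equation passes to the limit using the strong $L^2H^1$ convergence of $u_n$ and the stress convergence, with $r_n\to0$. The energy--entropy inequality follows from the uniform total inequality and the liminfs above.
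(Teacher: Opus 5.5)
Your proposal is correct and follows the paper's proof essentially step for step. The steps match: the label-integrated relative-entropy Cauchy estimate from Lemma~\ref{lem:varying-fibre-relative-entropy}, driven by Lemma~\ref{lem:deformation-history-convergence} and the preparation hypothesis; Pinsker on unit-mass fibres; the Eulerian return by splitting into a same-label error and a composition error; the strong--strong drag bound; the truncated stress with Hardy tails; the fixed-measure Jeffreys liminf; and the coupled limit. Your two small deviations are harmless: checking the lemma's applicability through frozen-kernel uniqueness, and obtaining the renormalized equation from the limiting fibres plus Lemma~\ref{lem:kinetic-RLF-pullback} rather than by passing renormalizations in Eulerian form. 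For the second, you should identify the limiting fibre with the unique constructed renormalized solution (e.g.\ by kernel stability), since a bare weak limit of the fibre weak form is not automatically renormalized.
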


\begin{proof}
\textit{Step 1: a Cauchy estimate on moving fibres.}
Lemma~\ref{lem:deformation-history-convergence} makes
\(B_n=\nabla u_n\circ X_n\) Cauchy in \(L^2((0,T)\times\Omega_a)\).  Pulling
\eqref{eq:direct-regularized-kinetic} back by \(X_n\) gives, for almost every
label, the fibre equation in Lemma
\ref{lem:varying-fibre-relative-entropy}, with
\[
 z_{n,a}(t)=\eta[h_n](t,X_n(t,a))
 =\int_{D\times Y}\Phi_aM(H_n-1).
\]
Integrating \eqref{eq:varying-fibre-relative-entropy} in the label yields
\begin{align}
 \sup_{0\le t\le T}\int_\Omega
 \operatorname{Ent}_M(H_n(t,a)\mid H_m(t,a))\dd a
 \le C_T\left[
 \int_\Omega\operatorname{Ent}_M(h_n^{\rm in}\mid h_m^{\rm in})
 +\|B_n-B_m\|_{L^2_{t,a}}^2\right].
 \label{eq:direct-integrated-relative-entropy-Cauchy}
\end{align}
The right-hand side tends to zero.  Fibrewise Pinsker and unit mass therefore
show that \((H_n)\) is Cauchy in
\(C([0,T];L^1_M(\Omega_a\times D\times Y))\), and, more precisely, in the
quadratic total-variation seminorm on the left of
\eqref{eq:direct-fibre-total-variation-convergence}.  Its limit \(H\) is
non-negative and has unit mass on almost every label.  This proves
\eqref{eq:direct-fibre-total-variation-convergence} and defines \(h\) by
push-forward under \(X\), without using a scalar projection.

\textit{Step 2: return to Eulerian variables.}
The difference between \(h_n\) and \(h\) is split into the pullback difference
\(H_n-H\) at the same label and the composition difference generated by
\(X_n-X\).  The first tends to zero by Step 1 and measure preservation.  For
the second, approximate \(H\) in \(L^1_M\) by a bounded function continuous
in \((t,a)\), use \eqref{eq:direct-flow-convergence}, and then remove the
approximation.  This proves \eqref{eq:direct-Eulerian-L1-convergence}.

The same argument applied to the fibrewise total variation gives
\begin{equation}
 \int_0^T\!\int_\Omega
 \left(\int_{D\times Y}M|h_n-h|\right)^2\dd x\dd t\longrightarrow0.
 \label{eq:direct-Eulerian-TV-L2}
\end{equation}
Indeed, the integrand is bounded by four because both fibre masses are one,
so convergence in measure of the composition error upgrades its \(L^1\)
convergence to \(L^2\).  Boundedness of \(\Phi_a\) now proves
\eqref{eq:direct-local-moment-convergence}; the Lipschitz activity law proves
\eqref{eq:direct-kernel-convergence}.

\textit{Step 3: identification of full drag and reaction.}
For a smooth kinetic test \(\varphi\), the part of the drag difference
containing \(\nabla u_n-\nabla u\) is bounded by
\[
 C_\varphi\|\nabla u_n-\nabla u\|_{L^1_{t,x}},
\]
because \(\rho[h_n]=1\) and \(D\) is bounded.  The remaining part is at most
\[
 C_\varphi\|\nabla u\|_{L^2_{t,x}}
 \left\|\int M|h_n-h|\right\|_{L^2_{t,x}},
\]
which tends to zero by \eqref{eq:direct-Eulerian-TV-L2}.  This proves
\eqref{eq:direct-drag-product-convergence}.  Similarly,
\begin{align*}
 \int M|\mathcal R_{K_n}h_n-\mathcal R_{K_h}h|
 &\le2a^*\|k\|_\infty\int M|h_n-h|\\
 &\quad+C\int_\Omega|\eta[h_n]-\eta[h]|\rho[h_n],
\end{align*}
so the reaction flux converges in \(L^1\).  The transport and time terms pass
by \eqref{eq:direct-Eulerian-L1-convergence} and strong convergence of
\(u_n\).  Uniform Fisher information gives weak compactness of
\(\nabla_q\sqrt{h_n}\); the standard square-root argument identifies the
limit with \(\nabla_q\sqrt h\).  Hence \(h\) satisfies the asserted
weak kinetic equation.  Applying the same passage to smooth renormalizations
with bounded first derivative identifies the transport and reaction terms by
the strong convergences above; convex lower semicontinuity retains the
non-negative configurational-diffusion defect.  Removing the renormalization
cutoff gives the asserted renormalized equation.

\textit{Step 4: singular stress and dissipation.}
For fixed \(A\), strong \(L^1_M\) convergence identifies the stress truncated
to \(\{|\nabla U\otimes q|\le A\}\).  The uniform Fisher bound and
\eqref{eq:FENE-stress-tail} make the complementary tails uniformly
\(O(A^{-1})\) in \(L^1_{t,x}\).  Letting first \(n\to\infty\) and then
\(A\to\infty\) proves \eqref{eq:direct-stress-convergence}.  Entropy and
Fisher information are convex and lower semicontinuous.  For the reaction
term, use the finite measure
\[
 k(y,y')M(q)\dd t\dd x\dd q\dd\pi(y)\dd\pi(y')
\]
and apply Lemma~\ref{lem:fixed-measure-Jeffreys-liminf} to the activity
coefficients, which converge in measure and remain in \([a_*,a^*]\).  This
proves the Jeffreys liminf and all kinetic conclusions.

\textit{Step 5: the coupled equation.}
Strong \(L^2_tH^1_x\) convergence passes the convection and viscous terms in
the momentum equation, while \eqref{eq:direct-stress-convergence} passes its
polymeric forcing and \(r_n\to0\) removes the regularization residual.  Weak
time continuity follows from the limiting equation.  Finally take the liminf
in the uniform total energy inequality, using the three lower-semicontinuity
statements from Step 4.  This gives \eqref{eq:full-drag-coupled-entropy} and
completes the proof.
\end{proof}

\begin{corollary}[Viscous-defect criterion]
\label{cor:viscous-defect-criterion}
In Theorem~\ref{thm:direct-state-resolved-compactness}, condition
\eqref{eq:strong-velocity-gradient-convergence} may be replaced by
\begin{align}
 u_n&\longrightarrow u &&\text{strongly in }L^2(0,T;L^2(\Omega)),
 \notag\\
 \nabla u_n&\rightharpoonup\nabla u
 &&\text{weakly in }L^2((0,T)\times\Omega),
 \label{eq:direct-weak-gradient}\\
 \int_0^T\!\int_\Omega|\nabla u_n|^2
 &\longrightarrow\int_0^T\!\int_\Omega|\nabla u|^2.
 \label{eq:no-viscous-defect}
\end{align}
Thus, for a fixed viscosity, direct state-resolved compactness holds exactly
when the weak velocity limit carries no viscous dissipation defect.  The
criterion does not assert that every large-data approximation is defect free.
\end{corollary}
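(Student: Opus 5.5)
The plan is to show that the three displayed conditions are equivalent to the strong gradient convergence \eqref{eq:strong-velocity-gradient-convergence}. Once that is done, Theorem~\ref{thm:direct-state-resolved-compactness} applies verbatim. The interpretive sentence, ``exactly when'', also needs the converse: if the conclusion of that theorem is to follow from its proof, the proof uses Lemma~\ref{lem:deformation-history-convergence}, which needs strong gradient convergence. So I will also record that strong \(L^2\) convergence of \(\nabla u_n\) forces \eqref{eq:no-viscous-defect}.

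The main step is the Hilbert-space identity
\[
 \|\nabla u_n-\nabla u\|_{L^2_{t,x}}^2
 =\|\nabla u_n\|_{L^2_{t,x}}^2-2(\nabla u_n,\nabla u)_{L^2_{t,x}}
 +\|\nabla u\|_{L^2_{t,x}}^2 .
\]
By \eqref{eq:direct-weak-gradient}, the cross term tends to \(2\|\nabla u\|_2^2\). By \eqref{eq:no-viscous-defect}, the first term tends to \(\|\nabla u\|_2^2\). Hence the right-hand side tends to zero, and weak convergence plus convergence of norms gives strong convergence in \(L^2((0,T)\times\Omega)\). Combined with the assumed strong \(L^2_{t,x}\) convergence of \(u_n\) itself, this is exactly \eqref{eq:strong-velocity-gradient-convergence} in \(L^2(0,T;H^1)\).

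The boundary case needs one check: the zero extensions used in Lemma~\ref{lem:deformation-history-convergence} have gradients equal to the zero extensions of \(\nabla u_n\), because \(u_n\in V_\sigma(\Omega)\) has zero trace. Therefore the norm identity holds equally for the extended fields on \(\R^d\), and the lemma's hypotheses are met without further regularity.

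Conversely, strong convergence implies convergence of norms, which is \eqref{eq:no-viscous-defect}, and also implies weak convergence. So under the standing weak-compactness setting, the absence of a viscous defect is equivalent to the hypothesis of the theorem. This justifies the word ``exactly''. It should be read with the caveat stated in the corollary: this equivalence concerns the sufficient hypothesis used in the proof, not a claim that every defect-carrying sequence fails to compact. I expect no real obstacle here. The only point needing care is the quantifier in ``exactly when'', which I would phrase as equivalence with \eqref{eq:strong-velocity-gradient-convergence}. I would also note that by lower semicontinuity one always has \(\liminf\|\nabla u_n\|_2^2\ge\|\nabla u\|_2^2\), so the defect \(\lim\|\nabla u_n\|_2^2-\|\nabla u\|_2^2\) (along subsequences) is nonnegative and vanishes precisely in the strong case.
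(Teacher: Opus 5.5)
Your proposal is correct and follows the paper's proof. Your norm expansion is exactly the Hilbert-space Radon--Riesz argument the paper invokes to upgrade \eqref{eq:direct-weak-gradient}--\eqref{eq:no-viscous-defect} to strong $L^2$ convergence of $\nabla u_n$; combined with $u_n\to u$ in $L^2_{t,x}$ this gives \eqref{eq:strong-velocity-gradient-convergence}, and your converse is the same one-line observation that strong convergence forces convergence of norms (your zero-extension check for bounded domains is a harmless extra, already covered by Lemma~\ref{lem:deformation-history-convergence}).
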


\begin{proof}
The Hilbert-space Radon--Riesz property turns
\eqref{eq:direct-weak-gradient}--\eqref{eq:no-viscous-defect} into strong
\(L^2\) convergence of \(\nabla u_n\).  Together with the first line this is
\eqref{eq:strong-velocity-gradient-convergence}, so Theorem
\ref{thm:direct-state-resolved-compactness} applies.  Conversely, strong
gradient convergence implies \eqref{eq:no-viscous-defect}.
\end{proof}

\begin{remark}[Scope of the direct theorem]
\label{rem:direct-compactness-scope}
Theorem~\ref{thm:direct-state-resolved-compactness} treats Galerkin, cutoff,
or coefficient regularizations of the equation with zero centre-of-mass
diffusion, provided each kinetic equation has the exact conservative form
\eqref{eq:direct-regularized-kinetic}.  It does not claim that a
family with positive diffusivities \(\delta_n\downarrow0\) satisfies
\eqref{eq:no-viscous-defect}; proving that property is a separate singular
limit problem.  Nor does the theorem cover degenerate activities: the
construction theorem allows \(\mathfrak a=0\), but the direct comparison of
two varying jump laws uses \(a_*>0\) in
\eqref{eq:direct-rate-quadratic-bound}.  Finally, the unit-density example in
Proposition~\ref{prop:v38-local-moment-oscillation-obstruction} shows that the
relative-entropy preparation of the initial state fibre cannot be weakened to
mere entropy boundedness.  These are the three explicit boundaries of the
sequential result.
\end{remark}

\begin{remark}[Local activity obstruction]
\label{rem:v37-activity-obstruction}
If \(K_n=k\mathfrak a(\eta[h_n](t,x))\), the reaction part of the defect equation
contains
\begin{equation}
 k\,[\mathfrak a(\eta[h_n])-\mathfrak a(\eta[h])]
 (h_n'-h_n),
 \label{eq:v37-activity-coefficient-defect}
\end{equation}
before any strong local-moment compactness is available.  Weak compactness
gives neither a sign nor a weak--strong product for this term.  The scalar
state-average compactness does not repair the problem because a general
observable depends on the unresolved \(Y\)-distribution.  A separate
fixed-point compactness theorem uniform in the frozen coefficient, or a
monotone activity structure with its own limit theorem, is required for such a
sequential passage.  The direct construction below takes a different route.
Pointwise density-dependent and unbounded state-sensitive moment activities
are still further outside the available compactness class.  State-blind
integrable drivers, including stress, are instead covered constructively by
Theorem~\ref{thm:integrable-driver-activity}.  Section
\ref{sec:v38-global-moment-activity} separates construction from sequential
stability: a labelwise Bielecki--Volterra map directly constructs a general
bounded local coefficient, while a finite-dimensional Volterra map treats
globally averaged
coefficients.  Neither argument asserts strong Eulerian compactness of arbitrary
approximating sequences.  Theorem
\ref{thm:direct-state-resolved-compactness} supplies such compactness only
after the missing information is quantified by the no-viscous-defect
condition \eqref{eq:no-viscous-defect}; Proposition
\ref{prop:v38-local-moment-oscillation-obstruction} shows that no analogous
statement follows from the natural weak bounds alone.
\end{remark}

\section{Conclusion}

The continuum-state system is governed by the scalar FENE dynamics more
rigidly than a specieswise construction suggests.  At zero centre-of-mass
diffusion, state averaging and Lagrangian lifting are inverse maps on the
energy-solution classes.  The continuum of reversible states therefore
preserves the scalar existence and multiplicity structure and introduces no
additional source of large-data nonuniqueness.  The lift remains valid for
full drag, degenerate bounded state-sensitive activities, and state-blind
drivers with integrable growth, including linear dependence on the singular
Kramers stress.

For approximation sequences, the same structure is recovered directly when
the missing compactness has zero price.  Relative-entropy preparation and
equality of viscous dissipation force convergence of the deformation
histories and then strong convergence of the complete state densities.  Full
drag, nonlinear activity, singular stress, and Jeffreys production pass to the
limit without post-limit reconstruction.  The stationary state oscillation
in the appendix shows that entropy bounds and zero viscous defect alone do not
imply this preparation.

Positive centre-of-mass diffusion provides a third closure mechanism: spatial
Fisher compactness identifies finite local moments, while a weighted Hardy
estimate and lower semicontinuity identify the FENE stress and non-atomic
reaction production.  The infinite-rank example confirms that this theorem
is not a finite-species reformulation.  What remains open is exactly what the
theorems leave open: scalar three-dimensional large-data uniqueness, direct
compactness without preparation, and the singular limit from positive to zero
centre-of-mass diffusion.

\appendix

\section{Corotational refinements and compactness obstructions}
\label{sec:v38-global-moment-activity}

The Lagrangian lift closes the full class of bounded finite local-moment
activities described below, despite the compactness obstruction in Remark
\ref{rem:v37-activity-obstruction}.  The distinction is logical: the local
theorem below constructs the coefficient directly on particle labels, whereas
weak Eulerian compactness alone does not identify it along arbitrary
approximating sequences.  We then treat spatially averaged moments and record
their sequential stability.

Let \(N_g<\infty\).  Throughout this section the activity
\(\mathfrak a\) is defined for \(z\in\R^{N_g}\) and satisfies
\eqref{eq:continuum-activity-bounds}--
\eqref{eq:continuum-activity-lipschitz} in that dimension.  Fix arbitrary
observables \(\Phi_a\in L^\infty(D\times Y)\), and define the centred global
moments
\begin{equation}
 \widehat\eta_a[h](t):=\frac1{|\T^2|}
 \int_{\T^2\times D\times Y}\Phi_a(q,y)M(q)(h(t,x,q,y)-1)
 \dd\pi(y)\dd q\dd x,\qquad 1\le a\le N_g.
 \label{eq:v38-global-moments}
\end{equation}
For \(z\in\R^{N_g}\), define
\begin{align}
 K_z(q,y,y')&:=k(y,y')\mathfrak a(q,z,y,y'),
 \label{eq:v38-global-kernel}\\
 (\mathcal R_zh)(q,y)&:=\int_YK_z(q,y,y')
 [h(q,y')-h(q,y)]\dd\pi(y'),
 \label{eq:v38-global-reaction}\\
 (\mathcal J_z\Phi)(q,y)&:=\int_YK_z(q,y,y')
 [\Phi(q,y')-\Phi(q,y)]\dd\pi(y'),
 \label{eq:v38-jump-adjoint}\\
 \mathcal A_z\Phi&:=\frac1{2{\rm Wi}}M^{-1}
 \nabla_q\cdot(M\nabla_q\Phi)+\mathcal J_z\Phi.
 \label{eq:v38-frozen-adjoint-generator}
\end{align}
The effective kernel is \(K_h(t,q,y,y')=K_{\widehat\eta[h](t)}(q,y,y')\).
We assume \(k\) is bounded, non-negative, and symmetric, and that
\(\mathfrak a\) satisfies
\eqref{eq:continuum-activity-bounds}--
\eqref{eq:continuum-activity-lipschitz}.  In particular,
\begin{align}
 0&\le K_z\le \|k\|_\infty a^*,
 \label{eq:v38-kernel-bound}\\
 \|K_z-K_{\widetilde z}\|_{L^\infty(D\times Y^2)}
 &\le \|k\|_\infty A|z-\widetilde z|.
 \label{eq:v38-kernel-lipschitz}
\end{align}
Thus the activity is a bounded Lipschitz function of a finite-dimensional
global variable.

We now state the generator assumption needed to extract the global-moment
evolution law from the weak equation.  It is not used in the local fixed point
of Theorem \ref{thm:local-moment-Lagrangian-closure}.  An observable \(\Phi\)
belongs to the bounded
adjoint-generator closure if it is the graph-norm limit of admissible kinetic
tests \(\Phi^{(m)}\) from Definition \ref{def:v37-zero-diffusion-weak},
independent of \((t,x)\), such that, for every \(R<\infty\),
\begin{align}
 &\Phi^{(m)}\longrightarrow\Phi,\qquad
 q\otimes\nabla_q\Phi^{(m)}\longrightarrow
 q\otimes\nabla_q\Phi
 &&\text{in }L^\infty(D\times Y),
 \label{eq:v38-generator-core-a}\\
 &\sup_{|z|\le R}\|\mathcal A_z\Phi^{(m)}-
 \mathcal A_z\Phi\|_{L^\infty(D\times Y)}\longrightarrow0.
 \label{eq:v38-generator-core-b}
\end{align}
This definition incorporates the natural no-flux realization of
\(M^{-1}\nabla_q\cdot(M\nabla_q\cdot)\); no boundary value of the singular
FENE drift is taken separately.  For each \(a\), assume
\begin{align}
 &\Phi_a\text{ belongs to this closure},\qquad
 \|\Phi_a\|_\infty+\|q\otimes\nabla_q\Phi_a\|_\infty\le C_\Phi,
 \label{eq:v38-generator-assumption-a}\\
 &\sup_{z\in\R^{N_g}}\|\mathcal A_z\Phi_a\|_\infty\le C_G,\qquad
 \|\mathcal A_z\Phi_a-\mathcal A_{\widetilde z}\Phi_a\|_\infty
 \le C_G|z-\widetilde z|.
 \label{eq:v38-generator-assumption-b}
\end{align}
The last Lipschitz bound follows directly from
\eqref{eq:v38-jump-adjoint} when the graph-core approximation is uniform and
\(\mathfrak a\) satisfies \eqref{eq:continuum-activity-lipschitz}.  We state
it explicitly because it is the exact generator closure used below.  Radial
\(\psi_a\) satisfy
\begin{equation}
 (Wq)\cdot\nabla_q\Phi_a=0
 \quad\text{for every skew-symmetric matrix }W.
 \label{eq:v38-corotational-observable}
\end{equation}

\begin{proposition}[A non-empty computable generator class]
\label{prop:computable-generator-class}
Let \(c_a\in L^\infty(Y,\pi)\), and let each \(\psi_a\) be either the
constant one or a radial function in \(C_c^\infty(D)\).  Then
\(\Phi_a(q,y)=c_a(y)\psi_a(q)\) satisfies
\eqref{eq:v38-generator-assumption-a}--
\eqref{eq:v38-generator-assumption-b} and
\eqref{eq:v38-corotational-observable}.  More precisely, with
\[
 \mathcal L_M\psi:=M^{-1}\nabla_q\cdot(M\nabla_q\psi)
 =\Delta_q\psi-\nabla_qU\cdot\nabla_q\psi,
\]
one may take
\begin{align}
 C_{\Phi,a}&=\|c_a\|_\infty
 \bigl(\|\psi_a\|_\infty+
 \|q\otimes\nabla_q\psi_a\|_\infty\bigr),
 \label{eq:explicit-generator-Cphi}\\
 C_{G,a}^{(0)}&=\frac{\|c_a\|_\infty}{2{\rm Wi}}
 \|\mathcal L_M\psi_a\|_\infty
 +2a^*\|k\|_\infty\|c_a\|_\infty\|\psi_a\|_\infty,
 \label{eq:explicit-generator-CG0}\\
 C_{G,a}^{(1)}&=2A\|k\|_\infty
 \|c_a\|_\infty\|\psi_a\|_\infty.
 \label{eq:explicit-generator-CG1}
\end{align}
The common constants in \eqref{eq:v38-generator-assumption-a}--
\eqref{eq:v38-generator-assumption-b} are obtained by taking the maximum of
these three quantities over the finite index set.
Thus the self-consistent theorem contains non-trivial continuum-state feedback
even with \(\psi_a\equiv1\), and it also contains bounded probes of chain
extension supported arbitrarily close to, but a positive distance from, the
FENE boundary.  For example, if \(a_0(q,y,y')\) is bounded, symmetric, and
\(a_0\ge a_0^*>\kappa>0\), then
\begin{equation}
 \mathfrak a(q,z,y,y')=a_0(q,y,y')+
 \kappa\tanh(\lambda\cdot z)
 \label{eq:explicit-nonlinear-activity-example}
\end{equation}
is a genuinely nonlinear member of the admissible activity class, with
\(A\le\kappa|\lambda|\).
\end{proposition}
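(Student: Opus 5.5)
The verification is direct, carried out observable by observable. For \(\Phi_a=c_a(y)\psi_a(q)\) I first check the corotational identity \eqref{eq:v38-corotational-observable}. If \(\psi_a\) is radial, then \(\nabla_q\psi_a=\psi_a^\flat(|q|)\,q/|q|\), so \((Wq)\cdot q=0\) for skew \(W\) gives the claim; the case \(\psi_a\equiv1\) is trivial. Next comes \eqref{eq:v38-generator-assumption-a}: \(\|\Phi_a\|_\infty\le\|c_a\|_\infty\|\psi_a\|_\infty\), and \(q\otimes\nabla_q\Phi_a=c_a\,q\otimes\nabla_q\psi_a\). Both are bounded because \(\psi_a\in C_c^\infty(D)\), so their sum is \(C_{\Phi,a}\) in \eqref{eq:explicit-generator-Cphi}.

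Membership in the bounded adjoint-generator closure is the step that needs care, because the singular drift \(\nabla_qU\) appears in \(\mathcal L_M\). For \(\psi_a\in C_c^\infty(D)\), the support is a compact set \(S\Subset D\). On \(S\), \(\nabla_qU\) is bounded by Definition~\ref{def:admissible-Maxwellian} (for the Warner potential, explicitly). Hence \(\mathcal L_M\psi_a=\Delta_q\psi_a-\nabla_qU\cdot\nabla_q\psi_a\) is bounded and compactly supported. The natural no-flux weak realization \(-\int M\nabla_q\psi_a\cdot\nabla_q\phi=\int M(\mathcal L_M\psi_a)\phi\) then holds by ordinary integration by parts, with no boundary term, because \(\psi_a\) vanishes near \(\partial D\). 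For \(\psi_a\equiv1\) both sides vanish. The \(y\)-factor is handled by approximating \(c_a\) by simple functions \(c_a^{(m)}\) in \(L^\infty(Y)\) (uniform approximation of a bounded measurable function). The tests \(\Phi^{(m)}=c_a^{(m)}\psi_a\) are admissible in the sense of Definition~\ref{def:v37-zero-diffusion-weak}. Since \(\mathcal A_z\) is linear, \(\mathcal A_z\Phi^{(m)}-\mathcal A_z\Phi_a\) is bounded by
\[
\|c_a^{(m)}-c_a\|_\infty\Bigl(\tfrac1{2{\rm Wi}}\|\mathcal L_M\psi_a\|_\infty+2\|k\|_\infty a^*\|\psi_a\|_\infty\Bigr),
\]
uniformly in \(z\). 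This gives \eqref{eq:v38-generator-core-a}--\eqref{eq:v38-generator-core-b}.

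For \eqref{eq:v38-generator-assumption-b} I split \(\mathcal A_z\Phi_a=\frac{c_a}{2{\rm Wi}}\mathcal L_M\psi_a+\psi_a\,\mathcal J_z c_a\), where the jump part acts only in \(y\) and, at fixed \(q\), \(\psi_a\) factors out. Using \(0\le K_z\le\|k\|_\infty a^*\) from \eqref{eq:v38-kernel-bound}, we get \(|\mathcal J_zc_a|\le2\|k\|_\infty a^*\|c_a\|_\infty\), and this yields \(C^{(0)}_{G,a}\). The first term does not depend on \(z\). Therefore
\[
|\mathcal A_z\Phi_a-\mathcal A_{\widetilde z}\Phi_a|\le\|\psi_a\|_\infty\int_Y|K_z-K_{\widetilde z}|\,|c_a'-c_a|\,\dd\pi',
\]
and \eqref{eq:v38-kernel-lipschitz} gives \(C^{(1)}_{G,a}\). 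Taking maxima over the finitely many \(a\) produces the common constants.

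Finally, for the example \eqref{eq:explicit-nonlinear-activity-example}: symmetry and measurability are inherited from \(a_0\). The bounds \(0<a_0^*-\kappa\le\mathfrak a\le\|a_0\|_\infty+\kappa\) give \eqref{eq:continuum-activity-bounds} (reading \(a_0^*\) as a lower bound for \(a_0\)). Since \(|\tanh'|\le1\), the Lipschitz constant in \(z\) is at most \(\kappa|\lambda|\), and nonlinearity is clear whenever \(\kappa\lambda\ne0\). The claim about feedback with \(\psi_a\equiv1\) holds because \(c_a\) is an arbitrary element of \(L^\infty(Y)\). No step is genuinely hard; the only point needing attention is the closure statement, which the compact support of \(\psi_a\) makes routine.
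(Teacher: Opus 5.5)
Your proposal is correct and follows the paper's proof essentially step by step. It uses the same factorization \(\mathcal A_z\Phi_a=\frac{c_a}{2{\rm Wi}}\mathcal L_M\psi_a+\psi_a\mathcal J_zc_a\), boundedness of \(\nabla_qU\) on \(\operatorname{supp}\psi_a\), the kernel bound and kernel Lipschitz estimate for \(C^{(0)}_{G,a}\) and \(C^{(1)}_{G,a}\), and radiality for the corotational identity. Your simple-function approximation of \(c_a\) is harmless but unnecessary, since the admissible test class already allows merely \(\pi\)-measurable dependence on \(y\); this is why the paper simply takes the constant sequence \(\Phi_a^{(m)}=\Phi_a\). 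Your explicit check of the \(\tanh\) example fills in a verification the paper leaves implicit.
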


\begin{proof}
For compactly supported \(\psi_a\), the singular coefficient
\(\nabla_qU\) is bounded on its support, so
\(\mathcal L_M\psi_a\in L^\infty(D)\).  The observable itself is an
admissible kinetic test; hence the constant sequence
\(\Phi_a^{(m)}=\Phi_a\) verifies
\eqref{eq:v38-generator-core-a}--\eqref{eq:v38-generator-core-b}.  For
\(\psi_a\equiv1\), both configurational derivatives vanish and the same
conclusion follows from the constant test in the natural no-flux realization.

Direct substitution gives
\begin{equation}
 \mathcal A_z\Phi_a
 =\frac{c_a\mathcal L_M\psi_a}{2{\rm Wi}}
 +\psi_a(q)\int_YK_z(q,y,y')
 [c_a(y')-c_a(y)]\dd\pi(y').
 \label{eq:explicit-generator-action}
\end{equation}
The kernel bound \eqref{eq:v38-kernel-bound} proves
\eqref{eq:explicit-generator-CG0}; the kernel Lipschitz estimate
\eqref{eq:v38-kernel-lipschitz} proves
\[
 \|\mathcal A_z\Phi_a-\mathcal A_{\widetilde z}\Phi_a\|_\infty
 \le C_{G,a}^{(1)}|z-\widetilde z|.
\]
Equation \eqref{eq:explicit-generator-Cphi} is immediate.  Finally, radiality
gives \((Wq)\cdot\nabla_q\psi_a=0\), proving
\eqref{eq:v38-corotational-observable}.
\end{proof}

For any bounded observables define the centred local moments
\begin{equation}
 \eta_a^{\rm loc}[h](r,x):=
 \int_{D\times Y}\Phi_a(q,y)M(q)(h(r,x,q,y)-1)
 \dd q\dd\pi(y),
 \qquad1\le a\le N_g.
 \label{eq:local-zero-diffusion-moments}
\end{equation}

\begin{theorem}[Direct closure of local finite-moment activity]
\label{thm:local-moment-Lagrangian-closure}
Assume \eqref{eq:continuum-kernel},
\eqref{eq:continuum-activity-bounds}--
\eqref{eq:continuum-activity-lipschitz}, and suppose only that
\(\Phi_a\in L^\infty(D\times Y)\), \(1\le a\le N_g\).  No adjoint-generator
closure or radiality is imposed.  Let the initial data satisfy
\eqref{eq:v37-corotational-scalar-class}--
\eqref{eq:v37-Masmoudi-initial-class}.  Then, for every \(T>0\), there is a
zero-diffusion corotational weak solution, in the sense of Definition
\ref{def:v37-zero-diffusion-weak} with \(\mathcal K=K_h\), such that
\begin{equation}
 K_h(r,x,q,y,y')
 =K_{\eta^{\rm loc}[h](r,x)}(q,y,y')
 \quad\text{for almost every }(r,x,q,y,y').
 \label{eq:self-consistent-local-kernel}
\end{equation}
It has the entropy, configurational Fisher, logarithmic-square, trace, and
identified-stress conclusions of Theorem
\ref{thm:v37-zero-diffusion-existence}, with \(K_0\) replaced by \(K_h\) in
the Jeffreys production.  For every chosen scalar
corotational base solution, the state-resolved lift and its local moment field
are unique, up to null sets, in the renormalized class with bounded number
density.
\end{theorem}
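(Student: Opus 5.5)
The plan is a triangular construction that parallels Theorem~\ref{thm:v37-zero-diffusion-existence} and Step~2 of Theorem~\ref{thm:full-drag-local-closure}. The only new ingredient is that the kernel is now fixed by a labelwise Bielecki--Volterra fixed point; it is not prescribed. First apply Step~1 of Theorem~\ref{thm:v37-zero-diffusion-existence} unchanged. The data \eqref{eq:v37-corotational-scalar-class}--\eqref{eq:v37-Masmoudi-initial-class} give, through the Lions--Masmoudi corotational theorem, a global scalar pair \((u,\bar h)\) on \([0,T]\). It has unit transported number density and identified stress \(\tau[\bar h]\in L^2\). Fix this velocity, let \(X\) be its regular Lagrangian flow, and set \(B=\mathsf W(u)\circ X\). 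This \(B\) is skew and lies in \(L^1(I)\) on almost every label. No activity enters this step, because state averaging cancels every symmetric reaction.

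Next, on \(\mathcal Z_T=L^\infty((0,T)\times\T^2_a;\R^{N_g})\) with the weighted norm \(\|z\|_\lambda=\operatorname{ess\,sup}e^{-\lambda t}|z(t,a)|\), define \(\mathsf K_z(t,a,q,y,y')=k(y,y')\mathfrak a(q,z(t,a),y,y')\). This kernel is bounded, measurable, symmetric and non-negative, so Proposition~\ref{prop:label-dependent-fibre-lift} yields a unique jointly measurable fibre family \(\widetilde H_z\) with datum \(h^{\rm in}\). Put \((\mathcal F_Tz)_a(t,a)=\int\Phi_aM(\widetilde H_z(t,a)-1)\). Because the fibre mass is \(\rho^{\rm in}(a)=1\), the map \(\mathcal F_T\) sends \(\mathcal Z_T\) into a bounded set. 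The labelwise kernel stability \eqref{eq:labelwise-kernel-stability}, combined with the Lipschitz bound \eqref{eq:continuum-activity-lipschitz}, gives
\[
|\mathcal F_Tz-\mathcal F_Tw|(t,a)\le2C_{\rm obs}\|k\|_\infty A\int_0^t|z-w|(r,a)\dd r .
\]
Hence \(\mathcal F_T\) is a contraction for \(\lambda>2C_{\rm obs}\|k\|_\infty A\), and we obtain a unique fixed point on all of \([0,T]\). Only boundedness of \(\Phi_a\) has been used, so no generator closure or radiality is required. Radiality of \(M\) is still needed in the rotation argument behind Proposition~\ref{prop:nonautonomous-fibre-evolution}; it concerns \(M\), not the observables.

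Then push the fixed point forward by \(X\), as in \eqref{eq:label-to-Eulerian-pushforward}. This gives an Eulerian renormalized solution \(h\) with coefficient \(K^E\). The identity \(z(t,X(t,a))=\eta^{\rm loc}[h](t,X(t,a))\) proves \eqref{eq:self-consistent-local-kernel}. By Proposition~\ref{prop:label-dependent-fibre-lift}, the state average of \(h\) is independent of the kernel and is the unique scalar renormalized solution for \(u\), so it equals \(\bar h\). Therefore \(\tau_Y[h]=\tau[\bar h]\), and the scalar momentum equation is exactly the coupled one. The entropy, Fisher, Jeffreys (with \(K_h\)) and trace conclusions come from the label-integrated entropy inequality. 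The logarithmic-square propagation is the fibre renormalization of Step~2 of Theorem~\ref{thm:v37-zero-diffusion-existence}: the skew drift vanishes and Lemma~\ref{lem:v37-reaction-defect-sign} holds for any bounded symmetric kernel, including a time- and label-dependent one.

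For uniqueness over a fixed base, pull a second renormalized lift with bounded number density back along the same \(X\); Lemma~\ref{lem:kinetic-RLF-pullback} makes this legitimate. Its moment field is then another fixed point of \(\mathcal F_T\), so Banach uniqueness together with prescribed-kernel contraction identifies the two lifts. The step I expect to require the most care is measurability and admissibility of \(\mathcal F_T\) on \(\mathcal Z_T\). One must check that \(z\mapsto\widetilde H_z\) produces a jointly measurable moment field, and that the kernel-stability estimate holds labelwise rather than only after integration. I would handle this exactly as in Proposition~\ref{prop:label-dependent-fibre-lift}: Picard iteration starting from \(z_0=0\), with the product-space resolvent construction at each step, so that every iterate is jointly measurable and the uniform limit inherits that measurability.
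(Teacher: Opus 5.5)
Your proposal is correct and follows the paper's proof essentially step for step: the Lions--Masmoudi scalar base, the labelwise Bielecki--Volterra contraction on \(\mathcal Z_T\) built from Proposition~\ref{prop:label-dependent-fibre-lift} and the labelwise kernel stability \eqref{eq:labelwise-kernel-stability}, push-forward to identify the self-consistent kernel, the state average and the stress, and uniqueness because any second lift over the same base yields another fixed point of \(\mathcal F_T\). The only blemishes are cosmetic: where you write \(z(t,X(t,a))\) you mean the pushed-forward field \(z^E(t,X(t,a))=z(t,a)\), and the product-space construction already makes each \(\mathcal F_Tz\) jointly measurable, so the Banach iterates need no separate Picard argument.
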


\begin{proof}
Choose the scalar pair \((u,\bar h)\) as in Step 1 of Theorem
\ref{thm:v37-zero-diffusion-existence}, and let \(X(r,\alpha)\) be the regular
Lagrangian flow of \(u\), normalized at zero.  We construct the state-resolved
lift over this fixed base on the whole interval \([0,T]\).

Put
\begin{equation}
 \mathcal Z_T:=L^\infty((0,T)\times\T^2_\alpha;\R^{N_g}),
 \qquad
 \|z\|_\lambda:=
 \mathop{\rm ess\,sup}_{(r,\alpha)}e^{-\lambda r}|z(r,\alpha)|,
 \quad \lambda>0.
 \label{eq:local-Bielecki-space}
\end{equation}
On a finite interval this norm is equivalent to the usual \(L^\infty\) norm,
so \(\mathcal Z_T\) is complete.  For \(z\in\mathcal Z_T\), put
\begin{equation}
 \mathsf K_z(r,\alpha,q,y,y'):=K_{z(r,\alpha)}(q,y,y').
 \label{eq:local-prescribed-label-kernel}
\end{equation}
Changing an \(L^\infty\) representative of \(z\) changes \(\mathsf K_z\) only
on a null set.  The activity bounds therefore make \(\mathsf K_z\) a
well-defined bounded measurable label kernel, so
Proposition \ref{prop:label-dependent-fibre-lift} gives a unique lift
\(\widetilde H_z\) with datum \(h^{\rm in}\).  Define
\begin{equation}
 (\mathcal F_Tz)_a(r,\alpha):=
 \int_{D\times Y}\Phi_aM(\widetilde H_z(r,\alpha)-1)
 \dd q\dd\pi,
 \qquad1\le a\le N_g.
 \label{eq:local-label-moment-map}
\end{equation}

Joint measurability follows from the product-space construction in Proposition
\ref{prop:label-dependent-fibre-lift}.  Since label mass is conserved and
\(M\dd q\dd\pi\) is a probability measure,
\begin{equation}
 |\mathcal F_Tz(r,\alpha)|
 \le C_{\rm obs}(\rho^{\rm in}(\alpha)+1),
 \qquad
 C_{\rm obs}:=\left(\sum_{a=1}^{N_g}\|\Phi_a\|_\infty^2\right)^{1/2}.
 \label{eq:local-moment-map-bound}
\end{equation}
Thus \(\mathcal F_T\) maps \(\mathcal Z_T\) to itself.  Notice that this step
uses no derivative of \(\Phi_a\).

For \(z,w\in\mathcal Z_T\),
\eqref{eq:v38-kernel-lipschitz} gives
\begin{equation}
 \|\mathsf K_z-\mathsf K_w\|_{L^\infty(D\times Y^2)}
 (\theta,\alpha)
 \le \|k\|_\infty A|z-w|(\theta,\alpha).
 \label{eq:local-label-kernel-Lipschitz}
\end{equation}
Boundedness of the observables and
\eqref{eq:labelwise-kernel-stability} then give, for almost every
\((r,\alpha)\),
\begin{align}
 |(\mathcal F_Tz-\mathcal F_Tw)(r,\alpha)|
 &\le C_{\rm obs}\int_{D\times Y}M
 |\widetilde H_z-\widetilde H_w|(r,\alpha)\dd q\dd\pi\\
 &\le C_{\rm loc}\int_0^r|z-w|(\theta,\alpha)\dd\theta,
 \label{eq:local-pointwise-Volterra}\\
 C_{\rm loc}&:=2\|\rho^{\rm in}\|_\infty\|k\|_\infty A C_{\rm obs}.
 \notag
\end{align}
Multiplication by \(e^{-\lambda r}\) and insertion of
\(e^{\lambda\theta}e^{-\lambda\theta}\) in the last integral give
\[
 e^{-\lambda r}\int_0^r|z-w|(\theta,\alpha)\dd\theta
 \le \|z-w\|_\lambda\int_0^r e^{-\lambda(r-\theta)}\dd\theta
 \le \lambda^{-1}\|z-w\|_\lambda.
\]
Consequently,
\begin{equation}
 \|\mathcal F_Tz-\mathcal F_Tw\|_\lambda
 \le \frac{C_{\rm loc}}{\lambda}\|z-w\|_\lambda.
 \label{eq:local-Bielecki-contraction}
\end{equation}
If \(C_{\rm loc}>0\), choose \(\lambda=2C_{\rm loc}\); if
\(C_{\rm loc}=0\), the map is constant.  Banach's theorem gives a unique fixed
point \(z\in\mathcal Z_T\) on the full time interval, without restarting.

Push the fixed point to Eulerian variables by
\(z^E(r,X(r,\alpha))=z(r,\alpha)\).  Proposition
\ref{prop:label-dependent-fibre-lift} gives an Eulerian lift \(h\) with
coefficient \(K_{z^E}\).  The fixed-point identity and
\eqref{eq:label-to-Eulerian-pushforward} give
\[
 z^E(r,x)=\eta^{\rm loc}[h](r,x),
\]
which is exactly \eqref{eq:self-consistent-local-kernel}.  State averaging
recovers \(\bar h\), so \(\tau_Y[h]=\tau[\bar h]\) and the scalar momentum
equation is unchanged.  The logarithmic-square estimate follows fibrewise as
in Step 2 of Theorem \ref{thm:v37-zero-diffusion-existence}.

Finally, let \(h_*\) be any self-consistent renormalized lift over the same
scalar base and pull it back by \(X\).  Its bounded local moment field
\(z_*\) belongs to \(\mathcal Z_T\).  Prescribed-kernel uniqueness in
Proposition \ref{prop:label-dependent-fibre-lift} identifies its pullback with
\(\widetilde H_{z_*}\), so self-consistency says exactly
\(z_*=\mathcal F_Tz_*\).  Uniqueness of the Bielecki fixed point gives
\(z_*=z\), and prescribed-kernel uniqueness then gives \(h_*=h\).
\end{proof}

\begin{remark}[Orientation-sensitive local feedback]
No radiality is hidden in Theorem
\ref{thm:local-moment-Lagrangian-closure}.  Since \(D\) is bounded, it applies,
for example, to the non-radial observables
\[
 \Phi(q,y)=c(y)q_i,
 \qquad
 \Phi_{ij}(q,y)=c(y)\left(q_iq_j-\frac{|q|^2}{2}\delta_{ij}\right),
\]
and hence to bounded Lipschitz activities driven by local orientation and
traceless conformation moments.  These observables need not belong to an
invariant finite-dimensional adjoint-generator space.
\end{remark}

\begin{proposition}[Sequential stability of global moments]
\label{prop:v38-global-moment-time-compactness}
Let \((u_n,h_n)\) satisfy the zero-diffusion weak equation with the
self-consistent kernel \(K_{\widehat\eta[h_n]}\).  Assume, uniformly in
\(n\),
\begin{equation}
 \|\rho[h_n]\|_{L^\infty((0,T)\times\T^2)}\le R_\rho,\qquad
 \|\nabla_xu_n\|_{L^2((0,T)\times\T^2)}\le R_u,\qquad
 \sup_t\int Mh_n\le R_m.
 \label{eq:v38-moment-uniform-bounds}
\end{equation}
If \eqref{eq:v38-generator-assumption-a}--
\eqref{eq:v38-generator-assumption-b} hold, then
\(\widehat\eta[h_n]\) is equibounded and equicontinuous on \([0,T]\), with
\begin{equation}
 |\widehat\eta[h_n](t)-\widehat\eta[h_n](s)|
 \le C\bigl(|t-s|+R_\rho R_u|t-s|^{1/2}\bigr).
 \label{eq:v38-moment-modulus}
\end{equation}
If \eqref{eq:v38-corotational-observable} holds for every \(a\), the
square-root term is absent and the moments are equi-Lipschitz.  If, in
addition,
\begin{equation}
 Mh_n\rightharpoonup Mh
 \quad\text{weakly in }L^1((0,T)\times\T^2\times D\times Y),
 \label{eq:v38-density-weak}
\end{equation}
then, after extraction,
\begin{align}
 \widehat\eta[h_n]&\longrightarrow\widehat\eta[h]
 &&\text{in }C([0,T];\R^{N_g}),
 \label{eq:v38-global-moment-strong}\\
 K_{\widehat\eta[h_n]}&\longrightarrow K_{\widehat\eta[h]}
 &&\text{in }C([0,T];L^\infty(D\times Y^2)).
 \label{eq:v38-global-kernel-strong}
\end{align}
Here \(\widehat\eta[h]\) denotes its continuous representative.
\end{proposition}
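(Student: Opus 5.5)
The plan is to test the weak kinetic equation of Definition~\ref{def:v37-zero-diffusion-weak} against the observables themselves, extended constantly in \((t,x)\) and approximated through the graph core. First take \(\varphi=\Phi_a^{(m)}\) with \(m\) fixed. The time derivative of \(\widehat\eta_a[h_n]\) then has three contributions, collected in the distributional identity
\[
 \frac{\dd}{\dd t}\int Mh_n\Phi_a^{(m)}
 =\int Mh_n\,\mathsf W(u_n)q\cdot\nabla_q\Phi_a^{(m)}
 +\int Mh_n\,\mathcal A_{\widehat\eta[h_n](t)}\Phi_a^{(m)}.
\]
Spatial transport vanishes because the test is \(x\)-independent and \(u_n\) is divergence free on \(\T^2\). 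Configurational diffusion together with the symmetric jump pair is rewritten, after symmetrization in \((y,y')\) and the natural no-flux integration by parts, as the pairing of \(Mh_n\) with \(\mathcal A_z\Phi_a^{(m)}\) at \(z=\widehat\eta[h_n](t)\).

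Next pass \(m\to\infty\) using \eqref{eq:v38-generator-core-a}--\eqref{eq:v38-generator-core-b}. Since \(\widehat\eta[h_n]\) is bounded by \(2C_\Phi R_m/|\T^2|\) (mass bound), the relevant \(z\) lie in a fixed ball \(|z|\le R\). The errors are then bounded by \(R_m\) times the uniform convergence rates, so the identity holds for \(\Phi_a\) itself in integrated form on \([s,t]\).

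Estimate the two terms separately.
\begin{enumerate}
\item The generator term is at most \(C_GR_m|t-s|\) by \eqref{eq:v38-generator-assumption-b}.
\item For the drag term, \(|q\otimes\nabla_q\Phi_a|\le C_\Phi\) gives the pointwise bound \(C_\Phi|\nabla_xu_n|\rho[h_n]\). Cauchy--Schwarz in \((r,x)\) then bounds it by \(C_\Phi R_\rho|\T^2|^{1/2}R_u|t-s|^{1/2}\).
\end{enumerate}
This proves \eqref{eq:v38-moment-modulus}, with equiboundedness coming from the mass bound. Under \eqref{eq:v38-corotational-observable} the drag integrand vanishes identically because \(\mathsf W(u_n)\) is skew, leaving the Lipschitz bound.

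Arzelà--Ascoli then gives a subsequence with \(\widehat\eta[h_n]\to\zeta\) uniformly. To identify \(\zeta\), pair with \(\chi\in L^\infty(0,T)\): the quantity \(\int_0^T\chi\,\widehat\eta_a[h_n]\) is a pairing of \(Mh_n\) against the bounded function \(\chi(t)\Phi_a/|\T^2|\), minus a constant. Weak \(L^1\) convergence \eqref{eq:v38-density-weak} then gives \(\zeta=\widehat\eta[h]\) a.e. in time, so \(\zeta\) is its continuous representative. Finally, \eqref{eq:v38-kernel-lipschitz} turns uniform convergence of the moments into \eqref{eq:v38-global-kernel-strong}.

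The main obstacle is justifying the test by \(\Phi_a\), which is not itself an admissible smooth test near \(\partial D\) and may carry a singular drift. The graph-core closure is designed for exactly this step. Within it, the delicate points are that the frozen rate depends on \(t\) through \(\widehat\eta[h_n](t)\), which is why the uniformity in \(|z|\le R\) of \eqref{eq:v38-generator-core-b} is needed, and that the passage is done at the level of the time-integrated identity so that the weak traces \(Mh_n\in C_w\) give pointwise-in-time values.
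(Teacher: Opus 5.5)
Your proposal is correct and follows the paper's proof essentially step by step. The shared steps are: testing with the graph-core approximants \(\Phi_a^{(m)}\) and letting \(m\to\infty\); bounding the generator term by \(C_GR_m|t-s|\) (up to the harmless factor \(|\T^2|^{-1}\)); applying Cauchy--Schwarz in \((t,x)\) to the drag term using \(\rho[h_n]\le R_\rho\); using Arzel\`a--Ascoli; identifying the limit by pairing \(Mh_n\) with time tests times \(\Phi_a\); and concluding with the kernel Lipschitz bound.

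The only slips are cosmetic constants. The equibound is \(C(R_m/|\T^2|+1)\) because of the centering term. The drag part of the \(m\to\infty\) error is controlled by \(R_\rho\|\nabla_xu_n\|_{L^1}\) rather than by \(R_m\). Neither affects the argument.
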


\begin{proof}
Use \(\Phi_a^{(m)}\), independent of \((t,x)\), in the weak kinetic
equation on \([s,t]\), and then let \(m\to\infty\) by
\eqref{eq:v38-generator-core-a}--
\eqref{eq:v38-generator-core-b}.  Spatial transport integrates to zero on
\(\T^2\), while diffusion and reaction combine into the frozen adjoint
generator.  Consequently
\begin{align}
 \widehat\eta_a[h_n](t)-\widehat\eta_a[h_n](s)
 =\frac1{|\T^2|}\int_s^t\!\int Mh_n
 \bigl[\mathcal A_{\widehat\eta[h_n]}\Phi_a
 +\mathsf W(u_n)q\cdot\nabla_q\Phi_a\bigr].
 \label{eq:v38-global-moment-weak-evolution}
\end{align}
The generator term is bounded by \(C_GR_m|t-s|/|\T^2|\).  For the drag
term, integrate first in \((q,y)\), use the number-density bound, and apply
Cauchy--Schwarz in \((t,x)\):
\begin{equation}
 \left|\int_s^t\!\int Mh_n\mathsf W(u_n)q\cdot\nabla_q\Phi_a\right|
 \le C_\Phi R_\rho |\T^2|^{1/2}R_u|t-s|^{1/2}.
 \label{eq:v38-global-moment-drag-bound}
\end{equation}
This proves \eqref{eq:v38-moment-modulus};
\eqref{eq:v38-corotational-observable} removes the last integral entirely.
The moment bound follows from boundedness of \(\Phi_a\), mass, and the fixed
centering term.

Arzela--Ascoli gives uniform convergence along a subsequence to a continuous
vector \(z\).  On the other hand, for every \(\theta\in C_c^\infty(0,T)\),
\eqref{eq:v38-density-weak} gives
\begin{equation}
 \int_0^T\theta(t)\widehat\eta_a[h_n](t)\dd t
 \longrightarrow\int_0^T\theta(t)\widehat\eta_a[h](t)\dd t.
 \label{eq:v38-global-moment-distributional-identification}
\end{equation}
Hence \(z=\widehat\eta[h]\) almost everywhere, which identifies the
continuous representative and proves \eqref{eq:v38-global-moment-strong}.
Estimate \eqref{eq:v38-kernel-lipschitz} then gives
\eqref{eq:v38-global-kernel-strong}.
\end{proof}

\begin{theorem}[Self-consistent global-moment activity existence]
\label{thm:v38-self-consistent-global-moment-existence}
Assume \eqref{eq:continuum-kernel} and
\eqref{eq:continuum-activity-bounds}--
\eqref{eq:continuum-activity-lipschitz}, and suppose only that
\(\Phi_a\in L^\infty(D\times Y)\), \(1\le a\le N_g\).  Let the initial data
satisfy \eqref{eq:v37-corotational-scalar-class}--
\eqref{eq:v37-Masmoudi-initial-class}.  Then, for every
\(T>0\), there is a zero-diffusion corotational weak solution, in the sense of
Definition \ref{def:v37-zero-diffusion-weak} with \(\mathcal K=K_h\), with
\[
 K_h(t,q,y,y')=K_{\widehat\eta[h](t)}(q,y,y').
\]
It has all conclusions and bounds of Theorem
\ref{thm:v37-zero-diffusion-existence}; in particular its Kramers stress is
identified and its Jeffreys production, with \(K_0\) replaced by \(K_h\), is
finite.  Its global moments are
bounded and its activity is self-consistent for almost every time.  If the
generator hypotheses \eqref{eq:v38-generator-assumption-a}--
\eqref{eq:v38-generator-assumption-b} also hold, the moment path has an
absolutely continuous representative.  For each
chosen scalar corotational base solution, the state-resolved lift and its
moment path are unique in the renormalized class of Theorem
\ref{thm:lagrangian-state-fibre}.  No strong compactness of the state-resolved
density is asserted.
\end{theorem}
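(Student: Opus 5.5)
The plan follows the triangular route of Theorem~\ref{thm:v37-zero-diffusion-existence} and Theorem~\ref{thm:local-moment-Lagrangian-closure}. The only change is that the local Bielecki map is replaced by a finite-dimensional Volterra map on moment paths \(z:[0,T]\to\R^{N_g}\).

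\textbf{Scalar base and lift over a frozen path.} First I take the scalar corotational pair \((u,\bar h)\) from Step~1 of Theorem~\ref{thm:v37-zero-diffusion-existence}, which uses the data class \eqref{eq:v37-corotational-scalar-class}--\eqref{eq:v37-Masmoudi-initial-class}. I fix it, together with its regular Lagrangian flow \(X\). For a bounded measurable path \(z\in L^\infty(0,T;\R^{N_g})\), the kernel \(K_{z(t)}(q,y,y')\) is bounded, symmetric and non-negative by \eqref{eq:v38-kernel-bound}. It is label independent, so Theorem~\ref{thm:lagrangian-state-fibre} applies directly and gives a unique renormalized lift \(H_z\) with datum \(h^{\rm in}\). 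I then define
\[
 (\mathcal G z)_a(t):=\frac1{|\T^2|}\int_{\T^2\times D\times Y}\Phi_aM(H_z(t)-1).
\]
Mass conservation bounds \(|\mathcal G z|\) by a constant depending only on \(C_{\rm obs}\) and the total mass, so \(\mathcal G\) maps bounded paths into a fixed ball. The kernel stability \eqref{eq:lagrangian-kernel-stability}, combined with \eqref{eq:v38-kernel-lipschitz} and boundedness of \(\Phi_a\), gives
\[
 |\mathcal G z(t)-\mathcal G w(t)|\le \frac{2\mathfrak m\,C_{\rm obs}\|k\|_\infty A}{|\T^2|}\int_0^t|z-w|(r)\dd r .
\]

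\textbf{Fixed point and identification.} The Bielecki norm \(\sup_t e^{-\lambda t}|z(t)|\) with \(\lambda\) large makes \(\mathcal G\) a strict contraction on the whole of \([0,T]\). This gives a unique fixed point \(z\), and I set \(h=H_z\). By construction \(\widehat\eta[h](t)=z(t)\) for almost every \(t\), so the kernel is self-consistent. The final assertion of Theorem~\ref{thm:lagrangian-state-fibre} says the state average of \(h\) is independent of the kernel, so it equals \(\bar h\). Hence \(\tau_Y[h]=\tau[\bar h]\), and the scalar momentum equation is exactly the coupled one. The remaining conclusions I expect to carry over from the fibre estimates:
\begin{itemize}
\item entropy, Fisher information and Jeffreys production from \eqref{eq:lagrangian-lift-entropy};
\item number-density transport from \eqref{eq:lagrangian-number-density};
\item the logarithmic-square bound, repeating the fibre renormalization of Step~2 of Theorem~\ref{thm:v37-zero-diffusion-existence} with Lemma~\ref{lem:v37-reaction-defect-sign}, whose sign holds for any symmetric non-negative kernel, including the time-dependent \(K_h\);
\item the traces, exactly as before.
\end{itemize}
Uniqueness over the fixed base goes as follows. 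Any other self-consistent renormalized lift \(h_*\) has a bounded moment path \(z_*\). Prescribed-kernel uniqueness in Theorem~\ref{thm:lagrangian-state-fibre} gives \(h_*=H_{z_*}\), so \(z_*=\mathcal G z_*\), hence \(z_*=z\) and \(h_*=h\).

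\textbf{Absolute continuity of the moment path.} Under \eqref{eq:v38-generator-assumption-a}--\eqref{eq:v38-generator-assumption-b}, I apply the computation of Proposition~\ref{prop:v38-global-moment-time-compactness} to the single solution \((u,h)\). That means testing with the graph-core approximants \(\Phi_a^{(m)}\) and passing \(m\to\infty\), which yields \eqref{eq:v38-global-moment-weak-evolution}. Its integrand is bounded in \(L^1(0,T)\): the generator term by \(C_G\) times the mass, and the drag term by \(C_\Phi R_\rho|\mathsf W(u)|\in L^1_{t,x}\). This gives an absolutely continuous representative of \(\widehat\eta[h]\), not merely a H\"older one.

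\textbf{Main obstacle.} I expect the delicate point to be this last step, namely justifying the limit \(m\to\infty\) in the weak form. It needs the uniform-in-\(z\) convergence \eqref{eq:v38-generator-core-b} on the bounded range of the fixed-point path, and dominated convergence for the drag term using the \(L^\infty\) convergence of \(q\otimes\nabla_q\Phi^{(m)}\). The fixed point itself is routine once the kernel-stability estimate is available. I would only need to check that measurability in time of \(K_{z(t)}\) is inherited from that of \(z\), which the bound \eqref{eq:v38-kernel-lipschitz} ensures.
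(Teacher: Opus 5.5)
Your proposal is correct and follows the paper's proof essentially step for step. The shared steps are: the Lions--Masmoudi scalar base with the label-independent lift from Theorem~\ref{thm:lagrangian-state-fibre}; the Volterra bound with constant $2\mathfrak m\|k\|_\infty A\,C_{\rm obs}/|\T^2|$ and the Bielecki contraction; inheritance of the stress, entropy, Jeffreys, trace and logarithmic-square conclusions; uniqueness through prescribed-kernel uniqueness; and absolute continuity from the tested identity \eqref{eq:v38-global-moment-weak-evolution} with an $L^1(0,T)$ bound on $|\dot z|$. The paper also handles the $m\to\infty$ passage you flag as the main obstacle simply by invoking the computation of Proposition~\ref{prop:v38-global-moment-time-compactness}.
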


\begin{proof}
Apply the scalar construction used in Step 1 of Theorem
\ref{thm:v37-zero-diffusion-existence} to obtain \((u,\bar h)\).  Reaction is
absent from this scalar problem.  Set
\[
 \mathfrak m:=\int_{\T^2\times D\times Y}Mh^{\rm in}\dd x\dd q\dd\pi.
\]
On
\begin{equation}
 \mathcal Y_T:=L^\infty(0,T;\R^{N_g}),
 \qquad
 \|z\|_\lambda:=\mathop{\rm ess\,sup}_{0<r<T}e^{-\lambda r}|z(r)|,
 \label{eq:global-Bielecki-space}
\end{equation}
use the same Bielecki norm as in the local theorem.  For \(z\in\mathcal Y_T\),
changing its representative on a null set does not change the prescribed
kernel or its lift.  Thus Theorem \ref{thm:lagrangian-state-fibre} gives the
unique lift \(H_z\) with
kernel \(K_{z(r)}\).  Define \(\mathcal G_Tz=\widehat\eta[H_z]\).  Boundedness
of the observables and mass conservation show that \(\mathcal G_T\) maps
\(\mathcal Y_T\) to itself; explicitly,
\[
 |\mathcal G_Tz(r)|\le
 \frac{\mathfrak m+|\T^2|}{|\T^2|}
 \left(\sum_{a=1}^{N_g}\|\Phi_a\|_\infty^2\right)^{1/2}.
\]
Moreover, boundedness of the observables gives first
\[
 |(\mathcal G_Tz-\mathcal G_Tw)(r)|
 \le \frac{C_{\rm obs}}{|\T^2|}
 \int_{\T^2\times D\times Y}M|H_z-H_w|(r),
 \qquad
 C_{\rm obs}:=\left(\sum_{a=1}^{N_g}\|\Phi_a\|_\infty^2\right)^{1/2}.
\]
The kernel estimate \eqref{eq:v38-kernel-lipschitz} and the stability bound
\eqref{eq:lagrangian-kernel-stability} therefore yield
\begin{align}
 |(\mathcal G_Tz-\mathcal G_Tw)(r)|
 &\le C_{\rm glob}\int_0^r|z-w|(\theta)\dd\theta,
 \label{eq:global-pointwise-Volterra}\\
 C_{\rm glob}&:=\frac{2\mathfrak m\|k\|_\infty A C_{\rm obs}}{|\T^2|}.
 \notag
\end{align}
The exponential calculation used for the local map gives, here explicitly,
\[
 e^{-\lambda r}\int_0^r|z-w|(\theta)\dd\theta
 \le \|z-w\|_\lambda
 \int_0^r e^{-\lambda(r-\theta)}\dd\theta
 \le \lambda^{-1}\|z-w\|_\lambda.
\]
Therefore
\begin{equation}
 \|\mathcal G_Tz-\mathcal G_Tw\|_\lambda
 \le \frac{C_{\rm glob}}{\lambda}\|z-w\|_\lambda.
 \label{eq:global-Bielecki-contraction}
\end{equation}
Choosing \(\lambda=2C_{\rm glob}\) when \(C_{\rm glob}>0\), and observing that
\(\mathcal G_T\) is constant when \(C_{\rm glob}=0\), gives a unique fixed
point on \([0,T]\).  Thus \(h=H_z\) has the required self-consistent kernel,
without restarting or passing to a nonlinear approximation limit.

Theorem \ref{thm:lagrangian-state-fibre} supplies entropy, Fisher, Jeffreys,
trace, mass, and number-density bounds on the whole interval.  State averaging
gives \(\int_Yh\dd\pi=\bar h\), hence \(\tau_Y[h]=\tau[\bar h]\), and the
identified stress is inherited from the scalar solution.  The convex-tail
argument in Step 2 of Theorem \ref{thm:v37-zero-diffusion-existence} gives the
logarithmic-square bound.  Under the optional generator hypotheses, testing as
in \eqref{eq:v38-global-moment-weak-evolution} gives, for almost every \(r\),
\begin{equation}
 |\dot z_a(r)|
 \le \frac{C_G\mathfrak m}{|\T^2|}
 +\frac{C_\Phi\|\rho^{\rm in}\|_\infty}{|\T^2|^{1/2}}
 \|\mathsf W(u)(r)\|_{L^2_x}.
 \label{eq:global-fixed-moment-AC-bound}
\end{equation}
The right-hand side belongs to \(L^1(0,T)\), because
\(u\in L^2(0,T;H^1(\T^2))\).  The distributional evolution identity therefore
provides an absolutely continuous representative of \(z\).

Finally, every self-consistent lift over the same scalar base defines an element
of \(\mathcal Y_T\) fixed by \(\mathcal G_T\), by prescribed-kernel uniqueness.
The Bielecki fixed point is unique, and prescribed-kernel uniqueness then
identifies the lift itself.
\end{proof}

\begin{remark}[Exact boundary of the direct fixed point]
The construction is conditional only on the scalar corotational solution
provided by Theorem \ref{thm:v37-zero-diffusion-existence}; it does not assert
uniqueness of that scalar base.  For each chosen base, both Bielecki fixed
points and their state-fibre lifts are unique.  This corotational subsection
uses globally bounded, globally Lipschitz activities and bounded observables;
adjoint-generator closure
is needed only for the additional time-regularity and autonomous-ODE results.
The argument neither takes a positive centre-of-mass diffusivity to zero nor
invokes cutoff-uniform compactness of nonlinear fixed points.  Full-drag local
bounded observables are covered by Theorem
\ref{thm:full-drag-local-closure}, and state-blind unbounded drivers by Theorem
\ref{thm:integrable-driver-activity}; what remains outside this corotational
global-moment result is autonomous closure for orientation-sensitive global
observables under full drag.
\end{remark}

\begin{proposition}[Autonomous ODE subclass and its boundary]
\label{prop:v38-autonomous-closure-boundary}
Put \(\Phi_0=1\) and suppose that
\(\Phi_0,\ldots,\Phi_{N_g}\) are linearly independent; write
\(V=\operatorname{span}\{\Phi_0,\ldots,\Phi_{N_g}\}\).
Suppose every \(\Phi_a\) is radial in \(q\) and, for every \(z\),
\begin{equation}
 \mathcal A_z\Phi_a=\sum_{b=0}^{N_g}C_{ab}(z)\Phi_b,
 \qquad 0\le a\le N_g,
 \label{eq:v38-invariant-generator-space}
\end{equation}
with bounded locally Lipschitz coefficients.  Then, for
\(0\le a\le N_g\), the moments
\(m_a=|\T^2|^{-1}\int Mh\Phi_a\) obey the closed system
\begin{equation}
 \dot m_a=\sum_{b=0}^{N_g}C_{ab}(m_1-\langle\Phi_1\rangle_M,\ldots,
 m_{N_g}-\langle\Phi_{N_g}\rangle_M)m_b,\qquad \dot m_0=0,
 \label{eq:v38-closed-moment-ODE}
\end{equation}
where \(\langle\Phi_a\rangle_M=\int_{D\times Y}M\Phi_a\dd q\dd\pi\).
Conversely, at every strictly positive realizable state, invariance
\eqref{eq:v38-invariant-generator-space} is necessary for an autonomous law
depending only on these moments.  Independence from arbitrary corotational
flows further forces \(Jq\cdot\nabla_q\Phi_a=0\) in two dimensions, hence
radiality on each circle.
\end{proposition}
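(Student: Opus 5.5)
The plan is to split the statement into its sufficiency part and its necessity part, and to work throughout with the weak corotational equation tested against functions independent of \((t,x)\).

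\emph{Sufficiency.} I would test the kinetic identity of Definition~\ref{def:v37-zero-diffusion-weak} with \(\varphi=\Phi_a\), which is admissible through its graph-core approximants as in the proof of Proposition~\ref{prop:v38-global-moment-time-compactness}. On \(\T^2\) the spatial transport integrates to zero. Because each \(\Phi_a\) is radial, \eqref{eq:v38-corotational-observable} removes the term \(\mathsf W(u)q\cdot\nabla_q\Phi_a\). Configurational diffusion and the symmetric jump form then combine into the frozen adjoint generator, giving
\[
 \dot m_a=\frac1{|\T^2|}\int Mh\,\mathcal A_{\widehat\eta[h](t)}\Phi_a .
\]
Substituting \eqref{eq:v38-invariant-generator-space} and using \(\widehat\eta_b=m_b-\langle\Phi_b\rangle_M\) produces \eqref{eq:v38-closed-moment-ODE}. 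For \(\Phi_0=1\) we have \(\mathcal A_z1=0\), so \(\dot m_0=0\), which is mass conservation. The coefficients are bounded and locally Lipschitz, so the identity holds in the absolutely continuous sense and the ODE is well posed.

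\emph{Necessity of invariance.} Suppose \(\dot m=G(m)\) holds for every solution. Fix a strictly positive realizable state \(f\) at a point, and take the solution with zero velocity, so that the drag is absent. Then \(\dot m_a(0)=\int Mf\,\mathcal A_z\Phi_a\) is a linear functional of \(f\). Consider perturbations \(f+\epsilon g\) with \(\int Mg\Phi_b=0\) for all \(b\), which are still positive for small \(\epsilon\) because \(f\) is strictly positive. They leave \(m\) unchanged, so \(\int Mg\,\mathcal A_z\Phi_a=0\) for all such \(g\). Hence \(\mathcal A_z\Phi_a\) annihilates \(V^{\perp}\) in \(L^2_M\), and since \(V\) is finite dimensional this means \(\mathcal A_z\Phi_a\in V\), i.e.\ \eqref{eq:v38-invariant-generator-space}.

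\emph{Necessity of rotation invariance.} Repeat the argument with an arbitrary corotational velocity \(u\). The rate acquires the additional term \(\int Mf\,\mathsf W(u)q\cdot\nabla_q\Phi_a\). Since \(\mathsf W(u)=\omega J\) and \(\omega\) can be chosen freely while \(m\) is unchanged, independence of the law from \(\omega\) forces \(\int Mf\,Jq\cdot\nabla_q\Phi_a=0\) for every strictly positive \(f\) near a given one. Varying \(f\) in an open set of \(L^1_M\) then gives \(Jq\cdot\nabla_q\Phi_a=0\) almost everywhere, which is constancy along circles, i.e.\ radiality.

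The main obstacle I expect is making the variations rigorous. One must check that the perturbed densities are realizable data for the weak class, and that the time derivative at \(t=0\) is actually determined by the data; this requires a strong trace and continuity of the moment rate. I would handle this by working with smooth bounded positive data and smooth velocities, where the fibre evolution of Proposition~\ref{prop:nonautonomous-fibre-evolution} is classical.
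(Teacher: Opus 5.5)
Your sufficiency argument and your necessity-of-invariance argument follow the paper's route; the paper compares homogeneous states $h_0\pm\varepsilon r$ with $h_0\ge c>0$ and $r\in L^\infty\cap V^\perp$. One step in the invariance argument needs to be added. Positivity of $f+\varepsilon g$ only allows bounded $g$, so what you obtain is that $F=\mathcal A_z\Phi_a$ annihilates $L^\infty\cap V^\perp$, not all of $V^\perp$. The paper closes this by truncating $F-P_VF$ to bounded functions $s_n$ and setting $r_n=s_n-P_Vs_n$. These $r_n$ stay bounded because the basis $\Phi_0,\ldots,\Phi_{N_g}$ of $V$ is bounded, and $\langle F,r_n\rangle_M\to\|F-P_VF\|_{L^2(M)}^2$. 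This is routine, but it has to be said.

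The genuine gap is in the rotation argument. The moments are global, so the drag contribution to $\dot m_a$ is $|\T^2|^{-1}\int_{\T^2}\int_{D\times Y}Mf\,\mathsf W(u)q\cdot\nabla_q\Phi_a$, where $\mathsf W(u)(x)=c\,\omega(x)J$ and $\omega$ is the vorticity. On $\T^2$ the vorticity of a periodic field has zero spatial mean, so $\omega$ is not a free constant. For spatially homogeneous $f$ this term equals $c\bigl(\int_{\T^2}\omega\bigr)\int Mf\,Jq\cdot\nabla_q\Phi_a/|\T^2|=0$ for every velocity, and no condition on $\Phi_a$ follows. If you allow $f$ to depend on $x$, flow independence only forces $x\mapsto\int_{D\times Y}Mf(x)\,Jq\cdot\nabla_q\Phi_a$ to be constant, not zero.

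The missing idea is to correlate the perturbation with the vorticity in $x$. Take $g=Jq\cdot\nabla_q\Phi_a$, which is bounded since $q\otimes\nabla_q\Phi_a$ is, a smooth mean-zero $\omega$, and $u=\nabla^\perp\psi$ with $-\Delta\psi=\omega$. The densities $h_0\pm\varepsilon\omega g$ are positive. Because $\int\omega=0$, they have the same global moments and the same generator contribution. Their drag contributions, however, differ by a nonzero multiple of $\varepsilon\int_{\T^2}\omega^2\int Mg^2$, which forces $g=0$. This is the paper's argument.
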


\begin{proof}
Radiality annihilates the drag term in
\eqref{eq:v38-global-moment-weak-evolution}; substituting
\eqref{eq:v38-invariant-generator-space} gives
\eqref{eq:v38-closed-moment-ODE}.  Since \(\mathcal A_z1=0\), its zeroth row
vanishes.  Standard finite-dimensional continuation applies because the
moments remain bounded by mass and the bounded observables.

For necessity, fix a realizable \(z\) and a bounded density \(h_0\ge c>0\),
and put \(F=\mathcal A_z\Phi_a\).  If \(F\notin V\), its orthogonal component
\(F-P_VF\) is non-zero.  We claim that there is an
\(r\in L^\infty(M\dd q\dd\pi)\cap V^\perp\) with \(\langle F,r\rangle_M\ne0\).
Indeed, truncate \(F-P_VF\) in value, obtaining \(s_n\in L^\infty\) with
\(s_n\to F-P_VF\) in \(L^2(M)\), and set \(r_n=s_n-P_Vs_n\).  The generators
are tested only against bounded observables, so the basis of \(V\) is bounded;
consequently \(r_n\in L^\infty\cap V^\perp\), and
\[
 \langle F,r_n\rangle_M
 =\langle F-P_VF,s_n\rangle_M
 \longrightarrow \|F-P_VF\|_{L^2(M)}^2>0.
\]
Choose one such \(r=r_n\).  If
\(0<\varepsilon<c/\|r\|_{L^\infty}\), the spatially homogeneous densities
\(h_0\pm\varepsilon r\) are non-negative and have identical \(V\)-moments,
whereas their \(a\)-th generator derivatives differ by
\(2\varepsilon\langle F,r\rangle_M\ne0\).  Thus no common autonomous moment
law exists.

For the flow obstruction, let \(J\) be rotation through \(\pi/2\) and put
\(g=Jq\cdot\nabla_q\Phi_a\).  If \(g\ne0\), choose a smooth mean-zero
\(\omega\) on \(\T^2\), solve \(-\Delta\psi=\omega\), and set
\(u=\nabla^\perp\psi\), so \(\mathsf W(u)q\cdot\nabla_q\Phi_a\) is a non-zero
constant multiple of \(\omega g\).  The positive perturbations
\(h_0\pm\varepsilon\omega g\) have the same global moments because
\(\int\omega=0\), whereas their drag contributions differ by a non-zero
multiple of \(\varepsilon\int_{\T^2}\omega^2\int M g^2\).  Hence flow-independent
closure forces \(g=0\).  This is precisely radiality in \(q\) on the ball.
\end{proof}

\begin{proposition}[Local moments retain arbitrary spatial oscillations]
\label{prop:v38-local-moment-oscillation-obstruction}
The bounds in Proposition \ref{prop:v38-global-moment-time-compactness} do
not imply strong compactness of a local moment in
\(L^1((0,T)\times\T^2)\), even when the number density is identically one
and the velocity has no viscous defect.  Indeed, normalize
\(\int_DM\dd q=1\), take
\[
 Y=\{-1,1\}\times\T,\qquad
 \pi=\tfrac12(\delta_{-1}+\delta_1)\otimes\mathcal L^1_{\T},
\]
where \(\mathcal L^1_{\T}\) is normalized Haar measure,
write \(y=(s,\theta)\), and set
\[
 c(y)=s,\qquad
 k((s,\theta),(s',\theta'))=\mathbf1_{\{s=s'\}}.
\]
Thus the Markov process mixes within two non-trivial compact communicating
classes but never changes the sign \(s\).  Fix \(0<\varepsilon<1\), and set
\begin{equation}
 u_n=0,\qquad
 h_n(t,x,q,s,\theta)=1+\varepsilon s\sin(nx_1).
 \label{eq:v38-stationary-spatial-oscillation}
\end{equation}
Then \((u_n,h_n)\) is a stationary zero-diffusion corotational weak solution
for every self-consistent kernel
\[
 K_n(y,y')=k(y,y')a(\eta[h_n]),\qquad
 a(z)=2+\min\{|z|,1\},
\]
because \(h_n(y')=h_n(y)\) whenever \(k(y,y')>0\).  Configurational diffusion
also vanishes, and the state-averaged Kramers stress is constant and isotropic.
All displayed mass, entropy, unit-number-density, configurational-Fisher, and
logarithmic-square bounds are uniform, every time derivative is zero, and
\(\nabla u_n=0\).  Nevertheless the bounded state-sensitive local moment
generated by \(\Phi(q,s,\theta)=s\) is
\begin{equation}
 \eta[h_n](x)=\varepsilon\sin(nx_1),
 \label{eq:v38-local-state-oscillation}
\end{equation}
which converges weakly to zero but has no strongly convergent subsequence in
\(L^1(\T^2)\).

Consequently the bounded, uniformly positive Lipschitz activity is not
identified by weak convergence:
\(a(\eta[h_n])\) converges weakly to
\(2+\varepsilon |\T^2|^{-1}\int_{\T^2}|\sin x_1|\dd x\), not to \(a(0)=2\).
By contrast, the global moment of \eqref{eq:v38-local-state-oscillation} is
identically zero.  The sequence is not Cauchy under
\eqref{eq:direct-well-prepared-initial-data}; hence this condition cannot be
removed from Theorem~\ref{thm:direct-state-resolved-compactness} and replaced
by the natural weak estimates, even when
\eqref{eq:no-viscous-defect} holds identically.
\end{proposition}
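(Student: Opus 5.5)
The plan is to verify the proposition by direct substitution, since every object is explicit. First I check that \((u_n,h_n)\) satisfies Definition~\ref{def:v37-zero-diffusion-weak} with \(\mathcal K=K_n\). The function \(h_n\) is independent of \(t\) and \(q\), and \(u_n=0\). Hence the time, transport, drag and configurational-diffusion terms in \eqref{eq:v37-zero-diffusion-weak-form} all vanish identically for every admissible test; in particular \(\nabla_qh_n=0\), so no boundary flux at \(\partial D\) arises. For the reaction term I use that \(k(y,y')>0\) only when \(s=s'\), and that \(h_n\) depends on \(y\) only through \(s\). Hence \(K_n(h_n'-h_n)=0\) pointwise, whatever the activity, and this term also vanishes. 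In the momentum identity, \(\tau_Y[h_n]=\int_D\nabla_qU\otimes q\,M\,\bar h_n\dd q\), and \(\bar h_n=\int_Yh_n\dd\pi=1\) because \(\int s\dd\pi=0\). The stress is therefore the constant isotropic matrix \(\int_D M\nabla_qU\otimes q\), using radiality of the Warner potential. Its divergence is zero, so \(u_n=0\) solves the momentum equation.

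Second, I check the uniform bounds. Since \(0<1-\varepsilon\le h_n\le1+\varepsilon\), the mass, the entropy \(\int MF(h_n)\), and the logarithmic-square quantity \eqref{eq:v37-log2-initial} are bounded independently of \(n\). For the last one, \(\rho=1\) and \(|\log h_n|\le|\log(1-\varepsilon)|\). The number density equals \(\rho[h_n]=\int_DM\dd q\cdot\bar h_n=1\). The configurational Fisher information is zero, as are the Jeffreys production and \(\nabla u_n\). The quantities \(R_\rho,R_u,R_m\) of Proposition~\ref{prop:v38-global-moment-time-compactness} are therefore uniform, and the no-viscous-defect condition \eqref{eq:no-viscous-defect} holds trivially with limit \(u=0\).

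Third, I compute the moment. With \(\Phi=s\) we get \(\eta[h_n]=\int M s(h_n-1)\dd q\dd\pi=\varepsilon\sin(nx_1)\int s^2\dd\pi=\varepsilon\sin(nx_1)\). This converges weakly to \(0\) by Riemann--Lebesgue, while its \(L^1\) norm stays equal to \(\varepsilon\int|\sin x_1|>0\). A strongly convergent subsequence would therefore have to converge to the weak limit \(0\), which is impossible. For the activity, \(|\eta|\le\varepsilon<1\) gives \(a(\eta[h_n])=2+\varepsilon|\sin(nx_1)|\). The standard weak convergence of periodic oscillations to their mean yields the stated limit \(2+\varepsilon|\T^2|^{-1}\int|\sin x_1|\), which differs from \(a(0)=2\). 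The global moment is \(\varepsilon\int_{\T^2}\sin(nx_1)\dd x/|\T^2|=0\).

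The last step, which I expect to need the most care, is the failure of \eqref{eq:direct-well-prepared-initial-data}. I would argue by contradiction. If the data were Cauchy in directed relative entropy, Pinsker together with unit fibre mass would make \(h_n^{\rm in}\) Cauchy in \(L^1_M\), hence \(\eta[h_n^{\rm in}]\) Cauchy in \(L^1\), contradicting the previous paragraph. One can also compute directly: \(\int_\Omega\operatorname{Ent}_M(h_n\mid h_m)\) is bounded below by a constant multiple of \(\int(\varepsilon\sin nx_1-\varepsilon\sin mx_1)^2\dd x\approx\varepsilon^2|\T^2|\) for \(n\ne m\). Since all the other hypotheses of Theorem~\ref{thm:direct-state-resolved-compactness} hold here, including \(\rho=1\), positive Lipschitz activity and zero viscous defect, while the conclusion \eqref{eq:direct-local-moment-convergence} fails, the preparation condition cannot be dropped.
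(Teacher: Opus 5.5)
Your proposal is correct and follows essentially the same route as the paper. Both verify the solution by direct substitution: every term of the weak form vanishes, the reaction term because \(h_n\) depends on \(y\) only through \(s\) while \(k\) never changes \(s\). Both then use the explicit moment \(\varepsilon\sin(nx_1)\), whose \(L^1\) norm is constant in \(n\) while its weak limit is zero, periodic averaging for the activity, and Pinsker's inequality to show the sequence is not Cauchy in directed relative entropy. Your extra lower bound \(\int_{\T^2}\operatorname{Ent}_M(h_n\mid h_m)\dd x\ge c\,\varepsilon^2|\T^2|\) for \(n\ne m\) is a correct quantitative supplement rather than a different argument.
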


\begin{proof}
The state symmetry gives
\[
 \rho[h_n]=\int_{D\times Y}Mh_n=1,\qquad
 \eta[h_n]=\int_{D\times Y}sM(h_n-1)=\varepsilon\sin(nx_1).
\]
The function is independent of \((t,q,\theta)\).  Its time derivative and
configurational diffusion vanish, and \(h_n'-h_n=0\) on the support of \(k\),
so every reaction flux vanishes despite the non-zero within-class kernel.
Since \(u_n=0\), both transport terms vanish as well.  Radial symmetry of
\(D,M,U\) gives
\[
 \int_D\nabla_qU\otimes q\,M\dd q=c_UI,
\]
so state averaging gives \(\tau_Y[h_n]=c_UI\), which is invisible to every
divergence-free momentum test.  Positivity
\(1-\varepsilon\le h_n\le1+\varepsilon\) gives the uniform entropy and
logarithmic-square bounds, while \(\nabla_qh_n=0\) gives zero Fisher
information.

If a subsequence of \(\sin(nx_1)\) converged strongly in \(L^1\), its weak
limit would be zero, while its \(L^1\)-norm is independent of \(n\), a
contradiction.  Boundedness of \(s\) shows that strong \(L^1_M\) convergence
of \(h_n\) would imply strong convergence of \(\eta[h_n]\), so the state
densities have no strongly convergent subsequence.  Pinsker's inequality then
shows that they cannot be Cauchy in the directed relative entropy of
\eqref{eq:direct-well-prepared-initial-data}.  Finally, for every continuous
periodic test \(g\), partition
the \(x_1\)-circle into \(n\) cells and rescale each cell; uniform continuity
of \(g\) shows
\[
 \int_{\T^2}g(x)|\sin(nx_1)|\dd x
 \longrightarrow \frac1{|\T^2|}\int_{\T^2}|\sin x_1|\dd x
 \int_{\T^2}g(x)\dd x.
\]
This proves the asserted weak activity limit.
\end{proof}

\section*{Statements and Declarations}

\paragraph{Funding.}
This work was supported by NSFC Grant 12501602, Hunan Provincial Education
Department Grant 24C0055, Hunan Provincial Science and Technology Department
Grant 2025JJ60052, and Xiangtan University Start-up Fund Grant KZ0810769.

\paragraph{Competing interests.}
The author declares that there are no competing interests.

\begingroup
\small
\raggedright
\setlength{\bibsep}{0pt plus 0.2ex}
\setlength{\bibhang}{0pt}
\bibliographystyle{unsrtnat}
\bibliography{paper_III_references_v56}
\endgroup

\end{document}